\documentclass[11pt]{article}

\usepackage[utf8]{inputenc}
\usepackage[T1]{fontenc}
\usepackage[english]{babel}
\usepackage{amsmath,amssymb,amsfonts}
\usepackage{amsthm}
\usepackage{geometry}
\usepackage{mathtools}
\mathtoolsset{showonlyrefs}
\usepackage{bm}
\usepackage{subcaption}
\usepackage[hidelinks]{hyperref}
\usepackage[shortlabels]{enumitem}
\usepackage{listings}
\usepackage{cite}

\newtheorem{theorem}{Theorem}
\newtheorem{lemma}[theorem]{Lemma} 
\newtheorem{proposition}[theorem]{Proposition}
\newtheorem{remark}[theorem]{Remark}
\newtheorem{definition}[theorem]{Definition}
\newtheorem{corollary}[theorem]{Corollary}
\theoremstyle{definition}
\newtheorem{example}[theorem]{Example}

\newcommand{\R}{\mathbb{R}}
\newcommand{\N}{\mathbb{N}}
\renewcommand{\d}{\mathrm{d}}
\renewcommand{\P}{\mathcal{P}}
\newcommand{\T}{\mathrm{T}}
\newcommand{\M}{\mathcal M}
\newcommand{\D}{\mathrm D}
\newcommand{\HD}{\mathrm H}
\newcommand{\F}{\mathcal F}
\newcommand{\V}{\mathcal V}
\newcommand{\zb}{\bm}
\newcommand{\vb}{\zb v}
\mathtoolsset{showonlyrefs}
\newcommand{\Beta}{\mathrm{B}}

\DeclareMathOperator*{\argmin}{arg\,min}

\DeclareMathOperator{\dom}{dom}
\DeclareMathOperator{\Id}{Id}
\DeclareMathOperator{\Lip}{Lip}
\DeclareMathOperator{\prox}{prox}
\DeclareMathOperator{\supp}{supp}
\newcommand{\tT}{\mathrm{T}}
\newcommand{\opt}{\mathrm{opt}}

\begin{document}

\title{Wasserstein Steepest Descent Flows\\
  of Discrepancies with Riesz Kernels}
\author{
  Johannes Hertrich\footnotemark[1]
  \and
  Manuel Gr\"af\footnotemark[1]
  \and
  Robert Beinert\footnotemark[1]
  \and
  Gabriele Steidl\footnotemark[1] 
}    
\maketitle
\footnotetext[1]{Institute of Mathematics,
  TU Berlin,
  Stra{\ss}e des 17. Juni 136, 
  10623 Berlin, Germany,
  \{j.hertrich, graef, beinert, steidl\}@math.tu-berlin.de
}

\begin{abstract}
The aim of this paper is twofold.
Based on the geometric Wasserstein tangent space, we first introduce Wasserstein steepest descent flows.
These are locally absolutely continuous curves in the Wasserstein space
whose tangent vectors point into a steepest descent direction
of a given functional.
This allows the use of Euler forward schemes instead of
Jordan--Kinderlehrer--Otto schemes.
For $\lambda$-convex functionals,
we show that
Wasserstein steepest descent flows are an equivalent characterization of Wasserstein gradient flows.
The second aim is to study Wasserstein flows 
of the maximum mean discrepancy 
with respect to certain Riesz kernels. 
The crucial part is hereby the treatment of the interaction energy.
Although it is not $\lambda$-convex along generalized geodesics,
we give analytic expressions for Wasserstein steepest descent flows of
the interaction energy starting at Dirac measures.
In contrast to smooth kernels, the particle may explode, i.e., a Dirac measure becomes a non-Dirac one.
The computation of steepest descent flows amounts to finding equilibrium measures with external fields, which
nicely links Wasserstein flows of interaction energies with potential theory.
Finally, we provide numerical simulations of Wasserstein steepest descent flows of discrepancies.
\end{abstract}

\section{Introduction}

Wasserstein gradient flows have received much attention both from the theoretic and application point of view for many years.
For a good overview on the theory, we refer to the books of Ambrosio, Gigli and Savar\'e \cite{BookAmGiSa05} and
Santambrogio \cite{S2015}. 
The theory of gradient flows on probability distributions provides a framework for analyzing 
and constructing particle-based methods by connecting the optimization of functionals with dynamical systems
based on differential geometric ideas.
A pioneering example is given by the overdamped Langevin equation, where the associated Fokker--Planck
equation is just the gradient flow of the Kullback--Leibler functional $\F(\mu) = \text{KL}(\mu,\nu)$ 
in the Wasserstein geometry \cite{JKO1998,Ot01,OW2005,Pav2014}.
Recently similar ideas have been worked out, replacing either the functional or the underlying geometry,
and were also adopted as information flows in deep learning approaches, 
see for instance \cite{ArKoSaGr19,GHLS2019,HHS2021,Liu2017,LBADDP2022,NR2021,RC2013,TS2020,WT2011} 
among the huge amount of papers.

Our interest in Wasserstein flows arises from the approximation of probability measures 
by empirical measures when halftoning images. 
In \cite{EGNS2021,FHS2013,GPS2012}, the gray values of an image
are considered as values of a probability density function $\rho$ of a measure $\nu$,
and the aim consists in approximating this measure by those empirical measure
$\mu = \frac1M \sum_{i=1}^M \delta_{x_i}$, $x_i \in \R^2$,
which minimizes the (maximum mean) discrepancy with the negative distance kernel $K(x,y) = - \| x - y \|$, i.e.,
the functional
\begin{equation} \label{halftoning}
 \mathcal F (\mu) = \mathcal D_{-\|\cdot\|}^2(\mu,\nu)
 =
  \underbrace{
   \sum_{i=1}^M \int_{\R^2} \| x_i -  y \| \rho(y) \, \d y }_{ {\rm attraction} }
  - \, \frac{1}{2M}                                                       
  \underbrace{\sum_{i,j=1}^M  \|x_i - x_j \|}_{{\rm repulsion}}  \, +  \, \text{c}
	.
\end{equation}
The attraction term ensures that the points $x_i$ are pushed to areas where the density is high, while the repulsion term
avoids point clustering. For an illustration see Figure~\ref{fig:halftoning}. 
The discrepancy with negative distance kernel is also known as energy distance \cite{Szekely2002,SSGF2013}.
Note that halftoning with the kernel $K(x,y) = - \| x - y \|^{-1}$ 
was addressed under the name electrostatic halftoning in the initial paper \cite{SGBW2010}, see also \cite{TSGSW2011}.
\begin{figure}[t]
  \begin{center}
    \begin{tabular}{ccc}
      \includegraphics[width=0.25\textwidth]{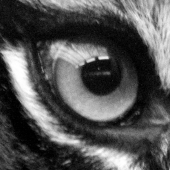}
      &
      &
        \includegraphics[width=0.25\textwidth]{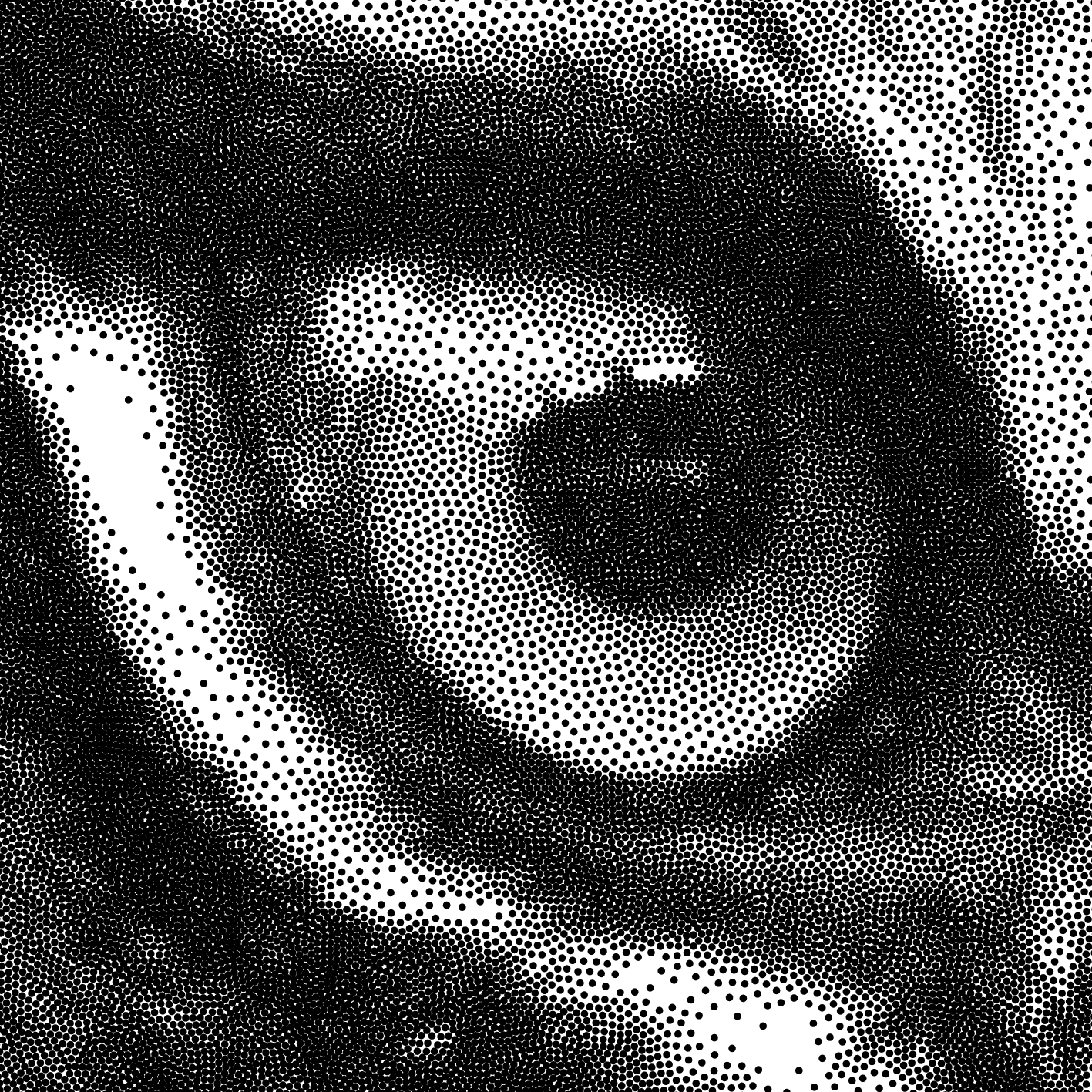}
      \\
      {\small image $\rho$}& &{\small halftoned image}
    \end{tabular}		
    \caption{Halftoning of an image. Gray values 
      are considered as values of a probability density function of a measure which
      is approximated by an empirical measure such that the discrepancy between 
      both measures becomes small. The halftoned image shows the position of the point measures.
    }					\label{fig:halftoning}
  \end{center}
\end{figure}
The halftoning functional \eqref{halftoning} is a special instance of discrepancy functionals
{\small
\begin{align}
  \mathcal F (\mu) = \mathcal D_{K}^2(\mu,\nu)
  = 
    \underbrace{-\int_{\R^{2d}} K(x,y) \d\nu(y) \d \mu(x)}_{\text{potential energy}}
  +                    
    \underbrace{\frac12 \int_{\R^{2d}} K(x,y) \, \d \mu(x) \d \mu(y)}_{\text{interaction energy}} \, +  \,\text{c},
\end{align}
}%
defined for  conditionally positive definite kernels $K: \R^d \times\R^d \to \R$ and arbitrary probability measures $\mu,\nu$ on $\R^d$,
where the first term is the potential energy of $\mu$ with respect to the potential of $\nu$ 
and the second term is known as interaction energy of $\mu$.
The restriction to empirical measures $\mu = \frac1M \sum_{i=1}^M \delta_{x_i}$
leads to the consideration of particle gradient flows of
$F(x_1,\ldots,x_M) \coloneqq \mathcal D_{K}^2(\frac1M \sum_{i=1}^M \delta_{x_i},\nu)$ in $\R^d$.
In \cite{ArKoSaGr19}, it has been established that for smooth kernels $K$ these particle flows are indeed Wasserstein gradient flows.
In other words, Wasserstein gradient flows starting at an empirical measure remain empirical measures and coincide with usual
gradient descent flows in $\R^d$. 
The situation changes for non-smooth kernels like the 
negative distance kernel applied in \eqref{halftoning}.
Here it is known that, for the interaction energy,
the Wasserstein gradient flow starting at an empirical measures cannot remain empirical, see \cite{BaCaLaRa13}.  
This implies that for the negative distance kernel, particle gradient flows of the discrepancy functional
cannot be Wasserstein gradient flows. 
In one dimension, this can be readily seen by the isome\-tric embedding of the Wasserstein space $\P_2(\R)$ 
into the Hilbert space $L_2((0,1))$, see \cite{BoCaFrPe15,CDEFS2020,HBGS2023}.
In dimensions $d \ge 2$, the geometry of the Wasserstein space 
is more complicated, and it is not obvious to answer if (sub)gradients of the above functionals
exist at any measure,
in particular at measures which are not absolutely continuous.
To study such cases, we recall the concept of the geometric tangent space of $\P_2(\R^d)$ 
which generalizes tangent vector fields to tangent velocity plans \cite{BookAmGiSa05, Gi04}. 
Based on this, we introduce the notion of the direction of steepest descent,
which leads us to a pointwise notion of the Wasserstein flows, which we call Wasserstein steepest descent flows.
For functionals, which are $\lambda$-convex along generalized geodesics,  we show that a curve is a Wasserstein gradient flow if and only if 
it is a Wasserstein steepest descent flows. 
If Wasserstein gradient and steepest descent flows coincide in a more general setting, remains an open question.
Unfortunately, for the Riesz kernel $K(x,y) = -\|x-y\|^r$, $r \in (0,2)$, neither the interaction energy nor the discrepancy functional are
$\lambda$-convex along generalized geodesics in dimensions $d \ge 2$. It is not trivial to check if these functionals are regular
such that the theory in \cite[Thm~11.3.2]{BookAmGiSa05} applies to this scenario. 
For the interaction energy, we provide analytic solutions for Wasserstein steepest descent flows starting at Dirac measures, 
which completes the findings for $d=1$ in \cite{BoCaFrPe15} and for $d\ge 2$ in \cite{CaHu17,GCO2021,ChSaWo22b}.
In particular, the direction of steepest descent at $\delta_0$ relates to the well studied optimization problem of equilibrium measures with external field in potential theory \cite{BookLa72, BookSaTo97}.
For the discrepancy functional, we determine steepest descent directions for Riesz kernels with $r \in [1,2)$ and
and show numerical simulations of Wasserstein steepest descent flows starting at Dirac measures
for target Dirac measures in two and three dimensions.
For a simulation of such flows with neural networks we refer to \cite{AHS2023,HWAH2023}.

\paragraph{Outline of the paper} We start by
providing preliminaries on Wasserstein spaces as geodesic spaces in Section \ref{sec:prelim}.
Basic facts on Wasserstein gradient flows, 
in particular, on the existence and uniqueness of Wasserstein proxies and on the convergence of the
MMS to Wasserstein gradient flows are recalled in Section \ref{sec:WGF}.
Then, in Section \ref{sec:gts}, we introduce Wasserstein
steepest descent flows which rely on the concept of the geometric Wasserstein tangent space.
We show  for locally Lipschitz continuous functions which are $\lambda$-convex along generalized geodesics, 
that there exists a unique Wasserstein steepest descent flow which coincides with the Wasserstein gradient flow.
Then we turn to special functionals arising from discrepancies defined with respect to  Riesz kernels
in Section \ref{sec:discr}.
Discrepancy functionals are, up to a constant, the sum of an
interaction energy and a potential energy.
In Section \ref{sec:mmd}, we investigate Wasserstein steepest descent flows of the interaction energy
starting at Dirac measures. This leads to the task of solving a constrained optimization problem 
related to a penalized one
which has to be solved when computing Wasserstein proxies.
We provide an analytic formula for the Wasserstein steepest descent flow. 
Finally, in Section \ref{sec:R}, we present numerically computed particle gradient flows 
for the whole discrepancy functional,  which are in good 
agreement with our findings for small time intervals.

\section{Preliminaries} \label{sec:prelim}
\paragraph{Wasserstein Space}

Let $\mathcal M(\R^d)$ denote the space of $\sigma$-additive signed Borel measures, 
$\mathcal P(\R^d)$ the set of all Borel probability measures,
and $\P_2(\R^d)$ its subset of measures with finite second moments,
i.e.\
\begin{equation}
  \label{def:P_2}
  \P_2(\R^d) 
  \coloneqq 
  \Bigl\{ \mu \in \P(\R^d) \colon \int_{\R^d}\|x\|_2^2 \, \d \mu(x) < \infty \Bigr\}.
\end{equation}
The set $\P_2^r(\R^d)$ of absolutely continuous probability measures with respect to the Lebesgue measure is a dense subset of $\P_2(\R^d)$. 
For $\mu \in \M(\R^d)$ and measurable $T\colon\R^d \to \R^n$,
the \emph{push-forward} of $\mu$ via $T$ is given by
$T_{\#}\mu \coloneqq \mu \circ T^{-1}$.
For $x=(x_1,\ldots,x_n) \in (\R^d)^{\times n}$,
the projection to the $(i_1,\ldots,i_k)$th components is denoted by
\begin{equation}
    \pi_{i_1,\ldots,i_k}(x) \coloneqq (x_{i_1},\ldots, x_{i_k}).
\end{equation}
The \emph{Wasserstein distance} 
$W_2\colon\P_2(\R^d) \times \P_2(\R^d) \to [0,\infty)$ 
is given by
\begin{equation}        \label{def:W_2}
  W_2^2(\mu, \nu)
  \coloneqq 
  \min_{\zb \pi \in \Gamma(\mu, \nu)} 
  \int_{\R^d\times \R^d}
  \|x_1 -  x_2\|_2^2
  \, \d \zb \pi(x_1, x_2),
  \qquad \mu,\nu \in \P_2(\R^d),
\end{equation}
where 
\begin{equation*}
  \Gamma(\mu, \nu)
  \coloneqq
  \{ \zb \pi \in \P_2(\R^d \times \R^d):
  (\pi_{1})_{\#}\zb \pi = \mu,\; (\pi_{2})_{\#}\zb \pi = \nu\}.
\end{equation*}
The set of optimal transport plans $\zb \pi$ 
realizing the minimum in \eqref{def:W_2}
is denote by $\Gamma^{\rm{opt}}(\mu, \nu)$.
If $\mu \in \P_2^{r}(\R^d)$, then the optimal transport plan is unique 
and is moreover given by a so-called transport map, see \cite{Brenier1987} 
and \cite[Thm~6.2.10]{BookAmGiSa05}. Let $L_2(\mu,\R^d)$ denote the 
space of (equivalence classes of) functions $f:\R^d \to \R^d$
with $\int_{\R^d} \|f\|_2^2 \, {\rm d} \mu(x) < \infty$.
								
	\begin{theorem}\label{thm:map-plan} 
   Let $\mu \in \mathcal P_2^{r}(\R^d)$ and $\nu \in \mathcal P_2(\R^d)$. 
	Then there is a unique plan $\zb \pi \in \Gamma^{\rm{opt}}(\mu, \nu)$
	 which is  induced by a unique
	measurable optimal transport map $T\colon \R^d \to \R^d$, i.e.,
	\begin{equation} \label{eq:T}
	\zb \pi = ({\rm{Id}}, T)_\# \mu
	\end{equation}
	and
  \begin{equation} \label{eq:monge}
	 W_2^2(\mu, \nu) =
 \min_{T\colon  \R^d \to  \R^d}
  \int_{\R^d}  \|T(x) -x \|_2^2  \, \d \mu(x) 
  \quad \text{subject \; to} \quad T_\# \mu = \nu.
  \end{equation}
	Further, $T = \nabla \psi$, where $\psi\colon \R^d \to (-\infty,+\infty]$
	is convex, lower semi-con\-tin\-u\-ous (lsc) and $\mu$-a.e.\ differentiable. 
	Conversely, if $\psi$ is convex, lsc and $\mu$-a.e.\ differentiable with $\nabla \psi \in L_2(\mu,\R^d)$,
	then $T \coloneqq \nabla \psi$ is an optimal map from $\mu$ to $\nu \coloneqq   T_\# \mu \in \mathcal P_2(\R^d)$.
 \end{theorem}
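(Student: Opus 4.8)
The plan is to prove Theorem~\ref{thm:map-plan} (Brenier's theorem together with its converse) by combining the cyclical monotonicity of optimal plans with Rockafellar's characterization of such sets as subsets of subdifferentials of convex functions, and then exploiting absolute continuity of $\mu$ to upgrade the plan to a map. First I would recall that any $\zb\pi \in \Gamma^{\rm opt}(\mu,\nu)$ is concentrated on a cyclically monotone set $S \subseteq \R^d \times \R^d$; this follows from the standard contradiction argument, where one uses a finite cyclic permutation of mass along a hypothetical non-monotone tuple of points to strictly lower the transport cost, contradicting optimality (see \cite[Thm~6.1.4]{BookAmGiSa05}). By Rockafellar's theorem, a cyclically monotone set is contained in the graph of the subdifferential $\partial\psi$ of a proper, convex, lsc function $\psi\colon\R^d\to(-\infty,+\infty]$, which can be written explicitly as a supremum of affine functions built from chained increments along points of $S$. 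Hence $\zb\pi$ is concentrated on $\{(x,y) : y \in \partial\psi(x)\}$.

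Next I would use the hypothesis $\mu \in \P_2^r(\R^d)$. Since $\psi$ is convex and finite on a set of full $\mu$-measure (as $\dom\partial\psi \subseteq \dom\psi$ and $\mu$ gives mass one to $\dom\partial\psi$), and a convex function on $\R^d$ is differentiable outside a set of Lebesgue measure zero, absolute continuity of $\mu$ gives that $\psi$ is differentiable $\mu$-a.e., with $\partial\psi(x) = \{\nabla\psi(x)\}$ there. Therefore $\zb\pi$ is concentrated on the graph of the map $T \coloneqq \nabla\psi$, which means $\zb\pi = ({\rm Id},T)_\#\mu$ and $T_\#\mu = (\pi_2)_\#\zb\pi = \nu$. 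This identity already yields \eqref{eq:T}; plugging it into \eqref{def:W_2} and noting that any admissible map $T'$ with $T'_\#\mu=\nu$ induces an admissible plan $({\rm Id},T')_\#\mu \in \Gamma(\mu,\nu)$ gives the Monge formulation \eqref{eq:monge}, with the minimum attained precisely at $T=\nabla\psi$. Uniqueness of the plan (and hence of $T$ up to $\mu$-null sets) follows from strict convexity of the cost: if two optimal plans existed, their average would be optimal and concentrated on a monotone set, forcing both to coincide; alternatively one invokes \cite[Thm~6.2.4, Thm~6.2.10]{BookAmGiSa05} directly. One must also note $\nabla\psi \in L_2(\mu,\R^d)$, which is immediate since $\int\|T(x)-x\|^2\d\mu(x) = W_2^2(\mu,\nu) < \infty$ and $\mu$ has finite second moment.

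For the converse direction, I would start from a convex, lsc, $\mu$-a.e.\ differentiable $\psi$ with $\nabla\psi \in L_2(\mu,\R^d)$, set $T \coloneqq \nabla\psi$ and $\nu \coloneqq T_\#\mu$. Then $\nu \in \P_2(\R^d)$ because $\int\|y\|^2\d\nu(y) = \int\|\nabla\psi(x)\|^2\d\mu(x) < \infty$. The plan $\zb\pi \coloneqq ({\rm Id},T)_\#\mu$ lies in $\Gamma(\mu,\nu)$ and is concentrated on the graph of $\nabla\psi \subseteq \partial\psi$, which is cyclically monotone (monotonicity of subdifferentials of convex functions is elementary, and cyclical monotonicity follows by summing the subgradient inequalities around a cycle). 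A cyclically monotone plan with finite cost is optimal — this is the easy converse of the above, via the standard argument comparing $\zb\pi$ to any competitor (see \cite[Thm~6.1.4]{BookAmGiSa05}) — so $\zb\pi \in \Gamma^{\rm opt}(\mu,\nu)$ and $T$ is an optimal map.

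The main obstacle I expect is the careful handling of measure-zero technicalities in the first direction: ensuring that $\mu$ really is concentrated on the set where $\psi$ is differentiable (one needs $\mu(\dom\partial\psi)=1$, which requires knowing the support of $\zb\pi$ projects appropriately, and then absolute continuity to kill the non-differentiability set of the convex function $\psi$), and justifying that the function $\psi$ produced by Rockafellar's construction is genuinely proper (not identically $+\infty$) — this uses that $S$ is nonempty and the chained-increment supremum is finite at least at one point. Everything else is either the well-known cyclical-monotonicity/optimality equivalence or routine bookkeeping with push-forwards and the finite-second-moment condition, so I would cite \cite{Brenier1987} and \cite[Thm~6.2.10]{BookAmGiSa05} for the polished statement rather than reproduce the full proof.
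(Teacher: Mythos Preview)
Your proof sketch is correct and follows the standard route via cyclical monotonicity and Rockafellar's theorem. Note, however, that the paper does not give its own proof of this statement: it is quoted as a known preliminary result with references to \cite{Brenier1987} and \cite[Thm~6.2.10]{BookAmGiSa05}, so there is nothing to compare against beyond confirming that your argument is essentially the one found in those sources.
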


\paragraph{Wasserstein Geodesics}
A curve $\gamma \colon I \to \P_2(\R^d)$ on an interval $I \subset \R$, is called a \emph{(length-minimizing) geodesic} 
if there exists a constant $C \ge 0$ 
such that 
\begin{equation}
    \label{eq:geodesic}
    W_2(\gamma(t_1), \gamma(t_2)) = C |t_2 - t_1|, \qquad \text{for all } t_1, t_2 \in I.
\end{equation}
 The constant $C$ is the speed of the geodesic.
The Wasserstein space is geodesic, 
i.e.\ any two measures $\mu, \nu \in \P_2(\R^d)$ 
can be connected by a geodesic.
These geodesics may be characterized by optimal plans.

\begin{proposition}[{\cite[Thm~7.2.2]{BookAmGiSa05}}]
\label{prop:geo}
    Let $\epsilon > 0$.
    Any geodesic $\gamma \colon [0,\epsilon] \to \P_2(\R^d)$ connecting $\mu, \nu \in \P_2(\R^d)$
    is determined by an optimal plan $\zb \pi \in \Gamma^{\rm{opt}}(\mu, \nu)$ via
    \begin{equation} \label{eq:geodesic_plan}
      \gamma(t) \coloneqq \bigl((1-\tfrac t \epsilon) \, \pi_1 
      + \tfrac t \epsilon \, \pi_2 \bigr)_{\#} \zb \pi, \quad \qquad t \in [0,\epsilon].
    \end{equation}
    Conversely, any $\zb \pi \in \Gamma^{\rm{opt}}(\mu, \nu)$ gives rise to a 
    geodesic $\gamma \colon [0,\epsilon] \to \P_2(\R^d)$ connecting $\mu$ and $\nu$.
\end{proposition}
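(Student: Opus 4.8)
The plan is to establish the two directions separately. The converse --- that an optimal plan induces a geodesic --- is a short computation using only the definition of $W_2$ and the triangle inequality. Fix $\zb\pi\in\Gamma^{\mathrm{opt}}(\mu,\nu)$, put $S_t\coloneqq(1-\tfrac t\epsilon)\pi_1+\tfrac t\epsilon\pi_2\colon\R^d\times\R^d\to\R^d$ and $\gamma(t)\coloneqq(S_t)_\#\zb\pi$, so that $\gamma(0)=\mu$ and $\gamma(\epsilon)=\nu$. For $0\le t_1<t_2\le\epsilon$ the plan $(S_{t_1},S_{t_2})_\#\zb\pi$ couples $\gamma(t_1)$ and $\gamma(t_2)$ and has cost $\tfrac{(t_2-t_1)^2}{\epsilon^2}\int\|x_1-x_2\|^2\,\d\zb\pi=\tfrac{(t_2-t_1)^2}{\epsilon^2}W_2^2(\mu,\nu)$, hence $W_2(\gamma(t_1),\gamma(t_2))\le\tfrac{t_2-t_1}{\epsilon}W_2(\mu,\nu)$. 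Inserting these three bounds for $[0,t_1]$, $[t_1,t_2]$ and $[t_2,\epsilon]$ into $W_2(\mu,\nu)\le W_2(\mu,\gamma(t_1))+W_2(\gamma(t_1),\gamma(t_2))+W_2(\gamma(t_2),\nu)$ forces each of them to be an equality, i.e.\ $W_2(\gamma(t_1),\gamma(t_2))=\tfrac{t_2-t_1}{\epsilon}W_2(\mu,\nu)$, which is the defining geodesic property with speed $C=W_2(\mu,\nu)/\epsilon$.

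For the first assertion, let $\gamma\colon[0,\epsilon]\to\P_2(\R^d)$ be a geodesic from $\mu$ to $\nu$; we may assume its speed $C$ is positive, the case $\mu=\nu$ being trivial. I would first work on a finite grid $0=s_0<\dots<s_n=\epsilon$: choose $\zb\beta_i\in\Gamma^{\mathrm{opt}}(\gamma(s_{i-1}),\gamma(s_i))$ and, by the classical gluing lemma for transport plans, build $\zb\Sigma\in\P_2((\R^d)^{n+1})$ whose consecutive bivariate marginals are the $\zb\beta_i$. Its marginal on the first and last coordinates couples $\mu$ and $\nu$, and the Minkowski inequality in $L^2(\zb\Sigma;\R^d)$ applied to the increments $x_{i-1}-x_i$ gives $W_2(\mu,\nu)\le\|x_0-x_n\|_{L^2(\zb\Sigma)}\le\sum_i\|x_{i-1}-x_i\|_{L^2(\zb\Sigma)}=\sum_i W_2(\gamma(s_{i-1}),\gamma(s_i))=W_2(\mu,\nu)$, so all inequalities are equalities. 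Hence $\zb\pi^{(n)}$, the marginal of $\zb\Sigma$ on the first and last coordinates, lies in $\Gamma^{\mathrm{opt}}(\mu,\nu)$, and equality in the iterated Minkowski inequality --- that is, Cauchy--Schwarz equality between the successive increments --- forces $(x_0,\dots,x_n)$ to be collinear for $\zb\Sigma$-a.e.\ point; comparing the $L^2$-norms of the partial sums then pins down the spacing, so $x_i=(1-\tfrac{s_i}\epsilon)x_0+\tfrac{s_i}\epsilon x_n$ and therefore $\gamma(s_i)=(S_{s_i})_\#\zb\pi^{(n)}$.

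It remains to turn this grid-dependent statement into a single plan valid for all times. I would fix a countable dense set $D\subset[0,\epsilon]$, apply the previous step to an increasing exhaustion of $D$ by finite grids to obtain plans $\zb\pi^{(n)}\in\Gamma^{\mathrm{opt}}(\mu,\nu)$ with $\gamma(s)=(S_s)_\#\zb\pi^{(n)}$ for every $s$ already contained in the current grid, and then pass to the limit: $\Gamma^{\mathrm{opt}}(\mu,\nu)$ is weakly compact and $\zb\pi\mapsto(S_s)_\#\zb\pi$ is weakly continuous under the uniform second-moment bound, so along a subsequence $\zb\pi^{(n)}\rightharpoonup\zb\pi\in\Gamma^{\mathrm{opt}}(\mu,\nu)$ and $\gamma(s)=(S_s)_\#\zb\pi$ for all $s\in D$; since both sides are $W_2$-continuous in $s$ (the left by the geodesic property, the right by the converse direction already proved), the identity extends to all $s\in[0,\epsilon]$. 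An alternative is to push the plans $\zb\Sigma$ forward to measures on $C([0,\epsilon];\R^d)$ and extract a weak limit by a superposition argument.

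The cost computation, the triangle-inequality chains and the equality analysis in the Minkowski and Cauchy--Schwarz inequalities are routine. I expect the real obstacle to be this final upgrade: a single gluing only produces an optimal plan adapted to one intermediate time, and since geodesics in $\P_2(\R^d)$ can branch for $d\ge2$ there is no purely algebraic way to patch together the plans obtained at different times, so one genuinely needs the grid construction together with compactness to arrive at one plan $\zb\pi$ that describes the whole curve.
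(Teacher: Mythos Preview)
The paper does not prove this proposition; it is quoted verbatim from \cite[Thm~7.2.2]{BookAmGiSa05}, so there is no in-paper argument to compare against. Your proof is essentially correct: the converse direction is the standard coupling computation, and for the forward direction the gluing plus the equality case of the Minkowski inequality in $L^2(\zb\Sigma;\R^d)$ correctly yields $x_i=(1-\tfrac{s_i}{\epsilon})x_0+\tfrac{s_i}{\epsilon}x_n$ $\zb\Sigma$-a.e., hence $\gamma(s_i)=(S_{s_i})_\#\zb\pi^{(n)}$. The compactness step is also sound: $\Gamma^{\opt}(\mu,\nu)$ is weakly closed in the tight set $\Gamma(\mu,\nu)$, pushforward by the continuous map $S_s$ is weakly continuous, and for nested grids each $s\in D$ eventually lies in every grid along the subsequence.

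One correction and one simplification. Your closing remark that ``geodesics in $\P_2(\R^d)$ can branch for $d\ge2$'' is false: $(\P_2(\R^d),W_2)$ is a non-branching space. What can happen is that there are \emph{several} geodesics between the same endpoints (one per optimal plan), but no two of them coincide on a nondegenerate subinterval and then separate. This is precisely the content of \cite[Lem~7.2.1]{BookAmGiSa05}, which the paper invokes elsewhere: for any $t\in(0,\epsilon)$ the optimal plan between $\gamma(t)$ and either endpoint is \emph{unique} and induced by a map. Using that lemma, a single intermediate point already determines $\zb\pi$, and one checks directly that the resulting interpolation agrees with $\gamma$ at every time; no exhaustion or compactness is needed. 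Your route trades this structural lemma for a soft compactness argument --- perfectly valid, just a bit heavier than necessary.
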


The optimal $\zb \pi$ in \eqref{eq:geodesic_plan} may be replaced by non-optimal plans
to obtain more general interpolating curves.
For instance,
based on the set of three-plans with base $\sigma \in \P_2(\R^d)$ given by
\begin{equation}
    \Gamma_\sigma(\mu,\nu)
    \coloneqq
    \bigl\{ 
    \zb{\alpha} \in \P_2(\R^d \times \R^d \times \R^d)
    :
    (\pi_1)_\# \zb \alpha = \sigma,
    (\pi_2)_\# \zb \alpha = \mu,
    (\pi_3)_\# \zb \alpha = \nu
    \bigr\},
\end{equation}
the so-called
\emph{generalized geodesics} $\gamma \colon [0, \epsilon] \to \P_2(\R^d)$
joining $\mu$ and $\nu$ (with base $\sigma$) is defined as
\begin{equation} \label{ggd}
  \gamma(t) 
  \coloneqq 
  \bigl(
  (1-\tfrac t \epsilon) \pi_2 
  + \tfrac t \epsilon \pi_3
  \bigr)_\# \zb \alpha,
  \qquad
  t \in [0, \epsilon],
\end{equation}
where $\zb \alpha \in \Gamma_\sigma (\mu,\nu)$
with
$(\pi_{1,2})_\# \zb{\alpha} \in \Gamma^{\opt}(\sigma,\mu)$
and
$(\pi_{1,3})_\# \zb{\alpha} \in \Gamma^{\opt}(\sigma,\nu)$, see \cite[Def~9.2.2]{BookAmGiSa05}.
The plan $\zb{\alpha}$ may be interpreted as transport from $\mu$ to $\nu$ via $\sigma$.

\paragraph{$\lambda$-Convexity along Wasserstein Geodesics}
Let $\lambda \in \R$ be a fixed constant.
A function $\F\colon \P_2(\R^d) \to (-\infty,+\infty]$ is called \emph{$\lambda$-convex along geodesics} \cite[Def~9.1.1]{BookAmGiSa05}
if, for every 
$\mu, \nu \in \dom \F \coloneqq \{\mu \in  \P_2(\R^d): \F(\mu) < \infty\}$,
there exists at least one geodesic $\gamma \colon [0, 1] \to \P_2(\R^d)$ 
between $\mu$ and $\nu$ such that
\begin{equation}     
    \label{def:lambda_convex}
    \F(\gamma(t)) 
    \le
    (1-t) \, \F(\mu) + t \, \F(\nu) 
    - \tfrac{\lambda}{2} \, t (1-t) \,  W_2^2(\mu, \nu), 
    \qquad  t \in [0,1].
\end{equation}
Analogously,
a function $\F\colon \mathcal P_2(\R^d) \to (-\infty,\infty]$ is called
\emph{$\lambda$-convex along generalized geodesics} \cite[Def~9.2.4]{BookAmGiSa05}, 
if for every $\sigma,\mu,\nu \in \dom \F$,
there exists at least one generalized geodesic $\gamma \colon [0,1] \to \P_2(\R^d)$ 
related to some $\zb{\alpha}$ as in \eqref{ggd}
such that
\begin{equation} \label{eq:gg}
  \F(\gamma(t))
  \le 
  (1-t) \, \F(\mu) 
  + t \, \F(\nu) 
  - \tfrac\lambda2 \, t(1-t) \, W_{\zb \alpha}^2(\mu,\nu), 
  \qquad t \in [0,1],
\end{equation}
where 
\begin{equation}
    \label{eq:W2alpha}
W_{\zb{\alpha}}^2 (\mu, \nu)
\coloneqq \int_{\R^d \times \R^d \times \R^d} \|x_2 - x_3\|_2^2 \, \d \zb{\alpha}(x_1,x_2,x_3).
\end{equation}
Further, $\F$ is called \emph{convex} (along generalized geodesics) if it is $0$-convex (along generalized geodesics).
Every function being $\lambda$-convex along generalized geodesics 
is also $\lambda$-convex along geodesics
since generalized geodesics with base $\sigma = \mu$ are actual geodesics.
A $\lambda$-convex function 
$\F \colon \P_2(\R^d) \to (-\infty, +\infty]$ 
is called \emph{coercive},  cf. \cite[(11.2.1b)]{BookAmGiSa05},
if there exists an $r > 0$ such that
\begin{equation}
\label{eq:coercive}
    \inf
    \biggl\{
    \F(\mu)
    : 
    \mu \in \P_2(\R^d)
    ,
    \int_{\R^d} \|x\|_2^2 \, \d \mu(x) \le r
    \biggr\}
    > -\infty.
\end{equation}

\section{Wasserstein Gradient Flows}\label{sec:WGF}
A curve $\gamma\colon I \to \P_2(\R^d)$ 
on the open interval $I \subset \R$
is called \emph{absolutely continuous} 
if there exists a function $m \in L_1(I)$ such that
\begin{equation}\label{eq_basic}
  W_2(\gamma(s), \gamma(t)) \le \int_s^t m(s) \, \d s, \qquad s,t \in I.
\end{equation}
Absolutely continuous curves are characterized by the \emph{continuity equation}
\cite[Thm~8.3.1]{BookAmGiSa05}.
More precisely, a continuous curve
$\gamma$ is absolutely continuous if and only if
there exists a Borel velocity field $v_t\colon \R^d \to \R^d$, 
$t \in I$ 
with $\int_I \| v_t \|_{L_2(\gamma(t),\R^d)} \, \d t < + \infty$
such that
\begin{equation} \label{eq:CE}
  \partial_t \gamma(t) + \nabla \cdot ( v_t \, \gamma(t)) = 0
\end{equation}
holds on $I \times \R^d$ in the distributive sense
\begin{equation}    
    \label{eq:CE_distr}
    \int_I \int_{\R^d} \partial_t \varphi(t, x) + v_t(x) \cdot \nabla_x \, \varphi(t, x) \, \d \gamma(t) \, \d t = 0
\end{equation}
for all smooth functions 
$\varphi\colon I \times \R^d \to \R$ with compact support,
i.e.\ $\varphi \in C_{\mathrm c}^\infty(I \times \R^d)$.
Moreover, there exists a unique velocity field,
henceforth also denoted by $v_t$,
such that
$m(t) \coloneqq \|v_t\|_{L_2(\gamma(t),\R^d)}$ becomes minimal in \eqref{eq_basic}.
Furthermore, 
the minimizing vector field is characterized
by the condition $v_t \in \T_{\gamma(t)} \mathcal P_2(\R^d)$
for almost every $t \in I$,
where $\T_{\mu}\P_2(\R^d)$ with $\mu \in \P_2(\R^d)$ denotes the \emph{regular tangent space}
\begin{align} 
\label{tan_reg}
    \T_{\mu}\mathcal P_2(\R^d)
    &\coloneqq 
      \overline{
      \left\{ 
      \nabla \phi: \phi \in C^\infty_{\mathrm c}(\R^d) 
      \right\}}^{L_2(\mu,\R^d)} \\
    &=
      \overline{
      \left\{ \lambda (T- \Id): (\Id ,T)_{\#} \mu \in \Gamma^{\opt} (\mu , T_{\#} \mu), \; \lambda >0
      \right\} }^{L_2(\mu,\R^d)},
\end{align}
see \cite[§~8]{BookAmGiSa05}.
Note that  $\T_{\mu} \mathcal P_2(\R^d)$ is an
	infinite dimensional subspace of $L_2(\mu,\R^d)$
	if $\mu \in \mathcal P_2^r(\R^d)$, and it is just $\R^d$ if $\mu = \delta_x$, $x \in \R^d$.
	
For a proper and lower semi-continuous (lsc) function $\F \colon \P_2(\R^d) \to (-\infty, \infty]$ 
and $\mu \in \P_2(\R^d)$, 
the \emph{reduced Fréchet subdifferential at $\mu$} 
is defined as the set $\partial \F(\mu)$
consisting of all $\zb \xi \in L_2(\mu, \R^d)$ 
satisfying
\begin{equation}\label{need}
    \F(\nu) - \F(\mu)
    \ge 
    \inf_{\zb \pi \in \Gamma^{\opt}(\mu,\nu)}
    \int_{\R^d \times \R^d}
    \langle \zb \xi(x_1), x_2 - x_1 \rangle
    \, \d \zb \pi (x_1, x_2)
    + o(W_2(\mu,\nu))
\end{equation}
for all $\nu \in \mathcal P_2(\R^d)$ or equivalently
\begin{equation}
    \liminf_{\nu \to \mu}
    \frac
    {\F(\nu) - \F(\mu) 
    - \inf_{\zb \pi \in \Gamma^{\opt}(\mu,\nu)}
    \int_{\R^d \times \R^d}
    \langle \zb \xi(x_1), x_2 - x_1 \rangle
    \, \d \zb \pi (x_1, x_2)}
    {W_2(\mu,\nu)} \ge 0,
\end{equation}
where $\nu$ converges to $\mu$ in $(\P_2(\R^d), W_2)$,
see \cite[(10.3.13)]{BookAmGiSa05}.
On the basis of this subdifferential, 
Wasserstein gradient flows may be defined as follows.

  An absolutely continuous curve  
  $\gamma \colon (0,+\infty) \to \P_2(\R^d)$ 
  with velocity field $v_t \in \T_{\gamma(t)} \mathcal P_2(\R^d)$
  is called a \emph{Wasserstein gradient flow 
  of} 
	$\F\colon \P_2(\R^d) \to (-\infty, +\infty]$
  if 
  \begin{equation}\label{wgf}
      v_t  \in - \partial \F(\gamma(t)), \quad \text{for a.e. } t > 0.
  \end{equation}

The existence of Wasserstein gradient flows is usually shown 
by using the generalized \emph{minimizing moment scheme}
\cite{Gi93, JKO1998}, 
which can be considered as Euler backward scheme for computing the Wasserstein gradient flow
\eqref{wgf}.
It is explained in the following.
For a proper and lsc function
$\F\colon \mathcal P_2(\R^d) \to (-\infty,\infty]$
and fixed $\tau > 0$, 
the \emph{proximal mapping}
$\prox_{\tau \F}$ is defined as the set-valued function
\begin{equation}
    \label{prox}
    \prox_{\tau \F} (\mu) 
    =
    \argmin_{\nu \in \mathcal P_2(\R^d)}
    \left\{ \frac{1}{2\tau} W_2^2(\mu,\nu) + \F(\nu) \right\}, 
    \qquad \mu \in \P_2(\R^d).
\end{equation}
Note that, for every $\mu \in \dom \F$, the existence 
and uniqueness of the minimizer in \eqref{prox} is assured if
$\F\colon \mathcal P_2(X) \to (-\infty,\infty]$ is $\lambda$-convex along generalized geodesics,
where $\lambda > -1/\tau$, 
see \cite[Lem~9.2.7]{BookAmGiSa05}.

Assuming that the proximal mapping is non-empty,
and starting with some $\mu_\tau^0 \coloneqq \mu_0 \in \P_2(\R^d)$,
we consider the piecewise constant curves given by the \emph{minimizing movement scheme} (MMS),  which is  also known as \emph{Jordan--Kinderlehrer--Otto scheme}:
\begin{equation}
    \label{otto_curve}
    \gamma_\tau \big|_{( (n-1)\tau, n \tau]}
    \coloneqq 
    \mu_\tau^n 
    \qquad\text{with}\qquad 
    \mu_\tau^n \in \prox_{\tau\F}(\mu_\tau^{n-1}).
\end{equation}
If $\F$ is $\lambda$-convex along generalized geodesics,
then there exists a $\tau^* >0$ such that 
$\prox_{\tau\F}(\mu)$ becomes single-valued 
for all $\tau < \tau^*$ and $\mu \in \P_2(\R^d)$.
Then we can study the limit of the curves $\gamma_\tau$.

\begin{theorem}[{\cite[Thm~11.2.1]{BookAmGiSa05}}]   \label{thm:existence_gflows_ggd}
    Let $\F\colon \P_2(\R^d) \to (-\infty,+\infty]$ be 
    proper, 
    lsc, 
    coercive,
    and $\lambda$-convex along generalized geodesics,
    and let $\mu_0 \in \overline{\dom \F}$.
    Then the curves $\gamma_\tau$ defined via the minimizing movement scheme \eqref{otto_curve} 
    converge for $\tau \to 0$ locally uniformly to 
    a locally Lipschitz curve $\gamma \colon (0,+\infty) \to \P_2(\R^d)$
    which is the unique Wasserstein gradient flow of $\F$ with $\gamma(0+) = \mu_0$.  
\end{theorem}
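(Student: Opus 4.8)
The statement is quoted verbatim from \cite[Thm~11.2.1]{BookAmGiSa05}, so the plan is to reproduce the classical argument for convergence of the minimizing movement scheme under $\lambda$-convexity along generalized geodesics. First I would fix $\tau$ small enough that $\lambda > -1/\tau$, so that by \cite[Lem~9.2.7]{BookAmGiSa05} each proximal step $\mu_\tau^n \in \prox_{\tau\F}(\mu_\tau^{n-1})$ is uniquely defined and, crucially, the resolvent map is a (strict) contraction-type operator: the $\lambda$-convexity along generalized geodesics yields the discrete contraction estimate $W_2(\prox_{\tau\F}(\mu),\prox_{\tau\F}(\nu)) \le (1+\lambda\tau)^{-1} W_2(\mu,\nu)$ (or $(1-\lambda\tau)$-type bound when $\lambda<0$). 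I would also record the basic a~priori energy estimate obtained by comparing $\mu_\tau^n$ with $\mu_\tau^{n-1}$ in the definition of the prox: this gives $\tfrac{1}{2\tau}\sum_n W_2^2(\mu_\tau^{n-1},\mu_\tau^n) \le \F(\mu_0) - \inf \F$, together with coercivity controlling the second moments and hence guaranteeing that the discrete trajectory stays in a compact (in $W_2$) region; coercivity plus the quadratic growth of $W_2^2$ is exactly what prevents the infimum in the prox from being $-\infty$.

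Next I would derive the discrete ``almost-Hölder'' estimate $W_2(\mu_\tau^m,\mu_\tau^n) \lesssim \sqrt{(\F(\mu_0)-\inf\F)\,|m-n|\tau}$ from the energy estimate via Cauchy--Schwarz, which shows the piecewise-constant curves $\gamma_\tau$ are uniformly $\tfrac12$-Hölder on compact time intervals away from $0$, and in particular equi-continuous. By a refined version using the contraction property (the generalized-geodesic convexity upgrades this to a genuine Lipschitz bound on $(0,\infty)$, with the Lipschitz constant on $[\delta,\infty)$ controlled by $\delta$ and $\lambda$), I would extract, via Arzelà--Ascoli in $(\P_2(\R^d),W_2)$ restricted to the compact moment-bounded set, a locally uniform limit $\gamma$ along a subsequence $\tau_k\to 0$, which is locally Lipschitz. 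The same contraction estimate, applied between two different step sizes $\tau,\tau'$ (a discrete Gronwall / ``Crandall--Liggett''-style comparison), shows that the whole family $\gamma_\tau$ is Cauchy as $\tau\to0$, so the limit is independent of the subsequence and $\gamma(0+)=\mu_0$.

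It then remains to identify $\gamma$ as the Wasserstein gradient flow, i.e.\ that its velocity field satisfies $v_t\in-\partial\F(\gamma(t))$ for a.e.\ $t$. The standard route is to pass to the limit in the discrete Euler--Lagrange (optimality) condition for each prox step: the minimizer $\mu_\tau^n$ satisfies a variational inequality which, written in terms of optimal plans between $\mu_\tau^{n-1}$ and $\mu_\tau^n$, encodes that $\tfrac{1}{\tau}(\text{displacement})$ is an approximate element of $-\partial\F(\mu_\tau^n)$; equivalently one uses the ``de Giorgi variational interpolation'' and the discrete energy-dissipation inequality $\F(\mu_\tau^n) + \tfrac{1}{2\tau}W_2^2(\mu_\tau^{n-1},\mu_\tau^n) + \tfrac{\tau}{2}|\partial\F|^2(\mu_\tau^n) \le \F(\mu_\tau^{n-1})$, summed and passed to the limit using lower semicontinuity of $\F$ and of the metric slope $|\partial\F|$ along the $W_2$-convergent curves. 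One shows in the limit the sharp energy-dissipation equality $\F(\gamma(s)) = \F(\gamma(t)) + \tfrac12\int_s^t (|\gamma'|^2 + |\partial\F|^2(\gamma(r)))\,\d r$, and $\lambda$-convexity guarantees this equality characterizes the gradient flow (curve of maximal slope with $v_t=-\partial\F(\gamma(t))$, the latter being a genuine subgradient by the $\lambda$-convex subdifferential calculus). Uniqueness follows once more from the $\lambda$-contraction: two gradient flows satisfy $W_2(\gamma_1(t),\gamma_2(t)) \le e^{-\lambda t} W_2(\gamma_1(0),\gamma_2(0))$, hence agree if they share an initial datum.

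The main obstacle is the last step: turning the discrete optimality conditions into the limiting subdifferential inclusion rigorously. The delicate points are (i) the lower semicontinuity of the metric slope $|\partial\F|$ along $W_2$-convergent sequences, which is where $\lambda$-convexity along generalized geodesics is genuinely used (it makes the slope a convex-type, lsc functional with the right chain-rule properties), and (ii) controlling the convergence of the optimal plans $\zb\pi_\tau^n\in\Gamma^{\opt}(\mu_\tau^{n-1},\mu_\tau^n)$ so that the discrete variational inequality \eqref{need}-type bound passes to the limit without losing the $o(W_2)$ remainder. All of the metric a~priori estimates (energy bound, Hölder/Lipschitz continuity, Cauchy property in $\tau$) are comparatively routine given coercivity and the contraction estimate; I would cite \cite[Thm~11.2.1]{BookAmGiSa05} for the full technical execution rather than reproduce it.
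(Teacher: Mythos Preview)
The paper does not give its own proof of this theorem: it is stated purely as a citation of \cite[Thm~11.2.1]{BookAmGiSa05} and used as a black box. Your proposal sketches the classical Ambrosio--Gigli--Savar\'e argument and explicitly defers to that reference for the technical execution, which is entirely consistent with (and in fact more detailed than) what the paper does.
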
 

Theorem~\ref{thm:existence_gflows_ggd} gives a pointwise definition of $\gamma$ for all $t \in [0,+\infty)$.
However, we will see that the interaction functional with distance kernel
is not $\lambda$-convex along geodesics.

\section{Geodesic Directions and Geodesic Tangents} \label{sec:gts}
For general $\F$, the velocity field $v_t \in \T_{\gamma(t)} \mathcal P_2(\R^d)$ in \eqref{wgf} 
is only determined for almost every $t >0$, but 
we want to give a definition of so-called steepest descent flows pointwise.
To this end, we recall the notion of the geometric tangent space, see \cite[Chap~4]{Gi04} or \cite[§~12.4]{BookAmGiSa05}, which generalizes tangent vector fields to so-called tangent velocity plans.

Note, any transport plan $\zb \pi\in\Gamma(\mu,\nu)$
is associated to a \emph{velocity plan} $\zb v \in \P_2(\R^d \times \R^d)$ by the relation
\begin{equation}
\label{plan-velocity}
  \zb v = (\pi_1, \pi_2 - \pi_1 )_{\#} \zb \pi,\quad\text{or equivalently} \quad \zb \pi = (\pi_1, \pi_1+\pi_2 )_{\#} \zb v.
\end{equation}
The set of 
all velocity plans at $\mu \in \P_2(\R^d)$ is defined by
\begin{equation}
    \label{eq:ms-vel-plan}
    \zb V(\mu)
    \coloneqq
    \{ \zb v \in \P_2(\R^d \times \R^d)
    :
    (\pi_1)_\# \zb v = \mu \}.
\end{equation}
We equip $\zb V(\mu)$ with the metric $W_\mu$ defined by
\begin{equation}
  W_{\mu}^2(\zb v, \zb w) 
  \coloneqq 
  \inf_{\zb \alpha \in \Gamma_{\mu}(\zb v, \zb w)} W_{\zb \alpha}^2((\pi_2)_{\#}\zb v, (\pi_2)_{\#} \zb w),
\end{equation}
where
\begin{equation}
    \Gamma_{\mu}(\zb v, \zb w)
    \coloneqq
    \{ \zb \alpha \in \P_2(\R^d \times \R^d \times \R^d) \;:\;
    (\pi_{1,2})_{\#}\zb \alpha = \zb v,\; (\pi_{1,3})_{\#}\zb \alpha = \zb w\}.
\end{equation}
Then, it was proven in \cite[Thm~4.5]{Gi04} that $(\zb V(\mu),W_\mu)$ is a complete metric space.
For a velocity plan $\zb v\in\zb V(\mu)$ and corresponding transport plan $\zb \pi=(\pi_1,\pi_1+\pi_2)_\#\zb v$, the curve $\gamma_{\zb v}\colon[0,\infty)\to\mathcal P_2(\R^d)$ determined by
\begin{equation} 
    \label{eq:exp-plan}
    \gamma_{\zb v}(t) 
    \coloneqq
    (\pi_1 + t \pi_2)_{\#} \zb v, \quad  t \geq 0.
\end{equation}
is equal to the interpolation 
\begin{equation}\label{eq:gamma_v_interpolation}
\gamma_{\zb v}(t)=((1-t)\pi_1+t\pi_2)_\#\zb \pi,\quad\text{for }t\in[0,1].
\end{equation}
In the case that the velocity plan $\zb v$ corresponds to an optimal transport plan $\zb \pi\in\Gamma^\opt(\mu,\nu)$, we obtain by Proposition~\ref{prop:geo} that $\gamma_{\zb v}$ is a geodesic on $[0,1]$.

In the following, we aim to characterize, for arbitrary $\epsilon > 0$, a geodesic $\gamma\colon[0,\epsilon]\to\mathcal P_2(\R^d)$ by velocity plans.
Therefore, we define the scaling of a velocity plan $\zb v \in \zb V(\mu)$ 
by a factor $c \in \R$ as
\begin{equation}
    c \cdot \zb v \coloneqq (\pi_1, c \, \pi_2)_{\#}\zb v. 
\end{equation}
Then, by definition the curve $\gamma_{c\cdot \zb v}$ fulfills
\begin{equation}\label{eq:scaling}
\gamma_{c\cdot \zb v}(t)=(\pi_1 + t \pi_2)_{\#} (c\cdot\zb v)=(\pi_1 + t \pi_2)_{\#} (\pi_1,c\pi_2)_\#\zb v=(\pi_1 + c t \pi_2)_{\#}\zb v=\gamma_{\zb v}(ct),
\end{equation}
i.e., $\gamma_{c\cdot\zb v}$ is the curve $\gamma_{\zb v}$ scaled by the factor $c$. For $c=0$ we obtain that 
\begin{equation*}
  \zb 0_\mu\coloneqq 0\cdot \zb v=\mu\otimes \delta_0.
\end{equation*}
Using this scaling and \eqref{eq:gamma_v_interpolation}, we obtain that a geodesic $\gamma\colon [0,\epsilon]\to\mathcal P_2(\R^d)$ related to $\zb \pi \in \Gamma^{\opt}(\mu,\nu)$ 
by \eqref{eq:geodesic_plan}
belongs to the velocity plan
\begin{equation}
  \zb v = \tfrac{1}{\epsilon}\cdot \bigl((\pi_1, \pi_2 - \pi_1 )_{\#} \zb \pi\bigr) = \bigl(\pi_1, \tfrac{1}{\epsilon} (\pi_2 - \pi_1) \bigr)_{\#} \zb \pi.
\end{equation}
in the sense that $\gamma_{\zb v}\equiv\gamma$ on $[0,\epsilon]$.
The main advantage of this characterization is that
two geodesics $\gamma_1\colon [0, \epsilon_1] \to \P_2(\R^d)$ 
and $\gamma_2\colon [0, \epsilon_2] \to \P_2(\R^d)$
with $\gamma_1|_{[0,\epsilon]} \equiv \gamma_2|_{[0,\epsilon]}$ 
for some $\epsilon \le \min\{\epsilon_1, \epsilon_2\}$
correspond to the same velocity plan $\zb v$.
Hence,
$\zb v$ may be interpreted as \emph{geodesic direction}.
We denote the subset $\zb V(\mu)$ consisting of  all geodesic directions at $\mu \in \P_2(\R^d)$
by
{\small 
\begin{align}   \label{def:geomTS}  
  \zb G(\mu) 
  &\coloneqq  
  \bigl\{ 
    \zb v \in \zb V(\mu):  \exists \epsilon >0
    \text{ such that } \zb \pi  = (\pi_1, \pi_1 + \tfrac1\epsilon \pi_2)_{\#} 
    \zb v \in \Gamma^{\opt}(\mu, (\pi_2) _{\#} \zb \pi ) 
  \bigr\}\\
  &=  \bigl\{ 
    \zb v \in \zb V(\mu):  \exists \epsilon >0
    \text{ such that } \gamma_{\zb v} \text{ is a geodesic on }[0,\epsilon] 
  \bigr\}.
\end{align}
}%
Then the \emph{geometric tangent space} at $\mu \in \P_2(\R^d)$ is given by \begin{equation}
  \zb \T_{\mu}\P_2(\R^d) \coloneqq \overline{ \zb G(\mu)}^{W_\mu}.
\end{equation} 
Here are some special cases.

\begin{proposition}\label{the:geomTSac}
\begin{enumerate}[\upshape(i)]
\item
  For $\mu \in \P^{r}_2(\R^d)$, it holds
  \begin{equation*}
    \zb \T_{\mu}\P_2(\R^d) =  \{\zb v = (\mathrm{Id}, v)_{\#}\mu:v \in \T_{\mu}\P_2(\R^d)\}
  \end{equation*}
  and
  $W_\mu^2(\zb v, \zb 0_\mu) =  \| v \|_{L_2(\mu,\R^d)}^2$.
\item
  For $\mu = \delta_p$, $p \in \R^d$, it holds
  \begin{equation*}
    \zb \T_{\delta_p} \P_2(\R^d) =  \{\zb v = \delta_p \otimes \eta \in \P_2(\R^d)\}
  \end{equation*}
  and
  $W_{\delta_p}^2(\zb v, \zb 0_{\delta_p}) = 
  \int_{\R^d} \|x\|_2^2 \, \d \eta(x)
  $.
\end{enumerate}		
\end{proposition}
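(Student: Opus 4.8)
The plan is to determine the set $\zb G(\mu)$ of geodesic directions explicitly in each case and then take its $W_\mu$-closure; the formula for $W_\mu^2(\cdot,\zb 0_\mu)$ will fall out of the same bookkeeping. For $\mu\in\P_2^r(\R^d)$ I would first show $\zb G(\mu)=\{(\Id,w)_\#\mu:w\in C\}$, where $C\coloneqq\{\lambda(S-\Id):(\Id,S)_\#\mu\in\Gamma^\opt(\mu,S_\#\mu),\ \lambda>0\}$ is the set whose $L^2(\mu,\R^d)$-closure is $\T_\mu\P_2(\R^d)$ by \eqref{tan_reg}. One inclusion uses Theorem~\ref{thm:map-plan}: if $\zb v\in\zb G(\mu)$ then $\zb\pi=(\pi_1,\pi_1+\tfrac1\epsilon\pi_2)_\#\zb v$ is an optimal plan with first marginal $\mu$, hence of the form $(\Id,T)_\#\mu$, so that $\zb v=(\Id,\epsilon(T-\Id))_\#\mu$ with $\epsilon(T-\Id)\in C$. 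For the reverse inclusion, given $w=\lambda(T-\Id)\in C$ the curve $\gamma_{(\Id,w)_\#\mu}(t)=\bigl((1-\lambda t)\Id+\lambda t\,T\bigr)_\#\mu$ is a geodesic on $[0,1/\lambda]$ by Proposition~\ref{prop:geo}, so $(\Id,w)_\#\mu\in\zb G(\mu)$.

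The key step is then to transfer the closure through the assignment $v\mapsto(\Id,v)_\#\mu$, which I would show is an isometry from $(L^2(\mu,\R^d),\|\cdot\|_{L^2(\mu,\R^d)})$ onto its image in $(\zb V(\mu),W_\mu)$. Indeed, for $v,w\in L^2(\mu,\R^d)$ any competitor $\zb\alpha\in\Gamma_\mu((\Id,v)_\#\mu,(\Id,w)_\#\mu)$ has first marginal $\mu$ and disintegrates over the first coordinate into kernels whose two marginals are forced to be $\delta_{v(x)}$ and $\delta_{w(x)}$; since a probability measure on $\R^d\times\R^d$ with both marginals Dirac is Dirac, $\Gamma_\mu((\Id,v)_\#\mu,(\Id,w)_\#\mu)$ reduces to the single plan $(\Id,v,w)_\#\mu$, and by \eqref{eq:W2alpha}, $W_\mu^2((\Id,v)_\#\mu,(\Id,w)_\#\mu)=\int_{\R^d}\|v-w\|_2^2\,\d\mu=\|v-w\|_{L^2(\mu,\R^d)}^2$. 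Taking $w\equiv 0$ gives the claimed identity $W_\mu^2(\zb v,\zb 0_\mu)=\|v\|_{L^2(\mu,\R^d)}^2$. Since this isometry sends Cauchy sequences to Cauchy sequences and $(\zb V(\mu),W_\mu)$ is complete by \cite[Thm~4.5]{Gi04}, the image of $L^2(\mu,\R^d)$ is $W_\mu$-closed; combining this with $\zb G(\mu)=\{(\Id,w)_\#\mu:w\in C\}$ and $\overline{C}^{L^2(\mu,\R^d)}=\T_\mu\P_2(\R^d)$ yields $\zb\T_\mu\P_2(\R^d)=\{(\Id,v)_\#\mu:v\in\T_\mu\P_2(\R^d)\}$, which is part (i).

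For $\mu=\delta_p$ I would first note $\zb V(\delta_p)=\{\delta_p\otimes\eta:\eta\in\P_2(\R^d)\}$, since the constraint $(\pi_1)_\#\zb v=\delta_p$ concentrates $\zb v$ on $\{p\}\times\R^d$. The decisive observation is that $\zb G(\delta_p)=\zb V(\delta_p)$: for any $\eta$ the only plan in $\Gamma(\delta_p,\nu)$ is $\delta_p\otimes\nu$, hence it is automatically optimal, and $\gamma_{\delta_p\otimes\eta}(t)=(p+t\,\Id)_\#\eta$ is a geodesic on $[0,1]$ by Proposition~\ref{prop:geo}, so $\delta_p\otimes\eta\in\zb G(\delta_p)$. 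As $(\zb V(\delta_p),W_{\delta_p})$ is complete \cite[Thm~4.5]{Gi04}, it is closed in itself, and therefore $\zb\T_{\delta_p}\P_2(\R^d)=\overline{\zb G(\delta_p)}^{W_{\delta_p}}=\zb V(\delta_p)$. For the norm, as above $\Gamma_{\delta_p}(\delta_p\otimes\eta,\delta_p\otimes\delta_0)$ is the singleton $\{\delta_p\otimes\eta\otimes\delta_0\}$, whence $W_{\delta_p}^2(\zb v,\zb 0_{\delta_p})=\int\|x_2-x_3\|_2^2\,\d(\delta_p\otimes\eta\otimes\delta_0)=\int_{\R^d}\|x\|_2^2\,\d\eta(x)$, which is part (ii).

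I expect the step requiring the most care to be the isometry claim in the absolutely continuous case, that is, pinning down that $\Gamma_\mu((\Id,v)_\#\mu,(\Id,w)_\#\mu)$ contains only the obvious three-plan, so that $W_\mu$ restricted to map-induced velocity plans coincides exactly with the $L^2(\mu,\R^d)$-distance. This identification is precisely what makes the $W_\mu$-closure of $\zb G(\mu)$ tractable and lets us import the known $L^2$-description of the regular tangent space in \eqref{tan_reg} into the geometric setting; the remaining arguments are push-forward bookkeeping and applications of Proposition~\ref{prop:geo}.
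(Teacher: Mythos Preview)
Your argument is correct. For part (ii) it is essentially identical to the paper's proof, which also argues that every velocity plan over $\delta_p$ is a product $\delta_p\otimes\eta$ and computes $W_{\delta_p}^2(\zb v,\zb 0_{\delta_p})$ via the unique three-plan $\delta_p\otimes\eta\otimes\delta_0$.

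For part (i) your route differs from the paper's. The paper simply invokes \cite[Thm~12.4.4]{BookAmGiSa05}, which states that for $\mu\in\P_2^r(\R^d)$ the barycentric projection gives an isometric one-to-one correspondence between $\zb\T_\mu\P_2(\R^d)$ and $\T_\mu\P_2(\R^d)$; this black-boxes the entire identification. You instead give a self-contained argument: first pin down $\zb G(\mu)$ via Brenier's theorem (Theorem~\ref{thm:map-plan}), then show directly that the map $v\mapsto(\Id,v)_\#\mu$ is an isometry from $L^2(\mu,\R^d)$ into $(\zb V(\mu),W_\mu)$ by reducing $\Gamma_\mu$ to a singleton through disintegration, and finally transfer the $L^2$-closure in \eqref{tan_reg} through this isometry. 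Your approach is longer but more transparent: it avoids the barycentric-projection machinery and makes explicit \emph{why} $W_\mu$ collapses to the $L^2$-distance on map-induced velocity plans, which is exactly the identification that the cited theorem packages. The paper's approach is terser and reuses existing theory; yours would be preferable in a setting where one does not want to import the barycentric-projection result.
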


\begin{proof}
    (i) The first part follows from \cite[Thm~12.4.4]{BookAmGiSa05}, where it was shown that
    for $\mu \in \P_2^r(\R^d)$, the so-called barycentric projection is an isometric one-to-one correspondence
between  $\zb \T_{\mu}\P_2(\R^d)$ and $\T_{\mu} \P_2(\R^d)$.

(ii) The second statement on the tangential space follows immediately from the fact that  
any probability measure with $\delta_p$ as one marginal is a product measure. 
Similarly, we obtain from 
$\Gamma_{\delta_p}(\delta_p\otimes\eta,\delta_p\otimes\delta_0)=\{\zb\alpha\}$, 
where $\zb \alpha=\delta_p\otimes\eta\otimes\delta_0$, that
  \begin{align}
    W_{\delta_p}(\delta_p\otimes\mu,\delta_p\otimes\delta_0)
    =
    W_{\zb \alpha}(\eta,\delta_0)
    &=\int_{\R^d\times\R^d\times\R^d}\|x_2-x_3\|_2^2 \, \d \zb \alpha (x_1,x_2,x_3)
    \\
    &=
    \int_{\R^d}\|x\|_2^2 \, \d \eta(x).
  \end{align}
\end{proof}

We define the 
\emph{exponential map} 
$\exp_{\mu}\colon  \zb\T_{\mu}\P_2(\R^d) \to \P_2(\R^d)$
by
\begin{equation}
       \exp_{\mu}(\zb v) \coloneqq \gamma_{\zb v}(1)= (\pi_1 + \pi_2)_{\#} \zb v.
\end{equation}
The \emph{inverse exponential map} 
$\exp_{\mu}^{-1}\colon\P_2(\R^d) \to  \T_{\mu}\P_2(\R^d)$
is given by the (multivalued) function
\begin{equation}
    \exp_{\mu}^{-1}(\nu) 
    \coloneqq 
    \bigl\{ (\pi_1, \pi_2 - \pi_1)_{\#} \zb \pi: 
    \zb \pi \in \Gamma^{\opt}(\mu,\nu) \bigr\}
\end{equation}
 and consists of all velocity plans $\zb v\in\zb V(\mu)$ such that $\gamma_{\zb v}|_{[0,1]}$ is a geodesic connecting $\mu$ and $\nu$.
Note that 
\begin{align}
  &\exp_\mu^{-1}(\nu)
  \\
  &=\{\zb v\in\zb \T_\mu\mathcal P_2(\R^d):\exp_\mu(\zb v)=\nu\}\cap\{\zb v\in \zb G(\mu):\gamma_{\zb v}|_{[0,1]} \text{ is a geodesic}\},
\end{align}
i.e.,
$\exp_\mu^{-1}$ is only the inverse of $\exp_\mu$ restricted to the set $\{\zb v\in \zb G(\mu):\gamma_{\zb v}|_{[0,1]} \text{ is a geodesic}\}$.

For a curve $\gamma\colon I \to \P_2(\R^d)$,
a velocity plan $\zb v_t \in \zb \T_{\gamma(t)}\P_2(\R^d)$ 
is called a \emph{(geometric) tangent vector} of $\gamma$ at $t \in I$ if,
for every $h > 0$ and $\zb v_{t, h} \in \exp_{\gamma(t)}^{-1}(\gamma(t+h))$,
it holds
\begin{equation}  \label{eq:geomTangentVector}
  \lim_{h \to 0+} W_{\gamma(t)}(\zb v_t,  \tfrac1h \cdot \zb v_{t, h}) = 0.
\end{equation}
If a tangent vector $\zb v_t$ exists, then the above limit 
is uniquely determined since $W_{\gamma(t)}$ is a metric on $\zb V(\gamma(t))$, and we write
\begin{equation}
      \dot \gamma(t)\coloneqq \zb v_t.
\end{equation}
In \cite[Thm~4.19]{Gi04}, it is shown that 
\begin{equation}\label{eq:consistency_tangent_vector}
\dot\gamma_{\zb v}(0)=\zb v\quad\text{for all }\zb v\in\zb \T_\mu\P_2(\R^d).
\end{equation}
Therefore, the definition of a tangent vector of a curve is consistent with the interpretation of $\gamma_{\zb v}$ as a curve in direction of $\zb v$.
For $\zb v\in\zb G(\mu)$, we can also compute the tangent vector $\dot \gamma_{\zb v}(t)$ for $t>0$ by the following lemma.

\begin{lemma}
    \label{lem:tan-geo}
    Let  $\zb v \in \zb G(\mu)$ be a velocity plan and $\epsilon>0$ such that $\gamma_{\zb v}$ is a geodesic on $[0,\epsilon]$.
    Then the (geometric) tangent vector of $\gamma_{\zb v}$ is given by
    \begin{equation}
        \dot \gamma_{\zb v} (t) 
        =
        (\pi_1 + t \, \pi_2, \pi_2)_\# \zb v,
        \qquad t \in [0,\epsilon).
    \end{equation}
\end{lemma}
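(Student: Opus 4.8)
The plan is to verify \eqref{eq:geomTangentVector} directly at an arbitrary time $t \in [0,\epsilon)$, i.e., to show that for every sufficiently small $h>0$ and every velocity plan $\zb w_{t,h} \in \exp_{\gamma_{\zb v}(t)}^{-1}(\gamma_{\zb v}(t+h))$ one has $W_{\gamma_{\zb v}(t)}\bigl(\zb v_t, \tfrac1h\cdot \zb w_{t,h}\bigr) \to 0$, where $\zb v_t \coloneqq (\pi_1 + t\,\pi_2, \pi_2)_\#\zb v$ is the claimed tangent. First I would observe that $\zb v_t$ genuinely lies in $\zb \T_{\gamma_{\zb v}(t)}\P_2(\R^d)$: since $\gamma_{\zb v}$ is a geodesic on $[0,\epsilon]$, restricting it to $[t,\epsilon]$ and reparameterizing gives a geodesic starting at $\gamma_{\zb v}(t)$, and by Proposition~\ref{prop:geo} the plan $(\pi_1+t\pi_2,\ \pi_1+t\pi_2 + \tfrac{1}{\epsilon-t}(\pi_2-0))\ldots$ — more carefully, the plan $\bigl(\pi_1+t\pi_2,\ \pi_1+\epsilon\pi_2\bigr)_\#\zb v$ is optimal between $\gamma_{\zb v}(t)$ and $\gamma_{\zb v}(\epsilon)$, and $\zb v_t$ is (a positive rescaling of) the associated velocity plan, so $\zb v_t \in \zb G(\gamma_{\zb v}(t)) \subset \zb \T_{\gamma_{\zb v}(t)}\P_2(\R^d)$.

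Next I would pin down what $\exp_{\gamma_{\zb v}(t)}^{-1}(\gamma_{\zb v}(t+h))$ contains for small $h$. Because $\gamma_{\zb v}$ is a geodesic on $[0,\epsilon]$, for $h < \epsilon - t$ the restriction to $[t,t+h]$ is a minimizing geodesic, so one \emph{natural} element of $\exp_{\gamma_{\zb v}(t)}^{-1}(\gamma_{\zb v}(t+h))$ is the velocity plan $\zb w^\star_{t,h} \coloneqq (\pi_1 + t\pi_2,\ h\,\pi_2)_\#\zb v$, for which $\tfrac1h\cdot \zb w^\star_{t,h} = \zb v_t$ exactly, giving distance $0$. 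The real content is that \eqref{eq:geomTangentVector} quantifies over \emph{all} elements of the (multivalued) inverse exponential, i.e., over all optimal plans from $\gamma_{\zb v}(t)$ to $\gamma_{\zb v}(t+h)$, and one must show that every choice converges to the same $\zb v_t$. Here I would invoke the known stability/uniqueness behaviour of optimal plans along a geodesic: optimal plans between $\gamma_{\zb v}(t)$ and points $\gamma_{\zb v}(t+h)$ on the same geodesic, suitably rescaled by $1/h$, converge as $h\to 0+$; this is essentially the content cited after \eqref{eq:consistency_tangent_vector} from \cite[Thm~4.19]{Gi04} applied to the geodesic $s\mapsto \gamma_{\zb v}(t+s)$, combined with \eqref{eq:consistency_tangent_vector} itself which identifies the limit with $\dot\gamma_{\zb w}(0)$ for the geodesic velocity plan $\zb w = \zb w^\star_{t,h}/h$-type object representing the geodesic through $\gamma_{\zb v}(t)$ in that direction. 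Concretely: the curve $s \mapsto \gamma_{\zb v}(t+s)$ is $\gamma_{\zb w}$ for $\zb w \coloneqq (\pi_1+t\pi_2,\ \pi_2)_\#\zb v = \zb v_t$, which is a geodesic direction, so \eqref{eq:consistency_tangent_vector} gives $\dot\gamma_{\zb v}(t) = \dot\gamma_{\zb v_t}(0) = \zb v_t$.

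The main obstacle I anticipate is the bookkeeping around the multivaluedness of $\exp^{-1}$ and making the reparameterization rigorous: one must check that shifting the base point from $\mu$ to $\gamma_{\zb v}(t)$ interacts correctly with the metric $W_{\gamma_{\zb v}(t)}$ on velocity plans (in particular that the three-plan gluing defining $W_{\gamma_{\zb v}(t)}$ behaves well under $\pi_1 \mapsto \pi_1 + t\pi_2$), and that $\tfrac1h\cdot(\cdot)$ commutes with these operations as in \eqref{eq:scaling}. A clean way to organize this is: (a) show the map $\zb u \mapsto (\pi_1 + t\pi_2, \pi_2)_\#\zb u$ sends the geodesic structure at $\mu$ to the geodesic structure at $\gamma_{\zb v}(t)$ in a $W$-compatible way; (b) reduce the general-$t$ statement to the $t=0$ statement \eqref{eq:consistency_tangent_vector} via this map; (c) conclude. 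The remaining verifications — that $\zb w^\star_{t,h}$ is optimal for $h$ small, and that the rescaled inverse-exponential elements are forced to converge — are then either immediate from the geodesic property or a direct citation of \cite[Thm~4.19]{Gi04}.
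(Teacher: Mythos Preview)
Your proposal is correct and, once you strip away the detours, is exactly the paper's argument: set $\zb v_t \coloneqq (\pi_1+t\pi_2,\pi_2)_\#\zb v$, verify $\gamma_{\zb v_t}(s)=\gamma_{\zb v}(t+s)$ so that $\zb v_t\in\zb G(\gamma_{\zb v}(t))$, and then invoke \eqref{eq:consistency_tangent_vector} to get $\dot\gamma_{\zb v}(t)=\dot\gamma_{\zb v_t}(0)=\zb v_t$. Your worries about the multivaluedness of $\exp^{-1}$ and about $W_{\gamma_{\zb v}(t)}$-compatibility of the shift are unnecessary: the definition \eqref{eq:geomTangentVector} of $\dot\gamma(t)$ depends only on the germ of $\gamma$ at $t$, so the identity $\gamma_{\zb v}(t+\cdot)=\gamma_{\zb v_t}(\cdot)$ immediately gives $\dot\gamma_{\zb v}(t)=\dot\gamma_{\zb v_t}(0)$, and the cited result \eqref{eq:consistency_tangent_vector} already absorbs the quantification over all elements of $\exp^{-1}$.
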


\begin{proof}
Let $t\in[0,\epsilon)$ and define 
$\zb v_t\coloneqq (\pi_1 + t \, \pi_2, \pi_2)_\# \zb v$. 
By definition, we have
  \begin{align}
    \gamma_{\zb v_t}(s)
    &=(\pi_1+s\,\pi_2)_\#\zb v_t=(\pi_1+s\,\pi_2)_\#(\pi_1 + t \, \pi_2, \pi_2)_\# \zb v
    \\
    &=(\pi_1+(t+s)\,\pi_2)_\#\zb v=\gamma_{\zb v}(s+t).
  \end{align}
Since $\gamma_{\zb v}$ is a geodesic on $[0,\epsilon]$ and $t<\epsilon$, this implies that $\gamma_{\zb v_t}$ is a geodesic on $[0,t-\epsilon]$.
In particular, it holds $\zb v_t\in\zb G(\gamma_{\zb v_t}(0))\subset \zb \T_{\gamma_{\zb v_t}(0)}\P_2(\R^d)$.
Consequently, \eqref{eq:consistency_tangent_vector} implies $
\dot \gamma_{\zb v_t}(0)=\zb v_t
$.
On the other hand, it follows from
$\gamma_{\zb v}(t+s)=\gamma_{\zb v_t}(s)$ and the definition \eqref{eq:geomTangentVector} of the tangent vector that 
$\dot \gamma_{\zb v}(t)=\dot \gamma_{\zb v_t}(0)$ which yields the assertion 
$\dot \gamma_{\zb v}(t) = \zb v_t$.
\end{proof}

For reparameterization, we need the following chain rule of differentiation which proof is given in Appendix~\ref{proof:chain_rule}.

\begin{lemma}
  \label{lem:chain_rule}
  Let $\gamma : [0, T)\to\mathcal \P_2(\R^d)$, $T>0$ and $f \colon J\to [0,T)$ be differentiable and monotone increasing. 
  If the tangent vector of $\gamma$ at $f(t)$, $t \in J$, exists,
  then it holds
  \begin{equation*}
    \dot \nu (t) =\dot f(t)\cdot \dot \gamma(f(t)),\quad \nu(t)\coloneqq \gamma(f(t)).
  \end{equation*}
\end{lemma}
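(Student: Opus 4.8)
The plan is to verify the defining condition \eqref{eq:geomTangentVector} of the geometric tangent vector directly for the candidate $\dot f(t)\cdot\dot\gamma(f(t))$. Write $s\coloneqq f(t)$ and $c\coloneqq\dot f(t)\ge 0$, and for $k>0$ set $h(k)\coloneqq f(t+k)-f(t)$; since $f$ is monotone increasing, $h$ is non-negative and non-decreasing, and $h(k)\to 0^+$ as $k\to 0^+$. First I would record that $c\cdot\dot\gamma(s)\in\zb\T_{\gamma(s)}\P_2(\R^d)=\zb\T_{\nu(t)}\P_2(\R^d)$: by \eqref{eq:scaling}, if $\gamma_{\zb v}$ is a geodesic on $[0,\epsilon]$ then $\gamma_{c\cdot\zb v}$ is a geodesic on $[0,\epsilon/c]$ for $c>0$, while $0\cdot\zb v=\zb 0_{\gamma(s)}$ is the direction of the constant (hence trivial) geodesic; thus $\zb G(\gamma(s))$ is invariant under scaling by $c\ge 0$, and since scaling by a fixed non-negative constant is $W_{\gamma(s)}$-Lipschitz (see the next paragraph), it preserves the closure $\zb\T_{\gamma(s)}\P_2(\R^d)$. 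Because $\nu(t)=\gamma(s)$ and $\nu(t+k)=\gamma(s+h(k))$, a velocity plan $\zb w\in\exp_{\nu(t)}^{-1}(\nu(t+k))$ is exactly a $\zb w\in\exp_{\gamma(s)}^{-1}(\gamma(s+h(k)))$, so the goal reduces to showing
\[
  W_{\gamma(s)}\bigl(c\cdot\dot\gamma(s),\ \tfrac1k\cdot\zb w_{t,k}\bigr)\longrightarrow 0\qquad(k\to 0^+)
\]
for every choice $\zb w_{t,k}\in\exp_{\gamma(s)}^{-1}(\gamma(s+h(k)))$; since $W_{\nu(t)}$ is a metric, this identifies $c\cdot\dot\gamma(s)$ as the (unique) tangent vector $\dot\nu(t)$. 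If $f$ is constant on a right neighbourhood of $t$ then $c=0$, $h(k)=0$ and $\zb w_{t,k}=\zb 0_{\nu(t)}$ for all small $k$, and the claim is trivial; hence I may assume $h(k)>0$ for all sufficiently small $k>0$.

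Next I would prove two elementary estimates for the metric $W_\mu$ on $\zb V(\mu)$, both by writing down explicit three-plans. For $\lambda>0$ and $\zb v,\zb w\in\zb V(\mu)$, pushing any $\zb\alpha\in\Gamma_\mu(\zb v,\zb w)$ forward under $(x_1,x_2,x_3)\mapsto(x_1,\lambda x_2,\lambda x_3)$ yields an element of $\Gamma_\mu(\lambda\cdot\zb v,\lambda\cdot\zb w)$, giving $W_\mu(\lambda\cdot\zb v,\lambda\cdot\zb w)\le\lambda\,W_\mu(\zb v,\zb w)$. Moreover $\Gamma_\mu(\zb v,\zb 0_\mu)$ consists of the single plan obtained from $\zb v$ by appending a vanishing third coordinate, whence $W_\mu^2(\zb v,\zb 0_\mu)=\int_{\R^d\times\R^d}\|x_2\|_2^2\,\d\zb v(x_1,x_2)<\infty$ (finite since $\zb v$ has finite second moment); and pushing $\zb v$ forward under $(x_1,x_2)\mapsto(x_1,\lambda x_2,\lambda'x_2)$ gives $W_\mu(\lambda\cdot\zb v,\lambda'\cdot\zb v)\le|\lambda-\lambda'|\,W_\mu(\zb v,\zb 0_\mu)$. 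Combining these with the triangle inequality shows that scalar multiplication is jointly continuous on $[0,\infty)\times\zb V(\mu)$: if $\lambda_k\to c$ and $\zb a_k\to\zb a$ in $(\zb V(\mu),W_\mu)$, then $\lambda_k\cdot\zb a_k\to c\cdot\zb a$.

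Then I would perform the reparameterization. Set $\lambda_k\coloneqq h(k)/k=\tfrac{f(t+k)-f(t)}{k}$, so $\lambda_k\to\dot f(t)=c$, and $\zb a_k\coloneqq\tfrac{1}{h(k)}\cdot\zb w_{t,k}$; since scaling is multiplicative, $\tfrac1k\cdot\zb w_{t,k}=\lambda_k\cdot\zb a_k$. Because $\zb w_{t,k}\in\exp_{\gamma(s)}^{-1}(\gamma(s+h(k)))$ and $h(k)\to 0^+$, the existence of the tangent vector $\dot\gamma(s)$ — that is, the defining relation $W_{\gamma(s)}(\dot\gamma(s),\tfrac1h\cdot\zb v_{s,h})\to 0$ as $h\to 0^+$ for admissible choices $\zb v_{s,h}\in\exp_{\gamma(s)}^{-1}(\gamma(s+h))$ — applied along the step sizes $h(k)$ with the admissible choice $\zb v_{s,h(k)}=\zb w_{t,k}$, forces $W_{\gamma(s)}(\dot\gamma(s),\zb a_k)\to 0$. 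By the joint continuity of scalar multiplication from the previous step, $\tfrac1k\cdot\zb w_{t,k}=\lambda_k\cdot\zb a_k\to c\cdot\dot\gamma(s)$ in $W_{\gamma(s)}$, which is the desired limit, and Lemma~\ref{lem:chain_rule} follows.

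The step I expect to be the main obstacle is the bookkeeping around the reparameterization rate: the hypothesis must be invoked with the step sizes $h(k)=f(t+k)-f(t)$ rather than $k$, and one has to guarantee that these are eventually strictly positive so that $\tfrac1{h(k)}\cdot\zb w_{t,k}$ is defined — this is precisely where monotonicity of $f$ and the dichotomy ``$f$ locally constant at $t$'' versus ``$h(k)>0$ for small $k$'' enters. The supporting coupling estimates and the continuity of scaling are routine but indispensable, since they are exactly what allows the passage from $\tfrac{1}{h(k)}\cdot\zb w_{t,k}\to\dot\gamma(s)$ to $\tfrac1k\cdot\zb w_{t,k}=\lambda_k\cdot\bigl(\tfrac{1}{h(k)}\cdot\zb w_{t,k}\bigr)\to c\cdot\dot\gamma(s)$.
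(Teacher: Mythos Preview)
Your proposal is correct and follows essentially the same route as the paper: both reduce to the defining limit for $\dot\gamma(s)$ via the reparameterized step $h(k)=f(t+k)-f(t)$, then control the scaling discrepancy by the triangle inequality and homogeneity of $W_\mu$. The only difference is packaging: you abstract the two coupling estimates into a clean ``scalar multiplication is jointly continuous on $[0,\infty)\times\zb V(\mu)$'' statement and run a single unified argument, whereas the paper splits into the cases $\dot f(t)>0$ and $\dot f(t)=0$ and, in the former, computes the scaling term explicitly via the identity $W_\mu^2(\zb v,\zb w)=\|\zb v\|_\mu^2+\|\zb w\|_\mu^2-2\langle\zb v,\zb w\rangle_\mu$; your formulation is slightly more streamlined but mathematically equivalent.
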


\section{Wasserstein Steepest Descent Flows}\label{sec:frech-diff}
In this section, we provide an alternative view on Wasserstein gradient flows  \eqref{wgf} 
based on the geometric interpretation that at any point $t\ge 0$ the tangent vector $\dot \gamma(t)$ 
points into an appropriately defined direction of steepest descent. 
Our approach allows the use of Euler forward schemes which are
often easier to implement in comparison to MMSs, which are based on the Euler backward scheme. In particular, the computation of particle gradient flows by simple gradient descent methods, can be seen as space and time discretization of the Euler forward scheme, see Section \ref{sec:R}.
We like to mention that measure differential equations with a different
definition of the ``solution'' inclusive Euler forward schemes were considered, e.g. in \cite{Pi19}.

For $\F\colon \P_2(\R^d) \to (-\infty,+\infty]$, we define the direction of steepest descent
using the following two notations of directional derivatives, where
the naming is adopted from \cite[§~1.2]{BookDeRu00}.
First, 
we consider the derivative along the curves $\gamma_{\zb v}$, where $\zb v$ belongs to the (geometric) tangent space.
More precisely, the \emph{Dini derivative}
of $\F\colon \P_2(\R^d) \to (-\infty,+\infty]$ at $\mu \in \dom \F$
in direction $\zb v \in \zb \T_{\mu}(\P_2(\R^d))$
is defined (if it exists) by
\begin{equation}        
\label{eq:dirDF}
  \D_{\zb v} \F(\mu) 
  \coloneqq 
  \lim_{t \to 0+} 
  \frac{\F(\gamma_{\zb v}(t)) - \F(\mu)}{t} 
  = 
  \frac{\d }{\d t} \,
  \F \circ \gamma_{\zb v}(t) \Big|_{t=0+}.
\end{equation}
Unfortunately, already in Euclidean spaces the derivative of a function along a curve $\gamma$ at $t$
does not necessarily coincide with the Dini derivative in direction of the tangent of $\gamma$ at $t$.
Therefore, we will need a more technical definition.
The \emph{lower/upper Hadamard derivative} of 
$\F\colon\P_2(\R^d) \to (-\infty,+\infty]$ 
at $\mu \in \dom \F$
in direction $\zb v \in \zb \T_{\mu}\P_2(\R^d)$
is defined by
{\small
\begin{align}  \label{eq:dirHF+-}
  \HD_{\zb v}^{-} \F(\mu)
  \coloneqq
  \smashoperator{\liminf_{\substack{\zb w \to \zb v, \, t \to 0+,\\ 
  \gamma_{\zb w} |_{[0,t]} \, \text{is geodesic}}}} 
  \frac{\F(\gamma_{\zb w}(t)) - \F(\mu)}{t}, 
  \quad
  \HD_{\zb v}^{+} \F(\mu) 
  \coloneqq
  \smashoperator{\limsup_{\substack{\zb w \to \zb v, \, t \to 0+,\\ 
  \gamma_{\zb w} |_{[0,t]} \, \text{is geodesic}}}}
  \frac{\F(\gamma_{\zb w}(t)) - \F(\mu)}{t}
\end{align}
}%
and the \emph{Hadamard derivative} (if the upper and lower limit coincide) by 
\begin{equation}
  \label{eq:dirHF}
  \HD_{\zb v} \F(\mu)
  \coloneqq 
  \lim_{\substack{\zb w \to \zb v, \, t \to 0+,\\ 
  \gamma_{\zb w} |_{[0,t]} \, \text{is geodesic}}} 
  \frac{\F(\gamma_{\zb w}(t)) - \F(\mu)}{t},
\end{equation}
where the convergence $\zb w \to \zb v$ is with respect to $W_\mu$.
The functional $\F$ is called \emph{Dini} or \emph{Hadamard differentiable} at $\mu$ 
if its Dini or Hadamard derivative exists 
for all directions $\zb v \in \zb \T_{\mu}\P_2(\R^d)$.
Note that all these directional derivatives are positively homogeneous (of degree 1) in $\zb v$.
If $\F$ is Hadamard differentiable, 
then it is also Dini differentiable and Hadamard and Dini derivative coincide. 
For locally Lipschitz continuous functions we have also the opposite direction.
Recall that
a function $\F \colon \P_2(\R^d) \to (-\infty,+\infty]$ is called
\emph{locally Lipschitz continuous at $\mu \in \P_2(\R^d)$},
if there exist $L,r>0$ such that
\begin{equation}
    |\F(\nu_1) - \F(\nu_2)| \le L W_2(\nu_1, \nu_2), \qquad
    \nu_1,\nu_2 \in B_r(\mu) 
\end{equation}
for all $\nu_1,\nu_2 \in B_r(\mu) \coloneqq  \{ \nu \in \P_2(\R^d): W_2(\nu, \mu) < r \}  \subset \dom \F$ and locally Lipschitz continuous, if this holds true for all $\mu \in \dom \F$.
Note that if $\F\colon \P_2(\R^d) \to (-\infty, +\infty]$ is locally Lipschitz, then it is also coercive since
\begin{equation}
  \F(\delta_0)-\F(\mu)\leq LW_2(\mu,\delta_0),
\end{equation}
i.e.
\begin{equation}
  F(\mu)\geq\F(\delta_0)-LW_2(\mu,\delta_0)\geq\F(\delta_0)-L\sqrt{r}
\end{equation}
for all $\mu$ with $W_2^2(\mu,\delta_0) = \int_{\R^d} \|x\|_2^2 \, \d \mu(x) \leq r$. 
Then it is not hard to show the following proposition. The proof is outlined in Appendix~\ref{proof:dini-hard}.

\begin{proposition}  \label{prop:dini-hard}
  Let $\F \colon \P_2(\R^d) \to \R$ be locally Lipschitz continuous around $\mu \in \P_2(\R^d)$.
  If $\D_{\zb v} \F(\mu)$ exists for $\zb v \in \zb \T_{\mu}\P_2(\R^d)$, 
  then $\D_{\zb v} \F(\mu) = \HD_{\zb v} \F(\mu)$.
\end{proposition}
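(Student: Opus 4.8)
The plan is to show that the Hadamard difference quotient, which a priori ranges over all velocity plans $\zb w$ close to $\zb v$ with $\gamma_{\zb w}|_{[0,t]}$ geodesic, can be compared — using local Lipschitz continuity — with the Dini difference quotient along the single curve $\gamma_{\zb v}$. The key quantitative ingredient is that if $\gamma_{\zb w}|_{[0,t]}$ is a geodesic, then by \eqref{eq:geodesic} its speed equals $W_\mu(\zb w,\zb 0_\mu)$, hence
\[
  W_2(\gamma_{\zb w}(t),\gamma_{\zb v}(t)) \le W_2(\gamma_{\zb w}(t),\mu) + W_2(\mu,\gamma_{\zb v}(t)) = t\bigl(W_\mu(\zb w,\zb 0_\mu) + W_\mu(\zb v,\zb 0_\mu)\bigr),
\]
but this crude bound is not small relative to $t$; instead I would use that $W_\mu$ dominates a ``Wasserstein-type'' distance between the endpoints, namely $W_2(\gamma_{\zb w}(t),\gamma_{\zb v}(t)) \le t\, W_\mu(\zb w,\zb v)$. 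This last inequality follows because a coupling $\zb\alpha\in\Gamma_\mu(\zb w,\zb v)$ realizing (nearly) $W_\mu(\zb w,\zb v)$ induces, via $\bigl(\pi_1 + t\pi_2,\ \pi_1 + t\pi_3\bigr)_\#\zb\alpha$, a transport plan between $\gamma_{\zb w}(t)$ and $\gamma_{\zb v}(t)$ whose cost is $t^2\,W_{\zb\alpha}^2\bigl((\pi_2)_\#\zb w,(\pi_2)_\#\zb v\bigr)$.

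With that in hand, the argument is: fix $\zb v$, assume $\D_{\zb v}\F(\mu)$ exists, and let $L,r$ be the local Lipschitz data at $\mu$. For $\zb w\to\zb v$ (in $W_\mu$) and $t\to 0+$ with $\gamma_{\zb w}|_{[0,t]}$ geodesic, write
\[
  \frac{\F(\gamma_{\zb w}(t))-\F(\mu)}{t} = \frac{\F(\gamma_{\zb v}(t))-\F(\mu)}{t} + \frac{\F(\gamma_{\zb w}(t))-\F(\gamma_{\zb v}(t))}{t},
\]
where eventually both $\gamma_{\zb w}(t)$ and $\gamma_{\zb v}(t)$ lie in $B_r(\mu)$ (since $W_2(\gamma_{\zb v}(t),\mu)\le t\,W_\mu(\zb v,\zb 0_\mu)\to 0$ and similarly for $\zb w$, using that $W_\mu(\zb w,\zb 0_\mu)$ stays bounded). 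The second summand is then bounded in absolute value by $L\,W_2(\gamma_{\zb w}(t),\gamma_{\zb v}(t))/t \le L\,W_\mu(\zb w,\zb v) \to 0$, while the first summand tends to $\D_{\zb v}\F(\mu)$ by definition of the Dini derivative. Hence $\HD_{\zb v}^{-}\F(\mu) = \HD_{\zb v}^{+}\F(\mu) = \D_{\zb v}\F(\mu)$, i.e.\ $\HD_{\zb v}\F(\mu)$ exists and equals $\D_{\zb v}\F(\mu)$.

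The main obstacle is establishing the endpoint estimate $W_2(\gamma_{\zb w}(t),\gamma_{\zb v}(t)) \le t\,W_\mu(\zb w,\zb v)$ cleanly: one must verify that $\bigl(\pi_1 + t\pi_2,\ \pi_1 + t\pi_3\bigr)_\#\zb\alpha$ is indeed an admissible coupling of $\gamma_{\zb w}(t)$ and $\gamma_{\zb v}(t)$ (its marginals are $(\pi_1+t\pi_2)_\#\zb w$ and $(\pi_1+t\pi_3)_\#\zb v$, which are the two curves at time $t$ because $\gamma_{\zb v}(t) = (\pi_1+t\pi_2)_\#\zb v$ and the three-plan $\zb\alpha$ has the prescribed marginals $\zb w$, $\zb v$ on the pairs $(1,2)$, $(1,3)$), compute its transport cost as $\int\|t x_2 - t x_3\|^2\,\d\zb\alpha = t^2 W_{\zb\alpha}^2(\cdot,\cdot)$, and pass to the infimum over $\zb\alpha\in\Gamma_\mu(\zb w,\zb v)$. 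A minor subtlety is that the infimum defining $W_\mu$ need not be attained, so I would argue with $\varepsilon$-optimal three-plans and let $\varepsilon\to 0$; this does not affect the conclusion. Everything else is a routine triangle-inequality-plus-$\liminf$/$\limsup$ bookkeeping.
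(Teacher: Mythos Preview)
Your proposal is correct and follows essentially the same route as the paper's proof. The paper's argument is organized slightly differently---it first records the key endpoint estimate $W_2(\gamma_{\zb v}(1),\gamma_{\zb w}(1)) \le W_\mu(\zb v,\zb w)$ as a standalone lemma (citing Gigli), then combines it with the scaling identity $\gamma_{\zb v}(t)=\gamma_{t\cdot\zb v}(1)$ and the homogeneity $W_\mu(t\cdot\zb v,t\cdot\zb w)=t\,W_\mu(\zb v,\zb w)$ to obtain exactly your inequality $W_2(\gamma_{\zb w}(t),\gamma_{\zb v}(t))\le t\,W_\mu(\zb w,\zb v)$---but the substance is identical: split the Hadamard quotient, control the cross-term by local Lipschitz continuity times this endpoint bound, and conclude that both $\HD^{\pm}_{\zb v}\F(\mu)$ coincide with $\D_{\zb v}\F(\mu)$. (Incidentally, the infimum defining $W_\mu$ is in fact attained, so the $\varepsilon$-optimal-plan workaround you mention is not needed, though it does no harm.)
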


For $\F \colon \P_2(\R^d) \to (-\infty+\infty]$,
the \emph{set of directions of steepest descent} at $\mu \in \dom \F$ is defined by
\begin{align}
\label{def:HD-}
  \HD_{-} \F(\mu) 
  & \coloneqq 
    \Big\{
    \left( \HD_{\zb v}^- \F(\mu)\right)^- \cdot \zb v :
    \zb v \in \argmin_{\substack{\zb w \in \zb \T_{\mu}\P_2(\R^d),\\ W_\mu(\zb w,\zb 0_\mu) = 1}} \HD_{\zb w}^- \F(\mu) \Big\},
\end{align}
if $\HD_{\zb v}^- \F(\mu)$ exists in $(-\infty,\infty]$ for all $\zb v \in \zb \T_{\mu}\P_2(\R^d)$,
where $(t)^- \coloneqq \max\{-t,0\}$ for $t \in \R$.
There may be no minimal direction,
i.e. $\HD_- \F(\mu)$ may be empty. 
On the basis of the introduced directional directions,
we are now interested in curves whose tangent $\dot \gamma(t)$ points into the direction of steepest descent.

\begin{definition} \label{def:wsgf}
A locally absolutely continuous curve
$\gamma \colon [0,+\infty) \to \P_2(\R^d)$ 
is called a \emph{Wasserstein steepest descent flow with respect to} $\F$ 
if $\dot \gamma(t)$ exists and satisfies
\begin{equation}         
  \label{eq:geomWGF}
  \dot \gamma(t) \in \HD_{-}\F(\gamma(t)), \qquad t \in [0,+\infty).
\end{equation}
\end{definition}

It is an open question if every Wasserstein steepest descent flow \eqref{eq:geomWGF} also satisfies the (weaker) Wasserstein gradient flow equation \eqref{wgf}.
Note that steepest descent directions can exist also in cases where the so-called extended Fr\'echet subdifferential related to the Wasserstein gradient flow  is empty.
However, under certain assumptions on $\F$, there exists a unique Wasserstein steepest descent flow and it coincides with the Wasserstein gradient flow of $\F$ in \eqref{wgf} \emph{for all} $t \in [0,+\infty)$.

\begin{proposition}
\label{ggd_case}
Let $\F \colon \P_2(\R^d) \to \R$ be locally Lipschitz continuous 
and $\lambda$-convex along generalized geodesics and  $\mu_0\in\P_2(\R^d)$.
Then, there exists a unique Wasserstein steepest descent flow of $\F$ starting at $\mu_0$. 
Moreover, it coincides with the unique Wasserstein gradient flow 
of $\F$ starting at $\mu_0$ determined by Theorem \ref{thm:existence_gflows_ggd}. 
\end{proposition}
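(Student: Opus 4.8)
The strategy is to show that the Wasserstein gradient flow $\gamma$ produced by Theorem~\ref{thm:existence_gflows_ggd} is in fact a Wasserstein steepest descent flow in the sense of Definition~\ref{def:wsgf}, and conversely that every Wasserstein steepest descent flow must equal $\gamma$. The first half is the substantial one. Under the hypotheses, $\F$ is proper, lsc, coercive (local Lipschitzness implies coercivity, as noted just before Proposition~\ref{prop:dini-hard}), and $\lambda$-convex along generalized geodesics, so Theorem~\ref{thm:existence_gflows_ggd} delivers a unique, locally Lipschitz Wasserstein gradient flow $\gamma$ with $\gamma(0+)=\mu_0$; moreover the standard theory (\cite[Thm~11.1.3, Thm~11.2.1]{BookAmGiSa05}) tells us that for $\lambda$-convex functionals the subdifferential $\partial\F(\gamma(t))$ is nonempty for \emph{every} $t>0$, the velocity field $v_t$ is its element of minimal norm, and $\gamma$ satisfies the evolution variational inequality (EVI). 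I would use the EVI together with local Lipschitz continuity to upgrade the ``a.e.\ $t$'' statement to ``every $t$'': because $t\mapsto\F(\gamma(t))$ is locally Lipschitz (as the composition of the Lipschitz $\F$ with the locally Lipschitz curve) and convex-like along the flow, the right derivative $\frac{\d^+}{\d t}\F(\gamma(t))$ exists for all $t\ge0$ and equals $-\|v_t\|_{L_2(\gamma(t))}^2$ with $v_t$ the minimal-norm subgradient, which is the pointwise slope condition we need.

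Next I would translate this into the Hadamard/Dini language of Section~\ref{sec:frech-diff}. The key identification is that, for a $\lambda$-convex locally Lipschitz $\F$, the Dini derivative $\D_{\zb v}\F(\mu)$ exists for every geodesic direction $\zb v\in\zb G(\mu)$ and is given by the pairing $\inf_{\zb\pi}\int\langle\xi(x_1),x_2-x_1\rangle\,\d\zb\pi$ with the minimal subgradient $\xi$ — this follows from the subdifferential inequality \eqref{need} (giving a lower bound on the difference quotient) combined with $\lambda$-convexity along the geodesic $\gamma_{\zb v}$ (giving the matching upper bound, since $t\mapsto\F(\gamma_{\zb v}(t))$ is then $\lambda$-convex in the ordinary sense, hence its difference quotients decrease to the right derivative at $0$). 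By density of $\zb G(\mu)$ in $\zb\T_\mu\P_2(\R^d)$ and Proposition~\ref{prop:dini-hard}, the Dini and Hadamard derivatives coincide, and $\zb v\mapsto\HD_{\zb v}\F(\mu)$ is Lipschitz on the unit sphere of $\zb\T_\mu\P_2(\R^d)$; the minimization in \eqref{def:HD-} is then a genuine Hilbert-space-type problem whose solution is the normalized negative minimal subgradient $-\xi/\|\xi\|$ (when $\xi\neq0$), with minimal value $-\|\xi\|$, so that $\HD_-\F(\mu)$ is exactly the singleton $\{\|\xi\|\cdot(-\xi/\|\xi\|)\}=\{-\xi\}$ viewed as the velocity plan $(\mathrm{Id},-\xi)_\#\mu$, i.e.\ the minimal-norm element of $-\partial\F(\mu)$. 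Comparing with the description of $v_t$ above gives $\dot\gamma(t)\in\HD_-\F(\gamma(t))$ for all $t\ge0$; here I must also invoke \eqref{eq:consistency_tangent_vector} and Lemma~\ref{lem:tan-geo} to be sure the geometric tangent vector $\dot\gamma(t)$ really is the velocity plan associated with $v_t$, which is true because $\gamma$ is absolutely continuous with velocity field $v_t\in\T_{\gamma(t)}\P_2(\R^d)$ and by Proposition~\ref{the:geomTSac} such fields correspond isometrically to tangent velocity plans.

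For uniqueness, suppose $\tilde\gamma$ is any Wasserstein steepest descent flow starting at $\mu_0$. By Definition~\ref{def:wsgf} it is locally absolutely continuous with a geometric tangent vector $\dot{\tilde\gamma}(t)\in\HD_-\F(\tilde\gamma(t))$ at every $t$; by the computation of $\HD_-\F$ above this forces its velocity field to be $v_t\in-\partial\F(\tilde\gamma(t))$ for a.e.\ $t$ (in fact every $t$), so $\tilde\gamma$ is in particular a Wasserstein gradient flow in the sense of Definition~\ref{def:wgf}. The uniqueness clause of Theorem~\ref{thm:existence_gflows_ggd} then yields $\tilde\gamma=\gamma$ on $(0,\infty)$, and continuity plus $\tilde\gamma(0)=\mu_0=\gamma(0+)$ extends this to $[0,\infty)$. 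The main obstacle I anticipate is the first step: carefully justifying the \emph{pointwise} (every-$t$) coincidence of $\D_{\zb v}\F(\gamma(t))$ with the subgradient pairing and of $\dot\gamma(t)$ with the minimal subgradient, rather than merely almost-everywhere — this is where $\lambda$-convexity along generalized geodesics and the local Lipschitz bound must be combined, and one has to be careful that the directional derivative is taken along geodesics $\gamma_{\zb w}$ with $\zb w$ near $\zb v$ (as in \eqref{eq:dirHF}) and not only along the single flow curve; handling the $\xi=0$ (stationary) case, where $\HD_-\F(\mu)=\{\zb 0_\mu\}$, also needs a separate remark.
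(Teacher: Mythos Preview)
Your overall strategy matches the paper's: show the Wasserstein gradient flow is a steepest descent flow, and conversely that any steepest descent flow satisfies the gradient-flow inclusion and hence coincides with it by uniqueness. The paper carries this out via three lemmas (existence of $\zb h\in\HD_-\F(\mu)$, uniqueness of the steepest descent direction, and the converse inclusion), relying on the \emph{extended} Fr\'echet subdifferential $\zb\partial\F(\mu)\subset\zb V(\mu)$, the metric slope $|\partial\F|$, and the pointwise identity $\lim_{h\to0+}h^{-1}(\F(\gamma(t+h))-\F(\gamma(t)))=-\|\zb v_t\|^2_{\gamma(t)}$ from \cite[Thm~11.2.1]{BookAmGiSa05}.

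There is, however, a genuine gap in your ``key identification''. You claim that for a $\lambda$-convex locally Lipschitz $\F$ the Dini derivative $\D_{\zb v}\F(\mu)$ \emph{equals} the pairing with the minimal subgradient $\xi$, arguing that the subdifferential inequality~\eqref{need} gives a lower bound and $\lambda$-convexity along $\gamma_{\zb v}$ gives the matching upper bound. The second part is wrong: $\lambda$-convexity of $t\mapsto\F(\gamma_{\zb v}(t))$ only ensures that the difference quotients decrease to a limit; it does \emph{not} force that limit to equal $\langle\xi,v\rangle$. Already in $\R$ with $f(x)=|x|$ at $x=0$, the minimal-norm subgradient is $0$ but $\D_v f(0)=|v|\neq 0$. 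In general the directional derivative of a convex function is the \emph{support} function of the subdifferential, not the pairing with its minimal element. Consequently your minimization over $\|\zb w\|_\mu=1$ is not the linear ``Hilbert-space-type'' problem you describe.

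The paper circumvents this by never asserting equality for all directions. Instead it (i)~bounds $\HD^-_{\zb w}\F(\mu)\ge -|\partial\F|(\mu)$ for every unit $\zb w$ directly from the definition of the slope, and (ii)~obtains the reverse inequality $\HD^-_{\zb h}\F(\mu)\le -|\partial\F|(\mu)$ for the \emph{single} direction $\zb h=(-1)\cdot\zb v_t$ by feeding the actual flow curve $h\mapsto\gamma(t+h)$ into the $\liminf$ defining $\HD^-$ and invoking the identity $\frac{\d^+}{\d t}\F(\gamma(t))=-|\partial\F|^2(\gamma(t))$ you already mentioned. Uniqueness of the steepest descent direction then uses the (nonlinear) Cauchy--Schwarz inequality for the pairing $\langle\cdot,\cdot\rangle_\mu$ on velocity plans together with its equality case. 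Note also that the paper works throughout with the extended subdifferential consisting of velocity \emph{plans}, not maps $\xi\in L_2(\mu,\R^d)$; at a general $\mu$ the minimal-norm subgradient and the candidate steepest descent directions are plans, so the reduction to $(\mathrm{Id},-\xi)_\#\mu$ you propose requires justification that the paper does not need.
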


The assumption of the local Lipschitz continuity can be weakened. However, as the exact formulation 
of the weaker assumptions requires some more technical notations, we include the more general version of the proposition as well as the proof in \ref{proof:ggd_case}.

\begin{remark}
   Definition~\ref{def:wsgf} allows the existence of Wasserstein gradient flows $\gamma$ with initial point $\mu_0 = \lim_{t \to 0+} \gamma(t) \not \in \dom \F$ (if it exists) and slope $\lim_{t \to 0+} \| \dot \gamma(t) \|_{\gamma(t)} = \infty$. That means the steepest descent direction at $\mu_0$ may not exist. 
  In Definition~\ref{def:wsgf} we excluded such curves, since we assume the existence of tangent velocity plans $\dot \gamma(t)$ for any $t \ge 0$.
\end{remark}

\section{Discrepancies} \label{sec:discr}
In this paper, we are interested in Wasserstein flows 
of so-called discrepancies defined with respect to kernels.
We restrict our attention to symmetric and \emph{conditionally positive definite kernels} 
$K\colon\R^d \times \R^d \to \R$ of order one, i.e.,
for any $n \in \N$, any pairwise different points $x^{1},\dots,x^{n} \in \R^d$ and any
$a_1, \dots, a_n \in \R$ with $\sum_{i=1}^{n} a_i = 0$
the following relation is satisfied:
    \begin{equation}        \label{eq:CPD}
        \sum_{i,j=1}^n a_i a_j K(x^i, x^j) \ge 0.
    \end{equation}
If \eqref{eq:CPD} is fulfilled for all $a_1, \dots, a_n \in \R$, the kernel 
is just called  \emph{positive definite}. We speak about \emph{(conditionally) strictly positive definiteness}
if we have strict inequality in \eqref{eq:CPD} except for all $a_j$, $j=1,\ldots,n$ being zero.
Examples of  strictly positive definite kernels are
the Gaussian $K(x_1,x_2) \coloneqq \exp(\|x_1-x_2\|/c)$, $c>0$ 
and the inverse multiquadric
$K(x_1,x_2) \coloneqq \left( c^2 + \|x_1-x_2\|^2 \right) ^{-r}$, $c,r>0$.
Strictly conditionally positive definite kernels are the multiquadric
$K(x_1,x_2) \coloneqq - \left( c^2 + \|x_1-x_2\|^2 \right) ^{-r}$, $r \in (0,1)$
and the  Riesz kernels
\begin{equation} \label{eq:riesz}
K(x_1,x_2) \coloneqq - \|x_1-x_2\|^r, \quad r \in (0,2),
\end{equation}
see \cite[p~115]{wendland2005} and
for more information on Riesz kernels \cite{BookRoWe}.

The \emph{$L_2$-discrepancy} $\mathcal D_K^2\colon\P(\R^d) \times \P(\R^d) \to \R$ 
between two measures $\mu, \nu \in \P(\R^d)$ is defined by
    \begin{equation}         \label{eq:DK2}
        \mathcal D_K^2(\mu, \nu) \coloneqq \mathcal E_K (\mu - \nu) \quad 
    \end{equation}
    with the so-called \emph{$K$-energy} on signed measures
    \begin{equation}         \label{eq:EK}
        \mathcal E_K(\sigma) \coloneqq \frac12 \int_{\R^d} \int_{\R^d} K(x_1,x_2) \, \d \sigma(x_1) \d \sigma(x_2), \qquad \sigma \in \mathcal M(\R^d).
    \end{equation}
The relation between discrepancies and Wasserstein distances is discussed in     \cite{NS2023}.
For fixed $\nu\in \P(\R^d)$, the $L_2$-discrepancy is a functional in $\mu$ and
can be decomposed as
\begin{equation}
    \label{eq:dis-decomp}
    \F_\nu(\mu) = \mathcal D_K^2(\mu, \nu) 
    = \mathcal E_K(\mu) + \V_{K, \nu}(\mu) + \underbrace{\mathcal E_K(\nu)}_{\text{const.}}
\end{equation}
with the \emph{interaction energy}  on probability measures
\begin{align} \label{eq:interaction}
     \mathcal E_K(\mu) &= \frac12 \int_{\R^d} \int_{\R^d} K(x_1,x_2) \, \d \mu(x_1) \d \mu(x_2), \quad \mu \in \P_2(\R^d)
\end{align}
and the \emph{potential energy} of $\mu$
with respect to the \emph{potential} of $\nu$,
\begin{align}\label{eq:potential}
    \V_{K, \nu}(\mu) &\coloneqq \int_{\R^d} V_{K, \nu}(x_1) \d \mu(x_1), 
    \quad 
    V_{K, \nu}(x_1) \coloneqq - \int_{\R^d} K(x_1,x_2) \d\nu(x_2).       
\end{align}

By the following proposition, the discrepancy of the Riesz kernel \eqref{eq:riesz} with $r \in [1,2)$
is  locally Lipschitz continuous in each argument for $r \in [1,2)$.
The proof is given in Appendix~\ref{proof:lip-int-pot}.

\begin{proposition}   \label{prop:lip-int-pot}
  For the Riesz kernel \eqref{eq:riesz} with $r \in [1,2)$, 
  the interaction energy $\mathcal E_K$ in \eqref{eq:interaction}
  and 
  the potential energy $\V_{K,\nu}$, $\nu \in \P_2(\R^d)$, in 
  \eqref{eq:potential}
  are locally Lipschitz continuous.
\end{proposition}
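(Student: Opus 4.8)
The plan is to establish local Lipschitz continuity of $\mathcal E_K$ and $\V_{K,\nu}$ separately, and in each case to reduce everything to the elementary estimate that, for $r \in [1,2)$, the function $x \mapsto \|x\|^r$ on $\R^d$ is locally Lipschitz with a modulus that grows at most linearly in $\|x\|$; more precisely $\big|\|x\|^r - \|y\|^r\big| \le r\,(\|x\| + \|y\|)^{r-1}\|x-y\| \le r\,(\|x\| + \|y\|)\,\|x-y\|$ since $r - 1 \in [0,1)$. The key device to pass from this pointwise estimate to a Wasserstein estimate is: given $\mu_1,\mu_2 \in B_\rho(\mu)$ for some reference measure $\mu$ and radius $\rho$, pick an optimal plan $\zb\pi \in \Gamma^{\mathrm{opt}}(\mu_1,\mu_2)$ and use it to couple everything, so that differences of integrals become integrals of differences against $\zb\pi$ (or products of such plans), which are then controlled by $W_2(\mu_1,\mu_2) = \|x_1-x_2\|_{L^2(\zb\pi)}$ via Cauchy–Schwarz.

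First I would handle the potential energy $\V_{K,\nu}(\mu) = \int_{\R^d} V_{K,\nu}(x)\,\d\mu(x)$ with $V_{K,\nu}(x) = \int_{\R^d}\|x-y\|^r\,\d\nu(y)$. For $\mu_1,\mu_2 \in B_\rho(\mu)$ and $\zb\pi \in \Gamma^{\mathrm{opt}}(\mu_1,\mu_2)$,
\[
  \V_{K,\nu}(\mu_1) - \V_{K,\nu}(\mu_2)
  = \int_{\R^d\times\R^d}\!\!\int_{\R^d} \big(\|x_1-y\|^r - \|x_2-y\|^r\big)\,\d\nu(y)\,\d\zb\pi(x_1,x_2),
\]
and the pointwise bound gives $\big|\|x_1-y\|^r - \|x_2-y\|^r\big| \le r\,(\|x_1-y\|+\|x_2-y\|+1)\,\|x_1-x_2\|$ (absorbing the $r-1$ exponent crudely into a linear bound plus constant). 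Integrating in $y$ against $\nu$ and then applying Cauchy–Schwarz in $\zb\pi$ yields $|\V_{K,\nu}(\mu_1)-\V_{K,\nu}(\mu_2)| \le C\,W_2(\mu_1,\mu_2)$, where $C$ depends only on the second moments of $\mu$ (hence on $\rho$) and of $\nu$, all of which are finite. This uses $\nu \in \P_2(\R^d)$ crucially and gives the local Lipschitz constant uniformly on balls.

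For the interaction energy $\mathcal E_K(\mu) = -\tfrac12\iint \|x_1-x_2\|^r\,\d\mu(x_1)\,\d\mu(x_2)$ the argument is the same but one must couple in \emph{both} variables simultaneously: with $\zb\pi \in \Gamma^{\mathrm{opt}}(\mu_1,\mu_2)$, the product plan $\zb\pi\otimes\zb\pi \in \Gamma(\mu_1\otimes\mu_1,\mu_2\otimes\mu_2)$ lets one write
\[
  \mathcal E_K(\mu_1) - \mathcal E_K(\mu_2)
  = -\tfrac12 \int \big(\|x_1 - x_1'\|^r - \|x_2 - x_2'\|^r\big)\,\d(\zb\pi\otimes\zb\pi)(x_1,x_2,x_1',x_2'),
\]
and the pointwise estimate applied to the pair $(x_1-x_1',\, x_2-x_2')$ together with $\|(x_1-x_1') - (x_2-x_2')\| \le \|x_1-x_2\| + \|x_1'-x_2'\|$ reduces this to integrals of $\|x_1-x_2\|$ against first and second moments; Cauchy–Schwarz then produces $|\mathcal E_K(\mu_1)-\mathcal E_K(\mu_2)| \le C\,W_2(\mu_1,\mu_2)$ with $C$ depending only on $\rho$ and the second moment of $\mu$. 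I expect the main (minor) obstacle to be purely bookkeeping: carrying the exponent $r-1$ through the estimate cleanly without losing the right growth, and checking that the constant $C$ can indeed be chosen uniformly over $B_\rho(\mu)$ — for this one notes that $W_2(\mu_i,\mu) < \rho$ bounds the second moments of $\mu_i$ in terms of that of $\mu$, via $\big(\int\|x\|^2\,\d\mu_i\big)^{1/2} \le W_2(\mu_i,\delta_0) \le W_2(\mu_i,\mu) + W_2(\mu,\delta_0) < \rho + W_2(\mu,\delta_0)$. No genuine analytic difficulty arises for $r \in [1,2)$; the restriction $r \ge 1$ is exactly what makes $\|\cdot\|^r$ Lipschitz rather than merely Hölder near the origin.
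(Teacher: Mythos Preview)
Your proposal is correct and follows essentially the same route as the paper: couple $\mu_1,\mu_2$ via an optimal plan (a product of optimal plans for $\mathcal E_K$), apply the pointwise Lipschitz estimate for $\|\cdot\|^r$ with $r\in[1,2)$, and close with Cauchy--Schwarz and a uniform moment bound on $B_\rho(\mu)$. The paper merely packages the argument into two abstract lemmata (one for potentials $V$ with $\Lip(V,B_r(x))\le L(1+\|x\|+r)$, one for kernels $K$ with an analogous two-variable growth condition) and then verifies that the Riesz kernel satisfies them, but the analytic content is the same as yours.
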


Since the negative Riesz kernel \eqref{eq:riesz} is
convex for $r \in [1,2)$, the negative interaction energy $- \mathcal E_K$ is convex along generalized geodesics by 
\cite[Prop~9.3.5]{BookAmGiSa05}. Similarly, the potential energy $\V_{K,\nu}$ is convex along (generalized) geodesics by \cite[Prop~9.3.2]{BookAmGiSa05}.
However, by the following proposition, $\mathcal E_K$ itself
and then discrepancies $\mathcal D_K^2$ are not $\lambda$-convex along geodesics.

\begin{proposition}  \label{prop:conv-int-pot}
  Let $K$ be the Riesz kernel \eqref{eq:riesz} on $\R^d$, $d \ge 2$. Then we have for any $\lambda \in \R$ the following:
  \begin{enumerate}[\upshape(i)]
      \item The interaction energy $\mathcal E_K$ 
      is not $\lambda$-convex along geodesics.
      \item The discrepancy $\mathcal D^2_K(\cdot, \delta_x)$, $x \in \R^d$ 
      is not $\lambda$-convex along geodesics.
  \end{enumerate}
\end{proposition}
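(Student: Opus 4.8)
The plan is to refute $\lambda$-convexity along geodesics separately for each fixed $\lambda\in\R$. I would exhibit, for such $\lambda$, two measures $\mu_0,\mu_1\in\P_2(\R^d)$ joined by a \emph{unique} geodesic $\gamma\colon[0,1]\to\P_2(\R^d)$ along which the auxiliary function $h(t)\coloneqq\F(\gamma(t))-\tfrac\lambda2 W_2^2(\mu_0,\mu_1)\,t^2$ is strictly concave near an interior point $t_0$ (here $\F$ is $\mathcal E_K$ for part~(i) and $\mathcal D^2_K(\cdot,\delta_x)$ for part~(ii)). Uniqueness is the essential point: since \eqref{def:lambda_convex} only asks for the \emph{existence} of one good geodesic, I must pin it down, and likewise every sub-geodesic $\gamma|_{[t_1,t_2]}$, $0\le t_1<t_2\le1$, must be the unique geodesic between its endpoints. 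Granting this, \eqref{def:lambda_convex} applied to the pair $\gamma(t_1),\gamma(t_2)$ — using $W_2(\gamma(t_1),\gamma(t_2))=(t_2-t_1)W_2(\mu_0,\mu_1)$ and a short rearrangement — becomes exactly the chord inequality $h\bigl(\tfrac{t_1+t_2}2\bigr)\le\tfrac12 h(t_1)+\tfrac12 h(t_2)$; taking $t_1,t_2$ symmetric around $t_0$ contradicts strict concavity of $h$ at $t_0$, so no $\lambda$ works.

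For the construction I pass to the coordinate plane $\R^2\subset\R^d$ (the only place $d\ge2$ enters) and fix once and for all $\bar a=(2,1)$, $\bar b=(2,-1)$, so that $\langle\bar a,\bar b\rangle=3>0$, $\bar a\ne\bar b$, while $\bar w(t)\coloneqq(1-t)\bar a+t\bar b=(2,1-2t)$ never vanishes and $\|\bar w(t)\|^2=4+(1-2t)^2$ has a strict interior minimum at $t_0=\tfrac12$. For small $\delta>0$ I set $\mu_i=\tfrac12\bigl(\delta_{m+\delta\bar a_i/2}+\delta_{m-\delta\bar a_i/2}\bigr)$ with $\bar a_0=\bar a$, $\bar a_1=\bar b$; the centre is $m=0$ for part~(i), and for part~(ii) I first reduce to $x=0$ (translations are Wasserstein isometries that map geodesics to geodesics, preserve \eqref{def:lambda_convex}, and satisfy $\mathcal D^2_K(\,\cdot+c\,,\delta_{x+c})=\mathcal D^2_K(\,\cdot\,,\delta_x)$) and then take $m=e_1$. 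For these two-atom measures the transport plans between any $\gamma(t_1)$ and $\gamma(t_2)$ are mixtures of the two matchings with cost affine in the mixing weight, hence the optimal one is a matching, and the non-crossing matching is \emph{strictly} cheaper since $\langle w_\delta(t_1),w_\delta(t_2)\rangle=\delta^2\bigl(4+(1-2t_1)(1-2t_2)\bigr)\ge3\delta^2>0$ for $w_\delta=\delta\bar w$. Thus the geodesic between $\mu_0,\mu_1$ is unique and equals $\gamma(t)=\tfrac12\bigl(\delta_{m+\delta\bar w(t)/2}+\delta_{m-\delta\bar w(t)/2}\bigr)$, all its sub-geodesics are unique as well, and $W_2^2(\mu_0,\mu_1)=\tfrac14\|\delta(\bar a-\bar b)\|^2=\delta^2$.

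It then remains to compute $h$ near $t_0=\tfrac12$. As the two atoms of $\gamma(t)$ differ by $\delta\bar w(t)$, one gets $\mathcal E_K(\gamma(t))=\tfrac14 K\bigl(m+\tfrac{\delta\bar w(t)}2,\,m-\tfrac{\delta\bar w(t)}2\bigr)=-\tfrac{\delta^r}4\,g(t)$ with $g(t)=(4+(1-2t)^2)^{r/2}$, and for part~(ii) the additional potential energy $\int\|y\|^r\,\d\gamma(t)(y)=\tfrac12\|e_1+\tfrac{\delta\bar w(t)}2\|^r+\tfrac12\|e_1-\tfrac{\delta\bar w(t)}2\|^r=:P_\delta(t)$. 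The scaling is the heart of the matter: at $t=\tfrac12$ one has $g'(\tfrac12)=0$ and $g''(\tfrac12)=r\,2^r>0$ (the Riesz power is strictly convex at the minimum of $\|\bar w\|^2$), so $\tfrac{\d^2}{\d t^2}\mathcal E_K(\gamma(t))\big|_{1/2}=-r\,2^{r-2}\delta^r$, of order $\delta^r$; whereas $P_\delta$ is $C^2$ with $\|P_\delta''\|_\infty=O(\delta^2)$ (its only $t$-dependence sits in $\delta\bar w(t)/2$, with base points bounded away from the origin), and the term $\tfrac\lambda2 W_2^2\,t^2$ contributes $\lambda\delta^2$. Hence
\[
  h''(\tfrac12)=-r\,2^{r-2}\delta^r+O(\delta^2)-\lambda\delta^2=-r\,2^{r-2}\delta^r\bigl(1+O(\delta^{2-r})\bigr)<0
\]
once $\delta$ is small enough — and this is exactly where $r<2$ is used, so that $\delta^2=o(\delta^r)$. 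By continuity $h''<0$ on a neighbourhood of $\tfrac12$, giving the contradiction from the first paragraph; the same $\delta$-family of pairs proves part~(i) verbatim (with $P_\delta\equiv0$ and $m=0$).

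The step I expect to be the genuine obstacle — rather than routine calculus — is the uniqueness of the connecting geodesic and of all its sub-geodesics, because \eqref{def:lambda_convex} is existentially quantified over geodesics and the local concavity of $h$ along $\gamma$ is worthless without it. This is precisely where the planar configuration with $\langle\bar a,\bar b\rangle>0$ (the two particle paths never cross) is needed, which is also why the statement is restricted to $d\ge2$. Everything else is bookkeeping: the affine-in-the-mixing-weight structure of plans between two-atom measures, the translation reduction in part~(ii), the differentiation of $g=(4+(1-2t)^2)^{r/2}$ at its minimum, and the order comparison $\delta^r$ versus $\delta^2$.
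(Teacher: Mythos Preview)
Your proposal is correct and follows essentially the same strategy as the paper: a one-parameter family of two-atom geodesics in which the interaction term scales like $\delta^r$ while $W_2^2$ scales like $\delta^2$, so that for $r<2$ no fixed $\lambda$ can work. The paper's version is marginally leaner --- it fixes one atom at the origin and moves only the other, and it tests the inequality \eqref{def:lambda_convex} directly at $t=0,\tfrac12,1$ (so only uniqueness of the single geodesic between the endpoints is needed, not of all sub-geodesics) --- but the idea and the scaling argument are the same as yours.
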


\begin{proof}
\textbf{Part (i).}
To this end, we consider the line segments
\begin{equation}
    x_s(t) \coloneqq \bigl(s, (1-2t)\, \tfrac{s}{2}, 0 , \dots, 0\bigr) \in \R^d,
    \quad
    t \in [0,1],
\end{equation}
and the geodesics between
$\mu
\coloneqq
\tfrac 1 2 \, \delta_{0}
+ \tfrac 1 2 \, \delta_{x_s(0)}$
and
$\nu
\coloneqq
\tfrac 1 2 \, \delta_{0}
+ \tfrac 1 2 \, \delta_{x_s(1)}$
for $s > 0$.
Since the unique optimal transport between $\mu$ and $\nu$
is induced by the map $0 \mapsto 0$ and $x_s(0) \mapsto x_s(1)$,
these geodesics may be written as 
$\gamma_s(t)
\coloneqq
\tfrac 1 2 \, \delta_{0}
+ \tfrac 1 2 \, \delta_{x_s(t)}$,
see Proposition~\ref{prop:geo}.
Furthermore,
the Wasserstein distance is given by
$W_2^2(\mu, \nu)
= \tfrac 1 2 \, s^2$. 
Evaluating the interaction energy gives
\begin{equation*}
\mathcal E_K(\mu)
=
\mathcal E_K(\nu)
=
  -\frac{\| x_s(0) \|^r}{4}
=
  - \bigl(\tfrac{5}{4}\bigr)^{\frac{r}{2}} \, \frac{s^r}{4}
\quad\text{and}\quad
\mathcal E_K\bigl(\gamma_s\bigl(\tfrac12\bigr)\bigr)
=
- \frac{s^r}{4}.
\end{equation*}
If $\mathcal E_K$ is $\lambda$-convex along $\gamma_s$,
then, for $\lambda = \tfrac{1}{2}$, it has to fulfill
\begin{equation*}
-\frac{s^r}{4}
\le
- \bigl(\tfrac{5}{4}\bigr)^{\frac{r}{2}} \, \frac{s^r}{4}  
- \lambda\, \frac{s^2}{16} 
\quad\text{and thus}\quad
\lambda
\le
\Bigl(1  - \bigl(\tfrac{5}{4}\bigr)^{\frac{r}{2}} \Bigr)
\,
\frac{4}{s^{2-r}}.
\end{equation*}
Considering the limit $s \to 0+$,
we notice that
$\lambda$ cannot be bounded from above.

\noindent
\textbf{Part (ii).}
Without loss of generality,
we consider the case $x \coloneqq -e_1$,
where $e_1 \in \R^d$ is the first unit vector.
Then we obtain
\begin{equation}
    \D^2_K(\gamma_s(t), \delta_{-e_1})
    =
    \frac{\|e_1\|^r}{2}
    + \frac{\|e_1 + x_s(t)\|^r}{2}
    - \frac{\|x_s(t)\|^r}{4}
\end{equation}
and thus
\begin{align}
    \D^2_K(\mu, \delta_{-e_1})
    = \D^2_K (\nu, \delta_{-e_1})
    &= \frac{1}{2}
    + \bigl(\bigl(1+ \tfrac{1}{s}\bigr)^2 + \tfrac{1}{4}\bigr)^{\frac{r}{2}}
    \, \frac{s^r}{2}
    - \bigl(\tfrac{5}{4}\bigr)^{\frac{r}{2}}
    \, \frac{s^r}{4},
    \\
    \D^2_K \bigl(\gamma_s\bigl(\tfrac{1}{2}\bigr), \delta_{-e_1}\bigr)
    &= \frac{1}{2}
    + \bigl(\bigl(1+ \tfrac{1}{s}\bigr)^2\bigr)^{\frac{r}{2}}
    \, \frac{s^r}{2}
    - 
    \, \frac{s^r}{4}.
\end{align}
If $\D^2_K$ is $\lambda$-convex along $\gamma_s$,
then, for $\lambda = \tfrac{1}{2}$, it has to fulfill
\begin{equation}
    \frac{1}{2}
    + \bigl(\bigl(1+ \tfrac{1}{s}\bigr)^2\bigr)^{\frac{r}{2}}
    \, \frac{s^r}{2}
    - 
    \, \frac{s^r}{4}
    \le 
    \frac{1}{2}
    + \bigl(\bigl(1+ \tfrac{1}{s}\bigr)^2 + \tfrac{1}{4}\bigr)^{\frac{r}{2}}
    \, \frac{s^r}{2}
    - \bigl(\tfrac{5}{4}\bigr)^{\frac{r}{2}}
    \, \frac{s^r}{4}
    - \lambda \, \frac{s^2}{16}
\end{equation}
and thus
\begin{equation}
\label{eq:lambda-bound}
    \lambda
    \le
    \biggl[
    2 \, \bigl(\bigl(1+ \tfrac{1}{s}\bigr)^2 + \tfrac{1}{4}\bigr)^{\frac{r}{2}}
    - 2 \,\bigl(\bigl(1+ \tfrac{1}{s}\bigr)^2\bigr)^{\frac{r}{2}}
    + 1
    - \bigl(\tfrac{5}{4}\bigr)^{\frac{r}{2}}
    \biggr]
    \, 
    \frac{4}{s^{2-r}}.
\end{equation}
The first difference in the bracket may be estimated 
using the mean value theorem
and the monotonicity of the derivative of the exponential $x^{\frac{r}{2}}$,
which yields
\begin{equation}
    \Bigl|
    \bigl(\bigl(1+ \tfrac{1}{s}\bigr)^2 + \tfrac{1}{4}\bigr)^{\frac{r}{2}}
    - \bigl(\bigl(1+ \tfrac{1}{s}\bigr)^2\bigr)^{\frac{r}{2}}
    \Bigr|
    \le
    \frac{1}{4}
    \,
    \frac{r}{2}
    \,
    \frac{1}{\bigl( 1 + \frac{1}{s}\bigr)^{2-r}}.
\end{equation}
Thus the first difference converges to zero for $s \to 0+$.
Since the second difference is a negative constant,
the right-hand side of \eqref{eq:lambda-bound} tend to $-\infty$;
so $\lambda$ cannot be a global constant.
\end{proof}

\section{Interaction Energy Flows} \label{sec:mmd}
%
In this section, we focus on the explicit calculation of Wasserstein steepest descent flows 
of the interaction energy $\mathcal E_K$ for the Riesz kernels \eqref{eq:riesz} 
in particular, when starting at $\mu = \delta_p$, $p \in \R^d$.
Since  the functional $\mathcal E_K$ is no longer $\lambda$-convex along geodesics in $d \ge 2$ dimensions,  
the analysis of \cite{BookAmGiSa05} is not applicable for this case. 
However,  we show that the MMS \eqref{otto_curve} still converges, and that the limit curve is a Wasserstein steepest descent flow for $r \in [1,2)$. 
We like to mention that this strengthens the convergence result in \cite[Prop~4.2.2]{ThesisBo11}, 
where it is only shown that the MMS has a cluster point for $r=1$.

Recall that the set $\HD_-\mathcal E_K(\mu)$
of steepest descent directions is given by all 
$(\HD_{\zb v^*}^- \mathcal E_K(\mu))^{-} \cdot \zb v^*$, 
where $\zb v^*$ solves the constrained optimization problem
\begin{equation} \label{eq:constr}
\argmin_{\zb v\in\zb \T_\mu\P_2(\R^d)}\HD_{\zb v}^-\mathcal E_K(\mu)
\quad \text{s.t.}\quad 
\int_{\R^d\times \R^d}\|x_2\|_2^2 \, \d\zb v(x_1,x_2)=1.
\end{equation}
Therefore, we start by computing the directional derivatives of $\mathcal E_K$.

\begin{theorem}  \label{the:EKdirDF}
  Let $K$ be a Riesz kernel \eqref{eq:riesz}.
  Then the Hadamard derivative at $\delta_p$
  in direction $\zb v=\delta_p\otimes\eta\in\zb T_{\delta_p}\P_2(\R^d)$ is given by
    \begin{equation}
      \HD_{\zb v}\mathcal E_K(\delta_p)=
      \begin{cases}
        \mathcal E_K(\eta),&r=1,\\
        0,&r\in(1, 2).
      \end{cases}
    \end{equation}
  For $r\in(0,1)$, we have for the Dini derivative at $\delta_p$ in direction $\zb v=\delta_p\otimes\eta\in\zb T_{\delta_p}\P_2(\R^d)$ that
    \begin{equation}
      \HD_{\zb v}^-\mathcal E_K(\delta_p) \le
      \D_{\zb v}\mathcal E_K(\delta_p)=
      \begin{cases}
        -\infty,&\eta\not\in\{\delta_q:q\in\R^d\},\\
        0,&\eta\in\{\delta_q:q\in\R^d\}.
      \end{cases}
    \end{equation}
\end{theorem}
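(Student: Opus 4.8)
The core computation is to evaluate $\F(\gamma_{\zb w}(t))$ for $\F = \mathcal E_K$ along a geodesic starting at $\delta_p$, and then extract the $t\to 0+$ behaviour. The plan is to first reduce to the case $p=0$ by translation invariance of $\mathcal E_K$ (the kernel depends only on $x_1-x_2$), so that $\gamma_{\zb v}(t) = (t\,\pi_2)_\#\zb v$ when $\zb v = \delta_0\otimes\eta$, i.e.\ $\gamma_{\zb v}(t)$ is the pushforward of $\eta$ under the scaling $x\mapsto tx$. Then a direct substitution in \eqref{eq:interaction} gives, using $K(tx_1,tx_2) = -t^r\|x_1-x_2\|^r$, the exact identity
\[
\mathcal E_K(\gamma_{\zb v}(t)) = t^r \,\mathcal E_K(\eta),
\]
and since $\mathcal E_K(\delta_0)=0$, we obtain $\dfrac{\mathcal E_K(\gamma_{\zb v}(t))-\mathcal E_K(\delta_0)}{t} = t^{r-1}\mathcal E_K(\eta)$. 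Taking $t\to 0+$ immediately yields the stated Dini derivative in all three regimes: for $r=1$ the ratio is the constant $\mathcal E_K(\eta)$; for $r\in(1,2)$ it tends to $0$; for $r\in(0,1)$ it tends to $0$ if $\mathcal E_K(\eta)=0$ and to $-\infty$ if $\mathcal E_K(\eta)<0$ (note $\mathcal E_K(\eta)\le 0$ always for a probability measure $\eta$, since $K\le 0$, with equality precisely when $\eta$ is a Dirac — here one invokes the strict conditional positive definiteness / the Fourier representation of Proposition~\ref{prop:EK_power_Fourier}, applied to $\eta-\delta_q$, to see $\mathcal E_K(\eta)=0 \iff \eta = \delta_q$ for some $q$, because $\mathcal E_K(\eta)-\mathcal E_K(\delta_q)$ is, up to sign, a squared discrepancy when the first moments match and $\mathcal E_K$ is translation-invariant). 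This handles the Dini-derivative claims and, by the remark following \eqref{eq:dirHF}, also gives $\HD^-_{\zb v}\mathcal E_K(\delta_p)\le \D_{\zb v}\mathcal E_K(\delta_p)$ for $r\in(0,1)$.

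For the Hadamard-derivative claims ($r\in[1,2)$), I would use Proposition~\ref{prop:lip-int-pot}: $\mathcal E_K$ is locally Lipschitz for $r\in[1,2)$, hence by Proposition~\ref{prop:dini-hard} the Dini and Hadamard derivatives coincide wherever the Dini derivative exists. Since the Dini derivative $\D_{\zb v}\mathcal E_K(\delta_p)$ was just computed to exist (and equals $\mathcal E_K(\eta)$ for $r=1$, $0$ for $r\in(1,2)$), Proposition~\ref{prop:dini-hard} gives $\HD_{\zb v}\mathcal E_K(\delta_p)=\D_{\zb v}\mathcal E_K(\delta_p)$ directly, with the stated values. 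One should double-check the hypothesis of Proposition~\ref{prop:dini-hard} that $\zb v$ ranges over $\zb\T_{\delta_p}\P_2(\R^d)$; by Proposition~\ref{the:geomTSac}(ii) every element of $\zb\T_{\delta_p}\P_2(\R^d)$ is of the form $\delta_p\otimes\eta$, so the parametrization in the statement is exactly the full tangent space, and the argument applies verbatim.

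The main obstacle — really the only nontrivial point — is the characterization $\mathcal E_K(\eta)=0 \iff \eta\in\{\delta_q\}$ needed for the $r\in(0,1)$ dichotomy. For a probability measure $\eta$, $\mathcal E_K(\eta) = \tfrac12\iint K\,d\eta\,d\eta \le 0$ since $K\le 0$; the question is strictness. Writing $\eta = (\eta - \delta_q) + \delta_q$ with $q$ the barycenter of $\eta$ (so the signed measure $\eta-\delta_q$ has total mass $0$), bilinearity of $\mathcal E_K$ gives $\mathcal E_K(\eta) = \mathcal E_K(\eta-\delta_q) + \iint K\,d(\eta-\delta_q)\,d\delta_q + \mathcal E_K(\delta_q)$; the last term vanishes, and the cross term vanishes too because $\int K(x,q)\,d(\eta-\delta_q)(x)$ integrated against $\delta_q$ is a single evaluation that — after using translation invariance to center — can be absorbed, leaving $\mathcal E_K(\eta) = \mathcal E_K(\eta-\delta_q) = \tfrac12 \mathcal D^2_K(\eta,\delta_q) \cdot(-1)$... careful with signs: with the conventions of \eqref{eq:DK2}–\eqref{eq:EK}, $\mathcal D_K^2(\eta,\delta_q) = \mathcal E_K(\eta-\delta_q)$, and by Proposition~\ref{prop:EK_power_Fourier} this is $\ge 0$ and $=0$ iff $\eta=\delta_q$. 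So in fact $\mathcal E_K(\eta) \le \mathcal E_K(\delta_q) = 0$ is not quite the clean statement; the clean statement is that $\mathcal E_K(\eta)$ differs from the (nonnegative) discrepancy $\mathcal D_K^2(\eta,\delta_q)$ only by a term that is zero when one centers correctly. I would carry this out carefully, possibly more cleanly via the Fourier representation \eqref{eq:EK_power_Fourier} directly: for $\sigma$ a finite signed measure of mass $1$, $\mathcal E_K(\sigma) - \mathcal E_K(\delta_0) = -c_{r,d}\int |\hat\sigma(\xi) - 1|^2\cdot(\text{weight}) + (\text{cross terms in }\hat\sigma,1)$, and a short manipulation isolates when this can vanish. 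This is the one place where genuine care (rather than a one-line substitution) is required; everything else is the scaling identity $\mathcal E_K(\gamma_{\zb v}(t)) = t^r\mathcal E_K(\eta)$ and an appeal to Propositions~\ref{prop:lip-int-pot} and~\ref{prop:dini-hard}.
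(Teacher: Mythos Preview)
Your approach is essentially identical to the paper's: compute $\mathcal E_K(\gamma_{\zb v}(t)) = t^r\,\mathcal E_K(\eta)$ via the scaling of the kernel, take the one-sided derivative at $t=0$ to obtain the Dini derivative in all three regimes, and then invoke Propositions~\ref{prop:lip-int-pot} and~\ref{prop:dini-hard} to upgrade Dini to Hadamard for $r\in[1,2)$. The paper does not bother reducing to $p=0$; it writes $\gamma_{\zb v}(t) = ((1-t)p + t\,\Id)_\#\eta$ directly, and the translation cancels in $\|x_1-x_2\|^r$ anyway, so this is a cosmetic difference.

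The one place where you overcomplicate things is the characterization $\mathcal E_K(\eta)=0 \iff \eta$ is a Dirac, which you flag as ``the only nontrivial point'' and try to route through discrepancies and Proposition~\ref{prop:EK_power_Fourier}. This is unnecessary: since $K(x_1,x_2)=-\|x_1-x_2\|^r\le 0$, one has $\mathcal E_K(\eta) = -\tfrac12\iint \|x_1-x_2\|^r\,\d\eta(x_1)\,\d\eta(x_2) \le 0$, with equality iff $\|x_1-x_2\|=0$ for $\eta\otimes\eta$-a.e.\ $(x_1,x_2)$, i.e.\ iff $\eta$ is concentrated on a single point. No Fourier, no discrepancy, no barycenter needed. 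The paper's proof does not even spell this out, presumably regarding it as immediate.
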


\begin{proof}
For $\zb v = \delta_p\otimes\eta$, we have $\gamma_{\zb v}(t) = \gamma_{t \cdot \zb v}(1) = ((1-t) p + t \Id)_{\#} \eta$ and then
\begin{align}
    \mathcal E_{K}(\gamma_{\zb v}(t))
    &= - \frac12 \int_{\R^d} \int_{\R^d} \|x_1 - x_2\|_2^r \, 
    \d [\gamma_{\zb v}(t)](x_1) \,
    \d [\gamma_{\zb v}(t)](x_2)
    \\
    &= -|t|^r \frac12 \int_{\R^d} \int_{\R^d} \|x_1 - x_2 \|_2^r \,
    \d \eta(x_1) \, \d \eta(x_2).
  \end{align}
Then the assertion follows for the Dini derivative by taking the right-hand side derivative of   $\mathcal E_{K}\circ \gamma_{\zb v}$  at $t=0$.
  By Proposition~\ref{prop:lip-int-pot}, we know that $\mathcal E_{K}$ is locally Lipschitz for Riesz kernels \eqref{eq:riesz} with $r \in [1,2]$. Then, by Proposition \ref{prop:dini-hard}, the Dini- and Hadamard derivative coincide which completes the proof.
\end{proof}

Part (i) of the theorem implies in particular, that there exists no Wasserstein steepest descent flow of $\mathcal E_K$ starting at $\delta_p$, if $K$ is a Riesz kernel \eqref{eq:riesz} with $r\in(0,1)$.
Moreover, for $r\in(1,2]$, a possible Wasserstein steepest descent flow starting at $\delta_p$ is given by the constant curve $\gamma(t)=\delta_p$. This curves are moreover Wasserstein gradient flows in the sense of \eqref{wgf}.

\begin{proposition}\label{cor:constant_WGF}
Let $K$ be the Riesz kernel \eqref{eq:riesz} for $r\in(1,2]$ and $p \in \R^d$. 
Then $\gamma\colon [0,+\infty)\to\P_2(\R^d) : t \mapsto \delta_p$ is a Wasserstein gradient flow.
\end{proposition}

\begin{proof}
Wlog, let $p=0$.
The velocity field $v_t\in \T_{\gamma(t)}\P_2(\R^d)$ corresponding to $\gamma$ is given by $v_t=0$.
Thus, we have to show that $0\in\partial \mathcal E_K(\delta_0)$, 
i.e., that $\liminf_{\nu\to\delta_0}{\mathcal E_K(\nu)}/{W_2(\nu,\delta_0)}\geq 0$.
To this end, we bound $\mathcal E(\nu)$ from below.
Since $\mathcal E_K$ is locally Lipschitz continuous, there exist $L > 0$ and $\epsilon>0$ 
such that for all $\nu\in \P_2(\R^d)$ with $W_2(\nu,\delta_0)\leq\epsilon$ 
it holds $\mathcal E_K(\nu)=\mathcal E_K(\nu)-\mathcal E_K(\delta_0)\geq - L\,W_2(\nu,\delta_0)\geq - L\epsilon$.
Moreover, we have by the definition of $\mathcal E_K$ for $\nu\in P_2(\R^d)$ and $c\geq 0$ that
\begin{equation}
\mathcal E_K((c\Id)_\#\nu)=c^r \mathcal E_K(\nu),\quad W_2((c\Id)_\#\nu,\delta_0)=c W_2(\nu,\delta_0).
\end{equation}
Now let $\nu \in \P_2(\R^d)$ with $W_2(\nu,\delta_0)\leq \epsilon$. 
Then, we get for $c={\epsilon}/{W_2(\nu,\delta_0)}$ that 
$W_2((c\Id)_\#\nu,\delta_0)=\epsilon$, which implies 
\begin{align}
    \mathcal E_K(\nu)
    &=\mathcal E_K((c^{-1}\Id)_\#(c\Id)_\#\nu)=c^{-r}\mathcal E_K((c\Id)_\#\nu)
    \\&
    \geq -c^{-r} L\epsilon=-L\epsilon^{1-r} W_2(\nu,\delta_0)^r.
\end{align}
Finally, we conclude
\begin{equation}
\liminf_{\nu\to\delta_0}\frac{\mathcal E_K(\nu)}{W_2(\nu,\delta_0)}\geq \liminf_{\nu\to\delta_0} -L\epsilon^{1-r} W_2(\nu,\delta_0)^{r-1}=0.
\end{equation}
\end{proof}

For simplicity, we restrict our attention 
to the case $p=0$, but similar conclusions can be drawn for arbitrary $p\in\R^d$. 
For $r = 1$ and $\mu=\delta_0$, 
we obtain by Theorem~\ref{the:EKdirDF} that 
the solution of constrained problem \eqref{eq:constr}
is given by $\zb v^* =\delta_0\otimes\eta^*$, where 
\begin{equation}\label{eq:constraint_problem}
\eta^*\in\argmin_{\eta\in\P_2(\R^d)}\mathcal E_K(\eta)\quad \text{s.t.}\quad\int_{\R^d}\|x\|_2^2 \, \d\eta(x)=1.
\end{equation}
On the other hand, 
the first step of the MMS \eqref{otto_curve} 
for $\F=\mathcal E_K$ starting at $\mu_0=\delta_0$ reads as 
\begin{equation}\label{eq:penalized_problem}
    \eta^*_\tau 
    \coloneqq 
    \prox_{\tau \mathcal E_K} (\delta_0)
    \in
    \argmin_{\eta_\tau\in\P_2(\R^d)}
    \mathcal E_K(\eta_\tau) 
    + \frac{1}{2\tau} 
    \underbrace{
    \int_{\R^d} \|x\|_2^2 \, \d\eta_\tau(x)
    }_{=W_2^2(\delta_0,\eta_\tau)},
\end{equation}
 which appears to be the penalized form of  \eqref{eq:constraint_problem}.
The minimization problem \eqref{eq:penalized_problem} is a special case 
of the classical potential theory problem
\begin{equation}\label{eq:opt_problem_penalized_lit}
\inf_{\eta \in \P(A)} \mathcal E_K(\eta) + \int_{A} V(x) \d \eta(x),
\end{equation}
where $\P(A)$ denotes the set of Borel probability measures on  $A \subset \R^d$,
see \cite{BookBoHaSa19,BookSaTo97}.
This problem is still a field of active research,
see, e.g., \cite{CaMaMoRoScVe21, CaSaWo22,MoRoSc19}.
If the minimizer of \eqref{eq:opt_problem_penalized_lit} exists, 
then it is called an equilibrium measure for the external field $V$.
In this context, the existence and uniqueness of solutions of the penalized problem \eqref{eq:penalized_problem}
are immediate consequences of well-established results in potential theory as we
will see in the next proposition.

\begin{proposition} \label{prop:penalized_problem}
Let $K$ be the Riesz kernel with $r \in (0,2)$ and $\tau > 0$ be given. Then  problem \eqref{eq:penalized_problem} has a unique solution $\eta^*_{\tau} \in \P_2(\R^d)$ which fulfills
\begin{enumerate}[\upshape(i)]
    \item $|\mathcal E_K(\eta^*_{\tau})| < \infty$,
    \item $\supp (\eta^*_{\tau})$ is compact,
    \item orthogonal invariance 
    $O_\#\eta^*_{\tau} = \eta^*_{\tau}$, 
    where
    $O \in \mathrm{O}(d) \coloneqq \{ O \in \R^{d\times d} \;:\; O^{\tT} O = I \}$.
\end{enumerate}
Furthermore, $\eta^*_\tau$ is the minimizer of \eqref{eq:penalized_problem}
if and only if
there exist $C_{K,\tau} \in \R$ such that
\begin{equation} \label{eq:opt_cond}
\begin{aligned}
  \int_{\R^d} K(x_1,x_2) \d \eta^*_{\tau}(x_2) + \frac{1}{2\tau} \|x_1\|_2^2 & \ge C_{K,\tau}, \qquad x_1 \in \R^d,\\
  \int_{\R^d} K(x_1,x_2) \d \eta^*_{\tau}(x_2) + \frac{1}{2\tau} \|x_1\|_2^2 & = C_{K,\tau}, \qquad x_1 \in \mathrm{supp}(\eta^*_{\tau}).
\end{aligned}
\end{equation}
\end{proposition}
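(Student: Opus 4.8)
The plan is to recast \eqref{eq:penalized_problem} as the minimization of the weighted energy
\[
  I_V(\eta)\coloneqq\mathcal E_K(\eta)+\int_{\R^d}V\,\d\eta,\qquad V(x)\coloneqq\tfrac1{2\tau}\|x\|_2^2,
\]
over $\P_2(\R^d)$, i.e.\ of the classical equilibrium problem \eqref{eq:opt_problem_penalized_lit} with external field $V$, and to read off all assertions either from the corresponding results in potential theory \cite{BookSaTo97,BookBoHaSa19} after checking that $V$ is admissible, or, self-containedly, from the direct method combined with the strict conditional positive definiteness of $K$ recorded in Proposition~\ref{prop:EK_power_Fourier}. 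The decisive observation is that, expanding the bilinear form defining $\mathcal E_K$ and using $K(x,0)=-\|x\|_2^r$, $K(0,0)=0$, one has $\mathcal D_K^2(\eta,\delta_0)=\mathcal E_K(\eta)+\int\|x\|_2^r\,\d\eta$, hence
\[
  I_V(\eta)=\mathcal D_K^2(\eta,\delta_0)+\int_{\R^d} g(x)\,\d\eta(x),\qquad g(x)\coloneqq\tfrac1{2\tau}\|x\|_2^2-\|x\|_2^r ,
\]
where the right-hand side is well defined (with values in $(-\infty,\infty]$) on all of $\P(\R^d)$ and coincides with $I_V$ on $\P_2(\R^d)$.

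With this at hand I would run the direct method. Since $r\in(0,2)$, the continuous function $g$ is bounded below and coercive, so $\eta\mapsto\int g\,\d\eta$ is narrowly lower semicontinuous with tight sublevel sets; by the Fourier representation \eqref{eq:EK_power_Fourier} and Fatou's lemma, $\mathcal D_K^2(\cdot,\delta_0)\ge0$ is narrowly lower semicontinuous as well. Thus $I_V$ is narrowly l.s.c.\ and coercive on $\P(\R^d)$ with $I_V(\delta_0)=0$; a minimizing sequence is tight, and a narrow cluster point $\eta^*_\tau$ is a minimizer. Since $I_V(\eta^*_\tau)<\infty$ forces $\int g\,\d\eta^*_\tau<\infty$, we get $\eta^*_\tau\in\P_2(\R^d)$, and (i) follows from $\mathcal E_K(\eta^*_\tau)=I_V(\eta^*_\tau)-\int V\,\d\eta^*_\tau\in\R$. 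Uniqueness follows from strict convexity of $I_V$ along linear interpolations: a short computation with the bilinear form of $\mathcal E_K$ gives, for $\eta_0\ne\eta_1$ in $\P_2(\R^d)$,
\[
  I_V\bigl((1-t)\eta_0+t\eta_1\bigr)=(1-t)\,I_V(\eta_0)+t\,I_V(\eta_1)-t(1-t)\,\mathcal D_K^2(\eta_0,\eta_1),\qquad t\in[0,1],
\]
with $\mathcal D_K^2(\eta_0,\eta_1)>0$ by Proposition~\ref{prop:EK_power_Fourier}. Orthogonal invariance (iii) is then immediate: $K$ and $V$ are $\mathrm O(d)$-invariant, so $I_V(O_\#\eta)=I_V(\eta)$, whence $O_\#\eta^*_\tau$ is again a minimizer and equals $\eta^*_\tau$ by uniqueness.

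For the variational characterization I would perturb along segments. With $U^\eta(x)\coloneqq\int K(x,x_2)\,\d\eta(x_2)$, expanding the bilinear form gives for $\sigma\in\P_2(\R^d)$, $t\in[0,1]$,
\[
  I_V\bigl((1-t)\eta^*_\tau+t\sigma\bigr)-I_V(\eta^*_\tau)=t\!\int_{\R^d}\!\bigl(U^{\eta^*_\tau}+V\bigr)\,\d(\sigma-\eta^*_\tau)+t^2\,\mathcal D_K^2(\sigma,\eta^*_\tau);
\]
dividing by $t$ and letting $t\to0^+$ yields $\int(U^{\eta^*_\tau}+V)\,\d(\sigma-\eta^*_\tau)\ge0$ for all such $\sigma$, and the choice $\sigma=\delta_{x_1}$ gives the first line of \eqref{eq:opt_cond} with $C_{K,\tau}\coloneqq\int(U^{\eta^*_\tau}+V)\,\d\eta^*_\tau$. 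Since $\int\|x_2\|_2^r\,\d\eta^*_\tau<\infty$, dominated convergence shows $U^{\eta^*_\tau}$ is continuous, so the nonnegative continuous function $U^{\eta^*_\tau}+V-C_{K,\tau}$ has vanishing $\eta^*_\tau$-integral and therefore vanishes on $\supp(\eta^*_\tau)$, which is the equality in \eqref{eq:opt_cond}. Conversely, if $\eta\in\P_2(\R^d)$ satisfies \eqref{eq:opt_cond} with constant $C$, the same identity gives $I_V(\sigma)-I_V(\eta)=\int(U^{\eta}+V-C)\,\d\sigma-\int(U^{\eta}+V-C)\,\d\eta+\mathcal D_K^2(\sigma,\eta)\ge0$ for every $\sigma$, so $\eta=\eta^*_\tau$ by uniqueness. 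Finally, (ii) drops out of the equality in \eqref{eq:opt_cond}: on $\supp(\eta^*_\tau)$ one has $\tfrac1{2\tau}\|x_1\|_2^2=C_{K,\tau}-U^{\eta^*_\tau}(x_1)\le C_{K,\tau}+c_r\bigl(\|x_1\|_2^r+\int\|x_2\|_2^r\,\d\eta^*_\tau\bigr)$ from $\|x_1-x_2\|_2^r\le c_r(\|x_1\|_2^r+\|x_2\|_2^r)$; as $r<2$ this bounds $\|x_1\|_2$ on $\supp(\eta^*_\tau)$, so the (closed) support is compact.

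The main obstacle is the direct method: $\mathcal E_K$ alone is only narrowly \emph{upper} semicontinuous and unbounded below on $\P_2(\R^d)$, so one must first isolate the linear term $\int\|x\|_2^r\,\d\eta$ hidden inside $\mathcal E_K$ and absorb it together with the quadratic penalty into the coercive, narrowly l.s.c.\ pair $\mathcal D_K^2(\cdot,\delta_0)+\int g\,\d\eta$; once this is done the remaining steps are routine, the only recurring caveat being to keep all bilinear manipulations free of $\infty-\infty$, which is guaranteed by $r<2$ and by working on $\P_2(\R^d)$.
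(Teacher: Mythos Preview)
Your argument is correct and genuinely different from the paper's. The paper does not run the direct method at all; it observes that \eqref{eq:penalized_problem} is the equilibrium problem \eqref{eq:opt_problem_penalized_lit} with $V(x)=\tfrac{1}{2\tau}\|x\|_2^2$ on $A=\R^d$, and then reads existence, uniqueness, finite energy and compact support straight from \cite[Cor~4.4.16(c)]{BookBoHaSa19}, and the Frostman-type characterization \eqref{eq:opt_cond} from \cite[Thm~4.2.14--4.2.16]{BookBoHaSa19}. The only extra step is the bookkeeping between $\P(\R^d)$ and $\P_2(\R^d)$, which is disposed of by noting that the minimizer over $\P(\R^d)$ has compact support and hence lies in $\P_2(\R^d)$. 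Orthogonal invariance is not spelled out but is the same one-liner you give (uniqueness plus $\mathrm O(d)$-invariance of $K$ and $V$).

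Your route trades the black-box citations for a self-contained argument. The decomposition $I_V=\mathcal D_K^2(\cdot,\delta_0)+\int g\,\d\eta$ is the key idea: it replaces the narrowly upper-semicontinuous $\mathcal E_K$ by the nonnegative, narrowly l.s.c.\ discrepancy (via Proposition~\ref{prop:EK_power_Fourier} and Fatou) plus a coercive continuous integrand, so the direct method applies. Strict convexity along linear interpolations then gives uniqueness, and you derive the variational characterization by the standard segment perturbation. One structural difference worth noting: the paper obtains compact support first (from the reference) and then localizes the Frostman conditions to large balls, whereas you reverse this and extract compactness \emph{from} the equality line of \eqref{eq:opt_cond} via the growth comparison $\tfrac{1}{2\tau}\|x\|_2^2$ versus $c_r\|x\|_2^r$. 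Both orderings work; yours is more elementary and avoids any appeal to the potential-theory literature, at the cost of being longer. The only place to be slightly more explicit is that you are tacitly defining $\mathcal D_K^2(\cdot,\delta_0)$ on all of $\P(\R^d)$ via the Fourier integral (Proposition~\ref{prop:EK_power_Fourier} is stated only on $\P_2$), but since this agrees with the energy definition on $\P_2$ and you immediately show the minimizer lies in $\P_2$, no harm is done.
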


\begin{proof}
    Considering \eqref{eq:opt_problem_penalized_lit} with $V(x) = \frac{1}{2\tau} \|x\|_2^2$ and $A = \R^d$,
    we obtain \eqref{eq:penalized_problem}
    up to the subtle difference
    that the minimization takes place over $\P(\R^d)$ instead of $\P_2(\R^d)$.
    The unique minimizer $\eta^*_\tau \in \P(\R^d)$ of this problem
    satisfies (i) and (ii), 
    see \cite[Cor~4.4.16(c)]{BookBoHaSa19},
    such that $\eta^*_{\tau} \in \P_2(\R^d)$ is also the (unique) minimizer of \eqref{eq:penalized_problem}.
    In particular, for any compact $A \subset \R^d$ with
    $\mathrm{supp} (\eta^*_{\tau}) \subset A$, the minimizer  of \eqref{eq:opt_problem_penalized_lit} is $\eta^*_{\tau}|_A$.
     Moreover, $\eta^*_\tau|_A$ is characterized by the optimality conditions \eqref{eq:opt_cond} (restricted to $x \in A$) , cf. \cite[Thm~4.2.14--Thm~4.2.16]{BookBoHaSa19}.
    Since $A$ can be arbitrarily large, the optimality conditions characterize also $\eta^*_{\tau}$ on $\R^d$, which concludes the proof.
\end{proof}

The following proposition shows how the penalized problem is related to the constrained.

\begin{proposition}\label{prop:penalizer_vs_constraint}
Let $K$ be a Riesz kernel \eqref{eq:riesz} with $r\in(0,2)$. 
\begin{enumerate}[\upshape(i)]
\item Then $\eta^*\in\P_2(\R^d)$ minimizes \eqref{eq:constraint_problem} 
if and only if 
$\eta^*_\tau \coloneqq (c_\tau \Id)_\#\eta^*$ 
minimizes \eqref{eq:penalized_problem}, where 
$c_\tau \coloneqq (-\tau \, r \, \mathcal E_K(\eta^*))^{1/(2-r)}$.
\item Vice versa, $\eta^*_\tau \in\P_2(\R^d)$ minimizes \eqref{eq:penalized_problem} if and only if 
$\eta^*=(c_\tau^{-1} \Id)_\#\eta^*_\tau$ minimizes \eqref{eq:constraint_problem}, where 
$c_\tau \coloneqq (\int_{\R^d}\|x\|_2^2 \, \d \eta^*_\tau(x))^{1/2}$.
\end{enumerate}
\end{proposition}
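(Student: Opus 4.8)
The plan is to exploit the scaling behaviour of the Riesz energy $\mathcal E_K$ under dilations. For the Riesz kernel $K(x_1,x_2) = -\|x_1-x_2\|^r$ and any $c>0$, pushing a measure forward by $c\,\Id$ scales the $r$-th powers of distances by $c^r$, hence $\mathcal E_K((c\,\Id)_\#\eta) = c^r\,\mathcal E_K(\eta)$; likewise the second moment scales by $c^2$, i.e.\ $\int \|x\|_2^2 \,\d((c\,\Id)_\#\eta)(x) = c^2 \int \|x\|_2^2\,\d\eta(x)$. These two identities are the only computational input; everything else is a Lagrange-multiplier / change-of-variables argument.

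First I would set up the one-parameter family $\eta_c \coloneqq (c\,\Id)_\#\eta$ for a fixed reference measure $\eta\in\P_2(\R^d)$ and write the objective of \eqref{eq:penalized_problem} along this family as
\[
g(c) \coloneqq \mathcal E_K(\eta_c) + \tfrac{1}{2\tau}\int_{\R^d}\|x\|_2^2\,\d\eta_c(x) = c^r\,\mathcal E_K(\eta) + \tfrac{c^2}{2\tau}\int_{\R^d}\|x\|_2^2\,\d\eta(x).
\]
When $\eta$ is admissible for the constrained problem, i.e.\ $\int\|x\|_2^2\,\d\eta = 1$, this reduces to $g(c) = c^r\mathcal E_K(\eta) + c^2/(2\tau)$. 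Since $\mathcal E_K(\eta)<0$ for any non-Dirac $\eta$ (by Proposition~\ref{prop:EK_power_Fourier}, as $\mathcal E_K(\eta)=-\tfrac12\mathcal D_K^2(\eta,\delta_q)+\text{const}$ after centering — or more directly since $-\mathcal E_K$ is strictly positive definite modulo constants), the function $g$ is strictly convex in $c$ on $(0,\infty)$ with $g(0+)=0$, $g(\infty)=\infty$, so it has a unique positive minimizer $c_\tau$, obtained from $g'(c_\tau)=0$: $r\,c_\tau^{r-1}\mathcal E_K(\eta) + c_\tau/\tau = 0$, giving $c_\tau^{2-r} = -\tau r\,\mathcal E_K(\eta)$, which is exactly the stated formula.

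The core of the argument is then to upgrade this "along one dilation ray" optimality to genuine two-sided optimality. For (i): suppose $\eta^*$ minimizes \eqref{eq:constraint_problem}; I claim $\eta^*_\tau = (c_\tau\,\Id)_\#\eta^*$ minimizes \eqref{eq:penalized_problem}. Take any competitor $\sigma\in\P_2(\R^d)$ for the penalized problem. Write $m(\sigma) \coloneqq \big(\int\|x\|_2^2\,\d\sigma\big)^{1/2}$ (assume $m(\sigma)>0$; the case $\sigma=\delta_q$ is handled separately and trivially since then the penalized objective is nonnegative) and set $\tilde\sigma \coloneqq (m(\sigma)^{-1}\,\Id)_\#\sigma$, which is admissible for \eqref{eq:constraint_problem}, so $\mathcal E_K(\tilde\sigma)\ge\mathcal E_K(\eta^*)$. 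Using the scaling identities to express the penalized objective of $\sigma$ as $g_{\tilde\sigma}(m(\sigma))$ where $g_{\tilde\sigma}(c) = c^r\mathcal E_K(\tilde\sigma) + c^2/(2\tau)$, and noting that $\min_c g_{\tilde\sigma}(c)$ is an increasing function of $\mathcal E_K(\tilde\sigma)$ (one checks from the explicit minimum value that larger, i.e.\ less negative, energy gives larger minimum — the minimum value equals $\big(\tfrac12-\tfrac1r\big)\tau^{r/(2-r)}\big(-r\,\mathcal E_K(\tilde\sigma)\big)^{2/(2-r)}$, increasing in $\mathcal E_K(\tilde\sigma)$ since the exponent on $-\mathcal E_K$ is positive and $\tfrac12-\tfrac1r<0$ forces us to double-check the sign — I would verify this monotonicity carefully), we get that the penalized objective of $\sigma$ is at least $\min_c g_{\eta^*}(c) = g(c_\tau)$, which is the penalized objective of $\eta^*_\tau$. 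Hence $\eta^*_\tau$ is a minimizer, and uniqueness comes from Proposition~\ref{prop:penalized_problem}. Direction (ii) is the mirror image: given the penalized minimizer $\eta^*_\tau$, rescale to unit second moment via $c_\tau = m(\eta^*_\tau)$, and run the same comparison in reverse, using that any constrained competitor $\eta$ dilated by $c_\tau$ is a penalized competitor whose objective we already know how to compare. One should also check consistency of the two formulas for $c_\tau$ in (i) and (ii): if $\eta^*$ and $\eta^*_\tau$ are a matching pair, then $m(\eta^*_\tau) = c_\tau\,m(\eta^*) = c_\tau$, and $c_\tau^{2-r} = -\tau r\mathcal E_K(\eta^*)$ should reproduce the first formula — a quick sanity check via the scaling identity.

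The main obstacle I anticipate is the careful sign-and-monotonicity bookkeeping in the step "the minimum value of $g_{\tilde\sigma}$ over $c>0$ is monotone increasing in $\mathcal E_K(\tilde\sigma)$": because $r\in(0,2)$, the exponent $2/(2-r)$ and the factor $\tfrac12-\tfrac1r$ have signs that depend on whether $r<1$ or $r>1$, and one must confirm the net dependence on $\mathcal E_K(\tilde\sigma)$ has the right monotonicity uniformly for $r\in(0,2)$ — I would isolate this as a short scalar calculus lemma. The secondary nuisance is degenerate measures: $\mathcal E_K(\eta)=0$ exactly for Diracs, where the "ray" collapses and $c_\tau$ is undefined; since Diracs are never minimizers of the constrained problem (their energy is $0$, strictly worse than any spread-out admissible measure by negative-definiteness) and give nonnegative penalized objective, these cases are excluded at the outset and cause no trouble. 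Measurability and the validity of the change-of-variables formula under $(c\,\Id)_\#$ are routine.
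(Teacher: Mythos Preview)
Your approach is essentially the same as the paper's---both rest on the scaling identity $\mathcal E_K((c\,\Id)_\#\eta)=c^r\mathcal E_K(\eta)$ and a one-dimensional optimization in the dilation parameter---but the paper's presentation is a bit cleaner. Instead of projecting an arbitrary competitor $\sigma$ to the constraint sphere and then invoking monotonicity of $\min_c g_{\tilde\sigma}(c)$ in $\mathcal E_K(\tilde\sigma)$, the paper writes the penalized infimum directly as the iterated minimum
\[
\inf_{c>0}\Bigl[c^r\Bigl(\inf_{\eta:\int\|x\|^2\,\d\eta=1}\mathcal E_K(\eta)\Bigr)+\tfrac{c^2}{2\tau}\Bigr],
\]
so the inner problem \emph{is} the constrained problem and no monotonicity lemma is needed. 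In fact your monotonicity worry dissolves anyway: since $c^r>0$, the inequality $\mathcal E_K(\tilde\sigma)\ge\mathcal E_K(\eta^*)$ gives $g_{\tilde\sigma}(c)\ge g_{\eta^*}(c)$ pointwise in $c$, hence also for the minima---no explicit evaluation of $\min_c g$ is required.

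One small correction: your claim that $g(c)=c^r\mathcal E_K(\eta)+c^2/(2\tau)$ is strictly convex on $(0,\infty)$ is false for $r\in(1,2)$, since $g''(c)=r(r-1)c^{r-2}\mathcal E_K(\eta)+1/\tau\to-\infty$ as $c\to0+$. What you actually need (and what holds for all $r\in(0,2)$) is that $g'(c)=0$ has a unique positive root and that $g(0+)=0$, $g(\infty)=\infty$, $g(c_\tau)<0$; this suffices for $c_\tau$ to be the unique global minimizer, and the rest of your argument goes through unchanged.
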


\begin{proof}
Let $\tau >0$ be the fixed step in \eqref{eq:penalized_problem}.
For any $c_\tau>0$ and $\eta_\tau \in\P_2(\R^d)$, we have 
$
\mathcal E_K((c_\tau \Id)_\#\eta_\tau)=c^r_\tau\mathcal E_K(\eta_\tau).
$
Then the objective in \eqref{eq:penalized_problem} can be rewritten as
\begin{align}
&\inf_{c_\tau> 0} \inf_{\eta_\tau\in\P_2(\R^d)} \mathcal E_K(\eta_\tau) + \frac{1}{2\tau} c_\tau^2
\quad\text{s.t.}\quad 
\int_{\R^d}\|x\|_2^2\, \d\eta_\tau(x)=c_\tau^2\label{eq:reformulated_opt_problem}\\
&=
\inf_{c_\tau>0}\inf_{\eta_\tau\in\P_2(\R^d)}
c_\tau^r\mathcal E_K((c_\tau^{-1}\Id)_\#\eta_\tau) 
+ \frac{1}{2\tau} c_\tau^2
\;\;\text{s.t.}\; 
\int_{\R^d}\|x\|_2^2 \, \d(c_\tau^{-1}\Id)_\#\eta_\tau(x)=1\\
&=\inf_{c_\tau>0}c_\tau^r \Big(\inf_{\eta\in\P_2(\R^d)} \mathcal E_K(\eta)\Big) + \frac{1}{2\tau} c_\tau^2\quad\text{s.t.}\quad \int_{\R^d}\|x\|_2^2\d\eta(x)=1,
\end{align}
where $\eta = (c_\tau^{-1}\Id)_\#\eta_\tau$.
Now, the set of minimizer with respect to $\eta$ is given by the set of all solutions $\eta^*$ of \eqref{eq:constraint_problem}. 
Thus, the set of solutions of \eqref{eq:penalized_problem} is empty if and only if the set of solutions of \eqref{eq:constraint_problem} is empty.
Further, setting the derivative with respect to $c_\tau$ to $0$ yields that the minimizer $c_\tau^*$ has to fulfills
  \begin{equation}
    0=r  (c_\tau^*)^{r-1}  \mathcal E_K(\eta^*) +\frac1\tau c_\tau^*
    \quad\Leftrightarrow\quad 
    c_\tau^*=(-\tau r \mathcal E_K(\eta^*))^{1/(2-r)}.
  \end{equation}
It is easy to verify that $\eta^*\neq\delta_0$ 
such that $\mathcal E_K(\eta^*)<0$ and $2-r>0$ ensuring that 
this expression is well-defined.
In summary, we obtain that $\eta^*_\tau$ is a solution of \eqref{eq:penalized_problem} 
if and only if $\eta^*_\tau =(c_\tau\Id)_\#\eta^*$ for some solution $\eta^*$ of \eqref{eq:constraint_problem} and $c_\tau=(-\tau \, r \, \mathcal E_K(\eta^*))^{1/(2-r)}$.
Following the arguments in the reverse direction and noting that \eqref{eq:reformulated_opt_problem} implies  $c_\tau^2=\int_{\R^d}\|x\|_2^2\d \eta^*$,
we obtain the second claim.
\end{proof}

In the following, we denote by $\mathcal U_A$ the uniform distribution on $A$.

\begin{theorem}    \label{thm:HD-spec}
Let $K$ be a Riesz kernel \eqref{eq:riesz} with $r\in (0,2)$. Then the solution $\eta_\tau^*$ of \eqref{eq:penalized_problem} is
 \begin{enumerate}[\upshape(i)]
 \item for  $d+r< 4$  given by
   \begin{equation*}
     \eta^*_{\tau} = \rho_{s_{\tau}} \mathcal U_{s_{\tau} \mathbb B^d}, \quad
     \rho_s(x) \coloneqq A_s \, (s^2 - \|x\|_2^2)^{1-\frac{r+d}{2}},\qquad x \in s \mathbb B^d,
   \end{equation*}
   where
   $\mathbb{B}^d \coloneqq \{x \in \R^d : \|x\|_2 \le 1\}$ and
   \begin{equation*}
     A_s \coloneqq \tfrac{\Gamma\left(\frac{d}{2} \right)}{\pi^{\frac{d}{2}} \Beta\bigl(\frac{d}{2},2-\frac{r+d}{2}\bigr)} s^{-(2-r)} , \quad 
     s_{\tau} \coloneqq  \left( \tfrac{\Gamma(2-\frac{r}{2}) \, \Gamma(\frac{d+r}{2}) \,r \, \tau}{\frac{d}{2} \,\Gamma(\frac{d}{2})} \right)^{\frac{1}{2-r}}
   \end{equation*}
   with the Beta function $\Beta$ and the Gamma function $\Gamma$,
 \item for $d+r \ge 4$ given by 
   \begin{equation*}
     \eta_\tau^*= \mathcal U_{c_\tau \mathbb S^{d-1}}, \quad
     c_\tau \coloneqq (-\tau \, r \, \mathcal E_K(\eta^*))^{1/(2-r)},
   \end{equation*}
   where  $\eta^* = \mathcal U_{\mathbb S^{d-1}}$,
   $\mathcal E_K(\eta^*)=-\tfrac12{_2F_1}\big(-\tfrac{r}{2},\tfrac{2-r-d}{2};\tfrac{d}{2};1\big)$
  with the hypergeometric function ${_2F_1}$
  and
  $\mathbb{S}^{d-1} \coloneqq \{x \in \R^d : \|x\| = 1\}$.
  \end{enumerate}
\end{theorem}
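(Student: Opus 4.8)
The plan is to characterize $\eta^*_\tau$ through the optimality conditions \eqref{eq:opt_cond} from Proposition~\ref{prop:penalized_problem} together with the orthogonal invariance established there, and then to verify that the explicit candidates in (i) and (ii) satisfy these conditions. First I would invoke Proposition~\ref{prop:penalizer_vs_constraint} to pass freely between the penalized problem \eqref{eq:penalized_problem} and the constrained problem \eqref{eq:constraint_problem}; it suffices to identify the minimizer of one of them, and the rescaling $x \mapsto c_\tau x$ transports the answer to the other. Since $\eta^*_\tau$ is orthogonally invariant with compact support, in case (i) it is reasonable to look for a radial density supported on a ball $s\mathbb B^d$, and in case (ii) for a measure supported on a sphere $c\mathbb S^{d-1}$ (intuitively, when $d+r$ is large the Riesz repulsion is too weak to fill a ball against the confining field, so the mass concentrates on a sphere).

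The core computation is the \emph{Riesz potential of these radial measures}. For case (i), I would recall (or derive) the classical identity that, for the density $\rho_s(x) = A_s (s^2 - \|x\|_2^2)^{1-(r+d)/2}$ on $s\mathbb B^d$, the Riesz potential $\int K(x_1,x_2)\,\d(\rho_s\mathcal U_{s\mathbb B^d})(x_2) = -\int \|x_1-x_2\|_2^r \rho_s(x_2)\,\d x_2$ is, up to an additive constant, an affine function of $\|x_1\|_2^2$ on the ball and bounded below appropriately outside — this is the well-known ``flat potential'' property behind equilibrium measures with quadratic external fields (the same computation that appears in e.g.\ the Ullman distribution / Marchenko–Pastur type results, and in \cite{BookSaTo97, BookBoHaSa19}). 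Matching the $\|x_1\|_2^2$-coefficient of this potential against $-\tfrac{1}{2\tau}\|x_1\|_2^2$ fixes the radius $s_\tau$, and the normalization $\int \rho_s \, \d\mathcal U_{s\mathbb B^d} = 1$ fixes $A_s$; these two Beta-function integrals produce the stated closed forms. For case (ii) the analogous step is to compute the Riesz potential of the uniform measure on a sphere, which by symmetry depends only on $\|x_1\|_2$; one checks that on the sphere $c_\tau\mathbb S^{d-1}$ the sum of this potential and $\tfrac{1}{2\tau}\|x_1\|_2^2$ is constant, and that the inequality in \eqref{eq:opt_cond} holds off the sphere precisely when $d+r\ge 4$ (this threshold is exactly where the radial second derivative of the spherical potential changes sign, so that the quadratic field can no longer be balanced by a ball-supported measure). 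The value $\mathcal E_K(\mathcal U_{\mathbb S^{d-1}}) = -\tfrac12 {_2F_1}(-\tfrac r2, \tfrac{2-r-d}{2}; \tfrac d2; 1)$ then follows from integrating $-\|x_1-x_2\|_2^r$ over $\mathbb S^{d-1}\times\mathbb S^{d-1}$, a standard spherical integral expressible via ${_2F_1}$.

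Once the potentials are in hand, the verification is routine: plug the candidate into \eqref{eq:opt_cond}, read off $C_{K,\tau}$, and confirm (i) $|\mathcal E_K(\eta^*_\tau)|<\infty$ (the density $(s^2-\|x\|_2^2)^{1-(r+d)/2}$ is integrable iff $r+d<4$, which is exactly the case split), (ii) compact support, (iii) orthogonal invariance — all evident by construction. Uniqueness is already guaranteed by Proposition~\ref{prop:penalized_problem}, so producing one measure satisfying \eqref{eq:opt_cond} finishes the proof.

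\textbf{Main obstacle.} The hard part will be the explicit evaluation of the Riesz potential $\int \|x_1-x_2\|_2^r \, \rho_s(x_2)\,\d x_2$ and showing it is affine in $\|x_1\|_2^2$ inside the ball (with the correct inequality outside). This is a genuine special-function computation — one natural route is via the Fourier representation in Proposition~\ref{prop:EK_power_Fourier}, recognizing $\rho_s$ as (a rescaling of) the measure whose Riesz transform is elementary, or alternatively via known formulas for fractional Laplacians of $(1-\|x\|^2)_+^\alpha$ on the ball. Getting the constants $A_s$ and $s_\tau$ exactly right, and cleanly handling the boundary case $d+r=4$ (where the density degenerates to the constant and the ball-supported family meets the sphere-supported one), will require care; I would relegate these Beta/Gamma bookkeeping steps to an appendix.
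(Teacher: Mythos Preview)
Your proposal is correct and follows essentially the same route as the paper: verify the candidate measures against the optimality conditions \eqref{eq:opt_cond} from Proposition~\ref{prop:penalized_problem}, with part~(i) delegated to the literature (the paper cites \cite{CaHu17,GuCaOl21} for the ``flat potential'' identity on the ball) and part~(ii) handled by an explicit computation of the spherical Riesz potential.

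For the obstacle you flag in~(ii), the paper's concrete tool is worth knowing: it obtains the potential of $\mathcal U_{R\mathbb S^{d-1}}$ via the Gegenbauer expansion of $\|x-y\|_2^{r}$ together with Gauss's quadratic transformation for ${}_2F_1$, arriving at the closed form \eqref{eq:int_dist_Sd}. The inequality off the sphere is then reduced to showing that two one-variable functions built from ${}_2F_1(-\tfrac r2,-\tfrac{d+r-2}{2};\tfrac d2;\,\cdot\,)$ attain their minimum at the boundary point $1$; this is done not by a second-derivative sign heuristic but by differentiating, using the derivative rule for ${}_2F_1$, and invoking Euler's integral representation to certify positivity of the resulting ${}_2F_1$ factors (this is where the threshold $d+r\ge 4$ enters, as the sign of the parameter $-\tfrac{d+r-4}{2}$). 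So your plan is right, but expect the off-support inequality to require a dedicated monotonicity lemma rather than falling out of a curvature observation.
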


The proof is given in \ref{app_supp}. 
The special case $d=3$ and $r=1$ was recently also handled in \cite{ChSaWo22b}.
Note that for $d \ge 3$ and $r \in [1,2)$ the densities $\rho_s$ are not integrable anymore.

Interestingly, for $d = 3$, we observe a so-called \emph{condensation phenomenon} starting at $r=1$, where the absolutely continuous measure switches to a singular one. 
A similar phenomenon was recognized for the logarithmic kernel $K(x_1,x_2) = - \log\|x_1-x_2\|$, corresponding to $r=0$ and $d \ge 4$ in \cite[Thm~1.2:~(i)(b)]{ChSaWo22b}.

For the case $r=1$, in which we are mainly interested, 
we obtain the following analytic expressions for 
the solution $\eta^*$ of the constrained problem 
and the steepest descent direction $\HD_- \mathcal E_K (\delta_p)$.
The corollary straightforwardly follows 
from the relation 
$\eta^* = (c_\tau^{-1} \Id)_\#\eta^*_\tau$ in Proposition~\ref{prop:penalizer_vs_constraint} 
and since  $- \HD_{\delta_p \otimes \eta^*} \mathcal E_K(\delta_p)
      = -\mathcal E_K(\eta^*)$
      and
$\HD_- \mathcal E_K (\delta_p) = \left(- \HD_{\delta_p \otimes \eta^*} \mathcal E_K(\delta_p) \right) \cdot (\delta_p \otimes \eta^*)$  by Theorem \ref{the:EKdirDF}.

\begin{corollary}    \label{thm:HD-spec_0}
    Let $K$ be a Riesz kernel \eqref{eq:riesz} with $r=1$. Then the solution $\eta^*$ of \eqref{eq:constraint_problem} and the  steepest descent directions of $\mathcal E_K$ at $\delta_p$ are given as follows:
    \begin{enumerate}[\upshape(i)]
        \item For $d=1$: 
        $ \eta^*= \mathcal U_{[-\sqrt{3}, \sqrt{3}]}
        \; \text{ and } \;
            \HD_- \mathcal E_K (\delta_p)
            =
            \{\delta_p \otimes \mathcal U_{[-1,1]}\}
        $.
        \item For $d=2$: 
        $
            \eta^*=   \rho_{ \sqrt{3/2} }\, 
            \mathcal U_{\sqrt{3/2} \, \mathbb B^2}$
        and 
        $    \HD_- \mathcal E_K (\delta_p)
            =
            \{\delta_p \otimes \rho_{\frac{\pi}4} \mathcal U_{\frac{\pi}4 \mathbb{B}^2}\},
       $
       where $\rho_s$ is the density function
        \begin{equation}
            \rho_s(x) 
            \coloneqq
                \frac{1}{2\pi s} \bigl(s^2 - \|x\|_2^2\bigr)^{-\frac12}, \qquad x \in s \, \mathbb B^2.
        \end{equation}
        \item For $d \ge 3$:
        $
            \eta^*= \mathcal U_{\mathbb S^{d-1}}
        $
        and
        $
            \HD_- E_K (\delta_p)
            =
            \{\delta_p \otimes \mathcal U_{R_d \mathbb S^{d-1}} \},
        $
        where\\
        $
        R_d \coloneqq  \tfrac{1}{2} \,_2F_1(-\tfrac12, -\tfrac{d-1}{2}; \tfrac{d}{2}; 1)
        $
        with the hypergeometric function $_2F_1$.
    \end{enumerate}
\end{corollary}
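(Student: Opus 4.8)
The plan is to assemble results already in place rather than to compute from scratch. By Proposition~\ref{the:geomTSac}(ii), every $\zb v\in\zb\T_{\delta_p}\P_2(\R^d)$ is of the form $\delta_p\otimes\eta$ with $W_{\delta_p}^2(\zb v,\zb 0_{\delta_p})=\int_{\R^d}\|x\|_2^2\,\d\eta(x)$, and since $\mathcal E_K$ is locally Lipschitz for $r=1$ by Proposition~\ref{prop:lip-int-pot}, Proposition~\ref{prop:dini-hard} together with Theorem~\ref{the:EKdirDF} gives $\HD_{\zb v}^-\mathcal E_K(\delta_p)=\HD_{\zb v}\mathcal E_K(\delta_p)=\mathcal E_K(\eta)$. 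Hence the minimization over unit-norm directions in \eqref{def:HD-} is exactly the constrained problem \eqref{eq:constraint_problem}, whose minimizer $\eta^*$ exists and is unique by Propositions~\ref{prop:penalized_problem} and \ref{prop:penalizer_vs_constraint}. Since $\eta^*$ has unit second moment it is not a Dirac measure, so $\mathcal E_K(\eta^*)<0$, and therefore $\HD_-\mathcal E_K(\delta_p)$ is the singleton $\bigl\{(-\mathcal E_K(\eta^*))\cdot(\delta_p\otimes\eta^*)\bigr\}=\bigl\{\delta_p\otimes((-\mathcal E_K(\eta^*))\,\Id)_\#\eta^*\bigr\}$; it remains to identify $\eta^*$ and the scalar $-\mathcal E_K(\eta^*)$ in each dimension.

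To identify $\eta^*$ I would invoke $\eta^*=(c_\tau^{-1}\Id)_\#\eta^*_\tau$ from Proposition~\ref{prop:penalizer_vs_constraint}(ii) with $c_\tau=(\int_{\R^d}\|x\|_2^2\,\d\eta^*_\tau(x))^{1/2}$, where $\eta^*_\tau$ is read off from Theorem~\ref{thm:HD-spec} specialized to $r=1$: for $d\in\{1,2\}$ (the regime $d+r<4$) it is the radial density $\rho_{s_\tau}$, which for $d=1$ reduces to $\mathcal U_{[-s_\tau,s_\tau]}$ since the exponent $1-\tfrac{r+d}{2}$ vanishes; and for $d\ge3$ (the regime $d+r\ge4$) it is $\mathcal U_{c_\tau\mathbb S^{d-1}}$. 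The second moment of $\eta^*_\tau$ is elementary to compute — $s_\tau^2/3$ for $d=1$, a single radial (Beta-type) integral giving $\tfrac23 s_\tau^2$ for $d=2$, and $c_\tau^2$ for $d\ge3$ since the mass lies on a sphere of that radius — so dividing out $c_\tau$ rescales the radius $s_\tau\mapsto s_\tau/c_\tau$ while the self-similar shape of the density is preserved, because $(c\,\Id)_\#\mathcal U_{[-s,s]}=\mathcal U_{[-cs,cs]}$ and, by a direct check with the explicit normalizing constant $A_s$, $(c\,\Id)_\#(\rho_s\,\d x)=\rho_{cs}\,\d x$. This lands on $\mathcal U_{[-\sqrt3,\sqrt3]}$ for $d=1$, $\rho_{\sqrt{3/2}}$ for $d=2$, and $\mathcal U_{\mathbb S^{d-1}}$ for $d\ge3$, and simultaneously verifies $\int\|x\|_2^2\,\d\eta^*=1$.

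For the scalar $-\mathcal E_K(\eta^*)$: for $d=1$, $\mathcal E_K(\mathcal U_{[-\sqrt3,\sqrt3]})=-\tfrac12\iint|x-y|\,\d\mathcal U_{[-\sqrt3,\sqrt3]}(x)\,\d\mathcal U_{[-\sqrt3,\sqrt3]}(y)=-\tfrac1{\sqrt3}$, using that $\mathcal U_{[0,L]}$ has mean absolute difference $L/3$ with $L=2\sqrt3$, so that $((-\mathcal E_K(\eta^*))\,\Id)_\#\mathcal U_{[-\sqrt3,\sqrt3]}=\mathcal U_{[-1,1]}$; for $d\ge3$, $\mathcal E_K(\eta^*)=\mathcal E_K(\mathcal U_{\mathbb S^{d-1}})$ is given in closed form by Theorem~\ref{thm:HD-spec}(ii) at $r=1$, namely $-\tfrac12\,{_2F_1}(-\tfrac12,-\tfrac{d-1}{2};\tfrac d2;1)=-R_d$, so $(R_d\,\Id)_\#\mathcal U_{\mathbb S^{d-1}}=\mathcal U_{R_d\mathbb S^{d-1}}$; and for $d=2$ I would obtain $-\mathcal E_K(\eta^*)$ either from the direct energy integral of $\rho_{\sqrt{3/2}}$ or, more cheaply, by matching the two expressions for the rescaling constant $c_\tau$ in Proposition~\ref{prop:penalizer_vs_constraint} (using $c_\tau=-\tau\mathcal E_K(\eta^*)$ from part~(i) at $r=1$ against $c_\tau=(\int\|x\|_2^2\,\d\eta^*_\tau)^{1/2}$ from part~(ii)), and then $(c\,\Id)_\#(\rho_s\,\d x)=\rho_{cs}\,\d x$ delivers the density $\rho_{\pi/4}$ on the ball of radius $\pi/4$.

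The bulk of the work is therefore purely bookkeeping: the $d=2$ radial second-moment integral and the careful tracking of the Gamma/Beta normalizing constants that appear when Theorem~\ref{thm:HD-spec} is specialized to $r=1$. The one conceptual point to record explicitly is that the $\argmin$ in \eqref{def:HD-} is attained and singleton, so that $\HD_-\mathcal E_K(\delta_p)$ is genuinely a one-point set — this is immediate from the uniqueness in Proposition~\ref{prop:penalizer_vs_constraint}.
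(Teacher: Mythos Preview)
Your proposal is correct and follows essentially the same route as the paper: identify $\eta^*$ by rescaling the penalized minimizer $\eta^*_\tau$ from Theorem~\ref{thm:HD-spec} via Proposition~\ref{prop:penalizer_vs_constraint}, and obtain the steepest descent direction as $(-\mathcal E_K(\eta^*))\cdot(\delta_p\otimes\eta^*)$ using Theorem~\ref{the:EKdirDF}. The paper's proof is a one-sentence pointer to exactly these two ingredients, while you have spelled out the second-moment computations and the identification of the scaling constant $-\mathcal E_K(\eta^*)$ explicitly; this extra bookkeeping is sound and matches the claimed formulas in each dimension.
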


There is the following interesting link between the measures $\eta^*$ in dimensions $d=1,2,3$ which states that they 
follow by projecting the measure from the higher dimensional space to the lower dimensional one. 

\begin{corollary}\label{cor:proj}
     Consider the rescaled measure $\mu_d \coloneqq (C_d \mathrm{Id})_{\#} \eta^*$ of Corollary~\ref{thm:HD-spec} with $r=1$, where $C_d > 0$ is chosen such that 
        \[
        \mathrm{supp} (\mu_d)
        = \begin{cases}
            \mathbb B^d, & d = 1,2,\\
            \mathbb S^2, & d=3.
        \end{cases}
        \]
        Then $\mu_d$ can be considered as a projection of $\mu_{d'}$ onto a $d$-dimensional subspace $X_d \subset \R^{d'}$, more precisely  
        $
        \mu_d = (\pi_{1,\dots,d})_{\#} \mu_{d'}
        $, $1 \le d \le d' \le 3$.
\end{corollary}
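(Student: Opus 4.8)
The plan is to work directly with the explicit densities/measures furnished by Corollary~\ref{thm:HD-spec_0} and to compute the pushforward under the coordinate projection $\pi_{1,\dots,d}$ by hand. Concretely, after the rescaling by $C_d$ we have $\mu_1=\mathcal U_{[-1,1]}=\mathcal U_{\mathbb B^1}$, $\mu_2=\rho_1\,\mathcal U_{\mathbb B^2}$ with $\rho_1(x)=\tfrac{1}{2\pi}(1-\|x\|_2^2)^{-1/2}$ (up to the normalizing constant of $\mathcal U_{\mathbb B^2}$, so effectively $\mu_2$ has density $c(1-\|x\|_2^2)^{-1/2}$ on $\mathbb B^2$), and $\mu_3=\mathcal U_{\mathbb S^2}$. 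It then suffices to verify the two identities $\mu_1=(\pi_1)_\#\mu_2$ and $\mu_2=(\pi_{1,2})_\#\mu_3$, since $(\pi_1)_\#\mu_3=(\pi_1)_\#(\pi_{1,2})_\#\mu_3=(\pi_1)_\#\mu_2=\mu_1$ follows by composing projections.

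For the step $\mu_2=(\pi_{1,2})_\#\mu_3$: the pushforward of the uniform surface measure on $\mathbb S^2$ under vertical projection to the equatorial disk $\mathbb B^2$ is the classical Archimedes/Lambert map, whose Jacobian makes the image measure uniform on the cylinder in one coordinate; translating this into the $(x_1,x_2)$-coordinates on the disk, one finds that each point $x\in\mathring{\mathbb B}^2$ has two preimages $(x,\pm\sqrt{1-\|x\|_2^2})$ on the sphere, and the surface-area element contributes a factor proportional to $(1-\|x\|_2^2)^{-1/2}$ — precisely the density $\rho_1$. I would make this rigorous by writing $\mathbb S^2$ as a graph over the disk, $z=\pm g(x)$ with $g(x)=\sqrt{1-\|x\|_2^2}$, and using the area formula $\d S=\sqrt{1+\|\nabla g\|_2^2}\,\d x=(1-\|x\|_2^2)^{-1/2}\,\d x$. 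For the step $\mu_1=(\pi_1)_\#\mu_2$: I compute the marginal density of $\mu_2$ by integrating out $x_2$, i.e.\ $x_1\mapsto \int_{-\sqrt{1-x_1^2}}^{\sqrt{1-x_1^2}} c\,(1-x_1^2-x_2^2)^{-1/2}\,\d x_2$; the substitution $x_2=\sqrt{1-x_1^2}\,\sin\theta$ turns the integrand into a constant, so the inner integral is independent of $x_1$, giving a uniform density on $[-1,1]$. Alternatively this also follows immediately by composing the two sphere projections. In either case one checks the normalizing constants match (they must, since all measures are probability measures), so no separate constant-chasing is needed beyond confirming positivity of $C_d$.

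I would also record that the scaling constants $C_d$ are well-defined and positive: from Corollary~\ref{thm:HD-spec_0}, $\eta^*$ has support $[-\sqrt3,\sqrt3]$ for $d=1$, $\sqrt{3/2}\,\mathbb B^2$ for $d=2$, and $\mathbb S^2$ for $d=3$, so $C_1=1/\sqrt3$, $C_2=\sqrt{2/3}$, $C_3=1$ do the job; since the projection of a ball (resp.\ sphere) of radius $R$ is the ball of radius $R$ in the lower-dimensional subspace, the support inclusions are consistent with the claimed identities, which is a useful sanity check before doing the density computation.

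The main obstacle is purely computational bookkeeping rather than conceptual: correctly accounting for the two-sheeted nature of the vertical projection $\mathbb S^2\to\mathbb B^2$ and the resulting surface-measure Jacobian, and making sure the density $\rho_1$ obtained this way matches the one in Corollary~\ref{thm:HD-spec_0} (including the constant $\tfrac{1}{2\pi}$ versus the $\mathcal U_{\mathbb B^2}$ normalization — a factor of $\pi$ that must be tracked carefully). Once the Archimedes-projection identity $\mu_2=(\pi_{1,2})_\#\mu_3$ is in hand, everything else is an elementary one-variable integral, and the $d=1\!\leftarrow\! d=3$ case is free by functoriality of pushforward under $\pi_1=\pi_1\circ\pi_{1,2}$.
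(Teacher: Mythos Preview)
Your proposal is correct and follows essentially the same route as the paper: reduce to the two identities $(\pi_{1,2})_\#\mu_3=\mu_2$ and $(\pi_1)_\#\mu_2=\mu_1$, establish them by direct computation, and obtain the remaining case by composing projections. The only cosmetic difference is that the paper computes the sphere-to-disk pushforward via spherical coordinates on a generating class of sets rather than via the graph parametrization and area formula you propose; both are equivalent and equally elementary.
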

\begin{proof}
For the case $d=2$, we obtain for the sets
        \[
       A_{\theta_1, \theta_2}^{\varphi_1, \varphi_2}   \coloneqq  \{ x \in \R^2: x = (\sin(\theta) \cos(\varphi), \sin(\theta) \sin(\varphi)), \; \theta_1 \le \theta \le \theta_2,\,  \varphi_1 \le \varphi \le \varphi_2 \},
       \]
where 
$0 \le \theta_1 \le \theta \le \theta_2 \le \frac{\pi}{2}$, $\varphi_1 \le \varphi \le \varphi_2$
that
        \[
           (\pi_{1,2})_{\#} \mathcal U_{\mathbb S^2} (A_{\theta_1,\theta_2}^{\varphi_1,\varphi_2}) = \frac{2}{4\pi} 
           \int_{\theta_1}^{\theta_2} \int_{\varphi_1}^{\varphi_2} \d \varphi \sin(\theta) \d \theta
            = \frac{1}{2\pi} (\varphi_2 - \varphi_1) (\cos(\theta_1)-\cos(\theta_2)).
        \]
        Since 
        \begin{align}
           \rho_{1} \mathcal U_{\mathbb B^2}(A_{\theta_1,\theta^2}^{\varphi_1,\varphi_2}) 
           &= \frac{1}{2\pi} \int_{\sin(\theta_1)}^{\sin(\theta_2)} \int_{\varphi_1}^{\varphi_2} \d \varphi (1-r^2)^{-\frac12} r\d r \\
           &= \frac{1}{2\pi}(\varphi_2-\varphi_1)(\sqrt{1-\sin(\theta_1)^2} - \sqrt{1-\sin(\theta_2)^2}),
        \end{align}
        we have
        $
           (\pi_{1,2})_{\#} \mathcal U_{\mathbb S^2} (A_{\theta_1,\theta_2}^{\varphi_1,\varphi_2})
           =  \rho_{1} \mathcal U_{\mathbb B^2}(A_{\theta_1,\theta^2}^{\varphi_1,\varphi_2})
        $
        and arrive at the assertion since the sets $A_{\theta_1, \theta_2}^{\varphi_1, \varphi_2}$ generate the Borel $\sigma$-algebra on $\mathbb B^2$.
        
        For the case $d=1$, using
        $
           (\pi_{1})_{\#} \mathcal U_{\mathbb S^2} = (\pi_{1})_{\#} (\pi_{1,2})_{\#} \mathcal U_{\mathbb S^2} = (\pi_{1})_{\#} \rho_1 \mathcal U_{\mathbb B^2}
        $,
        it is sufficient to show that $(\pi_{1})_{\#} \rho_1 \mathcal U_{\mathbb B^2} = \frac{1}{2} \mathcal U_{[-1,1]}$. 
        This follows from integration of
        the density $\rho_1(x)$ along the lines $l_s = \{ x=(x_1,x_2) \in \R^2 \;:\; x_1 = s\}$ giving
        \[
          \frac{2}{2\pi} \int_{0}^{\sqrt{1-s^2}} (1 - (s^2 + t^2))^{-\frac12} \d t
          = \int_0^{s'} (s'^2 - t^2)^{-\frac12} \d t = \frac12, \qquad -1 < s < 1,
        \]
        which is the density of $\mathcal U_{\mathbb B^1}$.
\end{proof}

To determine the whole steepest descent flow,
we need also the steepest descent directions at more general measures than just point measures.
The proof is in \ref{proof:directional_derivatives_abs_cont}.

\begin{theorem}\label{thm:directional_derivatives_abs_cont}
  Let $K$ be a Riesz kernel \eqref{eq:riesz} with $r \in [1,2)$
  and $\mu \in \P_2(\R^d)$, where we assume that  $\mu(\{x\}) = 0$, $x \in \R^d$ in case $r=1$.
  Then the unique steepest descent direction is given by
    \begin{equation}
      \HD_-\mathcal E_K(\mu)=\{(\Id,-\nabla G)_\#\mu\},\qquad  G(x_1)\coloneqq \int_{\R^d} K(x_1,x_2) \d \mu(x_2).
    \end{equation}
\end{theorem}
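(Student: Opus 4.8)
The plan is to compute the Dini derivative $\D_{\zb v}\mathcal E_K(\mu)$ for an arbitrary tangent velocity plan $\zb v\in\zb\T_\mu\P_2(\R^d)$ and then solve the constrained minimization problem \eqref{eq:constr} directly. By Proposition~\ref{prop:lip-int-pot}, $\mathcal E_K$ is locally Lipschitz continuous for $r\in[1,2)$, so by Proposition~\ref{prop:dini-hard} the Dini and Hadamard derivatives coincide whenever the former exists; hence it suffices to work with $\D_{\zb v}\mathcal E_K(\mu)$. Since $\mu$ is non-atomic (when $r=1$; for $r\in(1,2)$ the kernel is $C^1$), I expect the regular tangent space to suffice, i.e.\ we may restrict to $\zb v=(\Id,v)_\#\mu$ with $v\in\T_\mu\P_2(\R^d)$, using Proposition~\ref{the:geomTSac}(i). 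For such $\zb v$ we have $\gamma_{\zb v}(t)=(\Id+tv)_\#\mu$, so
\[
\mathcal E_K(\gamma_{\zb v}(t)) = -\tfrac12\int_{\R^d}\int_{\R^d}\|x_1-x_2+t(v(x_1)-v(x_2))\|_2^r\,\d\mu(x_1)\,\d\mu(x_2).
\]
First I would differentiate this under the integral sign at $t=0+$. For $r\in(1,2)$ the integrand $t\mapsto\|x_1-x_2+t\Delta v\|^r$ is differentiable everywhere (the only potential issue is the diagonal $x_1=x_2$, which is handled because $\|z\|^r$ with $r>1$ is $C^1$ at the origin with vanishing gradient), giving derivative $\langle\nabla_{x_1}K(x_1,x_2),v(x_1)-v(x_2)\rangle$ after symmetrization. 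For $r=1$, where $\|z\|$ is not differentiable at $z=0$, one must use that $\mu\otimes\mu$ charges the diagonal with mass zero because $\mu$ is non-atomic, so the set where differentiability fails is $\mu\otimes\mu$-null; dominated convergence (the difference quotients are bounded by $\|\Delta v\|$ in $L^1(\mu\otimes\mu)$, which is finite since $v\in L^2(\mu)$ and $\mu$ has finite second moment) then yields
\[
\D_{\zb v}\mathcal E_K(\mu) = \frac12\int_{\R^d}\int_{\R^d}\bigl\langle\nabla_{x_1}K(x_1,x_2),v(x_1)-v(x_2)\bigr\rangle\,\d\mu(x_1)\,\d\mu(x_2) = \int_{\R^d}\langle\nabla G(x_1),v(x_1)\rangle\,\d\mu(x_1),
\]
the last equality by symmetry of $K$ and Fubini, with $G(x_1)=\int K(x_1,x_2)\,\d\mu(x_2)$.

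Next I would solve \eqref{eq:constr}. The functional $\zb v\mapsto\D_{\zb v}\mathcal E_K(\mu) = \langle\nabla G,v\rangle_{L^2(\mu)}$ is linear in $v\in\T_\mu\P_2(\R^d)$, so minimizing it over the unit sphere $\|v\|_{L^2(\mu)}=1$ of the Hilbert space $\T_\mu\P_2(\R^d)$ is a standard Cauchy–Schwarz argument: the minimizer is $v^* = -P(\nabla G)/\|P(\nabla G)\|_{L^2(\mu)}$, where $P$ is the orthogonal projection onto $\T_\mu\P_2(\R^d)$, with minimal value $-\|P(\nabla G)\|_{L^2(\mu)}$. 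Then the steepest descent direction as defined in \eqref{def:HD-} is $(\D_{v^*}\mathcal E_K(\mu))^-\cdot\zb v^* = \|P(\nabla G)\|_{L^2(\mu)}\cdot(\Id,v^*)_\#\mu = (\Id,-P(\nabla G))_\#\mu$. The final step is to show $P(\nabla G)=\nabla G$, i.e.\ that $\nabla G$ already lies in the regular tangent space. I would argue this using the characterization \eqref{tan_reg}: the map $x\mapsto x-\varepsilon\nabla G(x)$ is, for $\varepsilon$ small, the gradient of the convex function $x\mapsto\tfrac12\|x\|^2-\varepsilon G(x)$ — here one needs that $G$ is semiconcave, i.e.\ $\tfrac12\|x\|^2 - \varepsilon G$ is convex for small $\varepsilon$, which follows from $G$ being a convolution of the convex kernel $-\|\cdot\|^r$ ($r\ge1$) against a probability measure, hence $-G$ is convex and $x\mapsto x-\varepsilon\nabla G$ is even monotone for every $\varepsilon>0$; Theorem~\ref{thm:map-plan} then gives that $\Id-\varepsilon\nabla G$ is an optimal transport map, so $\nabla G\in\T_\mu\P_2(\R^d)$ by \eqref{tan_reg}.

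The main obstacle I anticipate is twofold: (a) rigorously justifying the differentiation under the integral at $r=1$ on the diagonal — this is where non-atomicity of $\mu$ is essential and must be invoked carefully, together with an integrable dominating function for the difference quotients; and (b) confirming $\nabla G\in\T_\mu\P_2(\R^d)$, which requires the semiconcavity/convexity argument and a check that $\nabla G\in L^2(\mu,\R^d)$ (so that Theorem~\ref{thm:map-plan}'s converse direction applies) — for $r=1$, $\nabla_{x_1}K(x_1,x_2) = -(x_1-x_2)/\|x_1-x_2\|$ is bounded, so $\nabla G$ is bounded and the $L^2$ integrability is automatic; for $r\in(1,2)$ one needs a moment estimate. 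Uniqueness of the steepest descent direction then comes for free from the strict convexity of the norm on the Hilbert space $\T_\mu\P_2(\R^d)$, which forces the minimizer $v^*$ to be unique.
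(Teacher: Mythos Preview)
Your computation of the Dini derivative and the dominated-convergence justification are essentially what the paper does, and your observation that $-G$ is convex (so that $\Id-\varepsilon\nabla G$ is the gradient of a convex function and hence induces an optimal plan) is a point the paper leaves implicit. However, there is a genuine gap in the reduction to the regular tangent space.

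You invoke Proposition~\ref{the:geomTSac}(i) to restrict attention to velocity plans of the form $\zb v=(\Id,v)_\#\mu$ with $v\in\T_\mu\P_2(\R^d)$, but that proposition requires $\mu\in\P_2^r(\R^d)$, i.e.\ absolute continuity with respect to Lebesgue measure. The hypothesis of Theorem~\ref{thm:directional_derivatives_abs_cont} is only that $\mu$ is non-atomic (for $r=1$), which is strictly weaker. This matters concretely: the paper applies the theorem in the proof of Theorem~\ref{prop:EK_steepest_descent_flow} to measures $\gamma(t)=(\alpha_t\Id)_\#\eta^*$, and for $d+r\ge 4$ one has $\eta^*=\mathcal U_{\mathbb S^{d-1}}$, which is non-atomic but singular. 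For such $\mu$ the geometric tangent space $\zb\T_\mu\P_2(\R^d)$ contains plans that are not induced by maps, and your minimization over the Hilbert subspace $\T_\mu\P_2(\R^d)$ neither establishes the global minimum over $\zb\T_\mu\P_2(\R^d)$ nor the uniqueness of the minimizer there.

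The paper's fix is to carry out the same differentiation for an \emph{arbitrary} $\zb v\in\zb\T_\mu\P_2(\R^d)$: writing $\gamma_{\zb v}(t)=(\pi_1+t\pi_2)_\#\zb v$ one obtains
\[
\D_{\zb v}\mathcal E_K(\mu)=\int_{\R^{2d}} v^\tT\nabla G(x)\,\d\zb v(x,v)=\langle(\Id,\nabla G)_\#\mu,\zb v\rangle_\mu,
\]
the last identity holding because the three-plan $\zb\alpha=(\pi_1,\pi_2,\nabla G\circ\pi_1)_\#\zb v$ is the unique element of $\Gamma_\mu(\zb v,(\Id,\nabla G)_\#\mu)$ (one marginal is concentrated on a graph). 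One then applies the Cauchy--Schwarz inequality in the metric space $(\zb V(\mu),W_\mu)$---namely $|\langle\zb v,\zb w\rangle_\mu|\le\|\zb v\|_\mu\|\zb w\|_\mu$ with equality only if $\zb v=c\cdot\zb w$ for some $c\ge 0$---rather than in the Hilbert space $L_2(\mu,\R^d)$. This simultaneously gives the minimum value and the uniqueness of the minimizer over the full geometric tangent space.
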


For the one-dimensional setting,
a complete formula for the steepest descent direction was given in \cite[Prop~5.4]{BoCaFrPe15}.
Now we are in the position to show the existence of non-trivial Wasserstein steepest descent flows for $r \in [1,2)$.

\begin{theorem}\label{prop:EK_steepest_descent_flow}
Let $K$ be a Riesz kernel with $r \in [1,2)$ and $\eta^*$ be the unique solution of the constrained problem~\eqref{eq:constraint_problem}.
Then the curve $\gamma\colon[0,\infty)\to\P_2(\R^d)$ given by
\begin{equation}
\label{eq:EK_steepest_descent_flow}
 \gamma(t) \coloneqq (\alpha_t \mathrm{Id})_{\#} \eta^*, \qquad 
 \alpha_t \coloneqq \left(- t\, r(2-r) \, \mathcal E_K(\eta^*) \right)^{\frac{1}{2-r}},
 \end{equation}
is a steepest descent flow starting at $\gamma(0) = \delta_0$.
\end{theorem}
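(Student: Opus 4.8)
The plan is to verify directly that the explicitly given curve $\gamma$ in \eqref{eq:EK_steepest_descent_flow} is locally absolutely continuous, has a tangent vector at every $t \ge 0$, and that this tangent vector lies in $\HD_- \mathcal E_K(\gamma(t))$. First I would record the scaling behaviour of the interaction energy: for a Riesz kernel with exponent $r$, $\mathcal E_K((\alpha \mathrm{Id})_\#\sigma) = \alpha^r \mathcal E_K(\sigma)$, and $\int \|x\|_2^2 \, \d[(\alpha\mathrm{Id})_\#\sigma](x) = \alpha^2 \int \|x\|_2^2 \, \d\sigma(x)$. Applying this to $\eta^*$ (which satisfies the normalization $\int \|x\|_2^2 \, \d\eta^* = 1$ from \eqref{eq:constraint_problem}) shows $\gamma(t) = (\alpha_t \mathrm{Id})_\#\eta^*$ is a rescaling of a fixed measure, $W_2(\gamma(s),\gamma(t)) = |\alpha_s - \alpha_t| \, W_2(\eta^*,\delta_0) = |\alpha_s-\alpha_t|$, and since $t \mapsto \alpha_t = (-t\,r(2-r)\mathcal E_K(\eta^*))^{1/(2-r)}$ is continuous, nondecreasing and locally Lipschitz on $(0,\infty)$ (and continuous up to $t=0$ with $\alpha_0 = 0$, using $\mathcal E_K(\eta^*)<0$), the curve is locally absolutely continuous with $\gamma(0)=\delta_0$.

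Next I would compute the tangent vector $\dot\gamma(t)$. The natural guess is that $\gamma$ is, up to reparametrization, the curve $\gamma_{\zb v}$ associated to a fixed velocity plan, so Lemma~\ref{lem:tan-geo} and the chain rule Lemma~\ref{lem:chain_rule} apply. Concretely, write $\gamma(t) = \tilde\gamma(\alpha_t)$ where $\tilde\gamma(s) \coloneqq (s\,\mathrm{Id})_\#\eta^* = \gamma_{\zb w}(s)$ with $\zb w \coloneqq (0,\mathrm{Id})_\#\eta^* = \delta_0\otimes\eta^*$; since $\eta^*$ minimizes the constrained problem and (at $\delta_0$) every velocity plan $\delta_0 \otimes \eta \in \zb G(\delta_0)$ is a geodesic direction (because the optimal plan from $\delta_0$ to anything is the unique product plan), $\tilde\gamma$ is a geodesic, and Lemma~\ref{lem:tan-geo} gives $\dot{\tilde\gamma}(s) = (s\,\pi_2,\pi_2)_\#\zb w = (s\,\mathrm{Id},\mathrm{Id})_\#\eta^*$, which at the measure $\gamma(t)=\tilde\gamma(\alpha_t)$ is $\zb v_t \coloneqq (\mathrm{Id}, \alpha_t^{-1}\mathrm{Id})_\#\gamma(t)$ for $t>0$. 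The chain rule then yields $\dot\gamma(t) = \dot\alpha_t \cdot \dot{\tilde\gamma}(\alpha_t)$, and one computes $\dot\alpha_t = \tfrac{1}{2-r}\,(-r(2-r)\mathcal E_K(\eta^*))^{1/(2-r)}\,t^{(r-1)/(2-r)} = -r\,\mathcal E_K(\eta^*)\,\alpha_t^{r-1}$. So $\dot\gamma(t) = (\mathrm{Id}, -r\,\mathcal E_K(\eta^*)\,\alpha_t^{r-2}\,\pi_2)_\#\gamma(t)$ for $t>0$; at $t=0$ one checks separately that $\dot\gamma(0)$ exists and equals $(-\mathcal E_K(\eta^*))\cdot(\delta_0\otimes\eta^*)$, using the consistency relation \eqref{eq:consistency_tangent_vector} and the scaling $\gamma(t) = \gamma_{\alpha_t\cdot(\delta_0\otimes\eta^*)}(1) = \gamma_{(\delta_0\otimes\eta^*)}(\alpha_t)$ so that $\dot\gamma(t) \to \dot\gamma_{\delta_0\otimes\eta^*}(0)\cdot(\text{slope factor})$.

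It remains to identify $\dot\gamma(t)$ with an element of $\HD_-\mathcal E_K(\gamma(t))$. For $t=0$ this is exactly Corollary~\ref{thm:HD-spec_0}/Theorem~\ref{the:EKdirDF}: the steepest descent direction at $\delta_0$ is $(-\mathcal E_K(\eta^*))\cdot(\delta_0\otimes\eta^*)$ when $r=1$, and for $r\in(1,2)$ the flow would be constant — so I would note that the nontrivial content is really the case $r=1$, while for $r\in(1,2)$ one has $\mathcal E_K(\eta^*)$ with a different (positive-homogeneity) bookkeeping and should double-check that $\HD_-\mathcal E_K(\delta_0)$ as defined is consistent with a nonconstant flow; most likely the theorem is intended for $r=1$ with the $r\in(1,2)$ statement being vacuous or requiring the convention that $\HD_-$ at $\delta_0$ uses the constrained minimizer. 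For $t>0$, the measure $\gamma(t)=(\alpha_t\mathrm{Id})_\#\eta^*$ is non-atomic (for $r=1$, $\eta^*$ is absolutely continuous in $d=1,2$ and uniform on a sphere in $d\ge3$, hence atomless), so Theorem~\ref{thm:directional_derivatives_abs_cont} applies and gives $\HD_-\mathcal E_K(\gamma(t)) = \{(\mathrm{Id},-\nabla G_t)_\#\gamma(t)\}$ with $G_t(x) = \int K(x,y)\,\d\gamma(t)(y)$. The task is then to check $-\nabla G_t(x) = -r\,\mathcal E_K(\eta^*)\,\alpha_t^{r-2}\,x$ for $\gamma(t)$-a.e. $x$, i.e. that the potential of $\gamma(t)$ is quadratic on its support. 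This is precisely the optimality condition \eqref{eq:opt_cond} from Proposition~\ref{prop:penalized_problem} combined with Proposition~\ref{prop:penalizer_vs_constraint}: $\gamma(t)$ is, after the scaling $c_\tau$ there, the equilibrium measure $\eta^*_\tau$ for the external field $\tfrac{1}{2\tau}\|x\|_2^2$, so $\int K(x,y)\,\d\eta^*_\tau(y) = C_{K,\tau} - \tfrac{1}{2\tau}\|x\|_2^2$ on $\mathrm{supp}(\eta^*_\tau)$; differentiating and rescaling back to $\gamma(t)$ gives the claimed $\nabla G_t$. The main obstacle is this last identification — keeping the three scaling parameters ($\alpha_t$, $c_\tau$, and the normalization of $\eta^*$) consistent and correctly transporting the optimality condition of the penalized problem \eqref{eq:penalized_problem} into a statement about $\nabla G_t$ along the flow — together with the boundary behaviour at $t=0$ where Theorem~\ref{thm:directional_derivatives_abs_cont} does not apply and one must argue existence of the tangent vector by hand.
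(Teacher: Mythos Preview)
Your approach is essentially the paper's: compute $\dot\gamma(t)$ via the chain rule (Lemma~\ref{lem:chain_rule}) and the tangent formula for geodesics (Lemma~\ref{lem:tan-geo}), then for $t>0$ invoke Theorem~\ref{thm:directional_derivatives_abs_cont} and identify $-\nabla G_t$ with the linear map $x\mapsto \tfrac{x}{(2-r)t}$ using the equilibrium optimality condition \eqref{eq:opt_cond} (the paper in fact carries out this identification in two cases, $d+r<4$ and $d+r\ge 4$, the latter via the explicit hypergeometric formula \eqref{eq:int_dist_Sd}), and for $t=0$ fall back on Theorem~\ref{the:EKdirDF}.

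Your hesitation about $r\in(1,2)$ at $t=0$ is, however, a genuine misreading. The flow is \emph{not} constant for $r\in(1,2)$; rather, since $\tfrac{1}{2-r}>1$ one has $\dot\alpha_t \to 0$ as $t\to 0+$, so $\dot\gamma(0) = 0\cdot(\delta_0\otimes\eta^*) = \zb 0_{\delta_0}$. On the other hand, Theorem~\ref{the:EKdirDF} gives $\HD_{\zb v}\mathcal E_K(\delta_0)=0$ for every unit direction $\zb v$ when $r\in(1,2)$, so by the definition \eqref{def:HD-} one has $\HD_-\mathcal E_K(\delta_0)=\{(0)^-\cdot\zb v:\|\zb v\|=1\}=\{\zb 0_{\delta_0}\}$. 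Thus $\dot\gamma(0)\in\HD_-\mathcal E_K(\delta_0)$ holds and the theorem is not vacuous; the curve is a genuine, nonconstant steepest descent flow whose initial tangent is zero. The non-uniqueness you sense is real (the paper notes an infinite family \eqref{eq:EK_descent_flow_family} obtained by delaying the explosion), but that does not affect the validity of the specific curve \eqref{eq:EK_steepest_descent_flow}.
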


\noindent
The proof is given in \ref{app:desc_flow}.
\medskip

In the case $r=1$, we  obtain for example the curves
\begin{equation}\label{eq:JKO_flow_E1}
\gamma(t) = (-t\mathcal E_K(\eta^*)\Id)_\#\eta^* 
=
\begin{cases}
           \mathcal U_{[-t,t]}, & d = 1,\\
           \frac{\pi t}{8} \bigl(\frac{\pi^2t^2}{16}  - \|x\|_2^2\bigr)^{-\frac12} 
           \mathcal U_{t \frac\pi4 \mathbb B^2},
           & d = 2,\\
           \mathcal U_{t R_{d} \mathbb S^{d-1}}, & d \ge 3,
         \end{cases}
\end{equation}
where the constant $R_d$ is given in Corollary \ref{thm:HD-spec}(iii).

\begin{remark}
(i) With the same proof, we can see  that the curve \eqref{eq:EK_steepest_descent_flow} 
also fulfills the steepest descent condition
$
\dot\gamma(t)\in\HD_-\mathcal E_K(\gamma(t))
$
for $r \in (0,1)$ if  $t>0$. 
However, by Theorem~\ref{the:EKdirDF}, the set $\HD_-\mathcal E_K(\gamma(0))=\HD_-\mathcal E_K(\delta_0)$ is empty for $r\in(0,1)$,
so that this curve is not a Wasserstein steepest descent flow in the sense of Definition~\ref{def:wsgf}.

(ii) For $r \in (1,2)$, we obtain together with the trivial solution an infinite family of Wasserstein steepest descent flows starting at $\delta_0$. These are parameterized by the 'length of stay' $t_0\in\R_{\geq 0}$ at $\delta_0$ due to
\begin{equation}
    \label{eq:EK_descent_flow_family}
    \gamma(t)=
    \begin{cases}
        \delta_0,& $for $ t<t_0,\\
        (\alpha_{t-t_0}\Id)_\#\eta^*,&$for $t\geq t_0.
    \end{cases}
\end{equation}
\end{remark}

\begin{remark}[Relation to MMS and Wasserstein Gradient Flows]
In \cite{AHS2023}, 
the MMS steps for $\mathcal E_K$ are computed analytically,
and it turns out that the scheme converges to the curves in \eqref{eq:EK_steepest_descent_flow}.
Note that Theorem~\ref{thm:existence_gflows_ggd} here cannot be applied
since $\mathcal E_K$ is not $\lambda$-convex along geodesics.
Indeed, 
Proposition~\ref{cor:constant_WGF} shows the existence of Wasserstein gradient flows 
that cannot be represented as MMS limits.
Vice versa, it is an open question if limits of MMS are Wasserstein gradient flows. 
For this direction, the $\lambda$-convexity requirement 
can be weakened towards a regularity assumption by \cite[Thm 11.3.2]{BookAmGiSa05}.
Nevertheless, it is still unclear if $\mathcal E_K$ fulfills this regularity assumption.
\end{remark}

\section{Discrepancy Flows} \label{sec:R}

In the following, we determine steepest descent flows of the discrepancy functional $\mathcal F_\nu \coloneqq \mathcal D^2_K(\cdot, \nu)$ 
for the Riesz kernel $K$ with $r \in [1,2)$. 
For $r \in (1,2)$, where the Riesz kernel is differentiable, we characterize the Wasserstein steepest descent flow of $\mathcal F_{\delta_q}$ starting at
$\delta_p$. We provide a numerical simulation via particle flows for $r \in [1,2)$.
In contrast to the case $r \ge 2$, the particle explodes here.

The next theorem, which proof is given in \ref{proof:discr_2}, describes the steepest descent direction of the discrepancy functional.

\begin{theorem}    \label{thm:DKdir}
  Let $\F_{\nu}\coloneqq \mathcal D_K^2(\cdot,\nu)$, where $\nu \in \P_2(\R^d)$ and $K$ is the Riesz kernel \eqref{eq:riesz} with $r \in [1,2)$. 
  Then the following holds true.
  \begin{enumerate}[\upshape(i)] 
  \item
    For $\mu\in\P_2(\R^d)$, where $\mu(\{x\})=0$ for all $x \in \R^d$ in case $r=1$,  the unique steepest descent direction is given by
    \begin{equation}
      \HD_-\F_{\nu}(\mu)= \{(\mathrm{Id}, - \nabla G)_{\#} \mu\},
      \;
      G(x_1) \coloneqq   \int_{\R^d} K(x_1,x_2) \, \d \mu(x_2) + V_{K,\nu}(x_1).
    \end{equation}
  \item
    For $p \in \R^d$ with $\nu(\{p\}) = 0$, the steepest descent direction at $\delta_p$ is given by
    \begin{equation*}
      \HD_-\F_{\nu}(\delta_p) = 
      \begin{cases}
        \delta_p \otimes \big(-\mathcal E_K(\eta^*) \Id - \nabla V_{K,\nu}(p) \big)_\#\eta^*, & r=1,\\
        \delta_p \otimes \delta_{-\nabla V_{K,\nu}(p)}, & r \in (1,2),
      \end{cases}
    \end{equation*}
    where $\eta^*$ is defined in Corollary~\ref{thm:HD-spec} and $V_{K,\nu}$ in \eqref{eq:potential}.
  \end{enumerate}
\end{theorem}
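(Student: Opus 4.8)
The plan is to use the decomposition $\F_\nu=\mathcal E_K+\V_{K,\nu}+\mathrm{const}$ from \eqref{eq:dis-decomp}, which reduces the task to adding the directional derivatives of the potential energy to those of the interaction energy, the latter being already available from Theorems~\ref{the:EKdirDF} and~\ref{thm:directional_derivatives_abs_cont}. So first I would compute the Hadamard derivative of $\V_{K,\nu}$. Along $\gamma_{\zb v}(t)=(\pi_1+t\pi_2)_\#\zb v$ one has $\V_{K,\nu}(\gamma_{\zb v}(t))=\int V_{K,\nu}(x_1+tx_2)\,\d\zb v(x_1,x_2)$, and since $V_{K,\nu}(x_1)=\int\|x_1-x_2\|^r\,\d\nu(x_2)$ is locally Lipschitz with polynomial growth (Proposition~\ref{prop:lip-int-pot}), dominated convergence gives
\[
  \D_{\zb v}\V_{K,\nu}(\mu)=\int_{\R^d\times\R^d}\langle\nabla V_{K,\nu}(x_1),x_2\rangle\,\d\zb v(x_1,x_2),
\]
provided $V_{K,\nu}$ is differentiable $\mu$-a.e.; this is automatic for $r\in(1,2)$ since then $\|\cdot\|^r\in C^1$, and for $r=1$ it holds under the hypothesis $\mu(\{x\})=0$ because $x_1\mapsto\|x_1-x_2\|$ fails to be differentiable only at $x_1=x_2$ and $\nu$ has at most countably many atoms. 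Local Lipschitz continuity of $\V_{K,\nu}$ and Proposition~\ref{prop:dini-hard} then upgrade this to the Hadamard derivative $\HD_{\zb v}\V_{K,\nu}(\mu)=\D_{\zb v}\V_{K,\nu}(\mu)$. Since this is a genuine two-sided limit it adds to the lower Hadamard derivative of $\mathcal E_K$, giving $\HD_{\zb v}^{-}\F_\nu(\mu)=\HD_{\zb v}^{-}\mathcal E_K(\mu)+\HD_{\zb v}\V_{K,\nu}(\mu)$ for every $\zb v\in\zb\T_\mu\P_2(\R^d)$.

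For part (i), the proof of Theorem~\ref{thm:directional_derivatives_abs_cont} yields $\HD_{\zb v}^{-}\mathcal E_K(\mu)=\int\langle\nabla G_0(x_1),x_2\rangle\,\d\zb v$ with $G_0(x_1)=\int K(x_1,x_2)\,\d\mu(x_2)$, so adding the potential part gives $\HD_{\zb v}^{-}\F_\nu(\mu)=\int\langle\nabla G(x_1),x_2\rangle\,\d\zb v$ with $G=G_0+V_{K,\nu}$. It then remains to minimise this linear functional over $\{\zb v\in\zb\T_\mu\P_2(\R^d):W_\mu(\zb v,\zb0_\mu)=1\}$, which I would do exactly as in Theorem~\ref{thm:directional_derivatives_abs_cont}: passing to the barycentric projection $b(x_1)=\int x_2\,\d\zb v_{x_1}(x_2)$ turns the functional into $\langle\nabla G,b\rangle_{L^2(\mu,\R^d)}$, Jensen's inequality gives $\|b\|_{L^2(\mu,\R^d)}\le W_\mu(\zb v,\zb0_\mu)=1$, and Cauchy--Schwarz then shows the minimum equals $-\|\nabla G\|_{L^2(\mu,\R^d)}$, attained only at $\zb v^{*}=(\Id,-\nabla G/\|\nabla G\|_{L^2(\mu,\R^d)})_\#\mu$ (and at $\zb0_\mu$ when $\nabla G=0$ $\mu$-a.e.). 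Scaling by $(\HD_{\zb v^{*}}^{-}\F_\nu(\mu))^{-}=\|\nabla G\|_{L^2(\mu,\R^d)}$ produces the asserted direction $(\Id,-\nabla G)_\#\mu$.

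For part (ii) the tangent space is $\zb\T_{\delta_p}\P_2(\R^d)=\{\delta_p\otimes\eta\}$ with $W_{\delta_p}^2(\delta_p\otimes\eta,\zb0_{\delta_p})=\int\|x\|^2\,\d\eta$ by Proposition~\ref{the:geomTSac}(ii). Evaluating at $\delta_p$ gives $\HD_{\delta_p\otimes\eta}\V_{K,\nu}(\delta_p)=\langle w,\bar\eta\rangle$, where $w:=\nabla V_{K,\nu}(p)$ exists since $\nu(\{p\})=0$ and $\bar\eta:=\int x\,\d\eta(x)$, while Theorem~\ref{the:EKdirDF} gives $\HD_{\delta_p\otimes\eta}\mathcal E_K(\delta_p)=\mathcal E_K(\eta)$ for $r=1$ and $=0$ for $r\in(1,2)$. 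For $r\in(1,2)$ one minimises $\eta\mapsto\langle w,\bar\eta\rangle$ subject to $\int\|x\|^2\,\d\eta=1$; since $\|\bar\eta\|^2\le\int\|x\|^2\,\d\eta$ with equality iff $\eta$ is a Dirac, the unit minimiser is $\delta_{-w/\|w\|}$ with value $-\|w\|$, and scaling by $\|w\|$ gives $\delta_p\otimes\delta_{-w}$, i.e.\ the claimed formula (also in the trivial case $w=0$, where $\HD_{-}\F_\nu(\delta_p)=\{\zb0_{\delta_p}\}$). For $r=1$ one minimises $\eta\mapsto\mathcal E_K(\eta)+\langle w,\bar\eta\rangle$ over $\int\|x\|^2\,\d\eta=1$; here I would exploit that $\mathcal E_K$ is translation invariant and, for $r=1$, positively $1$-homogeneous under dilations, and that $\eta^{*}$ is centred (by its orthogonal invariance, Corollary~\ref{thm:HD-spec_0}), with $\mathcal E_K(\eta^{*})<0$. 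Writing $\eta$ through its mean $b=\bar\eta$ and centred part $\eta_0=(\Id-b)_\#\eta$ and setting $\rho^2:=\int\|x\|^2\,\d\eta_0=1-\|b\|^2$, the objective equals $\mathcal E_K(\eta_0)+\langle w,b\rangle$, whose minimum over $\eta_0$ with fixed $\rho$ is $\rho\,\mathcal E_K(\eta^{*})$ (attained at $(\rho\Id)_\#\eta^{*}$, using $1$-homogeneity and minimality of $\eta^{*}$) and over $b$ with $\|b\|=\sqrt{1-\rho^2}$ is $-\|w\|\sqrt{1-\rho^2}$. Everything thus reduces to minimising the one-variable function $g(\rho)=\rho\,\mathcal E_K(\eta^{*})-\|w\|\sqrt{1-\rho^2}$ on $[0,1]$; its unique minimiser $\rho^{*}\in(0,1)$ satisfies $\rho^{*}/\sqrt{1-(\rho^{*})^2}=|\mathcal E_K(\eta^{*})|/\|w\|$ and $g(\rho^{*})=-\sqrt{\mathcal E_K(\eta^{*})^2+\|w\|^2}$, and carrying the scaling $(\HD^{-}_{\zb v^{*}}\F_\nu(\delta_p))^{-}\cdot\zb v^{*}$ through, the factors $\rho^{*}$, $\sqrt{1-(\rho^{*})^2}$ and $\sqrt{\mathcal E_K(\eta^{*})^2+\|w\|^2}$ telescope and leave exactly $\delta_p\otimes(-\mathcal E_K(\eta^{*})\Id-\nabla V_{K,\nu}(p))_\#\eta^{*}$.

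The main obstacle is not in part (ii), which is entirely explicit, but in part (i) at a $\mu$ that is only non-atomic and not absolutely continuous: there both the directional-derivative formula for $\mathcal E_K$ and, more delicately, the fact that the minimiser $(\Id,-\nabla G/\|\nabla G\|_{L^2(\mu,\R^d)})_\#\mu$ lies in the \emph{closure} $\zb\T_\mu\P_2(\R^d)=\overline{\zb G(\mu)}^{W_\mu}$ of geodesic directions --- it need not be a geodesic direction itself --- rest on the geometric tangent-space machinery already developed for Theorem~\ref{thm:directional_derivatives_abs_cont}; by comparison the potential energy contributes only the routine differentiation under the integral above.
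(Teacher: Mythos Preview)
Your proposal is correct and follows essentially the same route as the paper. The one organisational difference worth noting is in Part~(i): rather than decomposing $\F_\nu=\mathcal E_K+\V_{K,\nu}+\text{const}$ and computing the two directional derivatives separately as you do, the paper absorbs the potential into a modified symmetric kernel
\[
\tilde K(x_1,x_2)=K(x_1,x_2)+V_{K,\nu}(x_1)+V_{K,\nu}(x_2)+\mathcal E_K(\nu),
\]
writes $\mathcal D_K^2(\mu,\nu)=\mathcal E_{\tilde K}(\mu)$, and then invokes the general argument behind Theorem~\ref{thm:directional_derivatives_abs_cont} once for $\tilde K$. This is slightly cleaner but equivalent to your additive approach. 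For the minimisation step the paper uses the Cauchy--Schwarz inequality in $\zb V(\mu)$ from Lemma~\ref{lem:1}(iii) directly on velocity plans, whereas you pass through the barycentric projection and Jensen; the equality conditions line up, so both give the same unique minimiser.

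For Part~(ii) with $r=1$ your mean--centred decomposition and one-variable reduction $g(\rho)=\rho\,\mathcal E_K(\eta^*)-\|w\|\sqrt{1-\rho^2}$ is exactly what the paper does, only the paper phrases the same optimisation as two successive Cauchy--Schwarz inequalities in $\R^2$. The observation that the extra centering constraint is harmless because $\eta^*$ is already orthogonally invariant (hence centred) is made explicitly in both your argument and the paper's. The concern you raise at the end---that $(\Id,-\nabla G)_\#\mu$ lies in $\zb\T_\mu\P_2(\R^d)$ for merely non-atomic $\mu$---is not addressed any more explicitly in the paper than in your sketch; it is treated as part of the machinery inherited from Theorem~\ref{thm:directional_derivatives_abs_cont}.
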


Based on these steepest descent directions, we see in the next proposition that,
for differentiable Riesz kernels, there exists a
steepest descent flow for $\mathcal D_K^2(\cdot,\delta_q)$,
$q \in \R^d$,
which has the form of a particle flow.

\begin{proposition}
    \label{ex:one_particle_disc_flow}
    Let $\mathcal F_\nu \coloneqq \mathcal D^2_K( \cdot, \delta_q)$,
    where $q\in \R^d$, 
    and let $K$ be the Riesz kernel with $r \in (1,2)$.
    A Wasserstein steepest descent flow starting at $\delta_p$,
    $p \in \R^d$,
    is given by
    \begin{equation}
        \label{eq:gamma_one_particle}
        \gamma(t) \coloneqq 
        \begin{cases}
            \delta_{x(t)},& t \in [0,t_*),\\
            \delta_q, & t \in [t_*,\infty),
        \end{cases}
        \quad\text{with}\quad
        t_* \coloneqq \frac{\|q-p\|_2^{2-r}}{r(2-r)},
    \end{equation}
    where
    \begin{equation}
        \label{eq:par-curve}
        x(t) \coloneqq  q - \frac{q-p}{\|q-p\|_2} \left(
        \|q-p\|_2^{2-r} - r(2-r)t  \right)^{\frac{1}{2-r}}.
    \end{equation}
\end{proposition}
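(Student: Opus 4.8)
The plan is to verify Definition~\ref{def:wsgf} directly for the proposed curve, splitting into the two regimes $t<t_*$ (a single particle travelling towards $q$) and $t\ge t_*$ (the particle at rest at $\delta_q$); one may assume $p\neq q$, since for $p=q$ one has $t_*=0$ and $\gamma\equiv\delta_q$, which is covered by the $t\ge t_*$ analysis below. First I would rewrite the curve as $\gamma(t)=\delta_{x(t)}$ with $x(t)=q-\tfrac{q-p}{\|q-p\|_2}\beta(t)$ on $[0,t_*)$, where $\beta(t)\coloneqq\bigl(\|q-p\|_2^{2-r}-r(2-r)t\bigr)^{1/(2-r)}$ satisfies $\beta(t_*)=0$, $\|q-x(t)\|_2=\beta(t)$, and, by differentiating $\beta^{2-r}=\|q-p\|_2^{2-r}-r(2-r)t$, $\dot\beta=-r\beta^{r-1}$, hence $\dot x(t)=r\beta(t)^{r-2}\bigl(q-x(t)\bigr)$. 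Since $r>1$, this extends continuously to $t_*$ with $\dot x(t_*)=0$, so $x\in C^1([0,\infty);\R^d)$; as $W_2(\gamma(s),\gamma(t))=|x(s)-x(t)|$, the curve $\gamma$ is locally Lipschitz, in particular locally absolutely continuous.

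Next I would identify the tangent vectors. For $t<t_*$ and small $h>0$ both $\gamma(t)$ and $\gamma(t+h)$ are Dirac measures, so $\Gamma^{\opt}(\gamma(t),\gamma(t+h))$ contains only $\delta_{(x(t),x(t+h))}$ and $\exp_{\gamma(t)}^{-1}(\gamma(t+h))=\{\delta_{x(t)}\otimes\delta_{x(t+h)-x(t)}\}$. Using Proposition~\ref{the:geomTSac}(ii) (applied to the marginals, giving $W_{\delta_{x(t)}}^2(\delta_{x(t)}\otimes\eta_1,\delta_{x(t)}\otimes\eta_2)=W_2^2(\eta_1,\eta_2)$), the defining limit \eqref{eq:geomTangentVector} reduces to $\bigl|\tfrac1h(x(t+h)-x(t))-\dot x(t)\bigr|\to0$, so $\dot\gamma(t)=\delta_{x(t)}\otimes\delta_{\dot x(t)}$. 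For $t\ge t_*$ we have $\gamma(t+h)=\delta_q=\gamma(t)$ for all $h>0$ and $\exp_{\delta_q}^{-1}(\delta_q)=\{\zb 0_{\delta_q}\}$, so $\dot\gamma(t)=\zb 0_{\delta_q}$. In particular $\dot\gamma(t)$ exists for every $t\ge0$.

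It then remains to match $\dot\gamma(t)$ with the steepest descent direction. For $t<t_*$ one has $x(t)\neq q$, i.e.\ $\nu(\{x(t)\})=\delta_q(\{x(t)\})=0$, so Theorem~\ref{thm:DKdir}(ii) in the case $r\in(1,2)$ applies and gives $\HD_-\F_{\delta_q}(\delta_{x(t)})=\{\delta_{x(t)}\otimes\delta_{-\nabla V_{K,\delta_q}(x(t))}\}$; since $V_{K,\delta_q}(y)=\|y-q\|_2^r$ has gradient $r\|y-q\|_2^{r-2}(y-q)$ for $y\neq q$, we get $-\nabla V_{K,\delta_q}(x(t))=r\beta(t)^{r-2}(q-x(t))=\dot x(t)$ by the first paragraph, hence $\dot\gamma(t)\in\HD_-\F_{\delta_q}(\gamma(t))$. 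For $t\ge t_*$, since Theorem~\ref{thm:DKdir}(ii) does \emph{not} cover the point $\delta_q$ (there $\nu(\{q\})=1$), I would show $\HD_-\F_{\delta_q}(\delta_q)=\{\zb 0_{\delta_q}\}$ directly: along any ray $\gamma_{\zb v}$ with $\zb v=\delta_q\otimes\eta$, a short computation using $\mathcal E_K(\delta_q)=0$ and translation invariance of the $r$-energy yields $\F_{\delta_q}(\gamma_{\zb v}(s))=s^{r}\bigl(\mathcal E_K(\eta)+\int_{\R^d}\|w\|_2^r\,\d\eta(w)\bigr)$, whose right derivative at $s=0$ vanishes because $r>1$; combined with the local Lipschitz continuity of $\F_{\delta_q}$ (Proposition~\ref{prop:lip-int-pot}) and Proposition~\ref{prop:dini-hard}, this gives $\HD^-_{\zb v}\F_{\delta_q}(\delta_q)=0$ for \emph{all} $\zb v$, so the constrained minimum in \eqref{def:HD-} is $0$, is attained, and $(0)^-\cdot\zb v=\zb 0_{\delta_q}$. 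Thus $\dot\gamma(t)=\zb 0_{\delta_q}\in\HD_-\F_{\delta_q}(\gamma(t))$, completing the verification.

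I expect the main obstacle to be precisely the endpoint $\delta_q$: it is the global minimizer of $\F_{\delta_q}$ (Proposition~\ref{prop:EK_power_Fourier}) but lies outside the scope of Theorem~\ref{thm:DKdir}(ii), so one has to argue separately that every lower Hadamard derivative there is zero — and this is exactly where the assumption $r>1$ enters, making $\F_{\delta_q}$ behave like $s^{r}$ with zero slope at $0$ along rays emanating from $\delta_q$ (for $r=1$ this fails and the $1$-energy term contributes a nonzero slope). A secondary technical point is the consistency of the geometric tangent vector with the pointwise derivative $\dot x(\cdot)$; the short limit argument above handles it, and alternatively one could invoke the reparametrization chain rule Lemma~\ref{lem:chain_rule} on the straight geodesic rays through $\delta_{x(t)}$, but the direct computation is cleaner. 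Everything else is routine.
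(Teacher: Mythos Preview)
Your proof is correct and follows essentially the same route as the paper: verify that $x(t)$ solves the particle ODE $\dot x=-\nabla V_{K,\delta_q}(x)$, invoke Theorem~\ref{thm:DKdir}(ii) for $t<t_*$, and observe $\dot\gamma(t)=\zb 0_{\delta_q}$ is the steepest descent direction at the global minimizer $\delta_q$ for $t\ge t_*$. Your treatment of the endpoint $\delta_q$ is in fact more careful than the paper's (which simply appeals to global minimality), since you explicitly compute $\HD_{\zb v}\F_{\delta_q}(\delta_q)=0$ for every direction and hence confirm that the argmin in \eqref{def:HD-} is attained.
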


\begin{proof}
    The tangent vector of the curve $t \mapsto x(t)$ in \eqref{eq:par-curve} is given by
    \begin{align}
      \dot x(t) 
      &= r \, \frac{q-p}{\|q-p\|_2} \left( \|q-p\|_2^{2-r} - r(2-r)t \right)^{\frac{1}{2-r}-1}
      \\
      &= r \, (q - x(t)) \left( \|q-p\|_2^{2-r} - r(2-r)t \right)^{-1}.
    \end{align}
    Therefore, 
    the particle $x(t)$ solves the gradient flow equation
    \begin{equation}
    \label{eq:DK_one_particle_flow}
       \dot x(t) = - \nabla V_{K,\delta_q}(x(t)), \quad t \in  [0,t_*), \quad x(0) = p,
    \end{equation}
    where
    \begin{equation*}
      V_{K,\delta_q}(x) \coloneqq \|q-x\|_2^r,
      \quad 
      \nabla V_{K,\delta_q}(x) = - r (q-x) \|q-x\|^{r-2}, \quad x \in \R^d \setminus \{q\}.
    \end{equation*}
    Thus, by Theorem~\ref{thm:DKdir},
    the curve \eqref{eq:gamma_one_particle}
    is a Wasserstein steepest descent flow for $t \in [0,t_*)$.
    For $t \ge t_*$,
    we have $\dot \gamma(t) = \delta_q \otimes \delta_0$, 
    which is here the direction of steepest descent
    since $\delta_q$ is the global minimizer of $D_K^2(\cdot, \delta_q)$.
\end{proof}

Moreover, 
we expect that there exists an infinite family of Wasserstein steepest descent flows
similar to the family given in \eqref{eq:EK_descent_flow_family} for the interaction energy.
That means at any time point $0 \le t_0 < t_*$,
the point mass $\gamma(t)$ in \eqref{eq:gamma_one_particle} may explode
to an absolutely continuous measure leading to another Wasserstein steepest descent flow.
Unfortunately,
the analytic computation of the whole flow describing this effect 
is much more difficult than for the interaction energy.
Therefore,
we provide some numerical simulations using an Euler forward scheme.

\paragraph{Numerical simulation.}
Let $K$ be again the Riesz kernel \eqref{eq:riesz} with $r \in [1,2)$, and let $e_1$ be the first unit vector.
In the following, we want to approximate the discrepancy flow with respect to 
$\F_{\delta_{e_1}} =\mathcal D_K^2(\cdot, \delta_{e_1})$ in $\R^d$.
To this end, we restrict the set of feasible measures to the set of point measures located at exactly
$M$ points, i.e., to the set 
  \begin{equation}
    \mathcal S_M\coloneqq \Big\{\frac1M\sum_{i=1}^M \delta_{x_i}:x_i\in\R^d,x_i\neq x_j\text{ for all }i\neq j\Big\}.
  \end{equation}
Then, we compute the Wasserstein gradient flow of the functional
\begin{equation}
\F_M(\mu)\coloneqq \begin{cases}\mathcal D_K^2(\mu, \delta_{e_1}),&$if $\mu\in \mathcal S_M\\+\infty,&$otherwise.$\end{cases}
\end{equation}
By taking the mean field limit $M \to \infty$,
we expect that gradient flows with respect to $\F_M$ approximate the gradient flows with respect to $\F_{\delta_{e_1}}$.
In order to compute the gradient flows with respect to $\F_M$ for some fixed $M\in\N$,
we consider the (rescaled) particle gradient flow for the function
$F_M\colon \R^{dM} \to [0,\infty)$ given by
\begin{equation}
    F_M(x)
    \coloneqq \F_{\delta_{e_1}}\left(\frac1M \sum_{i=1}^M \delta_{x_i}\right) = -\frac{1}{2M^2} \sum_{i,j=1}^M \|x_i - x_j\|_2^r + \frac{1}{M} \sum_{i=1}^M \|x_i - e_1 \|_2^r.
\end{equation}  
More precisely, we are interested in solutions of the ODE
\begin{equation}
\label{eq:ODE}
    \dot u = - M \nabla F_M(u).
\end{equation}
Then, 
we see that the solutions $u=(u_1,...,u_M)\colon(0,\infty)\to\R^{dM}$
of \eqref{eq:ODE} and the Wasserstein gradient flows $\gamma\colon(0,\infty)\to\P_2(\R^d)$ with respect to $\F_M$
are related by
  \begin{equation}
    \gamma(t)\coloneqq  \frac1M\sum_{i=1}^M\delta_{u_i(t)}.
\end{equation}
For further details see \cite{AHS2023}.
Finally, we approximate the solutions of \eqref{eq:ODE} by the explicit Euler-forward scheme
\begin{equation}
\label{eq:euler_scheme}
    x^{(n+1)} \coloneqq - \tau^{(n)} M \nabla F_M(x^{(n)}), \qquad n \in \N_0.
\end{equation}

\begin{figure}[t]
    \begin{center}
    \includegraphics[width=1\textwidth]{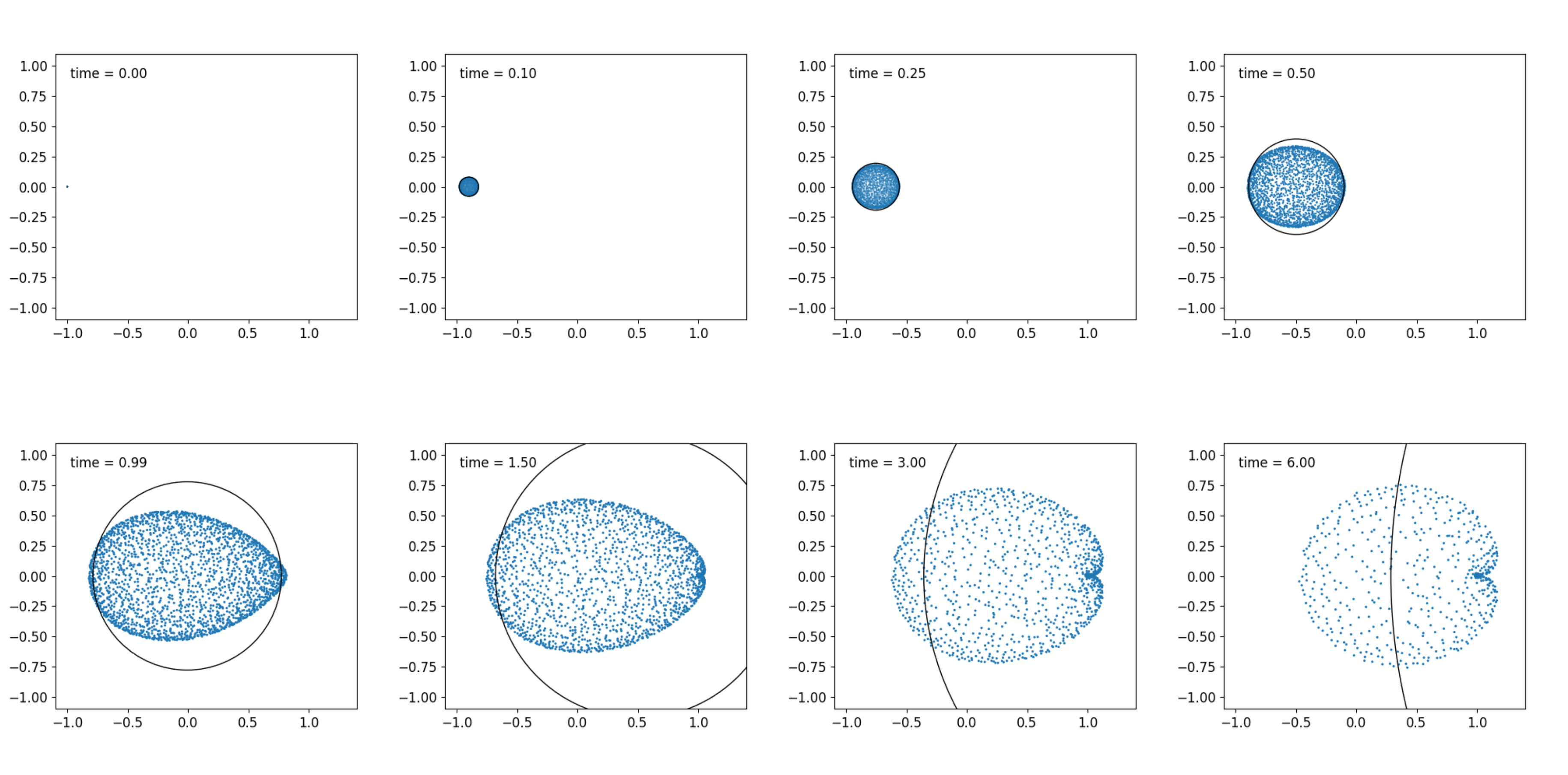}
    \end{center}
    \caption{
    2D particle gradient flow of $\mathcal D_{K}^2(\cdot, \delta_{e_1})$ for the Riesz kernel with $r=1$ starting around  $\delta_{-e_1}$. 
		The black circles depict the border of $\supp \gamma_{\zb v}(t)$ related to the steepest descent direction $\zb v$ at $t=0$ given in 
		\eqref{eq:approx_one_particle_disc_flow_geodesic}.}
    \label{fig:num_grad_flow_2d_delta_delta}
\end{figure}

\begin{figure}[t]
    \begin{center}
        \includegraphics[width=1\textwidth]{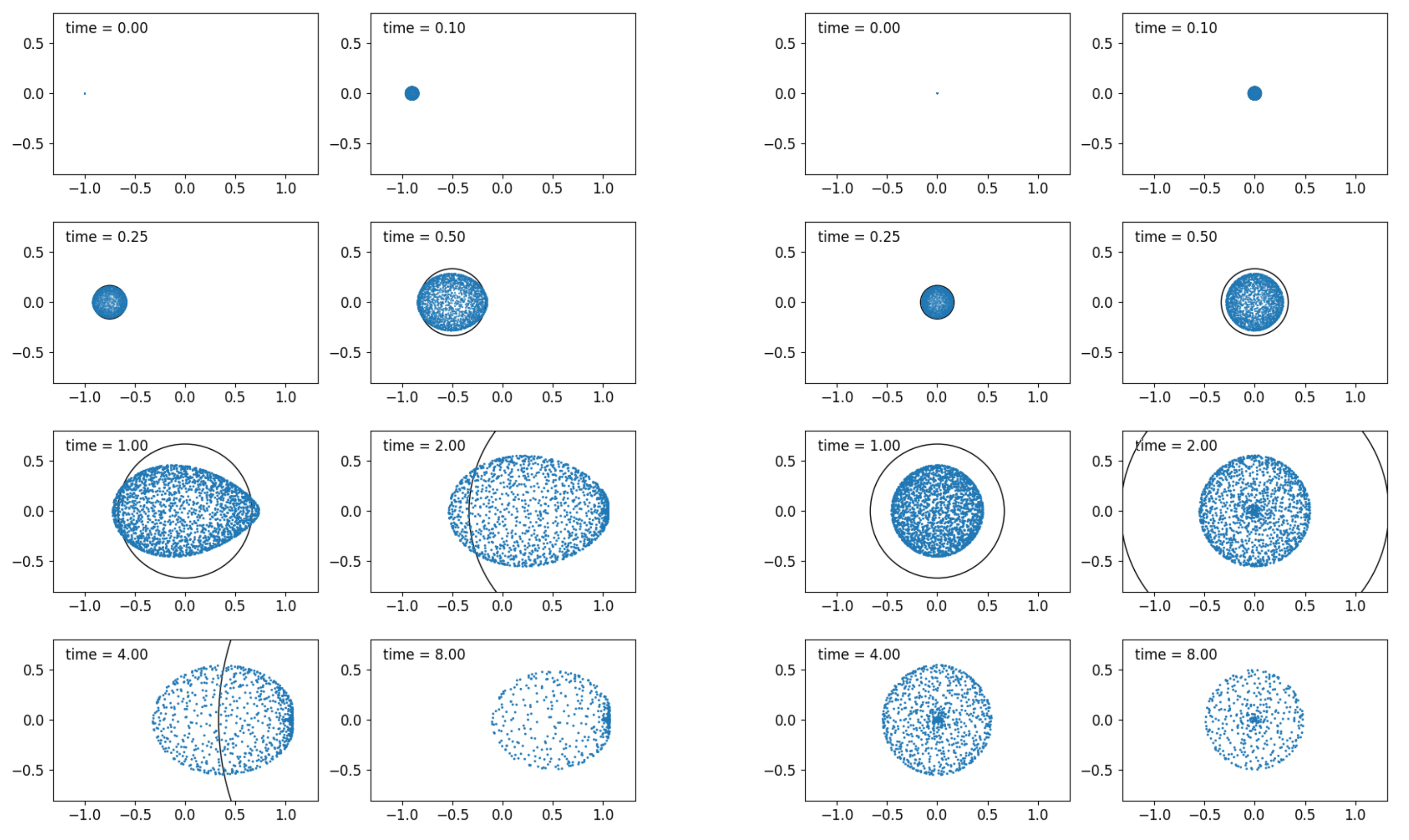}
    \end{center}
    \caption{
    3D particle gradient flow of $\mathcal D_{K}^2(\cdot, \delta_{e_1})$ for the Riesz kernel with $r=1$ starting around  $\delta_{-e_1}$. 
    The left columns show the projection to the $x_1x_2$-plane, the right columns to the $x_3x_2$-plane.
    The black circles depict the border of $\supp \gamma_{\zb v}(t)$ related to the steepest descent direction $\zb v$ at $t=0$ given in 
		\eqref{eq:approx_one_particle_disc_flow_geodesic}.}
    \label{fig:num_grad_flow_3d_delta_delta}
\end{figure}
 
\begin{figure}[t]
    \begin{center}
    \includegraphics[width=1\textwidth]{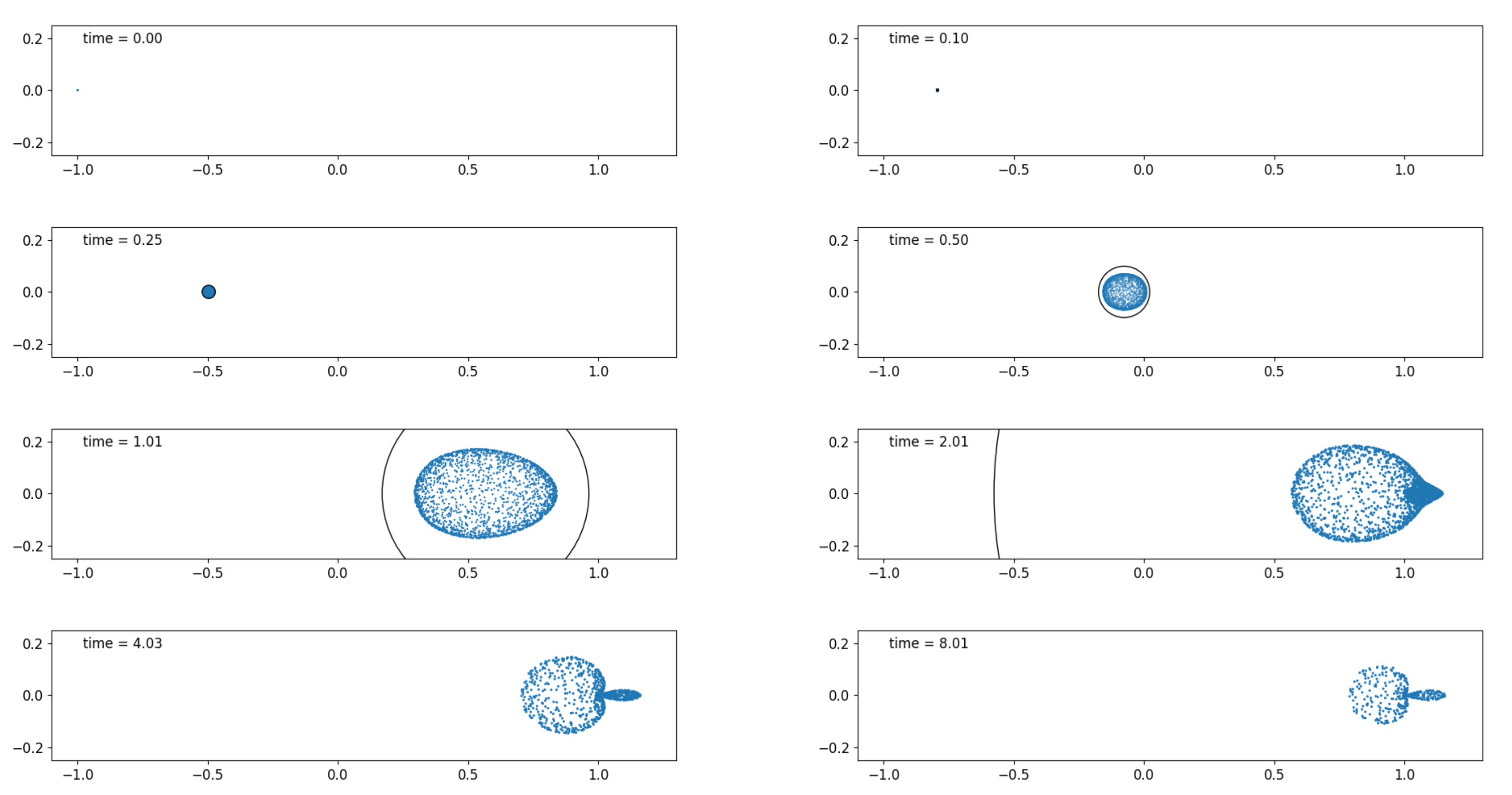}
    \end{center}
    \caption{
    2D particle gradient flow of $\mathcal D_{K}^2(\cdot, \delta_{e_1})$ for the Riesz kernel with $r=\frac{3}{2}$ starting around  $\delta_{-e_1}$.}
    \label{fig:num_grad_flow_2d_delta_delta_r_3_2}
\end{figure}

\begin{example}
    We take $\tau^{(0)} \coloneqq \frac{1}{10 M}$ 
    and set $\tau^{(n)} \coloneqq \min\{n \tau^{(0)}, \tau_{\max} \}$,
    where $\tau_{\max} \sim \frac{1}{M}$ is some maximal step size.
    We aim to compute gradient flows starting at $\delta_{-e_1}$.
    In order to start in a set $\mathcal S_M$, we first 
    perform a forward step from $\delta_{-e_1}$ in the known steepest descent direction, see Theorem \ref{thm:DKdir}.
    So the initial points $x_i^{(0)}$ for $d=2$ and $d=3$ are nearly distributed on a small ball and  sphere, respectively.
    We apply the explicit Euler scheme \eqref{eq:euler_scheme} for $M=2000$ points
    for initial points in a cube of radius $R = 10^{-9}$ 
    and maximal step size $\tau_{\max}= \frac{10}{M}$.
    The results are depicted in 
		\begin{itemize}
		\item[-] Figure~\ref{fig:num_grad_flow_2d_delta_delta} for $d=2$, $r=1$, 
		\item[-] Figure~\ref{fig:num_grad_flow_3d_delta_delta} for $d=3$, $r=1$, and 
		\item[-] Figure~\ref{fig:num_grad_flow_2d_delta_delta_r_3_2} for $d=2$, $r=\frac32$.
		\end{itemize}
    To compare the computed flows with the initial steepest descent direction 
    in Theorem~\ref{thm:DKdir}, 
    we illustrate the support of 
		\begin{equation}    \label{eq:approx_one_particle_disc_flow_geodesic}
         \gamma_{\zb v}(t) 
         =
         \big((t-1) \, e_1 - \mathcal E_K(\eta^*) \, t \Id \big)_{\#} \eta^*
    \end{equation}
    with $\eta^*$ from Corollary~\ref{thm:HD-spec},
    where $\zb v \coloneqq \HD_{-}\F_{\delta_{e_1}}(\delta_{-e_1})$.
    We observe a good accordance with the numerical Euler forward scheme
    indicating that
    the particle explosion in discrepancy flows behaves
    similarly as for interaction energy flows.\hfill $\Box$
\end{example}

For smoother kernels than the considered Riesz kernels,
the repulsion effect leading to particle explosions does not occur.
For instance,
Arbel et al.\ \cite{ArKoSaGr19} consider Lipschitz-continuously differentiable kernels
and show the existence of a unique Wasserstein gradient flow 
for the corresponding discrepancy.
Moreover, 
the forward Euler scheme given in \cite[§~2.2]{ArKoSaGr19} 
converges locally uniformly to this flow. 
In particular, 
this shows 
that the Wasserstein gradient flow $\gamma$ remains atomic 
if the initial measure $\gamma(0) = \mu_0$ is atomic.
The behavior changes completely 
for the Riesz kernel with $r=1$. 
Here the repulsion effect is directly encoded 
in the steepest descent direction given in Theorem~\ref{thm:DKdir}(ii). 
For $r \in (1,2)$,
the repulsion is weaker
such that the steepest descent flow can have the form of a particle flow,
see Proposition~\ref{ex:one_particle_disc_flow};
but our numerical experiments indicate
the existence of an infinite family of Wasserstein descent flows 
similar to the family \eqref{eq:EK_descent_flow_family} observed
for the interaction energy,
where at any time the particle may decide to explode.

\appendix

\section{Proof of Lemma~\ref{lem:chain_rule}}
\label{proof:chain_rule}
Let 
$ 
\zb v_{t,h}\in \exp_{\nu(t)}^{-1}(\nu(t+h))
$.
Then it holds
        \[             \nu(t+h)
        =\gamma\left( f(t+h) \right)
        =\gamma\left( f(t)+h\dot f(t)+r(h) \right),
        \]
        where $r(h)/h\to 0$ as $h\to0$.
        Consequently, the optimal transport plans from $\nu(t)$ to $\nu(t+h)$ and from $\gamma\left( f(t) \right)$ to $\gamma\left(f(t)+h\dot f(t)+r(h)\right)$ coincide such that
        \[
          \zb v_{t,h} = \zb{\tilde v}_{f(t), h\dot f(t) + r(h)}
          \in \exp_{\gamma(f(t))}^{-1}\left(\gamma\left(f(t)+ h\dot f(t) + r(h) \right)\right).
        \]
        We consider the case $\dot f(t)>0$. Since $r(h)/h\to0$ as $h \to 0+$, we have that $h\dot f(t)+r(h)>0$ for $h>0$ small enough.
        Thus, it holds by homogenity of  $W_{\nu(t)}$, see Lemma~\ref{lem:1}, that
        \begin{equation}
            \begin{aligned}
            0
            &\leq W_{\nu(t)}\Big(\dot f(t)\cdot \dot{\gamma}(f(t)),\tfrac1h\cdot \zb v_{t,h}\Big)\\
            &= 
            \frac{h\dot f(t)+r(h)}{h} W_{\gamma(f(t))}
            \Big(\tfrac{h\dot f(t)}{h\dot f(t)+r(h)}\cdot \dot{\gamma}\left( f(t) \right),\tfrac1{h\dot f(t)+r(h)}\cdot \zb v_{t,h}\Big)\\
            &\leq 
            \underbrace{\frac{h\dot f(t)+r(h)}{h}}_{\to 1/ \dot f(t)}\Big[\underbrace{W_{\gamma(f(t))}\Big(\tfrac{h\dot f(t)}{h\dot f(t)+r(h)}\cdot \dot{\gamma}(f(t)),\dot\gamma(f(t))\Big)}_{\text{(I)}}\\
            &\qquad\qquad\qquad\;\;+\underbrace{W_{\gamma(f(t))}\Big(\dot{\gamma} \left(f(t) \right),\tfrac1{h\dot f(t)+r(h)}\cdot \zb{\tilde v}_{f(t),h\dot f(t)+r(h)}\Big)}_{\text{(II)}}\Big].\label{eq_conv}
            \end{aligned}
        \end{equation}
        The term (II) converges to zero, as $\dot{\gamma}(f(t))$ is a tangent vector of $\gamma$ at $f(t)$ and since $h\to0+$ implies $h\dot f(t)+r(h)\to0+$.
        Further, the term (I) can be computed as
        \begin{align}
            \text{(I)}
           &=
            \|\tfrac{h\dot f(t)}{h\dot f(t)+r(h)}\cdot \dot{\gamma} \left(f(t) \right)\|^2_{\gamma ( f(t))}
            +
            \|\dot{\gamma} \left(f(t) \right)\|^2_{\gamma(f(t))}
            -2
            \Big\langle\tfrac{h\dot f(t)}{h\dot f(t)+r(h)}\cdot \dot{\gamma}\left(f(t) \right),\dot{\gamma}(f(t))\Big\rangle_{\gamma(f(t))}\\
        &=
        \left( 1-\tfrac{h\dot f(t)}{h\dot f(t)+r(h) } \right)^2
        \|\dot{\gamma}\left(f(t)\right)\|^2_{\gamma(f(t))}
        \quad \to 0 \quad \text{ as } h\to 0+.
        \end{align}
        Consequently, $W_{\gamma(f(t))}(\dot f(t)\cdot \dot{\gamma}(f(t)),\tfrac1h\cdot \zb \nu_{t,h})$ converges to zero.

        Finally, we consider the case that $\dot f(t)=0$.
        Then we have $\dot f(t)\cdot \dot{\zb\gamma}\left(f(t) \right)=\gamma\left(f(t)\right)\otimes \delta_0$. 
        Thus it holds
                   \begin{align}
                W_{\nu(t)} \left(\dot f(t)\cdot \dot{\zb\gamma}\left(f(t)\right),\tfrac1h\cdot \zb \nu_{t,h} \right)
                & =W_{\nu(t)}\left(\gamma\left(f(t)\right)\otimes \delta_0,\tfrac1h\cdot \zb \nu_{t,h} \right) \\
                & =\|\tfrac1h\cdot \zb{\tilde \nu}_{f(t),r(h)}\|_{\gamma(f(t))}                 
            \end{align}
       	which is zero if $r(h)=0$ for $h>0$ small enough. Otherwise, we have that $r(h)>0$ for $h>0$. 
				Then we obtain, that the above expression is equal to
$$
\tfrac{r(h)}{h}\|\tfrac1{r(h)}\cdot \gamma_{f(t),r(h)}\|_{\gamma(f(t))}.
$$
Now, the first factor converges to zero and the second factor converges to some number $C>0$ since it holds
$\tfrac1{r(h)}\cdot \gamma_{f(t),r(h)}\to \dot\gamma (f(t))$ with respect to $W_{\gamma\left(f(t) \right)}$.
Hence the whole expression converges to zero and we are done.
\hfill$\Box$

\section{Proofs from Section~\ref{sec:frech-diff}}\label{app:proofs_sec_WSDF}
In order to prove the results from Section~\ref{sec:frech-diff}, 
we require the notion of a scalar product, metric velocity 
as well as some further properties of the metric $W_\mu$ in $\zb V(\mu)$.
We give these definitions and properties in \ref{subsec:def_scalar_metric}. 
Afterwards, we prove Proposition~\ref{ggd_case} in \ref{proof:ggd_case}.

\subsection{Scalar product, Metric velocity and Properties of $W_\mu$}\label{subsec:def_scalar_metric}
Besides the metric,
we may define the  scalar product of two velocity plans $\zb v,\zb w\in\zb V(\mu)$ by
\begin{equation}
  \label{inner_mu}
    \langle \zb v, \zb w \rangle_{\mu} 
    \coloneqq 
    \max_{\zb \alpha \in \Gamma_{\mu}(\zb v, \zb w)}
    \langle \zb v, \zb w \rangle_{\zb \alpha}
    \;\;\text{with}\;\;
    \langle \zb v, \zb w \rangle_{\zb \alpha} 
    \coloneqq 
    \int_{\R^d \times \R^d \times \R^d}
    x_2^\T x_3  
    \, \d \zb \alpha(x_1, x_2, x_3)
\end{equation}
and the \emph{metric velocity} as
\begin{equation}
    \|\zb v\|_{\mu}^2
    \coloneqq  
    \langle \zb v,\zb v \rangle_{\mu} 
    = 
    \int_{\R^d \times \R^d} \|x_2\|_2^2 \, \d \zb v(x_1, x_2).
\end{equation}
In particular, we have for $\zb v \in \exp_{\mu}^{-1}(\nu)$ that
    {\small
		\begin{equation}     \label{eq:exp_plan_norm}
        W_2^2(\mu, \nu)
        = \int_{\R^d \times \R^d} \|x - y\|_2^2 \d(\pi_1, \pi_1 + \pi_2)_{\#} \zb v(x, y)
        = \int_{\R^d \times \R^d} \|y\|_2^2 \, \d \zb v(x, y) = \|\zb v\|_{\mu}^2.
    \end{equation}
		}%
The next lemma summarizes further properties. Some of them are proven in \cite{Gi04}, for the others we provide a proof.

		\begin{lemma} \label{lem:1}
		Let $\mu \in \P_2(\R^d)$.
		For all $\zb v, \zb w \in \zb V(\mu)$, the following relations hold true:
		\begin{enumerate}[\upshape(i)]
    \item Homogeneity:
		\begin{align}
				\langle c\cdot \zb v,\zb w\rangle_\mu &=c\langle\zb v,\zb w\rangle_\mu=\langle \zb v,c\cdot \zb w\rangle_\mu, \quad c \ge 0,\\
				\| c \cdot \zb v\|_{\mu} &= |c| \|\zb v\|_{\mu}, \quad c \in \R, \\
              W_{\mu}(c\cdot\zb v,c\cdot\zb w)& = |c| \, W_\mu(\zb v,\zb w), \quad c\in\R.
	\end{align}
	\item Properties of $W_\mu$:
	\begin{align}
          W_{\mu}^2(\zb v, \zb w)
          &=  \|\zb v\|_{\mu}^2 + \|\zb w\|_{\mu}^2 - 2 \langle \zb v, \zb w \rangle_{\mu},
          \\
          W_\mu(\zb v, \zb w)
          &\geq W_2(\gamma_{\zb v}(1), \gamma_{\zb w}(1)),\\
            W_\mu^2(\zb v,\zb 0_\mu)&=\|\zb v\|_\mu^2=\int_{\R^d}\|x_2\|_2^2\d \zb v(x).
        \label{eq:W2-Wmu}
    \end{align}
\item  Cauchy--Schwarz inequality:
    \begin{equation}
      |\langle \zb {v},\zb{w}\rangle_\mu|\leq\|\zb{v}\|_\mu\|\zb{w}\|_\mu,
    \end{equation}
  where it holds $\langle \zb{v},\zb{w}\rangle_\mu=\|\zb{v}\|_\mu\|\zb{w}\|_\mu$ if and only if there exists some $c\geq0$ such that $\zb v=c\cdot\zb w$.
\end{enumerate}
\end{lemma}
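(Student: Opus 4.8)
The plan is to reduce every assertion to elementary manipulations of the glueing set $\Gamma_\mu(\zb v,\zb w)$, combined with the pointwise and integral Cauchy--Schwarz inequalities; the only ingredient that is not a routine computation is the fact that the maximum defining $\langle\cdot,\cdot\rangle_\mu$ is attained. I would take the latter from the metric-space structure of $(\zb V(\mu),W_\mu)$ in \cite[Thm~4.5]{Gi04}, which rests on the narrow compactness of $\Gamma_\mu(\zb v,\zb w)$ (the marginals being fixed) together with the uniform integrability of $(x_1,x_2,x_3)\mapsto x_2^\T x_3$ coming from the finite, fixed second moments of $\zb v$ and $\zb w$.

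For (i), I would first note that scaling acts coordinatewise: for $c>0$ the map $\zb\alpha\mapsto(\pi_1,c\,\pi_2,\pi_3)_\#\zb\alpha$ is a bijection from $\Gamma_\mu(\zb v,\zb w)$ onto $\Gamma_\mu(c\cdot\zb v,\zb w)$, and under it $\langle c\cdot\zb v,\zb w\rangle_{\zb\alpha'}=c\,\langle\zb v,\zb w\rangle_{\zb\alpha}$ where $\zb\alpha'=(\pi_1,c\,\pi_2,\pi_3)_\#\zb\alpha$; taking the maximum yields $\langle c\cdot\zb v,\zb w\rangle_\mu=c\,\langle\zb v,\zb w\rangle_\mu$, the case $c=0$ being immediate from $\zb 0_\mu=\mu\otimes\delta_0$ and the identity in the second argument following by symmetry. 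Likewise $\zb\alpha\mapsto(\pi_1,c\,\pi_2,c\,\pi_3)_\#\zb\alpha$ maps $\Gamma_\mu(\zb v,\zb w)$ onto $\Gamma_\mu(c\cdot\zb v,c\cdot\zb w)$ and multiplies the cost $W_{\zb\alpha}^2$ by $c^2$, so $W_\mu(c\cdot\zb v,c\cdot\zb w)=|c|\,W_\mu(\zb v,\zb w)$; and $\|c\cdot\zb v\|_\mu^2=\int\|c\,x_2\|_2^2\,\d\zb v=c^2\|\zb v\|_\mu^2$ directly from the definition.

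For (ii), for each $\zb\alpha\in\Gamma_\mu(\zb v,\zb w)$ I would expand the square, using that the $(1,2)$- and $(1,3)$-marginals of $\zb\alpha$ are $\zb v$ and $\zb w$, to obtain $\int\|x_2-x_3\|_2^2\,\d\zb\alpha=\|\zb v\|_\mu^2+\|\zb w\|_\mu^2-2\langle\zb v,\zb w\rangle_{\zb\alpha}$; minimizing the left-hand side over $\zb\alpha$ (equivalently, maximizing $\langle\zb v,\zb w\rangle_{\zb\alpha}$) gives the first identity. For $W_\mu(\zb v,\zb w)\ge W_2(\gamma_{\zb v}(1),\gamma_{\zb w}(1))$ I would glue: $\zb\beta\coloneqq(\pi_1+\pi_2,\pi_1+\pi_3)_\#\zb\alpha$ lies in $\Gamma(\gamma_{\zb v}(1),\gamma_{\zb w}(1))$ with transport cost $\int\|x_2-x_3\|_2^2\,\d\zb\alpha$, so $W_2^2(\gamma_{\zb v}(1),\gamma_{\zb w}(1))\le W_{\zb\alpha}^2$, and I take the infimum. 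The last line then follows from the first identity with $\zb w=\zb 0_\mu$, since $\|\zb 0_\mu\|_\mu=0$ and $\langle\zb v,\zb 0_\mu\rangle_\mu=0$ (the unique element of $\Gamma_\mu(\zb v,\zb 0_\mu)$ has third coordinate $0$).

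For (iii), for any $\zb\alpha\in\Gamma_\mu(\zb v,\zb w)$ I would chain the pointwise bound $|x_2^\T x_3|\le\|x_2\|_2\|x_3\|_2$ with Cauchy--Schwarz in $L^2(\zb\alpha)$ to get $\bigl|\int x_2^\T x_3\,\d\zb\alpha\bigr|\le\|\zb v\|_\mu\|\zb w\|_\mu$; since this holds for every $\zb\alpha$, it passes to the maximum, proving $|\langle\zb v,\zb w\rangle_\mu|\le\|\zb v\|_\mu\|\zb w\|_\mu$. If $\zb v=c\cdot\zb w$ with $c\ge0$ then $\langle\zb v,\zb w\rangle_\mu=c\,\|\zb w\|_\mu^2=\|\zb v\|_\mu\|\zb w\|_\mu$ by (i). Conversely, in the non-degenerate case $\zb v,\zb w\neq\zb 0_\mu$ (the degenerate cases being checked directly), I would pick a maximizer $\zb\alpha^*$ and read off the two equality conditions: pointwise $x_2^\T x_3=\|x_2\|_2\|x_3\|_2$, i.e. $x_2$ and $x_3$ are nonnegatively parallel $\zb\alpha^*$-a.e., and equality in the $L^2(\zb\alpha^*)$ Cauchy--Schwarz, i.e. $\|x_2\|_2=c\,\|x_3\|_2$ $\zb\alpha^*$-a.e. for some $c\ge0$; together these force $x_2=c\,x_3$ $\zb\alpha^*$-a.e., whence $\zb v=(\pi_{1,2})_\#\zb\alpha^*=(\pi_1,c\,\pi_3)_\#\zb\alpha^*=c\cdot\zb w$. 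I expect the main obstacle to be precisely this equality analysis together with the attainment of the maximum in $\langle\cdot,\cdot\rangle_\mu$ used in (ii) and (iii); everything else is bookkeeping on push-forwards.
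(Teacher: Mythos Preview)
Your proposal is correct and, for part (iii), follows essentially the same chain of inequalities as the paper: pointwise Cauchy--Schwarz followed by Cauchy--Schwarz in $L^2(\zb\alpha)$, with the equality case handled by reading off both equality conditions at a maximizer. The only difference is that for (i) and the first two items of (ii) the paper simply cites \cite[Prop.~4.2,~4.17,~4.27 and (4.23)]{Gi04}, whereas you supply the direct push-forward and square-expansion arguments; your self-contained route is entirely adequate and arguably preferable here, since the computations are short. Your explicit flag that attainment of the maximum in $\langle\cdot,\cdot\rangle_\mu$ is needed (via narrow compactness of $\Gamma_\mu(\zb v,\zb w)$ and uniform integrability) is a point the paper leaves implicit in its use of ``$\max$'' rather than ``$\sup$''.
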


 \begin{proof}
		i) The homogeneity proof was given in  \cite[Prop~4.17, 4.27]{Gi04}.
		\\
	ii) The first result was shown in \cite[Prop~4.2]{Gi04} and the second one in \cite[(4.23)]{Gi04}. 
The third item follows by the definition of $\|\zb v\|_\mu^2$ and the first item as it holds by i) that 
$
\|\zb 0_\mu\|_\mu=0\|\zb 0_\mu\|_\mu=0=0\langle \zb v,\zb 0_\mu\rangle_\mu=\langle \zb v,\zb 0_\mu\rangle_\mu
$.
\\
iii) It  holds 
{\small
  \begin{align}
    &\langle\zb{v},\zb{w}\rangle_\mu
    =
      \sup_{\boldsymbol\alpha\in\Gamma_\mu(\zb{v},\zb{w})}\langle\zb{v},\zb{w}\rangle_{\boldsymbol\alpha}
      =\sup_{\boldsymbol\alpha\in\Gamma_\mu(\zb{v},\zb{w})}\int_{\R^d\times\R^d\times\R^d}y^\tT z 
      \d \boldsymbol\alpha(x,y,z)\\
    &
      \leq \sup_{\boldsymbol\alpha\in\Gamma_\mu(\zb{v},\zb{w})}
      \int_{\R^d\times\R^d\times\R^d}\|y\|_2 \|z\|_2 \d\boldsymbol\alpha(x,y,z)\\
    &\leq 
      \sup_{\boldsymbol\alpha\in\Gamma_\mu(\zb{v},\zb{w})}\Big(\int_{\R^d\times\R^d\times\R^d}\|y\|_2^2\d\boldsymbol\alpha(x,y,z)\Big)^{1/2}
    \Big(\int_{\R^d\times\R^d\times\R^d}\|z\|_2^2\d\boldsymbol\alpha(x,y,z)\Big)^{1/2}\\
    &=
      \sup_{\boldsymbol\alpha\in\Gamma_\mu(\zb{v},\zb{w})}\Big(\int_{\R^d\times\R^d}\|y\|_2^2\d\zb{v}(x,y)\Big)^{1/2} \Big(\int_{\R^d\times\R^d}\|z\|_2^2\d\zb{w}(x,z)\Big)^{1/2} 
    =\|\zb{v}\|_\mu\|\zb{w}\|_\mu,
	  \end{align}
		}%
where the first inequality is Cauchy--Schwarz' inequality in $\R^d$ and 
the second inequality is Cauchy--Schwarz's inequality on the functions $(x,y,z)\mapsto\|y\|_2$ and $(x,y,z)\mapsto \|z\|_2$ 
in $L_2(\zb \alpha)$.
Consequently, it holds equality if and only if for $\zb\alpha$-almost every $(x,y,z)$ there exists some $c\geq0$ such that $y=cz$ and if there exists some $c\geq0$ such that $\|y\|_2=c\|z\|_2$ $\zb \alpha$-almost everywhere.
That is, we have equality if and only if there exist some $c\geq0$ such that $y=cz$ $\zb \alpha$-almost everywhere, which is equivalent to $\zb v=c\cdot \zb w$.
Finally, it follows 
  \begin{equation}
-\|\zb{v}\|_\mu\|\zb{w}\|_\mu=-\|-\zb{v}\|_\mu\|\zb{w}\|_\mu\leq -\langle -\zb{v},\zb{w}\rangle_\mu
\leq\langle \zb{v},\zb{w}\rangle_\mu\leq\|\zb{v}\|_\mu\|\zb{w}\|_\mu.
\end{equation}
 \end{proof}

\subsection{Proof of Proposition~\ref{prop:dini-hard}}
\label{proof:dini-hard}
Let $(\zb v_n)_{n \in \mathbb N}$ be a sequence in $\zb V(\mu)$ with $W_\mu(\zb v_n,\zb v)\to0$ and let $(t_n)_{n \in \mathbb N}$ with
$t_n>0$ such that $t_n\to0$.
Then, we find $n_0\in\N$ such that
    \[
       \gamma_{\zb v}(t_n),\gamma_{\zb v_n}(t_n) \in B_r(\mu), \qquad n \ge n_0. 
    \]
  Using the local Lipschitz continuity of $\F$, formula \eqref{eq:scaling}, the second item of Lemma~\ref{lem:1} ii) and the third item of Lemma~\ref{lem:1} i), we infer that
  \begin{align}
    \lim_{n\to\infty}
    \biggl|
    \frac
    {\F(\gamma_{\vb}(t_n)) - \F(\gamma_{\vb_n}(t_n))}
    {t_n}
    \biggr|
    & \le
      \lim_{n\to\infty}
      \frac
      {L \, W_2(\gamma_{\vb}(t_n), \gamma_{\vb_n}(t_n))}
      {t_n}
    \\
    &=
      \lim_{n\to\infty}
      \frac
      {L \, W_2(\gamma_{t_n \vb}(1), \gamma_{t_n \vb_n}(1))}
      {t_n}
      \le
      \lim_{n\to\infty}
      \frac
      {L \, W_\mu(t_n \vb, t_n \vb_n)}
      {t_n}
    \\
    &=
      \lim_{n \to \infty}
      \frac
      {L \, t_n \, W_\mu(\vb, \vb_n)}
      {t_n}
      =
      \lim_{n \to \infty}
      L \, W_\mu(\vb, \vb_n)
      = 
      0.\label{star}
  \end{align}
  On the other hand, by the definition of $\liminf$ and $\limsup$ there exist sequences $t^{\pm}_n \to 0+$ in $\R_{\geq0}$ and $\zb v_n^{\pm}$ in $\zb V(\mu)$ such that $\gamma_{\zb v_n^{\pm}}|_{[0,t_n^{\pm}]}$ 
  are geodesics, $W_\mu(\zb v_n^{\pm},\vb)\to 0$ and
  \begin{equation}
    \begin{aligned}
      \HD_{\zb v}^{-} \F(\mu)
  \coloneqq
  \smashoperator{\liminf_{\substack{\zb w \to \zb v, \, t \to 0+,\\ 
  \gamma_{\zb w} |_{[0,t]} \, \text{is geodesic}}}} 
  \frac{\F(\gamma_{\zb w}(t)) - \F(\mu)}{t}
      =
      \lim_{n \to \infty}
      \frac
      {\F(\gamma_{\zb v_n^{-}}(t_n^{-})) - \F(\mu)}
      {t_n^{-}}, \\
      \HD_{\zb v}^{+} \F(\mu) 
  \coloneqq
  \smashoperator{\limsup_{\substack{\zb w \to \zb v, \, t \to 0+,\\ 
  \gamma_{\zb w} |_{[0,t]} \, \text{is geodesic}}}}
  \frac{\F(\gamma_{\zb w}(t)) - \F(\mu)}{t}
      =
      \lim_{n \to \infty}
      \frac
      {\F(\gamma_{\zb v_n^{+}}(t_n^{+})) - \F(\mu)}
      {t_n^{+}}.
    \end{aligned} 
  \end{equation}
Then, it holds by \eqref{star}  that
  \begin{equation}
    |\D_{\zb v}\F(\mu) - \HD_{\zb v}^{\pm} \F(\mu)| = \lim_{n \to \infty} \biggl|
     \frac
    {\F(\gamma_{\zb v}(t_n^{\pm})) - \F(\gamma_{\zb v_n^{\pm}}(t_n^{\pm}))}
    {t_n^{\pm}}
    \biggr| = 0.\\
  \end{equation}
  Since both the lower and the upper Hadamard derivative coincide with the Dini derivative we arrive at the assertion.\hfill$\Box$

\subsection{Proof of Proposition~\ref{ggd_case}}
\label{proof:ggd_case}

Proposition~\ref{ggd_case} is a special case 
of Theorem~\ref {ggd_case_general} at the end of this subsection.
In order to relate Wasserstein steepest descent flows to Wasserstein gradient flows, we need more technicalities, in particular the notation of subdifferentials for velocity plans. 
The \emph{extended Fr\'echet subdifferential} of a function $\F\colon\P_2(\R^d)\to(-\infty,\infty]$ at $\mu$ is defined by
  \begin{equation}
    \zb\partial \F(\mu)\coloneqq \{\zb v\in\zb V(\mu):\F(\nu)
        \ge \F(\mu) - \sup_{\zb{v} \in \exp^{-1}_\mu(\nu)}
        \langle (-1) \cdot \zb{h},\zb{v} \rangle_\mu
        + o(W_2(\mu,\nu))\},
\end{equation}
cf.~\cite[Def~10.3.1]{BookAmGiSa05}.
In particular, it is shown in \cite[Thm~10.3.11]{BookAmGiSa05} that for functions $\F\colon\P_2(\R^d)\to(-\infty,\infty]$, which are $\lambda$-convex functions along generalized geodesics, and a measure $\mu\in\P_2(\R^d)$ with $\zb \partial\F(\mu)\neq\emptyset$ there exists an unique element of the subdifferential with minimal norm, i.e.,
$
\argmin_{\zb v\in\zb\partial\F}\|\zb v\|_\mu
$
contains exactly one element.
Moreover, the \emph{local slope} $|\partial \F|\colon\P_2(\R^d)\to [0,\infty]$ of $\F$ is defined by
  \begin{equation}
    |\partial\F| \left(\mu \right) \coloneqq \limsup_{\nu \to \mu} \frac{\left(\F(\mu) - \F(\nu) \right)^+}{W_2(\mu,\nu)},
\end{equation}
where $(t)^+ \coloneqq \max\{t,0\}$.

Using these notations, the following theorem from \cite[Thm~11.2.1]{BookAmGiSa05}  characterizes the  tangent vectors of Wasserstein gradient flows for all $t \in [0,+\infty)$. 
Note that the original theorem is formulated for $W_2$ instead of $W_{\gamma(t)}$ in \eqref{eq1}, but the proofs in those book provide indeed the relation below. 

\begin{theorem} \label{thm:nice_case}
Let $\F\colon  \P_2(\R^d) \to (-\infty,+\infty]$ be proper, lsc, 
coercive and $\lambda$-convex along generalized geodesics.
Further, denote by $\gamma\colon (0,\infty)\to\P_2(\R^d)$ the unique Wasserstein gradient flow from Theorem~\ref{thm:existence_gflows_ggd}.
Then, for every $t,h >0$
and $\zb v_{t,h} \in \exp^{-1}_{\gamma(t)}\left(\gamma(t+h) \right)$, 
the right limit
\begin{equation} \label{eq1}
\dot\gamma(t)=\zb v_t \coloneqq \lim_{h \to 0+} -\frac{1}{h} \cdot \zb v_{t,h}, \text{ i.e.,  } \lim_{h \to 0+} W_{\gamma(t)}(\zb v_t,  -\tfrac1h \cdot \zb v_{t, h}) = 0
\end{equation}
exists and satisfies for all $t>0$ the relations
\begin{equation}\label{eq2}
\zb v_t = \argmin_{\zb v \in \zb \partial\F(\gamma(t))} \| \zb v \|_{\gamma(t)} 
\end{equation}
and
\begin{equation}\label{eq3}
\lim_{h \to 0+} \frac{\F(\gamma(t+h)) - \F( \gamma(t) )}{h} 
= - \| \zb v_t \|_{\gamma(t)}^2
= - |\partial\F|^2 \left(\gamma(t) \right)
\end{equation}
Further, \eqref{eq1}, \eqref{eq2} and \eqref{eq3} hold true at $t=0$ 
if and only if 
$\zb \partial \F(\mu_0) \not = \emptyset$.
\end{theorem}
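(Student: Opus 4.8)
The plan is to reduce the statement to the fine structure theory of gradient flows of $\lambda$-convex functionals in \cite[Chap~10--11]{BookAmGiSa05} and to the geometric tangent space calculus of \cite{Gi04}, and then to address the single point that is not literally in those references, namely that the limit in \eqref{eq1} may be taken with respect to $W_{\gamma(t)}$ and not only with respect to $W_2$. Everything else --- the existence of the right limit, its identification with the minimal-norm element of the subdifferential, the energy identity, and the dichotomy at $t=0$ --- is available there, once the gradient flow $\gamma$ from Theorem~\ref{thm:existence_gflows_ggd} is recognized as a curve of maximal slope satisfying the $\lambda$-evolution variational inequality.

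First I would record the consequences of $\gamma$ being such a curve (see \cite[§~2.4, Thm~11.1.4]{BookAmGiSa05}): for every $t>0$ the right metric derivative $|\dot\gamma|(t+)$ exists, the map $t\mapsto\F(\gamma(t))$ is right-differentiable on $(0,\infty)$, and $\tfrac{\d^+}{\d t}\F(\gamma(t)) = -|\dot\gamma|^2(t+) = -|\partial\F|^2(\gamma(t))$. Since $\bigl\|-\tfrac1h\cdot\zb v_{t,h}\bigr\|_{\gamma(t)}^2 = \tfrac1{h^2}W_2^2(\gamma(t),\gamma(t+h))$ by \eqref{eq:exp_plan_norm}, this reduces \eqref{eq1}--\eqref{eq3} to showing that the velocity plans $-\tfrac1h\cdot\zb v_{t,h}$ converge in $W_{\gamma(t)}$, as $h\to0+$, to the minimal-norm element of the subdifferential.

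Next I would invoke that for $\F$ that is $\lambda$-convex along generalized geodesics the extended Fréchet subdifferential $\zb\partial\F(\gamma(t))$, when non-empty, has a unique element $\zb\xi_t$ of minimal norm, with $\|\zb\xi_t\|_{\gamma(t)} = |\partial\F|(\gamma(t))$, and that the rescaled optimal plans from $\gamma(t)$ to $\gamma(t+h)$ converge narrowly to the velocity plan of $-\zb\xi_t$ with convergence of the kinetic energies $\tfrac1{h^2}W_2^2(\gamma(t),\gamma(t+h))\to\|\zb\xi_t\|_{\gamma(t)}^2$ (this is \cite[Thm~10.3.11]{BookAmGiSa05} together with the proof of \cite[Thm~11.1.6, Thm~11.2.1]{BookAmGiSa05}). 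To pass to $W_{\gamma(t)}$-convergence I would use Lemma~\ref{lem:1}(ii) to write
\[
 W_{\gamma(t)}^2\bigl(-\tfrac1h\cdot\zb v_{t,h},\,\zb\xi_t\bigr)
 = \bigl\|-\tfrac1h\cdot\zb v_{t,h}\bigr\|_{\gamma(t)}^2 + \|\zb\xi_t\|_{\gamma(t)}^2 - 2\bigl\langle-\tfrac1h\cdot\zb v_{t,h},\,\zb\xi_t\bigr\rangle_{\gamma(t)},
\]
observe that the first summand converges to $\|\zb\xi_t\|_{\gamma(t)}^2$ by the energy convergence, that $\limsup_{h\to0+}\langle-\tfrac1h\cdot\zb v_{t,h},\zb\xi_t\rangle_{\gamma(t)}\le\|\zb\xi_t\|_{\gamma(t)}^2$ by the Cauchy--Schwarz inequality in Lemma~\ref{lem:1}(iii), and that the matching lower bound follows from narrow convergence of the plans together with attainment of the maximum defining $\langle\cdot,\cdot\rangle_{\gamma(t)}$ in \eqref{inner_mu} (cf.\ \cite[§~4]{Gi04}). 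Hence $W_{\gamma(t)}(-\tfrac1h\cdot\zb v_{t,h},\zb\xi_t)\to0$, so the limit $\zb v_t$ in \eqref{eq1} exists, equals $\zb\xi_t$, and is the tangent vector $\dot\gamma(t)$ in the sense of \eqref{eq:geomTangentVector}; this gives \eqref{eq1} and \eqref{eq2}, and \eqref{eq3} then follows from the first step.

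Finally, for $t=0$ I would use that $|\partial\F|(\mu) = \min\{\|\zb\xi\|_\mu : \zb\xi\in\zb\partial\F(\mu)\}$ for $\F$ that is $\lambda$-convex along generalized geodesics, with the convention $\min\emptyset = +\infty$, so that $\zb\partial\F(\mu_0)\neq\emptyset$ exactly when $|\partial\F|(\mu_0)<\infty$. If it is non-empty, the three steps above go through verbatim at $t=0$, using $\gamma(0+)=\mu_0$; if it is empty, then $\tfrac1hW_2(\mu_0,\gamma(h)) = \bigl\|-\tfrac1h\cdot\zb v_{0,h}\bigr\|_{\mu_0}\to+\infty$, which rules out any limit in $\zb V(\mu_0)$ and hence any of \eqref{eq1}--\eqref{eq3}. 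I expect the main obstacle to be precisely the upgrade from $W_2$ to $W_{\gamma(t)}$ in \eqref{eq1}: one must check that the convergence of rescaled plans produced inside the proofs in \cite[Chap~11]{BookAmGiSa05} is indeed narrow plus kinetic-energy convergence, which by the identity above is equivalent to $W_{\gamma(t)}$-convergence of the associated velocity plans; the remaining work is bookkeeping.
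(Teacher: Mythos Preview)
Your proposal is correct and follows essentially the same approach as the paper: the paper does not supply its own proof but attributes the result to \cite[Thm~11.2.1]{BookAmGiSa05}, explicitly noting that the original statement there is with $W_2$ rather than $W_{\gamma(t)}$ in \eqref{eq1} and that the proofs in that reference already yield the stronger $W_{\gamma(t)}$-convergence. You reproduce precisely this reduction and then spell out the upgrade from $W_2$ to $W_{\gamma(t)}$ in more detail than the paper does, via the identity $W_{\gamma(t)}^2=\|\cdot\|^2+\|\cdot\|^2-2\langle\cdot,\cdot\rangle_{\gamma(t)}$ together with the Cauchy--Schwarz inequality; a slightly more direct route for that step would be to note that narrow convergence plus convergence of second moments gives $W_2$-convergence of the velocity plans on $\R^d\times\R^d$ (cf.\ \cite[Prop~7.1.5]{BookAmGiSa05}), which for plans with common first marginal is equivalent to $W_{\gamma(t)}$-convergence, but your argument reaches the same conclusion.
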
 

Using this theorem, we can show that in some cases Wasserstein gradient flows are Wasserstein steepest descent flows.

\begin{lemma}\label{lem:GradientFlow_is_steepest_descent}
Let $\F\colon  \P_2(\R^d) \to (-\infty,+\infty]$ be proper, lsc, 
coercive and $\lambda$-convex along generalized geodesics.
Then, for $\mu\in\mathcal P_2(\R^d)$ with $\zb\partial\F(\mu)\neq \emptyset$ and 
\begin{equation}\label{holds}
    \zb v\coloneqq \argmin_{\zb w\in\zb\partial\F(\mu)}\|\zb w\|_\mu,
\end{equation}
it holds
\begin{equation*}
  \zb h\coloneqq(-1)\cdot\zb v\in\HD_-\F(\mu)
  \quad\text{and}\quad
  \HD^-_{\zb h/\|\zb h\|_\mu} \F(\mu)=-|\partial \F|(\mu).
\end{equation*}
In particular,  for all $\mu_0\in \mathcal P_2(\R^d)$ with $\zb\partial\F(\mu_0)\neq \emptyset$, the Wasserstein gradient flow of $\F$ starting at $\mu_0$ is a Wasserstein steepest descent flow of $\F$. 
\end{lemma}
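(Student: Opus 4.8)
The plan is to establish the two displayed assertions — that $\zb h\in\HD_-\F(\mu)$ and that $\HD_{\zb h/\|\zb h\|_\mu}^-\F(\mu)=-|\partial\F|(\mu)$ — by pinching the lower Hadamard derivative between a universal lower bound coming from the local slope and a matching upper bound obtained by testing $\F$ along the Wasserstein gradient flow; the ``in particular'' statement then follows by evaluating this along that flow. I assume $|\partial\F|(\mu)>0$ (if $|\partial\F|(\mu)=0$ then $\zb v=\zb h=\zb 0_\mu$, $\mu$ is stationary, and the statement is degenerate). The ingredients are the metric identities of Lemma~\ref{lem:1}, the scaling rule \eqref{eq:scaling}, the norm identity \eqref{eq:exp_plan_norm}, the positive $1$-homogeneity of $\HD^-$ noted after \eqref{eq:dirHF}, and Theorem~\ref{thm:nice_case} applied to the gradient flow issued from $\mu$ (which exists since $\F$ is proper, lsc, coercive and $\lambda$-convex along generalized geodesics, and since $\zb\partial\F(\mu)\ne\emptyset$).

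\emph{Step 1 (lower bound).} First I would show $\HD_{\zb w}^-\F(\mu)\ge-|\partial\F|(\mu)\,\|\zb w\|_\mu$ for every $\zb w\in\zb\T_\mu\P_2(\R^d)$. Let $(\zb w_n,t_n)$ be admissible for the constrained liminf in \eqref{eq:dirHF+-}: $\zb w_n\to\zb w$ in $W_\mu$, $t_n\downarrow0$, $\gamma_{\zb w_n}|_{[0,t_n]}$ a geodesic. Then $t_n\cdot\zb w_n\in\exp_\mu^{-1}(\gamma_{\zb w_n}(t_n))$ by \eqref{eq:scaling}, so $W_2(\mu,\nu_n)=t_n\|\zb w_n\|_\mu$ by \eqref{eq:exp_plan_norm} and Lemma~\ref{lem:1}, where $\nu_n\coloneqq\gamma_{\zb w_n}(t_n)$. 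Hence
\[
\frac{\F(\nu_n)-\F(\mu)}{t_n}=\frac{\F(\nu_n)-\F(\mu)}{W_2(\mu,\nu_n)}\,\|\zb w_n\|_\mu\ \ge\ -\frac{\bigl(\F(\mu)-\F(\nu_n)\bigr)^+}{W_2(\mu,\nu_n)}\,\|\zb w_n\|_\mu,
\]
and since $\nu_n\to\mu$, taking $\liminf_n$ and using $\limsup_n(\F(\mu)-\F(\nu_n))^+/W_2(\mu,\nu_n)\le|\partial\F|(\mu)$ (definition of the local slope) together with $\bigl|\,\|\zb w_n\|_\mu-\|\zb w\|_\mu\,\bigr|\le W_\mu(\zb w_n,\zb w)\to0$ gives the claim. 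In particular $\HD_{\zb w}^-\F(\mu)>-\infty$ for all $\zb w$, so $\HD_-\F(\mu)$ is well defined, and $\HD_{\zb w}^-\F(\mu)\ge-|\partial\F|(\mu)$ whenever $\|\zb w\|_\mu=1$.

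\emph{Step 2 (upper bound and conclusion).} Let $\gamma\colon[0,\infty)\to\P_2(\R^d)$ be the Wasserstein gradient flow with $\gamma(0)=\mu$. By Theorem~\ref{thm:nice_case} (at $t=0$, using $\zb\partial\F(\mu)\ne\emptyset$), for every $\zb v_{0,h}\in\exp_\mu^{-1}(\gamma(h))$ one has $-\tfrac1h\cdot\zb v_{0,h}\to\zb v$ in $W_\mu$ as $h\downarrow0$, hence $\tfrac1h\cdot\zb v_{0,h}\to(-1)\cdot\zb v=\zb h$ by homogeneity; moreover $\|\zb h\|_\mu=\|\zb v\|_\mu=|\partial\F|(\mu)$ and $\lim_{h\downarrow0}\bigl(\F(\gamma(h))-\F(\mu)\bigr)/h=-|\partial\F|^2(\mu)$. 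Put $\zb w_h\coloneqq\tfrac1h\cdot\zb v_{0,h}$ and $t_h\coloneqq h$; by \eqref{eq:scaling}, $\gamma_{\zb w_h}$ coincides on $[0,h]$ with the geodesic $\gamma_{\zb v_{0,h}}|_{[0,1]}$ affinely reparametrized, so it is a geodesic on $[0,t_h]$ with $\gamma_{\zb w_h}(t_h)=\gamma_{\zb v_{0,h}}(1)=\gamma(h)$. Thus $(\zb w_h,t_h)$ is admissible for $\HD_{\zb h}^-\F(\mu)$, $\zb w_h\to\zb h$, and the difference quotient along it equals $\bigl(\F(\gamma(h))-\F(\mu)\bigr)/h\to-|\partial\F|^2(\mu)$, so $\HD_{\zb h}^-\F(\mu)\le-|\partial\F|^2(\mu)$. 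With Step 1 ($\HD_{\zb h}^-\F(\mu)\ge-|\partial\F|(\mu)\|\zb h\|_\mu=-|\partial\F|^2(\mu)$) this is an equality, and dividing by $|\partial\F|(\mu)$ via $1$-homogeneity gives $\HD_{\zb h/\|\zb h\|_\mu}^-\F(\mu)=-|\partial\F|(\mu)$, the second assertion. For the first, Step 1 shows $\zb h/\|\zb h\|_\mu$ minimizes $\zb w\mapsto\HD_{\zb w}^-\F(\mu)$ over $\{\zb w:\|\zb w\|_\mu=1\}$, so $\bigl(\HD_{\zb h/\|\zb h\|_\mu}^-\F(\mu)\bigr)^-\cdot\tfrac{\zb h}{\|\zb h\|_\mu}=|\partial\F|(\mu)\cdot\tfrac{\zb h}{\|\zb h\|_\mu}=\zb h\in\HD_-\F(\mu)$ by \eqref{def:HD-}.

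\emph{Step 3 (the flow statement and the main difficulty).} For the final claim, let $\gamma$ be the Wasserstein gradient flow from $\mu_0$, extended by $\gamma(0)\coloneqq\mu_0$; it is locally absolutely continuous, and for every $t\ge0$ Theorem~\ref{thm:nice_case} (at $t=0$ using $\zb\partial\F(\mu_0)\ne\emptyset$) gives $\zb\partial\F(\gamma(t))\ne\emptyset$, the existence of $\dot\gamma(t)$, and $\dot\gamma(t)=(-1)\cdot\argmin_{\zb w\in\zb\partial\F(\gamma(t))}\|\zb w\|_{\gamma(t)}$. Applying the first part of the lemma at $\mu=\gamma(t)$ yields $\dot\gamma(t)\in\HD_-\F(\gamma(t))$ for all $t\ge0$, so $\gamma$ is a Wasserstein steepest descent flow by Definition~\ref{def:wsgf}. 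I expect the only delicate point to be the upper bound in Step 2: one must verify that the rescaled inverse–exponential plans $\tfrac1h\cdot\zb v_{0,h}$ are simultaneously admissible test directions for $\HD^-$ (they parametrize geodesics on $[0,h]$ terminating at $\gamma(h)$) and converge in $W_\mu$ exactly to $\dot\gamma(0)=\zb h$ — this is the infinitesimal content of Theorem~\ref{thm:nice_case} transported into the geometric tangent space, and the scaling/metric bookkeeping (together with keeping the sign convention for $\dot\gamma$ straight) is what requires care; both pinching estimates are otherwise soft.
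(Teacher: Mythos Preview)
Your proposal is correct and follows essentially the same pinching strategy as the paper: the lower bound $\HD^-_{\zb w}\F(\mu)\ge-|\partial\F|(\mu)\,\|\zb w\|_\mu$ from the slope definition is exactly the paper's inequality \eqref{lower} rewritten, and the matching upper bound comes, as in the paper, from testing along the gradient flow via Theorem~\ref{thm:nice_case} (equations \eqref{eq1} and \eqref{eq3}). The order of the two steps is reversed relative to the paper, and you spell out the ``in particular'' part slightly more explicitly, but the argument is the same.
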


\begin{proof}
Theorem \ref{thm:nice_case} implies the existence of a unique Wasserstein gradient flow $\gamma\colon (0,\infty)\to\mathcal P_2(\R^d)$ with $\gamma(0)\coloneqq\gamma(0+)=\mu$. 
Since $\zb\partial\F(\mu)\neq \emptyset$, we know that
$\zb v \in \V(\mu)$ in \eqref{holds}
is the velocity field $\zb v_t$ with \eqref{eq1}, \eqref{eq2} and \eqref{eq3} at $t=0$.

We consider two cases.
If  $\zb v = \zb 0_{\gamma(0)}$, then the assertion is straightforward.
If $\zb v \not = \zb 0_{\gamma(0)}$, we conclude 
by \eqref{eq3} and the definition of $\HD^-_{\zb v} \F$ that
\begin{align}
   - |\partial \F|^2(\mu)
   &=
   \lim_{h \to 0+}
  \frac
  {\F(\gamma(h)) - \F(\gamma(0))} h
  =
  \lim_{h \to 0+}
  \frac
  {\F(\gamma_{-h^{-1}\cdot\vb_{0,h}}(h)) - \F(\gamma(0))} h\\
  &\ge
  \HD^-_{(-1)\cdot\zb v} \F(\gamma(0))=\HD^-_{\zb h} \F(\gamma(0)),
\end{align}
and further by dividing by $|\partial\F|(\gamma(0))=\|\zb v\|_{\gamma(0)}=\|\zb h\|_{\gamma(0)}$ that
  \begin{equation}
    \frac{1}{\| \zb h \|_{\gamma(0)}} \HD^-_{\zb h} \F(\gamma(0))
    \le 
    - |\partial \F|(\gamma(0)).
  \end{equation}
 On the other hand, we have by \eqref{eq:exp_plan_norm} that
 $W_2(\mu, \gamma_{\tilde{\zb w}}(t)) = \|t \cdot \tilde{\zb w} \|_\mu = t \| \tilde {\zb w} \|_\mu$, 
 $\tilde{\zb w} \in \T_\mu\left(\P_2(\R^d)\right)$, 
 such that for every 
 $\zb w \in \zb \T_\mu\left(\P_2(\R^d)\right)$, $\zb w \not = \zb 0_\mu$,
 \begin{align}\label{lower}
    \frac
    {(\HD^-_{\zb w} \F(\mu))^-}
    {\| \zb w \|_\mu}
    &\le
    \biggl(
    \liminf_{\substack{\tilde{\zb w} \to \zb w, t \to 0+ \\ \gamma_{\tilde w}|_{[0,t]}\text{ is geodesic}}}
    \frac
    {\F(\gamma_{\tilde{\zb w}}(t)) - \F(\mu)}
    {t \, \| \tilde {\zb w}\|_\mu}
    \biggr)^-\\
    &=
    \limsup_{\substack{\tilde{\zb w} \to \zb w, t \to 0+ \\ \gamma_{\tilde w}|_{[0,t]}\text{ is geodesic}}}
    \frac{(\F(\mu) - \F(\gamma_{\tilde {\zb w}}(t)))^+}{W_2(\mu,\gamma_{\tilde {\zb w}}(t))}
    \le
    | \partial \F | (\mu).
  \end{align}
 Combining both inequalities, we get
   \begin{equation}
 \frac{1}{\| \zb h \|_{\gamma(0)}} 
 \HD^-_{\zb h} \F(\gamma(0)) 
 = 
 \HD^-_{ \frac{1}{\| \zb h \|_{\gamma(0)}} \cdot \zb h} \F(\gamma(0)) 
 =
- | \partial \F | (\gamma(0)).
\end{equation}
Let $\zb w  \in \argmin\limits_{\substack{\tilde{\zb w} \in \zb \T_{\gamma(0)}\P_2(\R^d) \\ \|\tilde{\zb w}\|_{\gamma(0)} = 1}} \HD_{\tilde{\zb w}}^- \F(\mu) $.
Then we obtain by \eqref{lower} that
  \begin{equation}
    \HD^-_{\zb w} \F(\gamma(0)) \ge  - | \partial \F | (\gamma(0)),
  \end{equation}
so that 
 $\frac{1}{\| \zb h \|_{\gamma(0)}} \cdot \zb h \in  \argmin\limits_{\substack{\tilde{\zb w} \in \zb \T_{\gamma(0)}\P_2(\R^d) \\ \|\tilde{\zb w}\|_{\gamma(0)} = 1}} \HD_{\tilde{\zb w}}^- \F(\mu)$
 and
 $\zb h \in \HD_-\F(\gamma(0))=\HD_-\F(\mu)$.
\end{proof}

To show the reverse direction, namely that every Wasserstein steepest descent flow is a Wasserstein gradient flow for
special functions $\F$,
we need some additional assumptions.

We say that $\F\colon\P_2(\R^d)\to(-\infty,\infty]$ is \emph{continuous along geodesics} if $\F\circ\gamma\colon[0,\epsilon]\to(-\infty,\infty]$ is continuous for \emph{all} geodesics $\gamma\colon[0,\epsilon]\to \P_2(\R^d)$ with $\F \circ \gamma(0),\F \circ \gamma(\epsilon)<\infty$.
The following lemma states that $\zb h$ from the previous lemma is the only element in $\HD_-\F(\mu)$
if $\F$ is additionally continuous along geodesics.
This will be the basis of the proof that under mild assumptions Wasserstein steepest descent flows are Wasserstein gradient flows.

\begin{lemma}\label{lem:unique_steepest_descent}
Let $\F\colon  \P_2(\R^d) \to (-\infty,+\infty]$ be proper, lsc, 
coercive, $\lambda$-convex along generalized geodesics and continuous along geodesics.
Then,  it holds for any $\mu\in\mathcal P_2(\R^d)$ with $\zb\partial\F(\mu)\neq \emptyset$ that $\HD_-\F(\mu)=\{\zb h\}$, where $\zb h$ is defined as in Lemma~\ref{lem:GradientFlow_is_steepest_descent}. 
\end{lemma}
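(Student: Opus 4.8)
The plan is to show that any $\zb g \in \HD_-\F(\mu)$ must coincide with $\zb h$, where $\zb h = (-1)\cdot\zb v$ and $\zb v = \argmin_{\zb w\in\zb\partial\F(\mu)}\|\zb w\|_\mu$. By Lemma~\ref{lem:GradientFlow_is_steepest_descent} we already know $\zb h\in\HD_-\F(\mu)$, and that the normalized direction $\zb h/\|\zb h\|_\mu$ achieves the minimum of $\HD^-_{\zb w}\F(\mu)$ over $\|\zb w\|_\mu=1$ with value $-|\partial\F|(\mu)$. So any element of $\HD_-\F(\mu)$ has the form $(\HD^-_{\zb g_0}\F(\mu))^-\cdot\zb g_0 = |\partial\F|(\mu)\cdot\zb g_0$ for some minimizing unit direction $\zb g_0$, i.e.\ $\HD^-_{\zb g_0}\F(\mu) = -|\partial\F|(\mu)$ and $\|\zb g_0\|_\mu=1$. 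The task reduces to showing that such a minimizing unit direction is unique and equals $\zb h/\|\zb h\|_\mu$.

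First I would translate the condition $\HD^-_{\zb g_0}\F(\mu) = -|\partial\F|(\mu)$ back into a statement about the extended Fréchet subdifferential. Pick a sequence $\zb w_n\to\zb g_0$ in $W_\mu$ with $\gamma_{\zb w_n}|_{[0,t_n]}$ geodesics, $t_n\to0+$, realizing the liminf. Using continuity of $\F$ along geodesics together with the $\lambda$-convexity inequality \eqref{eq:gg}, the difference quotients $(\F(\gamma_{\zb w_n}(t))-\F(\mu))/t$ are essentially monotone in $t$ (up to the $-\tfrac\lambda2 t(1-t)W^2_{\zb\alpha}$ correction), so one can upgrade the limit along $t_n$ to control $(\F(\gamma_{\zb g_0}(t))-\F(\mu))/t$ as $t\to0$, showing $\D_{\zb g_0}\F(\mu)$ exists and equals $-|\partial\F|(\mu)$. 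Then, writing $\nu_t\coloneqq\gamma_{\zb g_0}(t) = \exp_\mu(t\cdot\zb g_0)$ and noting $W_2(\mu,\nu_t)=t\|\zb g_0\|_\mu = t$ by \eqref{eq:exp_plan_norm}, the limit $\lim_{t\to0+}(\F(\nu_t)-\F(\mu))/W_2(\mu,\nu_t) = -|\partial\F|(\mu)$ combined with the characterization of $|\partial\F|$ and the fact that $\zb v$ with minimal norm in $\zb\partial\F(\mu)$ satisfies $\|\zb v\|_\mu = |\partial\F|(\mu)$ (this is the content of \cite[Thm~10.3.11]{BookAmGiSa05}, already invoked in the excerpt) forces $t\cdot\zb g_0\in\exp_\mu^{-1}(\nu_t)$ to be, in the limit, the ``steepest'' possible direction. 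More precisely, the defining inequality of $\zb\partial\F(\mu)$ applied to $\zb\xi = \zb v$ gives $\F(\nu_t)-\F(\mu)\ge -\langle\zb v,t\cdot\zb g_0\rangle_\mu + o(t)$, hence $-|\partial\F|(\mu) = \lim (\F(\nu_t)-\F(\mu))/t \ge -\langle\zb v,\zb g_0\rangle_\mu \ge -\|\zb v\|_\mu\|\zb g_0\|_\mu = -|\partial\F|(\mu)$ by Cauchy--Schwarz (Lemma~\ref{lem:1}(iii)). Equality throughout forces $\langle\zb v,\zb g_0\rangle_\mu = \|\zb v\|_\mu\|\zb g_0\|_\mu$, and the equality case of Cauchy--Schwarz in Lemma~\ref{lem:1}(iii) yields $\zb v = c\cdot\zb g_0$ for some $c\ge0$; comparing norms gives $c = \|\zb v\|_\mu$, so $\zb g_0 = \zb v/\|\zb v\|_\mu = \zb h/\|\zb h\|_\mu$ (after noting $\zb h = -\zb v$, the minimizing unit direction is $-\zb v/\|\zb v\|_\mu$; the sign is handled carefully using that $\HD^-_{\zb g_0}\F<0$). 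Multiplying by $|\partial\F|(\mu) = \|\zb h\|_\mu$ gives $\zb g = \zb h$.

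The main obstacle I anticipate is the first step: rigorously passing from the \emph{liminf over perturbed directions and vanishing times} in the definition \eqref{eq:dirHF+-} of $\HD^-_{\zb g_0}\F(\mu)$ to a genuine \emph{one-sided derivative along the fixed curve $\gamma_{\zb g_0}$}. The Dini/Hadamard subtlety is exactly why $\F$ is not assumed locally Lipschitz here, so Proposition~\ref{prop:dini-hard} does not apply directly; instead one must use $\lambda$-convexity along generalized geodesics and continuity along geodesics to get the needed monotonicity/semicontinuity of difference quotients. A clean way to organize this is: for fixed small $s>0$, bound $(\F(\gamma_{\zb g_0}(s))-\F(\mu))/s$ above by a limit of $(\F(\gamma_{\zb w_n}(s))-\F(\mu))/s$ using continuity along the geodesics connecting $\gamma_{\zb w_n}(s)$ to $\gamma_{\zb g_0}(s)$ (whose endpoints converge in $W_2$ since $W_\mu(\zb w_n,\zb g_0)\to0$ dominates $W_2(\gamma_{\zb w_n}(1),\gamma_{\zb g_0}(1))$ by Lemma~\ref{lem:1}(ii)), and then let $s$ vary using the convexity-induced monotonicity; the lower bound $\HD^-_{\zb g_0}\F(\mu)\le\D_{\zb g_0}\F(\mu)$ is immediate from the definitions. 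Once $\D_{\zb g_0}\F(\mu) = \HD^-_{\zb g_0}\F(\mu) = -|\partial\F|(\mu)$ is established, the remainder is the short Cauchy--Schwarz rigidity argument above.
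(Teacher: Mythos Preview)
Your Cauchy--Schwarz rigidity endgame is exactly the paper's, and the reduction to ``any minimizing unit direction $\zb g_0$ must equal $\zb h/\|\zb h\|_\mu$'' is the right framing. But the intermediate route you sketch has a genuine gap, and the obstacle you flag is not the one that actually bites.

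The step where you write ``the defining inequality of $\zb\partial\F(\mu)$ applied to $\zb\xi=\zb v$ gives $\F(\nu_t)-\F(\mu)\ge -\langle\zb v,t\cdot\zb g_0\rangle_\mu+o(t)$'' is not justified. The extended Fr\'echet subdifferential inequality involves a supremum over \emph{all} elements of $\exp_\mu^{-1}(\nu_t)$, so to isolate the specific term $\langle\,\cdot\,,t\cdot\zb g_0\rangle_\mu$ you need $\exp_\mu^{-1}(\nu_t)=\{t\cdot\zb g_0\}$ to be a singleton. Two things can fail: first, $\zb g_0$ lives only in $\zb\T_\mu\P_2(\R^d)=\overline{\zb G(\mu)}^{W_\mu}$ and need not be a geodesic direction at all, so $t\cdot\zb g_0$ need not lie in $\exp_\mu^{-1}(\nu_t)$; second, even for genuine geodesic directions the inverse exponential at the \emph{endpoint} of a geodesic can be multivalued. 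Your plan to first upgrade $\HD^-_{\zb g_0}\F(\mu)$ to $\D_{\zb g_0}\F(\mu)$ does not resolve this, and the proposed monotonicity/continuity transfer from $\gamma_{\zb w_n}(s)$ to $\gamma_{\zb g_0}(s)$ is delicate (you would need $\F(\gamma_{\zb g_0}(s))<\infty$ a priori to even invoke continuity along a connecting geodesic).

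The paper sidesteps all of this by \emph{never} leaving the approximating geodesic sequence. It takes $\zb g_n\to\zb g$ in $W_\mu$ with $\gamma_{\zb g_n}|_{[0,\tilde t_n]}$ geodesic realizing the $\liminf$, and then uses continuity along geodesics only in the mild form that $s\mapsto\F(\gamma_{\zb g_n}(s\tilde t_n))$ is continuous on $[0,1]$; this lets one replace $\tilde t_n$ by $t_n=s_n\tilde t_n$ with $s_n\to1_-$ so that the limit is unchanged \emph{and} the target $\gamma_{\zb g_n}(t_n)$ is now an \emph{interior} point of the geodesic. By \cite[Lem~7.2.1]{BookAmGiSa05} the inverse exponential at such interior points is a singleton, $\exp_\mu^{-1}(\gamma_{\zb g_n}(t_n))=\{t_n\cdot\zb g_n\}$, so the subdifferential inequality gives exactly $\F(\gamma_{\zb g_n}(t_n))-\F(\mu)\ge -\langle\zb h,t_n\cdot\zb g_n\rangle_\mu+o(t_n)$. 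Dividing by $t_n$ and passing to the limit (using $\zb g_n\to\zb g$ in $W_\mu$) yields $-|\partial\F|(\mu)\,\|\zb g\|_\mu\ge-\langle\zb h,\zb g\rangle_\mu\ge-\|\zb h\|_\mu\|\zb g\|_\mu$, and your Cauchy--Schwarz equality argument finishes. So the real technical device is the interior-point trick from \cite[Lem~7.2.1]{BookAmGiSa05}, not a Dini/Hadamard comparison.
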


\begin{proof}
  Consider some $\zb g \in \HD_{-}\F(\mu)$. Since $\zb h\in\HD_-\F(\mu)$, we have
    \begin{equation}
      \HD_{\zb g}^{-}\F(\mu)=\HD_{\zb h}^-\F(\mu)=- \|\zb h\|_{\mu} |\partial \F|(\mu)=- \|\zb g\|_{\mu} |\partial \F|(\mu).\end{equation}
  Further, by definition of $\HD_{\zb g}^{-}$, there exist $\tilde t_n\to0$ and $\zb g_n\in \zb G(\mu)$ with $ \zb g_n \to \zb g$ in $W_\mu$ such that $\tilde  t_n\cdot\zb g_n\in\exp_\mu^{-1}(\gamma_{\zb g_n}( \tilde t_n))$
  and
  \begin{equation}
    \lim_{n\to \infty} \frac{\F(\gamma_{\zb g_n}(\tilde t_n)) - \F(\mu)}{\tilde t_n}= \HD_{\zb g}^{-}\F(\mu) = - \|\zb g\|_{\mu} |\partial \F|(\mu).
  \end{equation}
  Since the limit is finite, we assume wlog that $\F(\gamma_{\zb g_n}(\tilde t_n))<\infty$.
  Thus, by continuity of $\F$ along geodesics, the functions
  \begin{equation}
    \phi_n \colon  [0,1] \to \R, \qquad \phi_n(s) \coloneqq \F (\gamma_{\zb g_n}(s \tilde  t_n)), \qquad s \in [0,1],
  \end{equation}
  are continuous. 
  Hence we can find a sequence $(s_n)_{n \in \N}$ with 
  $s_n \to 1_-$ as $n\to \infty$ such that 
  \begin{equation}
    |\phi_n(s_n) - \phi_n(1)| = |\F(\gamma_{\zb g_n}(s_n \tilde t_n)) - \F(\gamma_{\zb g_n}( \tilde t_n))| \le \tilde t_n^2
  \end{equation}
  with $\exp_{\mu}^{-1}(\gamma_{\zb g_n}(s_n \tilde t_n)) = \{ s_n \tilde t_n \cdot \zb v_n \}$, cf. \cite[Lem~7.2.1]{BookAmGiSa05}.
  Now, replacing the sequence $\tilde t_n$ by $t_n\coloneqq s_n \tilde t_n$ does not alter the limit, i.e.,
    \begin{align*}
      &\lim_{n\to \infty} \frac{\F(\gamma_{\zb g_n}(t_n)) -\F(\mu)}{t_n} 
      \\
      &=  \lim_{n\to \infty} \frac{\F(\gamma_{\zb g_n}(s_n\tilde t_n)) -\F(\mu)}{s_n \tilde t_n}
      - \lim_{n\to \infty} \frac{\F(\gamma_{\zb g_n}(s_n\tilde t_n))- \F(\gamma_{\zb g_n}(\tilde t_n))}{s_n \tilde t_n}\\  
      &= \lim_{n\to \infty} \frac{\F(\gamma_{\zb g_n}(\tilde t_n))-\F(\mu)}{\tilde t_n}=\HD_{\zb g}^-\F(\mu).
    \end{align*}
  Since $(-1)\cdot \zb h \in \zb \partial \F(\mu)$ and $\exp_{\mu}^{-1}(\gamma_{\zb g_n}(t_n)) = \{t_n \cdot \zb g_n\}$ it holds with \eqref{eq:exp_plan_norm} that
  \begin{equation}\F(\gamma_{\zb g_n}(t_n)) - \F(\mu)  \ge  - \langle \zb h, t_n\cdot \zb g_n \rangle_{\mu} + o(t_n\|\zb g_n\|_\mu) =   - t_n \langle \zb h, \zb g_n \rangle_{\mu} + o(t_n \|\zb g_n\|_{\mu}).
  \end{equation}
  As $\zb g_n\to\zb g$ in $W_\mu$ implies $\|\zb g_n\|_\mu\to\|\zb g\|_\mu$, we obtain by dividing both sides by $t_n$ and letting $n \to \infty$ that
  \begin{equation}
    - \|\zb g\|_{\mu} |\partial \F|(\mu) = \lim_{n\to \infty} \frac{\F(\gamma_{\zb g_n}(t_n)) - \F(\mu)}{t_n} \ge - \langle \zb h, \zb g \rangle_{\mu} \ge - \| \zb h \|_{\mu} \|\zb g\|_{\mu},
  \end{equation}
  where the second implication is the Cauchy--Schwarz inequality from Lemma \ref{lem:1} (iii).
  Since $\|\zb g\|_{\mu} = \|\zb h\|_{\mu} = |\partial \F|(\mu)$ we have equality. 
  By the equality condition of the Cauchy--Schwarz relation, this yields $\zb h = \zb  g$ such that
  $\HD_-\F(\mu)=\{\zb h\}$.
\end{proof}

Now, we can show that under certain assumptions Wasserstein steepest descent flows are Wasserstein gradient flows.

\begin{lemma}\label{lem:steepest_descent_is_GradientFlow}
Let $\F\colon  \P_2(\R^d) \to (-\infty,+\infty]$ be proper, lsc, coercive, $\lambda$-convex along generalized geodesics and continuous along geodesics and let $\mu_0\in \mathcal P_2(\R^d)$ such that $\zb\partial-\F(\mu_0)\neq \emptyset$.
Further assume that $\zb \partial F(\mu)\neq\emptyset$ for any $\mu\in\P_2(\mu)$ with $\HD_-\F(\mu)\neq\emptyset$.
Then, there exists a unique Wasserstein steepest descent flow of $\F$ starting at $\mu_0$, which coincides with the Wasserstein gradient flow of $\F$ starting at $\mu_0$.
\end{lemma}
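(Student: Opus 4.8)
The plan is to combine the three preceding lemmata into a single clean argument. By Lemma~\ref{lem:GradientFlow_is_steepest_descent}, the Wasserstein gradient flow $\gamma$ of $\F$ starting at $\mu_0$ (which exists and is unique by Theorem~\ref{thm:existence_gflows_ggd}, using that $\zb\partial\F(\mu_0)\ne\emptyset$ so that \eqref{eq1}--\eqref{eq3} hold also at $t=0$) is \emph{a} Wasserstein steepest descent flow of $\F$; this already settles existence. It remains to prove uniqueness, i.e.\ that \emph{every} Wasserstein steepest descent flow $\tilde\gamma$ starting at $\mu_0$ coincides with $\gamma$.

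First I would take an arbitrary Wasserstein steepest descent flow $\tilde\gamma\colon[0,\infty)\to\P_2(\R^d)$ with $\tilde\gamma(0)=\mu_0$. By Definition~\ref{def:wsgf}, $\dot{\tilde\gamma}(t)$ exists and lies in $\HD_-\F(\tilde\gamma(t))$ for every $t\ge 0$; in particular $\HD_-\F(\tilde\gamma(t))\ne\emptyset$, so by hypothesis $\zb\partial\F(\tilde\gamma(t))\ne\emptyset$ for all $t\ge 0$. Then Lemma~\ref{lem:unique_steepest_descent} applies at each point $\tilde\gamma(t)$ and gives $\HD_-\F(\tilde\gamma(t))=\{\zb h_t\}$, where $\zb h_t=(-1)\cdot\argmin_{\zb w\in\zb\partial\F(\tilde\gamma(t))}\|\zb w\|_{\tilde\gamma(t)}$ is exactly the velocity plan appearing in Theorem~\ref{thm:nice_case}. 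Hence $\dot{\tilde\gamma}(t)=\zb h_t$ for all $t\ge 0$, i.e.\ $\tilde\gamma$ satisfies the same pointwise tangent-vector identity \eqref{eq1}--\eqref{eq2} that characterizes the Wasserstein gradient flow. The same reasoning of course applies to $\gamma$ itself.

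The final step is to deduce $\tilde\gamma\equiv\gamma$ from the fact that both are locally absolutely continuous curves with identical initial value $\mu_0$ and identical tangent vector $\zb h_t=\argmin_{\zb w\in\zb\partial\F(\cdot)}\|\zb w\|$ (up to sign) at every point. Here I would invoke the uniqueness part of the theory of gradient flows for $\lambda$-convex functionals: by \cite[Thm~11.1.4 / Thm~11.2.1]{BookAmGiSa05} the differential inclusion $\dot\gamma(t)\in-\zb\partial\F(\gamma(t))$ together with the minimal-selection property $\|\dot\gamma(t)\|_{\gamma(t)}=|\partial\F|(\gamma(t))$ admits at most one locally absolutely continuous solution with given initial datum — this is the standard contraction estimate $\frac{\d}{\d t}W_2^2(\gamma_1(t),\gamma_2(t))\le -2\lambda W_2^2(\gamma_1(t),\gamma_2(t))$ valid along gradient flows of $\lambda$-convex functionals. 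Since both $\gamma$ and $\tilde\gamma$ are such solutions starting at $\mu_0$, Grönwall's inequality forces $W_2(\gamma(t),\tilde\gamma(t))=0$ for all $t$, hence $\tilde\gamma=\gamma$.

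The main obstacle is the last step: one must be careful that a Wasserstein steepest descent flow in the sense of Definition~\ref{def:wsgf} genuinely qualifies as a solution of the gradient-flow differential inclusion to which the AGS contraction/uniqueness theory applies. Concretely, from $\dot{\tilde\gamma}(t)=\zb h_t\in-\zb\partial\F(\tilde\gamma(t))$ for \emph{all} $t$ one needs that $\tilde\gamma$ satisfies the Evolution Variational Inequality (or at least the a.e.\ curve-of-maximal-slope condition and energy identity) used in \cite[§~11.1--11.2]{BookAmGiSa05}; this follows because $\zb h_t$ is precisely the minimal-norm element of $-\zb\partial\F(\tilde\gamma(t))$, so that $\|\dot{\tilde\gamma}(t)\|_{\tilde\gamma(t)}=|\partial\F|(\tilde\gamma(t))$ holds pointwise, and local absolute continuity plus this identity yield the chain rule $\frac{\d}{\d t}\F(\tilde\gamma(t))=-|\partial\F|^2(\tilde\gamma(t))$ needed to enter the EVI framework. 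Once $\tilde\gamma$ is recognized as an EVI/gradient flow, uniqueness is automatic, and the proof is complete. For the general version stated in Appendix~\ref{proof:ggd_case} one replaces local Lipschitz continuity by the weaker hypotheses ensuring $\zb\partial\F(\mu_0)\ne\emptyset$ and continuity along geodesics, which are exactly the assumptions of Lemma~\ref{lem:steepest_descent_is_GradientFlow}.
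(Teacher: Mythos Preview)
Your existence argument and the first part of the uniqueness argument (showing $\HD_-\F(\tilde\gamma(t))=\{\zb h_t\}$ and hence $\dot{\tilde\gamma}(t)=\zb h_t$ with $(-1)\cdot\zb h_t$ the minimal-norm element of $\zb\partial\F(\tilde\gamma(t))$) agree exactly with the paper. The divergence is in the final step, and the obstacle you yourself flag is real: you have not actually shown that $\tilde\gamma$ satisfies the EVI or is a curve of maximal slope. The appeal to a chain rule $\tfrac{\d}{\d t}\F(\tilde\gamma(t))=-|\partial\F|^2(\tilde\gamma(t))$ is unjustified here --- such chain rules in the Wasserstein space require nontrivial assumptions (see \cite[§~10.3--10.4]{BookAmGiSa05}) and cannot simply be read off from $\|\dot{\tilde\gamma}(t)\|_{\tilde\gamma(t)}=|\partial\F|(\tilde\gamma(t))$ plus local absolute continuity. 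So as written, the passage from ``$\dot{\tilde\gamma}(t)$ is the minimal-norm element of $-\zb\partial\F(\tilde\gamma(t))$'' to ``$\tilde\gamma$ falls under the AGS uniqueness theorem'' is a gap.

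The paper closes this gap differently and more directly: instead of going through EVI, it verifies Definition~\ref{def:wgf} for $\tilde\gamma$. Since $\tilde\gamma$ is locally absolutely continuous, it has a Borel velocity field $v_t$ solving the continuity equation; by \cite[Prop~8.4.6]{BookAmGiSa05} one has $(\Id,v_t)_\#\tilde\gamma(t)=\dot{\tilde\gamma}(t)$ for a.e.\ $t$, hence $(\Id,-v_t)_\#\tilde\gamma(t)=(-1)\cdot\dot{\tilde\gamma}(t)\in\zb\partial\F(\tilde\gamma(t))$. Then \cite[Rem~10.3.3]{BookAmGiSa05} translates this plan-subdifferential inclusion into the vector-field inclusion $-v_t\in\partial\F(\tilde\gamma(t))$, i.e.\ $v_t\in-\partial\F(\tilde\gamma(t))$, which is exactly \eqref{wgf}. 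Thus $\tilde\gamma$ \emph{is} a Wasserstein gradient flow, and uniqueness follows from Theorem~\ref{thm:existence_gflows_ggd}. This route sidesteps any need for a chain rule or an EVI verification; the two citations from \cite{BookAmGiSa05} are the missing ingredients in your argument.
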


\begin{proof}
By Theorem~\ref{thm:existence_gflows_ggd}, there exists a unique Wasserstein gradient flow starting at $\mu_0$, which is by Lemma~\ref{lem:GradientFlow_is_steepest_descent} a steepest descent flow. 
Thus, it suffices to show that any Wasserstein steepest descent flow $\gamma\colon [0,\infty) \to\mathcal P_2(\R^d)$ is a Wasserstein gradient flow.
Since $\dot\gamma(t)\in\HD_-(\gamma(t))$, we have that $\HD_-(\gamma(t))$ is non-empty, which implies
by assumption that the subdifferential $\zb \partial \F(\gamma(t))$ is non-empty.
By Lemma \ref{lem:GradientFlow_is_steepest_descent} and \ref{lem:unique_steepest_descent}, we obtain $\dot\gamma(t)\in\HD_-\F(\gamma(t))=\{(-1)\cdot\zb v\}$, 
where $\zb v \coloneqq \argmin_{\zb w\in\zb\partial\F(\gamma(t))}\|\zb w\|_{\gamma(t)}$ 
which implies that $(-1) \cdot \dot\gamma(t)\in \zb\partial\F(\gamma(t))$.
Since $\gamma$ is by definition absolutely continuous, 
there exists a Borel velocity field $v_t\colon\R^d\to\R^d$, $t\in I$ with
\begin{align}\label{eq:cont_equation_fulfilled}
\partial_t\gamma(t)+\nabla_x\cdot(\gamma(t)v_t)&=0\quad\text{in } I\times\R^d.
\end{align}
Moreover, \cite[Prop~8.4.6]{BookAmGiSa05} implies that $(\Id,v_t)_\#\gamma(t)=\dot\gamma(t)$ for almost-every $t\in I$ such that 
\begin{equation}
(\Id,-v_t)_\#\gamma(t)
=(-1) \cdot (\Id,v_t)_\#\gamma(t)
=(-1) \cdot\dot\gamma(t)\in\zb \partial \F(\gamma(t)).
\end{equation}
Finally, we know by \cite[Rem~10.3.3]{BookAmGiSa05}  that $(\Id,v)_\#\mu\in\zb\partial\F(\mu)$ if and only if $v\in\partial\F(\mu)$, so that 
$-v_t\in\partial\F(\gamma(t)$, i.e., $v_t\in-\partial\F(\gamma(t))$.
Together with \eqref{eq:cont_equation_fulfilled}, we can conclude that $\gamma$ is the unique Wasserstein gradient flow with respect to $\F$.
\end{proof}

\begin{remark}
The assumption that $\zb \partial F(\mu)\neq\emptyset$ for any $\mu\in\P_2(\mu)$ with $\HD_-\F(\mu)\neq\emptyset$ 
is automatically fulfilled if the slope $|\partial \F|(\mu)$ is finite for every $\mu\in\P_2(\R^d)$ as \cite[Thm~10.3.10]{BookAmGiSa05} 
states that the subdifferential at $\mu$ is non-empty for all $\mu\in\P_2(\R^d)$ 
with $|\partial \F|(\mu)<\infty$. 
This includes in particular locally Lipschitz continuous functions $\F$ since local Lipschitz continuity implies by definition that $|\partial \F|(\mu)$ is finite  for all $\mu\in\P_2(\R^d)$.
\end{remark}

Summarizing Lemma~\ref{lem:GradientFlow_is_steepest_descent}, \ref{lem:unique_steepest_descent} and \ref{lem:steepest_descent_is_GradientFlow} we obtain the following theorem.

\begin{theorem}\label{ggd_case_general}
Let $\F\colon  \P_2(\R^d) \to (-\infty,+\infty]$ be proper, lsc, coercive and $\lambda$-convex along generalized geodesics and let $\mu_0\in \mathcal P_2(\R^d)$ such that $\zb\partial\F(\mu_0)\neq \emptyset$.
Then, the unique Wasserstein gradient flow starting at $\mu_0$ is a Wasserstein steepest descent flow.
Moreover, if $\F$ is additionally continuous along geodesics and fulfills $\zb \partial F(\mu)\neq\emptyset$ for any $\mu\in\P_2(\mu)$ with $\HD_-\F(\mu)\neq\emptyset$,
then there exists a unique Wasserstein steepest descent flow starting at $\mu_0$ which coincides with the Wasserstein gradient flow.
\end{theorem}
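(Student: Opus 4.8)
The plan is to read off Theorem~\ref{ggd_case_general} as the consolidation of the three lemmas just established, so the proof should be short. For the first assertion I would note that $\zb\partial\F(\mu_0)\neq\emptyset$ forces $\mu_0\in\dom\F\subset\overline{\dom\F}$, so Theorem~\ref{thm:existence_gflows_ggd} applies and yields the unique Wasserstein gradient flow $\gamma$ of $\F$ with $\gamma(0+)=\mu_0$. Lemma~\ref{lem:GradientFlow_is_steepest_descent} then states precisely that this curve is a Wasserstein steepest descent flow of $\F$, which is exactly the first claim.

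For the second assertion I would simply invoke Lemma~\ref{lem:steepest_descent_is_GradientFlow}, whose hypotheses are verbatim the ones added in the second part of the theorem — namely $\F$ continuous along geodesics and $\zb\partial\F(\mu)\neq\emptyset$ whenever $\HD_-\F(\mu)\neq\emptyset$ — and whose conclusion is exactly that a unique Wasserstein steepest descent flow starting at $\mu_0$ exists and coincides with the Wasserstein gradient flow. Internally this uses Lemma~\ref{lem:unique_steepest_descent}: given any steepest descent flow $\gamma$, the set $\HD_-\F(\gamma(t))$ is non-empty, hence so is $\zb\partial\F(\gamma(t))$, and the uniqueness of the steepest descent direction forces $(-1)\cdot\dot\gamma(t)$ to be the minimal-norm element of $\zb\partial\F(\gamma(t))$; the continuity equation for the absolutely continuous curve $\gamma$ together with \cite[Prop~8.4.6]{BookAmGiSa05} and \cite[Rem~10.3.3]{BookAmGiSa05} then identifies its velocity field with an element of $-\partial\F(\gamma(t))$ for a.e.\ $t$, so that $\gamma$ is the Wasserstein gradient flow.

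Because the statement is essentially bookkeeping, I do not expect any obstacle beyond checking that the hypotheses of the cited lemmas match. If one were to reprove it from scratch, the two genuinely delicate points would be (i) the uniqueness in Lemma~\ref{lem:unique_steepest_descent}, where one must pass from the $\liminf$ in the definition of $\HD^-$ to a geodesic-approximating sequence, use continuity along geodesics to move the evaluation point back to time $1$ without altering the limiting difference quotient, and then apply the equality case of the Cauchy--Schwarz inequality from Lemma~\ref{lem:1}(iii); and (ii) the implication ``steepest descent flow $\Rightarrow$ gradient flow'' in Lemma~\ref{lem:steepest_descent_is_GradientFlow}, which relies on the a.e.\ identification of $\dot\gamma(t)$ with the barycentric velocity field. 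In the present consolidated form, however, the proof is a one-line appeal to the three lemmas.
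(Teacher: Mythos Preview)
Your proposal is correct and matches the paper's approach exactly: the paper simply states that the theorem is obtained by ``summarizing Lemma~\ref{lem:GradientFlow_is_steepest_descent}, \ref{lem:unique_steepest_descent} and \ref{lem:steepest_descent_is_GradientFlow}'', with no further argument. Your additional remarks on the delicate points inside those lemmas are accurate but go beyond what the paper itself provides at this point.
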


\section{Proof of Proposition~\ref{prop:lip-int-pot}}
\label{proof:lip-int-pot}
In the following, we denote the  Lipschitz constant of $F\colon \R^d \to \R$ on $A \subset \R^d$ by
\begin{equation}
  \Lip(F, A) \coloneqq \sup\Big\{ \frac{|F(x) - F(y)|}{\|x-y\|_2}: x, y \in A, \; x \not = y \Big\}.
\end{equation}
To prove the Proposition~\ref{prop:lip-int-pot}, we need three auxiliary lemmata.

\begin{lemma}   \label{prop:VLip}
  Let $V\colon \R^d \to \R$ be locally Lipschitz continuous and $L>0$ such that
  \begin{equation}\label{eq:Vlip}
    \Lip(V, B_{r}(x)) \le L (1 + \|x\|_2 + r), \qquad x \in \R^d, \quad r>0.
  \end{equation}
    Then the functional $\V\colon \P_2(\R^d) \to \R$ given by
  \begin{equation}
    \V(\mu) \coloneqq \int_{\R^d} V(x) \d \mu(x), \qquad \mu \in \P_2(\R^d),
  \end{equation}
  is locally Lipschitz continuous.
\end{lemma}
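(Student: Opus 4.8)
The plan is to bound the increments of $\V$ directly using an optimal transport plan together with the prescribed growth of the local Lipschitz constants of $V$.

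First I would verify that $\V$ is real-valued on $\P_2(\R^d)$. Applying \eqref{eq:Vlip} with $x=0$ gives, for every $y\in\R^d$, $|V(y)-V(0)|\le \Lip(V,B_{\|y\|_2+1}(0))\,\|y\|_2\le L(2+\|y\|_2)\,\|y\|_2$, so $|V(y)|\le C(1+\|y\|_2^2)$ for a suitable constant $C=C(L,V(0))$, and hence $V\in L^1(\mu)$ whenever $\mu\in\P_2(\R^d)$. Next, fix $\mu\in\P_2(\R^d)$ and $r'>0$, take $\nu_1,\nu_2\in B_{r'}(\mu)$ and an optimal plan $\zb\pi\in\Gamma^{\opt}(\nu_1,\nu_2)$. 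Since $V$ is integrable against both marginals of $\zb\pi$,
\[
  \V(\nu_1)-\V(\nu_2)=\int_{\R^d\times\R^d}\bigl(V(x_1)-V(x_2)\bigr)\,\d\zb\pi(x_1,x_2).
\]
For a fixed pair $(x_1,x_2)$ and any $\epsilon>0$, both points lie in $B_{\|x_1-x_2\|_2+\epsilon}(x_1)$, so \eqref{eq:Vlip} yields $|V(x_1)-V(x_2)|\le L(1+\|x_1\|_2+\|x_1-x_2\|_2+\epsilon)\,\|x_1-x_2\|_2$; letting $\epsilon\to 0$ gives the pointwise estimate $|V(x_1)-V(x_2)|\le L(1+\|x_1\|_2+\|x_1-x_2\|_2)\,\|x_1-x_2\|_2$.

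Then I would integrate this bound against $\zb\pi$ and apply the Cauchy--Schwarz inequality in $L^2(\zb\pi)$, splitting off the factor $\|x_1-x_2\|_2$ whose $L^2(\zb\pi)$-norm is exactly $W_2(\nu_1,\nu_2)$. Using $(1+a+b)^2\le 3(1+a^2+b^2)$ and $(\pi_1)_{\#}\zb\pi=\nu_1$, the remaining factor is controlled by $3\bigl(1+\int_{\R^d}\|x\|_2^2\,\d\nu_1(x)+W_2^2(\nu_1,\nu_2)\bigr)$. Finally, for $\nu_1,\nu_2\in B_{r'}(\mu)$ the triangle inequality gives $\int_{\R^d}\|x\|_2^2\,\d\nu_1(x)=W_2^2(\nu_1,\delta_0)\le (r'+W_2(\mu,\delta_0))^2$ and $W_2(\nu_1,\nu_2)\le 2r'$, so the whole factor is bounded by a constant depending only on $\mu$ and $r'$. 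This produces $|\V(\nu_1)-\V(\nu_2)|\le \tilde L\,W_2(\nu_1,\nu_2)$ with $\tilde L=L\sqrt{3\bigl(1+(r'+W_2(\mu,\delta_0))^2+4r'^2\bigr)}$ for all $\nu_1,\nu_2\in B_{r'}(\mu)$, which is precisely local Lipschitz continuity of $\V$ at $\mu$; since $\mu$ was arbitrary, $\V$ is locally Lipschitz continuous.

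No serious obstacle arises here. The only mildly delicate point is that the pointwise bound on $|V(x_1)-V(x_2)|$ grows linearly in $\|x_1\|_2$, so one cannot replace it by a single global Lipschitz constant; instead one must use the Cauchy--Schwarz splitting together with the uniform second-moment bound that is available precisely because $B_{r'}(\mu)\subset\P_2(\R^d)$ is a bounded set of probability measures.
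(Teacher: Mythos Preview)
Your proof is correct and follows essentially the same approach as the paper: take an optimal plan between $\nu_1,\nu_2\in B_{r'}(\mu)$, use the pointwise bound $|V(x_1)-V(x_2)|\le L(1+\|x_1\|_2+\|x_1-x_2\|_2)\|x_1-x_2\|_2$ coming from \eqref{eq:Vlip}, apply Cauchy--Schwarz to extract the factor $W_2(\nu_1,\nu_2)$, and control the remaining second-moment term uniformly on the ball. The only cosmetic differences are that the paper passes to the velocity plan $\zb v=(\pi_1,\pi_2-\pi_1)_\#\zb\pi$ and splits the three summands separately in the Cauchy--Schwarz step, whereas you keep the transport plan and use $(1+a+b)^2\le 3(1+a^2+b^2)$; you also add the preliminary check that $\V$ is real-valued, which the paper omits.
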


\begin{proof}
  For $r > 0$ and $\mu \in \mathcal P_2(\R^d)$,
  let $\nu_1, \nu_2 \in B_r(\mu)$. 
  Choosing $\zb \pi \in \Gamma^{\opt}(\nu_1,\nu_2)$ 
  and $\zb v = (\pi_1, \pi_2 - \pi_1)_\# \zb \pi \in \exp_{\nu_1}^{-1}(\nu_2)$,
  we estimate first applying the triangular inequality 
  \begin{align}
         |\V(\nu_1) - \V(\nu_2)|
      & \le \int_{\R^d \times \R^d} |V(x_1) - V(x_2)| \d \zb \pi(x_1, x_2)\\
      & = \int_{\R^d \times \R^d} |V(x_1) - V(x_1+x_2)| \d \zb v(x_1, x_2)\\
      &  \le \int_{\R^d \times \R^d} \Lip\left(V, B_{\|x_2\|_2}(x_1) \right) \|x_2\|_2 \d \zb v(x_1, x_2)\\
      & \le L \int_{\R^d \times \R^d} 
      \bigl(1 + \|x_1\|_2 + \|x_2\|_2 \bigr) \|x_2\|_2 \d \zb v(x_1, x_2).
   \end{align}
   Using the Cauchy-Schwarz inequality, we get 
   \begin{equation}
   \begin{aligned}
         |\V(\nu_1) - \V(\nu_2)|
      & \le 
      L \left(\int_{\R^d \times \R^d} \|x_2\|_2^2 \d \zb v(x_1,x_2)\right)^{\frac12}
      + L \int_{\R^d \times \R^d} \|x_2\|_2^2 \d \zb v(x_1,x_2)
      \\
      &+ L \left( \int_{\R^d \times \R^d} \|x_1\|_2^2 \d \zb v(x_1, x_2) \right)^\frac12
      \left( \int_{\R^d \times \R^d} \|x_2\|_2^2 \d \zb v(x_1,x_2) \right)^\frac12\\
       & = L \left(1 + \left(\int_{\R^d}\|x_1\|_2^2 \d \nu_1(x_1)\right)^\frac12 + W_2(\nu_1, \nu_2) \right) W_2(\nu_1, \nu_2)\\
      & = L \left(1 + 2r + \left(\int_{\R^d}\|x_1\|_2^2 \d \nu_1(x_1)\right)^\frac12 \right) W_2(\nu_1, \nu_2).
    \end{aligned}
  \end{equation}
  To estimate the remaining integral, 
  let $\tilde{\zb \pi} \in \Gamma^{\opt}(\nu_1,\mu)$.
  Using the triangle inequality,  we obtain
  \begin{equation}
      \label{eq:first-mom}
      \begin{aligned}
        \int_{\R^d} \|x_1\|_2^2 \d \nu_1(x_1)
        &=
        \int_{\R^d \times \R^d}
        \|x_1\|_2^2 \d \tilde{\zb \pi}(x_1,x_2)\\
        &\le 
        2\int_{\R^d \times \R^d}
        \|x_1 - x_2\|_2^2 \d \tilde{\zb \pi}(x_1,x_2)
        +
        2\int_{\R^d \times \R^d}
        \|x_2\|_2^2 \d \tilde{\zb \pi}(x_1,x_2)
        \\
        &= 2 W_2(\mu,\nu_1) + 2 \int_{\R^d} \|x_2\|_2^2 \d \mu(x_2),
      \end{aligned}
  \end{equation}
  and consequently
  \begin{equation*}
      |\V(\nu_1) - \V(\nu_2)|
      \le
       L \left(1 + 2r + \left(2r + 2\int_{\R^d}\|x_2\|_2^2 \d \mu(x_2)\right)^\frac12 \right) W_2(\nu_1, \nu_2). \qedhere
  \end{equation*}
\end{proof}

Note, if $V\colon \R^d \to \R$ is differentiable such that there exists $L>0$ with 
\begin{equation}
  \|\nabla V(x)\|_2 \le L (1 + \|x\|_2), \qquad x \in \R^d,
\end{equation}
then for $x_1, x_2 \in B_r(x)$ with $r>0$, $x \in \R^d$, it holds
\[
\begin{aligned}
  | V(x_2) - V(x_1) | & = \left| \int_0^1 \nabla V(x_1 + t (x_2-x_1))^{\tT} (x_2-x_1) \d t \right| \\
&  \le \int_0^1 \|\nabla V(x_1 + t(x_2-x_1))\|_2 \|x_2-x_1\|_2 \d t \\
&  \le \int_0^1 L (1 + \|x_1 + t(x_2-x_1)\|_2) \|x_2-x_1\|_2 \d t \\
&  \le \int_0^1 L (1 + \|x_1\|_2 + t\|x_2-x_1\|_2) \|x_2-x_1\|_2 \d t \\
&  = L (1 + \|x_1\|_2 + \tfrac12 \|x_2 - x_1\|_2) \|x_2 - x_1\|_2
  \le  L (1 + \|x_1\|_2 + r) \|x_2 - x_1\|
\end{aligned}
\]
and the condition \eqref{eq:Vlip} is satisfied.

\begin{lemma}  \label{lem:EKLip}
  Let $K\colon \R^d \times \R^d \to \R$ be locally Lipschitz continuous and $L>0$ such that
  \begin{equation}
  \label{eq:Klip}
    \Lip(K, B_{r}(x) \times B_{s}(y)) \le L (1 + \|x\|_2 + \|y\|_2 + r + s), \qquad x, y\in \R^d, \quad r, s \ge 0.
  \end{equation}
  Then the interaction energy 
  $\mathcal E_K\colon \P_2(\R^d) \to \R$ in \eqref{eq:interaction}
  is locally Lipschitz continuous.
\end{lemma}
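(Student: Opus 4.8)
The plan is to mimic the argument of Lemma~\ref{prop:VLip}, now expanding the double integral of the kernel over a coupling and controlling the Lipschitz increments of $K$ via the growth bound \eqref{eq:Klip}. First I would fix $\mu \in \P_2(\R^d)$ and $r>0$, take two measures $\nu_1,\nu_2 \in B_r(\mu)$, choose an optimal plan $\zb\pi \in \Gamma^{\opt}(\nu_1,\nu_2)$ and set $\zb v = (\pi_1,\pi_2-\pi_1)_\# \zb\pi \in \exp_{\nu_1}^{-1}(\nu_2)$, so that $\int \|x_2\|_2^2 \,\d\zb v(x_1,x_2) = W_2^2(\nu_1,\nu_2)$. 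To compare $\mathcal E_K(\nu_1)$ and $\mathcal E_K(\nu_2)$, I would use the product coupling $\zb\pi\otimes\zb\pi \in \Gamma^{\opt}(\nu_1\otimes\nu_1,\nu_2\otimes\nu_2)$ and write, via a telescoping step,
\begin{align}
  |\mathcal E_K(\nu_1) - \mathcal E_K(\nu_2)|
  &\le \tfrac12 \int_{(\R^d)^4} |K(x_1,x_3) - K(x_2,x_4)| \,\d[\zb\pi\otimes\zb\pi](x_1,x_2,x_3,x_4)\\
  &\le \tfrac12 \int |K(x_1,x_3)-K(x_2,x_3)| + |K(x_2,x_3)-K(x_2,x_4)| \,\d[\cdots].
\end{align}
Each difference is bounded through \eqref{eq:Klip} by $L(1 + \|x_1\|_2 + \|x_3\|_2 + \|x_2-x_1\|_2)\,\|x_2-x_1\|_2$ and its symmetric analogue, after re-expressing the endpoints $x_2,x_4$ in the $\zb v$-coordinates as $x_1 + (x_2-x_1)$, $x_3 + (x_4-x_3)$.

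Next I would integrate these bounds. Writing the increments as the $\zb v$-variable $x_2$ (with $\|x_2\|_2$ small on average, since $\int\|x_2\|_2^2\,\d\zb v = W_2^2(\nu_1,\nu_2) \le 4r^2$) and applying the Cauchy–Schwarz inequality exactly as in the proof of Lemma~\ref{prop:VLip}, every term reduces to a product of $W_2(\nu_1,\nu_2)$ with a factor that is a polynomial in $W_2(\nu_1,\nu_2)$ and in the first and second moments $\int \|x\|_2\,\d\nu_1(x)$, $\int\|x\|_2^2\,\d\nu_1(x)$. These moments are in turn bounded, uniformly over $\nu_1 \in B_r(\mu)$, by the triangle-inequality estimate \eqref{eq:first-mom} (and its first-moment analogue) in terms of $r$ and the fixed second moment $\int\|x\|_2^2\,\d\mu(x)$. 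Collecting constants yields $|\mathcal E_K(\nu_1)-\mathcal E_K(\nu_2)| \le L'\,W_2(\nu_1,\nu_2)$ with $L'$ depending only on $\mu$ and $r$, which is the claim.

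The only genuine subtlety — and the step I would flag as the main obstacle — is bookkeeping in the telescoping: one must be careful that after splitting $|K(x_1,x_3)-K(x_2,x_4)|$ into two one-variable increments, the "inner" integration variable appearing in the Lipschitz bound (e.g. $\|x_3\|_2$ in the first increment) is integrated against a probability measure whose moments are controlled, not against $|\sigma|$-type signed measures; using the product coupling of two probability measures rather than of signed measures is what makes this clean. A secondary point is checking that \eqref{eq:Klip} is indeed applicable to balls of the relevant radii, i.e. that $x_2 = x_1 + (x_2-x_1)$ lies in $B_{\|x_2-x_1\|_2}(x_1)$, which is immediate. I expect the remainder to be routine, following the template of Lemma~\ref{prop:VLip} verbatim, with the integrand $(1 + \|x_1\|_2 + \|x_3\|_2 + \|x_2\|_2)\|x_2\|_2$ in place of $(1+\|x_1\|_2+\|x_2\|_2)\|x_2\|_2$.

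Finally, to finish the proof of Proposition~\ref{prop:lip-int-pot}, I would verify that the negative Riesz kernel $K(x_1,x_2) = -\|x_1-x_2\|_2^r$ with $r \in [1,2)$ satisfies the hypothesis \eqref{eq:Klip}: since $t\mapsto t^r$ has derivative $r t^{r-1}$ which, for $r \in [1,2)$, grows at most linearly, one gets $\|\nabla_{x_1} K(x_1,x_2)\|_2 = r\|x_1-x_2\|_2^{r-1} \le r(1 + \|x_1\|_2 + \|x_2\|_2)$, and the mean-value estimate as in the remark following Lemma~\ref{prop:VLip} (now in both arguments jointly) produces a bound of the form \eqref{eq:Klip}. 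Likewise $V_{K,\nu}(x_1) = \int \|x_1-x_2\|_2^r\,\d\nu(x_2)$ has gradient bounded by $r\int(1+\|x_1\|_2+\|x_2\|_2)\,\d\nu(x_2) = r(1+\|x_1\|_2 + \int\|x_2\|_2\,\d\nu(x_2))$, so \eqref{eq:Vlip} holds with a constant depending on the first moment of $\nu$; Lemma~\ref{prop:VLip} then gives local Lipschitz continuity of $\V_{K,\nu}$.
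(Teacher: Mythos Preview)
Your proposal is correct and follows essentially the same route as the paper: fix $\nu_1,\nu_2\in B_r(\mu)$, take an optimal plan and the associated velocity plan, expand $|\mathcal E_K(\nu_1)-\mathcal E_K(\nu_2)|$ over the product coupling, apply the growth bound \eqref{eq:Klip}, and finish with Cauchy--Schwarz together with the moment estimate \eqref{eq:first-mom}. The only cosmetic difference is that the paper bounds $|K(x_1,x_2)-K(x_1+y_1,x_2+y_2)|$ in a single step via $\Lip(K,B_{\|y_1\|_2}(x_1)\times B_{\|y_2\|_2}(x_2))\,\|(y_1,y_2)\|_2$ rather than telescoping into two one-variable increments, which spares you the extra triangle inequality on the shifted base point.
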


\begin{proof}
  For $r > 0$,
  let $\nu_1, \nu_2 \in B_r(\mu)$. 
  Choosing $\zb \pi \in \Gamma^{\opt}(\nu_1,\nu_2)$ 
  and $\zb v = (\pi_1, \pi_2 - \pi_1)_\# \zb \pi \in \exp_{\nu_1}^{-1}(\nu_2)$,
  we estimate
  \begin{equation}
    \begin{aligned}
      &|\mathcal E_K(\nu_1) - \mathcal E_K(\nu_2)|
      \\
      & \le \int_{\R^{4d}} |K(x_1, x_2)- K(y_1,y_2)| \d \zb \pi(x_1, y_1) \d \zb \pi(x_2, y_2)\\
      & \le \int_{\R^{4d}} |K(x_1, x_2)- K(x_1+y_1, x_2 + y_2)| \d \zb v(x_1, y_1) \d \zb v(x_2, y_2)\\
      & \le \int_{\R^{4d}} \Lip(K, B_{\|y_1\|_2}(x_1) \times B_{\|y_2\|_2}(x_2)) \|(y_1, y_2)\|_2 \d \zb v(x_1, y_1) \d \zb v(x_2, y_2)\\
      & \le \int_{\R^{4d}} \Lip(K, B_{\|y_1\|_2}(x_1) \times B_{\|y_2\|_2}(x_2)) (\|y_1\|_2 + \|y_2\|_2) \d \zb v(x_1, y_1) \d \zb v(x_2, y_2)\\
      & \le L \int_{\R^{4d}} (1 + \|x_1\|_2 +  \|x_2\|_2 + \|y_1\|_2 + \|y_2\|_2) (\|y_1\|_2 + \|y_2\|_2) \d \zb v(x_1, y_1) \d \zb v(x_2, y_2)\\
      & \le L\left(2 + 2 \int_{\R^d}\|x\|_2 \d \nu_1(x) + 2 \left(\int_{\R^d}\|x\|_2^2 \d \nu_1(x) \right)^\frac12 + 4 W_2(\nu_1, \nu_2) \right) W_2(\nu_1, \nu_2)\\
      & \le L\left(2 + 8r + 2 \int_{\R^d}\|x\|_2 \d \nu_1(x) + 2 \left(\int_{\R^d}\|x\|_2^2 \d \nu_1(x) \right)^\frac12 \right) W_2(\nu_1, \nu_2)
    \end{aligned}
  \end{equation}
  Using the Cauchy--Schwarz inequality, we have
  \begin{equation}
      \int_{\R^d}\|x\|_2 \d \nu_1(x)
      \le
      \left(\int_{\R^d}\|x\|_2^2 \d \nu_1(x) \right)^\frac12
  \end{equation}
  Exploiting \eqref{eq:first-mom},
  we have
  \begin{equation}
      |\mathcal E_K(\nu_1) - \mathcal E_K(\nu_2)|
      \le
      L \left(2 + 8r + 4 \left(2r + 2\int_{\R^d}\|y\|_2^2 \d \mu(x)\right)^\frac12 \right) W_2(\nu_1, \nu_2)
  \end{equation}
  and arrive at the assertion.
\end{proof}

\begin{lemma}
\label{lem:Klip}
    Let $K\colon \R^d \times \R^d \to \R$ be differentiable such that there exists $L>0$ with
    \begin{equation}
        \| \nabla K(x, y) \|_2 \le L(1 + \|x\|_2 + \|y\|_2).
    \end{equation}
    Then condition~\eqref{eq:Klip} is satisfied.
\end{lemma}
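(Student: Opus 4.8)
The plan is to reduce the Lipschitz bound \eqref{eq:Klip} to the pointwise growth bound on $\nabla K$ by a one-variable mean value argument along line segments, using the convexity of Euclidean balls.

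First I would fix $x,y\in\R^d$ and $r,s\ge 0$ and take two distinct points $p=(p_1,p_2)$, $q=(q_1,q_2)$ of the convex set $B_r(x)\times B_s(y)\subset\R^d\times\R^d$. The segment $z(t)\coloneqq q+t(p-q)$, $t\in[0,1]$, stays inside $B_r(x)\times B_s(y)$ by convexity, so $\|z_1(t)\|_2\le\|x\|_2+r$ and $\|z_2(t)\|_2\le\|y\|_2+s$ uniformly in $t$. Feeding this into the hypothesis gives the uniform estimate
$$
\|\nabla K(z(t))\|_2\le L\bigl(1+\|z_1(t)\|_2+\|z_2(t)\|_2\bigr)\le L\bigl(1+\|x\|_2+\|y\|_2+r+s\bigr),\qquad t\in[0,1].
$$

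Next I would apply the mean value theorem to the scalar function $g(t)\coloneqq K(z(t))$, which is differentiable on $[0,1]$ with $g'(t)=\langle\nabla K(z(t)),p-q\rangle$ by the chain rule (here only differentiability of $K$ is used, nothing about continuity of $\nabla K$). This produces $\xi\in(0,1)$ with $K(p)-K(q)=g'(\xi)$, so Cauchy--Schwarz together with the bound above gives $|K(p)-K(q)|\le L(1+\|x\|_2+\|y\|_2+r+s)\,\|p-q\|_2$. Dividing by $\|p-q\|_2$ and taking the supremum over distinct $p,q\in B_r(x)\times B_s(y)$ then yields \eqref{eq:Klip} with the very same constant $L$, by the definition \eqref{lip} of $\Lip$.

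There is no real obstacle here; the only points needing a little care are that the norm on the product $\R^d\times\R^d$ should be read as the Euclidean norm on $\R^{2d}$ (so that $\nabla K$ and the distance in \eqref{lip} are compatible, consistently with the use of $\|(y_1,y_2)\|_2\le\|y_1\|_2+\|y_2\|_2$ in the proof of Lemma~\ref{lem:EKLip}), and that one should invoke the mean value theorem rather than the fundamental theorem of calculus so as not to assume integrability of $\nabla K$. Alternatively, one could write $K(p)-K(q)=\int_0^1\langle\nabla K(z(t)),p-q\rangle\,\d t$ exactly as in the remark following Lemma~\ref{prop:VLip}, with an identical estimate.
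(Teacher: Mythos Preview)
Your proof is correct and follows essentially the same line-segment argument as the paper: parametrize the segment between the two points, apply the chain rule, and use the growth bound on $\nabla K$. The only cosmetic differences are that the paper writes the increment as an integral $\int_0^1\langle\nabla K(z(t)),p-q\rangle\,\d t$ rather than invoking the mean value theorem, and that it bounds $\|z_1(t)\|_2\le\|x_1\|_2+t\|x_2-x_1\|_2$ via the endpoint rather than the ball center; your choice of bounding through $\|x\|_2+r$ directly is marginally cleaner since it yields the constant $L$ in \eqref{eq:Klip} without an extra factor.
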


\begin{proof}
Fix $x, y, \in \R^d$, $r, s > 0$ and let $(x_1, y_1), (x_2, y_2) \in B_r(x) \times B_s(y)$ then we estimate
\[
\begin{aligned}
 &|K(x_2,y_2) - K(x_1, y_1)| 
 \\
 & = \left| \int_0^1 \nabla K(x_1 + t (x_2-x_1), y_1 + t(y_2 - y_1))^{\tT} (x_2 - x_1, y_2 - y_1) \d t \right| \\
 & \le \int_0^1 \|\nabla K(x_1 + t (x_2-x_1), y_1 + t(y_2 - y_1))\|_2 \|(x_2 - x_1, y_2 - y_1)\| \d t \\
 & \le \int_0^1 L(1 + \|x_1 + t (x_2 - x_1)\|_2 + \|y_1 + t (y_2 - y_1)\|_2) \|(x_2 - x_1, y_2 - y_1)\|_2 \d t \\
\ & \le \int_0^1 L(1 + \|x_1\|_2 + t \|x_2 - x_1\|_2 + \|y_1\|_2 + t \|y_2 - y_1\|_2) \|(x_2 - x_1, y_2 - y_1)\|_2 \d t \\
\ & \le L(1 + \|x_1\|_2 + \tfrac12 \|x_2 - x_1\|_2 + \|y_1\|_2 + \tfrac12 \|y_2 - y_1\|_2) \|(x_2 - x_1, y_2 - y_1)\|_2 \\
\ & \le L(1 + \|x_1\|_2 + r + \|y_1\|_2 + s) \|(x_2 - x_1, y_2 - y_1)\|_2 \\
\end{aligned}
\]
which yields the assertion.
\end{proof}

\noindent
{\bf Proof of Proposition~\ref{prop:lip-int-pot}}:
{\bf Part ($\mathcal E_{K}$):}
By Lemma~\ref{lem:EKLip} it is sufficient to show that the kernel $K(x,y) = - \|x-y\|_2^r$, satisfies for $r \in [1,2)$ the condition \eqref{eq:Klip}. For $r=1$ the condition follows from the fact that $K$ is Lipschitz continuous with Lipschitz constant $L=1$. 
For $r \in (1,2)$ the kernels $K$ are differentiable with
gradient
\[
  \nabla K(x,y) = (\nabla_1 K(x,y), - \nabla_1 K(x,y))^{\tT},
  \qquad \nabla_1 K(x,y) = 
  \begin{cases}
      0,& x = y,\\
      \frac{r (x-y)}{\|x-y\|_2^{2-r}}, & x \ne y.\\
  \end{cases}
\]
Lemma~\ref{lem:Klip} leads with the estimate
\[
    \|\nabla K(x,y)\|_2 = \sqrt{2} r \|x-y\|_2^{r-1} \le \sqrt{2} r (1 + \|x-y\|_2) \le L (1 + \|x\|_2 + \|y\|_2), \quad L \coloneqq \sqrt{2} r,
\]
for $r \in (1,2)$ and the previous observation for $r=1$ for $s_1,s_2 >0$
to the assertion
\begin{equation}
    \label{eq:RieszLocalLip}
    \mathrm{Lip}(K, B_{s_1}(x) \times B_{s_2}(y)) \le L (1 + \|x\|_2 + \|y\|_2 + s_1 + s_2), \qquad x, y \in \R^d.
\end{equation}

\noindent
{\bf Part ($\mathcal V_{K,\nu}$):}
By Proposition~\ref{prop:VLip} it is sufficient to show that the potential 
$V_{K, \nu}(x) = \int_{\R^d} K(x,y) \d \nu(y)$, $\nu \in \P_2(\R^d)$ satisfies the condition~\eqref{eq:Vlip}. Fix $x \in \R^d$, $s>0$ and let $x_1, x_2 \in B_s(x)$ with $x_1 \ne x_2$.
Then by the previous findings \eqref{eq:RieszLocalLip} we estimate
\begin{align}
\frac{|V_{K, \nu}(x_1) - V_{K, \nu}(x_2)|}{\|x_1 - x_2\|_2}
& \le \int_{\R^d} \frac{|K(x_1,y) - K(x_2,y)|}{\|x_1 - x_2\|_2} \d \nu(y) 
 \le \int_{\R^d} \mathrm{Lip}(K, B_s(x) \times \{y\}) \d \nu(y) \\
& \le \int_{\R^d} L(1 + \|x\|_2 + s + \|y\|_2) \d \nu(y) \\
& \le L(1 + \int_{\R^d} \|y\|_2 \d \nu(y) + \|x\|_2 + s). 
\label{eq:lip-int-fun}
\end{align}
\hfill $\Box$

\section{Proofs from Section~\ref{sec:mmd}}\label{proofs:mmd}

\subsection{Proof of Theorem~\ref{thm:HD-spec}} \label{app_supp}

To establish the claim, we require the following integrals over the sphere.

\begin{proposition}\label{prop:int_dist_Sd}
  Let $d\in\N$ with $d\ge2$, $r\in(0,2)$ and $R>0$. Then it holds
	{\small
  \begin{equation}         
    \begin{aligned}
      & \int_{R\mathbb S^{d-1}} \|x-y\|_2^r \, \d \mathcal U_{R \mathbb S^{d-1}}(y)
      & = \begin{cases}
        R^r \;_2F_1 \left(-\frac{r}{2}, \frac{2-r-d}{2}; \frac{d}{2}; \frac{\|x\|_2^2}{R^2} \right), & \|x\|_2 \le R,\\
        \|x\|_2^r \;_2F_1 \left(-\frac{r}{2}, \frac{2-r-d}{2}; \frac{d}{2}; \frac{R^2}{\|x\|_2^2} \right), & R \le \|x\|_2.\\
      \end{cases}
    \end{aligned} 
  \end{equation}
	}%
\end{proposition}

\begin{proof}
In order to prove the claim, we consider the orthogonal polynomials $P_n^{(\frac{d-2}{2})}$ with respect 
to the weight function $(1-t^2)^{\frac{d-3}{2}}$.
For $d>2$, these are the Gegenbauer polynomials with normalization
{\small
  \begin{align}
    \int_{-1}^1 P^{(\frac{d-2}{2})}_n(t) P^{(\frac{d-2}{2})}_m(t) (1-t^2)^{\frac{d-3}{2}} \d t =
    \begin{cases}
      \frac{\pi 2^{3-d}\Gamma(n+d-2)}{n! (n+\tfrac{d-2}{2}) \Gamma(\tfrac{d-2}{2})^2}, & n=m,\\
      0, & n \ne m.
    \end{cases}
  \end{align}
	}%
In particular, we have for $n=0$ that
{\small
\begin{align}
\int_{-1}^1 P^{(\frac{d-2}{2})}_0(t) P^{(\frac{d-2}{2})}_0(t) (1-t^2)^{\frac{d-3}{2}} \d t&=\frac{\pi 2^{3-d}\Gamma(d-2)}{\tfrac{d-2}{2} \Gamma(\tfrac{d-2}{2})^2}
=\frac{\pi 2^{3-d}\Gamma(\tfrac{d-2}{2})\Gamma(\frac{d-1}{2})}{ \Gamma(\tfrac{d}{2})\Gamma(\tfrac{d-2}{2})\pi^{1/2}2^{3-d}} \nonumber
\\
&=\frac{\pi^{1/2}\Gamma(\frac{d-1}{2})}{\Gamma(\tfrac{d}{2})}=\Beta(\tfrac{1}{2},\tfrac{d-1}{2}), \label{eq:zero_gegenbauer}
\end{align}
}%
where we used $z\Gamma(z)=\Gamma(z+1)$ and $\Gamma(z)\Gamma(z+1/2)=2^{1-2z}\pi^{1/2}\Gamma(2z)$ and where $\Beta(a,b)$ is the beta function.
For $d=2$, we obtain the Chebyshev polynomials of first kind with normalization
{\small
  \begin{align}
    \int_{-1}^1 P^{(0)}_n(t) P^{(0)}_m(t) (1-t^2)^{-1/2} \d t =
    \begin{cases}
      \pi, & n=m=0,\\
      \pi/2, & n=m\neq 0\\
      0, & n \ne m.
    \end{cases}
  \end{align}
	}%
By definition, we obtain also for $d=2$ that
{\small
\begin{equation}\label{eq:zero_chebyshev}
\int_{-1}^1 P^{(\tfrac{d-2}{2})}_0(t) P^{(\tfrac{d-2}{2})}_0(t) (1-t^2)^{\tfrac{d-3}{2}} \d t=\pi=\Beta(\tfrac12,\tfrac12)=\Beta(\tfrac12,\tfrac{d-1}2).
\end{equation}
}%
Now, it holds by \cite[Section~2]{BaHu01} that for any $c>0$ the function $t\mapsto (2-2t+c^2)^{r/2}$ can be expanded for $t\in[-1,1]$
as
{\small
$$
(2-2t+c^2)^{r/2}=\sum_{n=0}^{\infty} a_n(\tfrac{r}{2},\tfrac{d-2}{2})P_n^{(\frac{d-2}{2})}(t),
$$
}%
for some coefficients $a_n(\tfrac{r}{2},\tfrac{d-2}{2})$.
By definition of Chebyshev and Gegenbauer polynomials it holds that $P^{(\frac{d-2}{2})}_0(t)=1$ for all $t$.
Therefore, we can compute this coefficient for $n=0$ as
{\small
  \begin{align}
  a_0(\tfrac{r}2,\tfrac{d-2}{2})
  &=
  \frac{1}{\Beta(\tfrac12,\tfrac{d-1}2)}
  \int_{-1}^1 (2 - 2t + c^2)^{r/2} \, P_0^{(\frac{d-2}{2})}(t) (1 - t^2)^{\frac{d-3}{2}} \d t\\
  &=\frac{1}{\Beta(\tfrac12,\tfrac{d-1}2)}\int_{0}^2 (4+c^2-2t)^{r/2}(2t-t^2)^{\frac{d-3}{2}} \d t\\
  &=\frac{2}{\Beta(\tfrac12,\tfrac{d-1}2)}\int_{0}^1 (4+c^2-4t)^{r/2}(4t-4t^2)^{\frac{d-3}{2}} \d t\\
  &=\frac{2^{d-2}(4+c^2)^{r/2}}{\Beta(\tfrac12,\tfrac{d-1}2)}\int_{0}^1 \Big(1-\frac{4}{4+c^2}t\Big)^{r/2}t^{\frac{d-3}{2}}(1-t)^{\frac{d-3}{2}} \d t,
  \end{align}
	}%
  where we substitute $t$ by $t-1$ in the first and $t$ by $2t$ in the second equality.
  Using Euler's integral formula \cite[§~2.1.3, (10)]{Ba53},
  this is equal to
	{\small
  \begin{align}
  a_0(\tfrac{r}2,\tfrac{d-2}{2})
  &=
  (4+c^2)^{r/2}{_2F_1}(-\tfrac{r}{2},\tfrac{d-1}2,d-1,\tfrac{4}{4+c^2}).
  \end{align}
	}%
Note that for $d>2$, this is consistent with the computations from \cite[Section~2.1]{BaHu01}.
  Now, let $x\in\R^{d}\setminus\{0\}$, $y\in\mathbb S^{d-1}$ and choose $t={x^{\tT}y}/{\|x\|_2}$ and $c^2={(\|x\|_2-1)^2}/{\|x\|_2}$.
  Then, it holds 
	{\small
  $$
  (2-2t+c^2)=\frac{2\|x\|_2-2x^\tT y+\|x\|_2^2+1-2\|x\|_2}{\|x\|_2}=\frac{\|x-y\|_2^2}{\|x\|_2}
  $$
	}%
  and
	{\small
  $$
  (4+c^2)=\frac{4\|x\|_2+\|x\|_2^2-2\|x\|_2+1}{\|x\|_2}=\frac{(\|x\|_2+1)^2}{\|x\|_2}.
  $$
	}%
  In particular, we have 
	{\small
  \begin{equation}\label{eq:gegenbauer_expansion}
    \|x-y\|^r=\sum_{n=0}^\infty a_n(\tfrac{r}{2},\tfrac{d-2}{2})\|x\|^{\tfrac{r}{2}}P_n^{(\frac{d-2}{2})}(\tfrac{x^{\tT}y}{\|x\|_2})
    \vspace{-5pt}
  \end{equation}
	}%
  with\vspace{-5pt}
	{\small
  \begin{align}\label{eq:zero_coefficient}
  a_0(\tfrac{r}2,\tfrac{d-2}{2})
  &=\frac{(\|x\|+1)^r}{\|x\|^{r/2}}{_2F_1}(-\tfrac{r}{2},\tfrac{d-1}2,d-1,\tfrac{4\|x\|}{(\|x\|+1)^2}).
  \end{align}
	}%
  Due to symmetry, we can choose wlog $x$ as $e\|x\|_2$, where $e$ is the first unit vector.
  Moreover, we denote $y\in\mathbb S^{d-1}$ by $y=y_1 e+y_{(d-1)}$ with $y_1\in[0,1]$ and $y_{(d-1)}\in\{0\}\times\mathbb S^{d-2}$.
  Then, we compute the integral over the unit sphere by \cite[(1.16)]{AH2012} as 
	{\small
  \begin{align}
    \int_{\mathbb S^{d-1}} \|x-y\|_2^r \, \d \mathbb S^{d-1}(y)=\int_{-1}^1\int_{\mathbb S^{d-2}}\|x-y\|_2^r\,\d \mathbb S^{d-2}(y_{(d-1)})(1-y_1^2)^{\tfrac{d-3}{2}}\,\d (y_1)
  \end{align}
	}%
  By inserting \eqref{eq:gegenbauer_expansion} and using $\frac{x^\tT y}{\|x\|_2}=\frac{\|x\|_2 y_1}{\|x\|_2}=y_1$, the above formula becomes
	{\small
  $$
  \|x\|^{\tfrac{r}{2}}\int_{-1}^1\int_{\mathbb S^{d-2}}\sum_{n=0}^\infty a_n(\tfrac{r}{2},\tfrac{d-2}{2})P_n^{(\frac{d-2}{2})}(y_1)\,\d\mathbb S^{d-2}(y_{(d-1)})(1-y_1^2)^{\tfrac{d-3}{2}}\,\d y_1.
  $$
	}%
  This does not depend on $y_{(d-1)}$. Therefore, by using the volume formula over the sphere $\int_{\mathbb S^{d-2}}1\,\d\mathbb S^{d-2}=\frac{2\pi^{(d-1)/2}}{\Gamma(\tfrac{d-1}{2})}$, it is equal to
	{\small
  $$
  \|x\|^{\tfrac{r}{2}}\frac{2\pi^{(d-1)/2}}{\Gamma(\tfrac{d-1}{2})}\int_{-1}^1\sum_{n=0}^\infty a_n(\tfrac{r}{2},\tfrac{d-2}{2})P_n^{(\frac{d-2}{2})}(y_1)(1-y_1^2)^{\tfrac{d-3}{2}}\,\d y_1.
  $$
	}%
  By interchanging the sum and the integral and adding the factor $P_0^{(\frac{d-2}{2})}(y_1)=1$, this is equal to
	{\small
  $$
  \|x\|^{\tfrac{r}{2}}\sum_{n=0}^\infty a_n(\tfrac{r}{2},\tfrac{d-2}{2})\frac{2\pi^{(d-1)/2}}{\Gamma(\tfrac{d-1}{2})}\int_{-1}^1P_0^{(\frac{d-2}{2})}(y_1)P_n^{(\frac{d-2}{2})}(y_1)(1-y_1^2)^{\tfrac{d-3}{2}}\,\d y_1.
  $$
	}%
Due to the orthogonality property of the polynomials $P_n^{(\frac{d-2}{2})}$, all summands despite $n=0$ are zero. Moreover, we can insert for $n=0$ the formulas \eqref{eq:zero_gegenbauer} and \eqref{eq:zero_chebyshev}. Then, the above term is equal to
{\small
\begin{align}
  \|x\|^{\tfrac{r}{2}}\frac{2\pi^{(d-1)/2}}{\Gamma(\tfrac{d-1}{2})}\Beta(\tfrac12,\tfrac{d-1}{2})a_0(\tfrac{r}{2},\tfrac{d-2}{2})
&=  \|x\|^{\tfrac{r}{2}}\frac{2\pi^{(d-1)/2}}{\Gamma(\tfrac{d-1}{2})}\frac{\pi^{1/2}\Gamma(\tfrac{d-1}{2})}{\Gamma(\tfrac{d}{2})}a_0(\tfrac{r}{2},\tfrac{d-2}{2})\\
&=\|x\|^{\tfrac{r}{2}}\frac{2\pi^{d/2}}{\Gamma(\tfrac{d}{2})}a_0(\tfrac{r}{2},\tfrac{d-2}{2}).
\end{align}
}%
Summarizing, we obtain by inserting \eqref{eq:zero_coefficient} 
{\small
  $$
  \int_{\mathbb S^{d-1}} \|x-y\|_2^r \, \d \mathbb S^{d-1}(y)=\frac{2\pi^{d/2}}{\Gamma(\frac{d}{2})}(\|x\|+1)^r {_2F_1}(-\tfrac{r}{2}, \tfrac{d-1}{2}; d-1; \tfrac{4\|x\|}{(\|x\|+1)^2}).
  $$
	}%
  By normalizing the volume of $\mathbb S^{d-1}$ and rescaling with a factor $R$ this is equivalent to
	{\small
  $$
  \int_{R\mathbb S^{d-1}} \|x-y\|_2^r \, \d \mathcal U_{R\mathbb S^{d-1}}(y)=(\|x\|+R)^r {_2F_1}(-\tfrac{r}{2}, \tfrac{d-1}{2}; d-1; \tfrac{4\|x\|R}{(\|x\|+R)^2}).
  $$
	}%
  Finally, the claim follows by the quadratic transformation rule due to Gauss \cite[(2.11(5))]{HTF1953} given by
	{\small
  \[
    (1+t)^{-2a}\,_2F_1(a, b; 2b; 4t/(1+t)^2) = \,_2F_1(a, a+ \tfrac12 - b; b + \tfrac12; t^2), \qquad t \in [0,1],
  \]
	}%
  with $t = \|x\|_2 /R$ for $\|x\|_2 \le R$ and $t = R / \|x\|_2$ for $R \le \|x\|_2$.
\end{proof}

		Further, we will need some auxiliary results on hypergeometric functions.
			
		\begin{lemma}\label{lem:hyper_convex}
Let $d+r<4$.
Then, it holds for any $x\in[0,1]$ that
{\small
\begin{align}
&{_2F_1}(1-\tfrac{r}{2},2-\tfrac{d+r}{2};3-\tfrac{r}{2};x)
\geq {_2F_1}(1-\tfrac{r}{2},2-\tfrac{d+r}{2},3-\tfrac{r}{2};1)\\
& \quad -(1-x)\frac{(2-r)(4-r-d)}{2(6-r)}{_2F_1}(2-\tfrac{r}{2};3-\tfrac{d+r}{2};4-\tfrac{r}{2};1).
\end{align}
}%
\end{lemma}
\begin{proof}
By the definition of hypergeometric functions via Pochhammer symbols, we have for $x\in[0,1]$ that
{\small
\begin{align}
{_2F_1}(1-\tfrac{r}{2},2-\tfrac{d+r}{2};3-\tfrac{r}{2};x)
&=\sum_{n=0}^\infty \frac{(1-\tfrac{r}{2})_n(2-\tfrac{d+r}{2})_n}{(3-\tfrac{r}{2})_n n!} x^n\\
&=\sup_{N_\mathrm{max}\in\N}\Big\{\sum_{n=0}^{N_\mathrm{max}} \frac{(1-\tfrac{r}{2})_n(2-\tfrac{d+r}{2})_n}{(3-\tfrac{r}{2})_n n!} x^n\Big\},
\end{align}
}%
where the last equality is due to the fact that all coefficients are non-negative. 
Further, the non-negativity of the coefficients implies that 
{\small 
$$\sum\limits_{n=1}^{N_\mathrm{max}}\frac{(1-\tfrac{r}{2})_n(2-\tfrac{d+r}{2})_n}{(3-\tfrac{r}{2})_n n!} x^n$$ 
}%
is convex on $[0,1]$ for any $N_\mathrm{max}$.
Therefore $x\mapsto {_2F_1}(1-\tfrac{r}{2},2-\tfrac{d+r}{2};3-\tfrac{r}{2};x)$ 
is convex as a supremum of convex functions.
Now the claim follows by the identity $f(x)\geq f(1)+(x-1)f'(1)$ 
for convex functions and the derivative rule for hypergeometric functions.
\end{proof}

We need the following 
lemma to prove Proposition~\ref{lem:min_2F1_a} below.

\begin{lemma} \label{lem:hyper_decreasing}
Let $d\in \N$ and $r\in(0,2)$ with $d+r\geq 4$. Then it  holds
\begin{enumerate}[\upshape(i)]
    \item 
$
\frac12 {_2F_1}(-\frac{r}{2},-\frac{d+r-2}{2};\frac{d}{2};1) = \frac{d+r-2}{d}{_2F_1}(\frac{2-r}{2},-\frac{d+r-4}{2};\frac{d+2}{2};1)
$, and
\item
${_2F_1}(\tfrac{4-r}{2},\tfrac{6-d-r}{2};\tfrac{d}{2}+2;x)\geq 0$
and
${_2F_1}(\tfrac{2-r}{2},\tfrac{4-d-r}{2};\tfrac{d}{2}+1;x)\geq 0$
for any $x\in(0,1)$.
\end{enumerate}
The hypergeometric function $f(x)= {_2 F_1}(\frac{2-r}2, -\frac{d+r-4}{2}; \frac{d+2}{2}; x)$ is here decreasing on $[0,1]$.
\end{lemma}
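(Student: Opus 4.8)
The plan is to obtain part~(i) from Gauss's summation theorem ${}_2F_1(a,b;c;1)=\frac{\Gamma(c)\Gamma(c-a-b)}{\Gamma(c-a)\Gamma(c-b)}$, and to deduce both inequalities in part~(ii) from a single device, namely Euler's transformation ${}_2F_1(a,b;c;x)=(1-x)^{c-a-b}\,{}_2F_1(c-a,c-b;c;x)$, which rewrites each of the two hypergeometric functions as a strictly positive factor times a hypergeometric function whose three parameters are all positive, hence with a manifestly non-negative power series on $(0,1)$.

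For~(i) I would apply Gauss's theorem to both sides. On the left, $(a,b,c)=(-\tfrac r2,-\tfrac{d+r-2}{2},\tfrac d2)$ gives $c-a-b=d+r-1$, $c-a=\tfrac{d+r}{2}$, $c-b=d+\tfrac r2-1$; on the right, $(a,b,c)=(\tfrac{2-r}{2},-\tfrac{d+r-4}{2},\tfrac{d+2}{2})$ gives $c-a-b=d+r-2$, and again $c-a=\tfrac{d+r}{2}$, $c-b=d+\tfrac r2-1$. Both values of $c-a-b$ are positive because $d+r\ge4$, and the lower parameters $\tfrac d2,\tfrac{d+2}{2}$ are never non-positive integers, so the theorem applies in both cases. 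Using $\Gamma(\tfrac{d+2}{2})=\tfrac d2\,\Gamma(\tfrac d2)$ and $\Gamma(d+r-1)=(d+r-2)\Gamma(d+r-2)$, one checks that both sides of the claimed identity collapse to $\frac{\Gamma(d/2)\,\Gamma(d+r-1)}{2\,\Gamma((d+r)/2)\,\Gamma(d+r/2-1)}$, which settles~(i).

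For~(ii), for the first function I would use Euler's transformation with $(a,b,c)=(\tfrac{4-r}{2},\tfrac{6-d-r}{2},\tfrac d2+2)$, so that $c-a=\tfrac{d+r}{2}>0$, $c-b=d+\tfrac r2-1>0$ and $c-a-b=d+r-3\ge1$; then ${}_2F_1(c-a,c-b;c;x)$ has only positive Taylor coefficients, hence is $\ge1>0$ on $[0,1)$, while $(1-x)^{d+r-3}>0$ on $(0,1)$, so the product — which is the asserted function — is positive there. For the second function I would take $(a,b,c)=(\tfrac{2-r}{2},\tfrac{4-d-r}{2},\tfrac d2+1)$, yielding $c-a=\tfrac{d+r}{2}$, $c-b=d+\tfrac r2-1$, $c-a-b=d+r-2\ge2$, and conclude identically. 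Finally, to see that $f(x)={}_2F_1(\tfrac{2-r}{2},-\tfrac{d+r-4}{2};\tfrac{d+2}{2};x)$ is non-increasing, I would differentiate by $\tfrac{\d}{\d x}{}_2F_1(a,b;c;x)=\tfrac{ab}{c}{}_2F_1(a+1,b+1;c+1;x)$, obtaining $f'(x)=\tfrac{(2-r)(4-d-r)}{2(d+2)}\,{}_2F_1(\tfrac{4-r}{2},\tfrac{6-d-r}{2};\tfrac d2+2;x)$; the prefactor is $\le0$ since $r<2$ and $d+r\ge4$, and the ${}_2F_1$-factor is $\ge0$ by the first inequality just established, so $f'\le0$ on $(0,1)$ and $f$ is non-increasing on $[0,1]$ (and continuous there since $c-a-b=d+r-2>0$).

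The computations are elementary; the only point requiring care — and essentially the only place the hypothesis $d+r\ge4$ is used — is the bookkeeping that in each application the parameters satisfy the needed positivity constraints ($c-a>0$, $c-b>0$, and in~(i) also $c-a-b>0$) and that no lower parameter is a non-positive integer, where $c-b=d+\tfrac r2-1>0$ should be checked separately for $d=1$ and $d\ge2$. The borderline case $d+r=4$, where $b=0$, $f\equiv1$ and $f'\equiv0$, should be recorded as trivially consistent.
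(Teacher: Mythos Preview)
Your proof is correct. Part~(i) is identical to the paper's argument via Gauss's summation theorem. For part~(ii) the paper instead applies Euler's \emph{integral} representation
\[
{}_2F_1(a,b;c;x)=\frac{\Gamma(c)}{\Gamma(b)\Gamma(c-b)}\int_0^1 t^{b-1}(1-t)^{c-b-1}(1-tx)^{-a}\,\d t
\]
(after swapping the first two parameters so that $0<b<c$ holds) and reads off positivity of the integrand directly. Your route through Euler's \emph{transformation} ${}_2F_1(a,b;c;x)=(1-x)^{c-a-b}{}_2F_1(c-a,c-b;c;x)$ is an equally standard and arguably cleaner device: it avoids integrals entirely and reduces to the observation that a hypergeometric series with all three parameters positive has non-negative coefficients. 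Both approaches hinge on the same parameter bookkeeping, and the derivative argument for the monotonicity of $f$ is identical in both. One minor remark: under the hypotheses $d\in\N$, $r\in(0,2)$, $d+r\ge4$ one necessarily has $d\ge3$, so your proposed case distinction ``$d=1$ versus $d\ge2$'' for checking $c-b>0$ is moot.
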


\begin{proof}
(i) By Gauss's summation formula for hypergeometric functions \cite[(3.1)]{Ko98}, we have 
{\small
\begin{equation} \label{gs}
{_2F_1}(a,b;c;1)=\frac{\Gamma(c)\Gamma(c-a-b)}{\Gamma(c-a)\Gamma(c-b)}, \quad  c>a+b,\, c>0.
\end{equation}
}%
Thus, we obtain together with $\Gamma(x+1)/x=\Gamma(x)$ the first assertion
{\small
$$
\frac{1}{2} \frac{\Gamma(d/2)\Gamma(d+r-1)}{\Gamma((d+r)/2)\Gamma(d+r/2-1)} = \frac{d+r-2}{d}\frac{\Gamma(d/2+1)\Gamma(d+r-2)}{\Gamma((d+r)/2)\Gamma(d+r/2-1)}.
$$
}%
(ii)
First, we show that ${_2F_1}(\frac{4-r}{2},\frac{6-d-r}{2};\frac{d}{2}+2;x)>0$. 
Since $_2F_1$ is by definition symmetric in the first two arguments, we obtain
{\small
$$
{_2F_1}(\tfrac{4-r}{2},\tfrac{6-d-r}{2};\tfrac{d}{2}+2;x)={_2F_1} (\tfrac{6-d-r}{2},\tfrac{4-r}{2};\tfrac{d}{2}+2;x).
$$
}%
By Euler's integral formula \cite[§~2.1.3, (10)]{Ba53}, this is equal to
{\small
$$
\frac{\Gamma(d/2+2)}{\Gamma((4-r)/2)\Gamma(d/2+r/2)}\int_0^1 t^{(2-r)/2}(1-t)^{d/2-(2-r)/2}(1-tx)^{d/2+r/2-3} \, \d t.
$$   
}%

Since the $\Gamma$-function is positive on $\R_{>0}$ and since for $t,x\in(0,1)$ it holds $t,(1-t),(1-tx)>0$, this is greater than zero and the first claim is proven.
The proof of the second inequality works analogously. 
Finally, by the derivative rule for hypergeometric functions and $d+r\geq 4$, it follows
{\small
$$
f'(x)=-\tfrac{(2-r)(d+r-4)}{2d+4} {_2F_1} (\tfrac{4-r}{2},\tfrac{6-d-r}{2};\tfrac{d}{2}+2;x )\leq 0,
$$
}%
so that $f$ is decreasing.
\end{proof}

\begin{proposition} \label{lem:min_2F1_a}
Let $d\in\N$ and $r\in(0,2)$ with $d+r\geq 4$ and
{\small
  \begin{align}
    h(x)
    &\coloneqq
      - \,_2 F_1(-\tfrac{r}2, -\tfrac{d+r-2}{2}; \tfrac{d}{2}; x^2) 
			+ \tfrac12 \tfrac{r(d+r-2)}{d} \,_2 F_1(\tfrac{2-r}2, -\tfrac{d+r-4}{2}; \tfrac{d+2}{2}; 1) \, x^2,
    \\
    \tilde h (x)
    &\coloneqq
      - _2 F_1(-\tfrac{r}2, -\tfrac{d+r-2}{2}; \tfrac{d}{2}; \tfrac{1}{x^2}) \, x^r
    + \tfrac12 \tfrac{r(d+r-2)}{d} \,_2 F_1(\tfrac{2-r}2, -\tfrac{d+r-4}{2}; \tfrac{d+2}{2}; 1) \, x^2.
    \\[-20pt]
  \end{align}
	}%
Then it holds $1 \in \argmin_{x \in [0,1]}h(x)$ and $1\in \argmin_{x \in [1,\infty)} \tilde h(x)$. 
\end{proposition}

\begin{proof}
(i)
Using the derivative rule for hypergeometric functions, we obtain
{\small
\begin{equation}
h'(x)=\tfrac{r(d+r-2)}{d}x \Big({_2 F_1}(\tfrac{2-r}2, -\tfrac{d+r-4}{2}; \tfrac{d+2}{2}; 1)-{_2 F_1}(\tfrac{2-r}2, -\tfrac{d+r-4}{2}; \tfrac{d+2}{2}; x^2)\Big)
\end{equation}
}%
and in particular $h'(1)=0$.
By Lemma~\ref{lem:hyper_decreasing}(ii) and since $x\mapsto x^2$ is strictly increasing on $[0,1]$, the function  ${_2 F_1}(\frac{2-r}2, -\frac{d+r-4}{2}; \frac{d+2}{2}; x^2)$ is decreasing on $[0,1]$.
In particular, we conclude
{\small
$$
{_2 F_1}(\tfrac{2-r}2, -\tfrac{d+r-4}{2}; \tfrac{d+2}{2}; 1)
-{_2 F_1}(\tfrac{2-r}2, -\tfrac{d+r-4}{2}; \tfrac{d+2}{2}; x^2)
\begin{cases}=0,&$for $x=1,\\\leq 0,&$for $x\in[0,1).\end{cases}
$$
}%
This implies $h'(x)\leq 0$ on $(0,1)$ such that $h$ is decreasing on $[0,1]$ which yields the first claim.
\\[1ex]
(ii)
We show that $\tilde h'(x) \geq 0$ for $x\in[1,\infty)$.
We obtain
{\small
\begin{align}
\tilde h'(x)
&=-rx^{r-1}{_2F_1}(-\tfrac{r}{2},-\tfrac{d+r-2}{2};\tfrac{d}{2};\tfrac{1}{x^2})+\tfrac{r(d+r-2)}{d x^{3-r}} {_2F_1}(\tfrac{2-r}{2},-\tfrac{d+r-4}{2};\tfrac{d+2}{2};\tfrac{1}{x^2})\\
&
\quad+\tfrac{r(d+r-2)x}{d} {_2F_1}(\tfrac{2-r}{2},-\tfrac{d+r-4}{2};\tfrac{d+2}{2};1).
\end{align}
}%
Now $h'(x)\geq0$ on $[1,\infty)$ is equivalent to $\tfrac{h'(x)}{r x^{r-1}}\geq 0$ on $[1,\infty)$.
By Lemma~\ref{lem:hyper_decreasing}(ii) and since $x\mapsto1/x^2$ is decreasing, 
the function $x\mapsto{_2F_1}(\frac{2-r}{2},-\frac{d+r-4}{2};\frac{d+2}{2};\frac{1}{x^2})$ is increasing on $[1,\infty)$ such that we have for $x\in[1,\infty)$ that
{\small
\begin{align}
\tfrac{\tilde h'(x)}{rx^{r-1}}\geq g(x)
&\coloneqq-{_2F_1}(-\tfrac{r}{2},-\tfrac{d+r-2}{2};\tfrac{d}{2};\tfrac{1}{x^2})+\tfrac{(d+r-2)}{dx^{2}} {_2F_1}(\tfrac{2-r}{2},-\tfrac{d+r-4}{2};\tfrac{d+2}{2};1)\\
&\quad
+\tfrac{(d+r-2)x^{2-r}}{d}{_2F_1}(\tfrac{2-r}{2},-\tfrac{d+r-4}{2};\tfrac{d+2}{2};1)\\
&=-{_2F_1}(-\tfrac{r}{2},-\tfrac{d+r-2}{2};\tfrac{d}{2};\tfrac{1}{x^2})
\\
&\quad
+(x^{2-r}
+\tfrac1{x^{2}})\tfrac{(d+r-2)}{d}{_2F_1}(\tfrac{2-r}{2},-\tfrac{d+r-4}{2};\tfrac{d+2}{2};1).\label{eq:H_der_lower_bound}
\end{align}
}%
For $x=1$, we obtain by Lemma~\ref{lem:hyper_decreasing}(i) that
{\small
\begin{align}
g(1)
&=-{_2F_1}(-\tfrac{r}{2},-\tfrac{d+r-2}{2};\tfrac{d}{2};1)+\tfrac{2\, (d+r-2)}{d}{_2F_1}(\tfrac{2-r}{2},-\tfrac{d+r-4}{2};\tfrac{d+2}{2};1)= 0.
\end{align}
}%

Thus, it suffices to show that $g$ is increasing on $[1,\infty)$.
Taking the derivative of $g$ on $[1,\infty)$ gives 
{\small
\begin{align}
g'(x)&=\tfrac{r}{x^3}\tfrac{(d+r-2)}{d}{_2F_1}(\tfrac{2-r}{2},-\tfrac{d+r-4}{2};\tfrac{d+2}{2};\tfrac{1}{x^2})\\
&\quad+\big((2-r)x^{1-r}-\tfrac{2}{x^{3}}\big)\tfrac{(d+r-2)}{d}{_2F_1}(\tfrac{2-r}{2},-\tfrac{d+r-4}{2};\tfrac{d+2}{2};1)\\
&\geq\tfrac{r}{x^3}\tfrac{(d+r-2)}{d}{_2F_1}(\tfrac{2-r}{2},-\tfrac{d+r-4}{2};\tfrac{d+2}{2};1)\\
&\quad+\big((2-r)x^{1-r}-\tfrac{2}{x^{3}}\big)\tfrac{(d+r-2)}{d}{_2F_1}(\tfrac{2-r}{2},-\tfrac{d+r-4}{2};\tfrac{d+2}{2};1)\\
&=
(2-r)\big(x^{1-r}-\tfrac{1}{x^{3}}\big) \tfrac{(d+r-2)}{d}{_2F_1}(\tfrac{2-r}{2},-\tfrac{d+r-4}{2};\tfrac{d+2}{2};1)\geq 0,
\end{align}
}%
and we are done.
\end{proof}
\vspace{0.5cm}

\noindent		
		{\bf Proof of Theorem~\ref{thm:HD-spec}(i):}
		In \cite{CaHu17} it was shown that for $d+r < 4$  the measure $\eta^*_\tau$ in (i)
    fulfills the equality condition in \eqref{eq:opt_cond}, see also \cite[Lem~2.4]{GCO2021}.
		We have to show the inequality condition.
		\\[2ex]
		\textbf{Case: $d=1$}: Since $\supp(\eta^*_\tau)=[-s_\tau,s_\tau]$, it remains to show that 
	{\small	
$$
f(x_1)\coloneqq \int_\R K(x_1,x_2)\d\eta^*_\tau(x_2)+\frac1{2\tau}\|x_1\|^2
$$    
}%
is increasing on $[s_\tau,\infty)$ and decreasing on $(-\infty,s_\tau]$. 
Due to the symmetry, it suffices to show that $f$ is increasing on $[s_\tau,\infty)$.
For $x_1\in[s_\tau,\infty)$, we have $x_1>x_2$ for all $x_2\in\supp(\eta^*)$ such that we can reformulate $f$ as
{\small
$$
f(x_1)=-A_{s_\tau}\int_{-s_\tau}^{s_\tau} (x_1-x_2)^r (s_\tau^2-x_2^2)^{(1-r)/2}\d x_2+\tfrac1{2\tau}x_1^2
$$ 
}%
Thus, its derivative on $(s_\tau,\infty)$ is given by
\begin{equation}\label{eq:derivative_opt_cond}
f'(x_1)=g(x_1)\coloneqq-rA_{s_\tau}\int_{-s_\tau}^{s_\tau} (x_1-x_2)^{r-1} (s_\tau^2-x_2^2)^{(1-r)/2}\d x_2+\tfrac1{\tau}x_1
\end{equation}
Now, we show first that $g(s_\tau)=0$ and second that $g$ is increasing on $(s_\tau,\infty)$. 
Together this shows that $f'>0$ such that $f$ is increasing on $[s_\tau,\infty)$ and we are done.

By \cite[Cor.~2.5]{GCO2021}, with $m=1$, $\beta=r-1$ and $\alpha=r$ it holds 
{\small
\begin{align}
\tfrac{g(s_\tau)}{s_\tau}
&=
-rA_{s_\tau}\frac{\pi^{\frac{1}{2}}}{\Gamma(\tfrac12)}\Beta(\tfrac{r}{2},\tfrac{3-r}{2}) \, {_2F_1}(\tfrac{1-r}{2},-\tfrac12;\tfrac12;1)+\tfrac1\tau.
\end{align}
}%
Using Gauss's summation formula for hypergeometric functions \eqref{gs}, the above equation can be reformulated as
{\small
\begin{align}
    \tfrac{g(s_\tau)}{s_\tau}
    &=
    -rs_\tau^{-(2-r)}
    \tfrac{
        \Gamma(\frac12)\Gamma(2-\frac{r}{2})
        }{
        \pi^{\frac{1}{2}}\Gamma(\frac12)\Gamma(\frac{3-r}{2})}
    \tfrac{
        \pi^{\frac{1}{2}}
        }{
        \Gamma(\frac12)}
    \tfrac{
        \Gamma(\frac{r}{2})\Gamma(\frac{3-r}{2})
        }{
        \Gamma(\frac32)}
    \tfrac{
        \Gamma(\frac12)\Gamma(\frac{1+r}{2})
        }{
        \Gamma(\frac{r}{2})\Gamma(1)}
    +\tfrac1\tau\\
    &=
    -rs_\tau^{-(2-r)}
    \tfrac{
        \Gamma(2-\frac{r}{2})\Gamma(\frac{1+r}{2})
        }{
        \Gamma(\frac32)}
    +\tfrac1\tau\\
    &=
    -r
    \tfrac{
        \frac{1}{2}\Gamma(\frac12)
        }{
        \Gamma(2-\frac{r}{2})\Gamma(\frac{1+r}{2})r\tau}
    \frac{
        \Gamma(2-\frac{r}{2})\Gamma(\frac{1+r}{2})
        }{
        \Gamma(\tfrac32)}
    +\tfrac1\tau
    =
    -\tfrac1\tau
    \tfrac{
        \Gamma(\frac12)
        }{
        2\Gamma(\frac32)}
    +\tfrac1\tau=0.
\end{align}
}%
This implies that $g(s_\tau)=0$.

Further, it holds for $r\in(0,1]$ that $(x_1-x_2)^{r-1}$ is decreasing on $[s_{\tau},\infty)$ 
for any $x_2\in[-s_\tau,s_\tau]$. 
Therefore both terms in \eqref{eq:derivative_opt_cond} are increasing and we are done for this case.
For $r\in(1,2)$, we take again the derivative of $g$ and arrive at
$$
g'(x_1)\coloneqq-r(r-1)A_{s_\tau}\int_{-s_\tau}^{s_\tau} (x_1-x_2)^{r-2} (s_\tau^2-x_2^2)^{(1-r)/2}\d x_2+\tfrac1{\tau}.
$$
By the same arguments as above we have that $g'$ is increasing on $([s_\tau,\infty)$ and
using \cite[Cor.~2.5]{GCO2021}, with $m=1$, $\beta=r-2$ and $\alpha=r$ and Gauss's summation formula, we obtain 
{\small
\begin{align}
    g'(s_\tau)
    &=
    -r(r-1)A_{s_\tau}
    \tfrac{\pi^{\frac{1}{2}}}{\Gamma(\frac12)}
    \Beta(\tfrac{r-1}{2},\tfrac{3-r}{2}) {_2F_1}(\tfrac{2-r}{2},0,\tfrac12;1)+\tfrac1\tau\\
    &=
    -r(r-1)s_\tau^{-(2-r)}
    \tfrac{
        \Gamma(\frac12)\Gamma(2-\frac{r}{2})
        }{
        \pi^{\frac{1}{2}}\Gamma(\frac12)\Gamma(\frac{3-r}{2})}
    \tfrac{
        \pi^{\frac{1}{2}}
        }{
        \Gamma(\frac12)}
    \tfrac{
        \Gamma(\frac{r-1}{2})\Gamma(\frac{3-r}{2})
        }{
        \Gamma(1)}
    +\tfrac1\tau\\
    &=
    -r(r-1)s_\tau^{-(2-r)}
    \tfrac{
        \Gamma(2-\frac{r}{2})\Gamma(\frac{r-1}{2})
        }{
        \Gamma(\frac12)}
    +\tfrac1\tau\\
    &=
    -r(r-1)
    \tfrac{
        \frac{1}{2}\Gamma(\frac12)
        }{
        \Gamma(2-\frac{r}{2})\Gamma(\frac{1+r}{2})r\tau}
    \tfrac{
        \Gamma(2-\frac{r}{2})\Gamma(\frac{r-1}{2})
        }{
        \Gamma(\frac12)}
    +\tfrac1\tau\\
    &=
    -\tfrac{
        \frac{r-1}{2}\Gamma(\frac{r-1}{2})
        }{
        \Gamma(\frac{1+r}{2})\tau}
    +\tfrac1\tau
    =-\tfrac{1}{\tau}+\tfrac1\tau 
    = 0.
\end{align}   
}%
This implies that $g$ is increasing on $[s_\tau,\infty)$ and we are done.

\medskip

\noindent
\textbf{Case $d\geq 2$:}
Let $e_1$ be the first unit vector in $\mathbb R^d$. Choose $\tau$ such that $s_\tau=1$ and let $t\geq1$, i.e., 
{\small
$$
\tau
=
\tfrac{
    \frac{d}{2}\Gamma(\frac{d}{2})
    }{
    r\Gamma(2-\frac{r}2)\Gamma(\frac{d+r}{2})}
=
\tfrac{
    \Gamma(1+\frac{d}{2})
    }{
    r\Gamma(2-\frac{r}2)\Gamma(\frac{d+r}{2})}.
$$
}%
We consider
{\small
$$
h(t)\coloneqq
-\int_{\R^d}\|te_1-x\|^r\d\eta_\tau^*(x)+\frac{1}{2\tau}t^2
=
-A_1\int_{B_1}\|te_1-x\|^r(1-\|x\|^2)^{1-\frac{d+r}{2}}\d x
+\tfrac{1}{2\tau}t^2.
$$
}%
We aim to show that $h(t)\geq h(1)$ for all $t\geq 1$.
Changing the order of integration, the integral over $B_1$ is equal to
{\small
$$
I(t) = \int_{0}^1\int_{R\mathbb{S}^{d-1}}\|te_1-x\|^r\d \mathcal U_{R\mathbb{S}^{d-1}}(x) \,
\tfrac{2\pi^{\frac{d}{2}} R^{d-1}}{\Gamma(\frac{d}{2})}(1-R^2)^{1-\tfrac{r+d}{2}} \, \d R.
$$
}%
Now, the inner integral can be computed by Propostion~\ref{prop:int_dist_Sd}. Then the above formula becomes
{\small
$$
I(t) =\frac{\pi^{\frac{d}{2}}t^r}{\Gamma(\tfrac{d}{2})} \int_0^1 {_2F_1}(-\tfrac{r}{2},1-\tfrac{r+d}{2};\tfrac{d}{2};\tfrac{R^2}{t^2})
R^{d-2}(1-R^2)^{1-\tfrac{r+d}{2}}2R \, \d R.
$$ 
}%
Using the substitution $S=R^2$ (``$\d S=2R\d R $''), this is equal to
{\small
$$
I(t) =\frac{\pi^{\frac{d}{2}}t^r}{\Gamma(\tfrac{d}{2})}\int_0^1 
{_2F_1}(-\tfrac{r}{2},1-\tfrac{r+d}{2};\tfrac{d}{2};\tfrac{S}{t^2})S^{\tfrac{d}{2}-1}(1-S)^{1-\tfrac{d+r}{2}}\d S.
$$}

Now using Euler's integral transform \cite[eqt (4.1.2)]{S1966} for generalized hypergeometric functions
{\small
\begin{align*}
&{_3F_2}(a_1,a_2,a_3;b_1,b_2;z)
\\
&=
\frac{\Gamma(b_2)}{\Gamma(a_3)\Gamma(b_2-a_3)}\int_0^1 S^{a_3-1}(1-S)^{b_2-a_3-1} \, {_2F_1}(a_1,a_2;b_1;Sz) \,\d 
\end{align*}
}%
with $a_1=-\tfrac{r}2$, $a_2=1-\tfrac{r+d}{2}$, $a_3=\tfrac{d}{2}$, $b_1=\tfrac{d}{2}$, $b_2=2-\tfrac{r}{2}$ and $z=\tfrac{1}{t^2}$,
we obtain 
{\small
$$
I(t) 
=
\tfrac{
    \pi^{\frac{d}{2}}t^r
    }{
    \Gamma(\frac{d}{2})} 
\tfrac{
    \Gamma(\frac{d}{2})\Gamma(2-\frac{r+d}{2})
    }{
    \Gamma(2-\frac{r}{2})}
{_3F_2}(-\tfrac{r}{2},1-\tfrac{r+d}{2},\tfrac{d}{2};\tfrac{d}{2},2-\tfrac{r}{2};\tfrac{1}{t^2}).
$$
}%
Using the definition of generalized hypergeometric functions
{\small
$$
{_3F_2}(a_1,a_2,c;c,b;z)=\sum_{n=0}^\infty \frac{(a_1)_n(a_2)_n(c)_n}{(c)_n(b)_n n!}z^n=\sum_{n=0}^\infty \frac{(a_1)_n(a_2)_n}{(b)_n n!}z^n={_2F_1}(a_1,a_2;b;z)
$$
}%
with Pochhammer symbol $(z)_0=1$ and $(z)_n=(z+n-1)\,(z)_{n-1}$, we conclude
{\small
\begin{equation}
    I(t) 
    =\tfrac{
        \pi^{\frac{d}{2}}t^r\Gamma(2-\frac{r+d}{2})
        }{
        \Gamma(2-\frac{r}{2})}
    {_2F_1}(-\tfrac{r}{2},1-\tfrac{r+d}{2};2-\tfrac{r}{2};\tfrac{1}{t^2}).
\end{equation}
}%
Thus, the function $h$ can be rewritten as
{\small
\begin{align*}
&h(t)
= -A_1 
\tfrac{
    \pi^{\frac{d}{2}}t^r\Gamma(2-\frac{r+d}{2})
    }{
    \Gamma(2-\frac{r}{2})}
{_2F_1}(-\tfrac{r}{2},1-\tfrac{r+d}{2};2-\tfrac{r}{2};\tfrac{1}{t^2})+\tfrac{t^2}{2\tau}\\
&= 
-\tfrac{
    \Gamma(2-\frac{r}{2})
    }{
    \pi^{\frac{d}{2}} \Gamma(2-\frac{r+d}{2})
    }
\tfrac{
    \pi^{\frac{d}{2}}t^r\Gamma(2-\frac{r+d}{2})
    }{
    \Gamma(2-\frac{r}{2})
    }
{_2F_1}(-\tfrac{r}{2},1-\tfrac{r+d}{2};2-\tfrac{r}{2};\tfrac{1}{t^2})
+\tfrac{
    r\Gamma(2-\frac{r}2)\Gamma(\frac{d+r}{2})t^2
    }{
    2\Gamma(1+\frac{d}{2})
    }\\
&=  
-t^r{_2F_1}(-\tfrac{r}{2},1-\tfrac{r+d}{2};2-\tfrac{r}{2};\tfrac{1}{t^2})
+\tfrac{
    r\Gamma(2-\frac{r}2)\Gamma(\frac{d+r}{2})t^2
    }{
    2\Gamma(1+\frac{d}{2})
}.
\end{align*}
}%
In order to show that $h$ is increasing on $[1,\infty)$, we consider its derivative. 
It is given by
{\small
\begin{align*}
    h'(t)
    &=
    -rt^{r-1}{_2F_1}(-\tfrac{r}{2},1-\tfrac{r+d}{2};2-\tfrac{r}{2};\tfrac{1}{t^2})\\
    &\quad
    +t^{r-3}
    \tfrac{r(r+d-2)}{4-r}
    {_2F_1}(1-\tfrac{r}{2},2-\tfrac{r+d}{2};3-\tfrac{r}{2};\tfrac{1}{t^2})
    +\tfrac{r\Gamma(2-\frac{r}2)\Gamma(\frac{d+r}{2})t}{\Gamma(1+\frac{d}{2})}.
\end{align*}
}%
Since $r+d-2\geq 0$, we have by Lemma~\ref{lem:hyper_convex} that
{\small
\begin{align*}
    \tfrac{h'(t)}{rt^{r-1}}
    \geq
    g(t)
    &\coloneqq
    -{_2F_1}(-\tfrac{r}{2},1-\tfrac{r+d}{2};2-\tfrac{r}{2};\tfrac{1}{t^2})\\
    &\quad
    -(t^{-2}-t^{-4})
    \tfrac{r+d-2}{4-r}
    \tfrac{(2-r)(4-r-d)}{2(6-r)}
    {_2F_1}(2-\tfrac{r}{2},3-\tfrac{d+r}{2},4-\tfrac{r}{2};1)\\
    &\quad
    +t^{-2}
    \tfrac{r+d-2}{4-r}
    {_2F_1}(1-\tfrac{r}{2},2-\tfrac{r+d}{2};3-\tfrac{r}{2};1) 
    +\tfrac{\Gamma(2-\frac{r}2)\Gamma(\frac{d+r}{2})t^{2-r}}{\Gamma(1+\frac{d}{2})}.
\end{align*}
}%
Next, we show that $g$ is non-negative on $[1,\infty)$, which implies that $h'$ 
is non-negative such that $h$ is increasing on $[1,\infty)$.
Using Gauss's summation formula for hypergeometric functions \eqref{gs}, 
we can evaluate $g(1)$ using the identity $z\Gamma(z)=\Gamma(z+1)$ as
{\small
\begin{align}
    g(1)
    &=
    -\tfrac{
        \Gamma(2-\frac{r}{2})\Gamma(1+\frac{r+d}{2})
        }{
        \Gamma(2)\Gamma(1+\frac{d}{2})}
    -\tfrac{
        (2-r-d)\Gamma(3-\frac{r}{2})\Gamma(\frac{d+r}{2})
        }{
        (4-r)\Gamma(2)\Gamma(1+\frac{d}{2})} 
    +\tfrac{
        \Gamma(2-\frac{r}2)\Gamma(\frac{d+r}{2})
        }{
        \Gamma(1+\tfrac{d}{2})}\\
    &=
    -\tfrac{
        \Gamma(2-\frac{r}{2})\frac{r+d}{2}\Gamma(\frac{r+d}{2})
        }{
        \Gamma(1+\frac{d}{2})}
    -\tfrac{
        (2-r-d)(2-\frac{r}{2})\Gamma(2-\frac{r}{2})\Gamma(\frac{d+r}{2})
        }{
        (4-r)\Gamma(1+\frac{d}{2})}
    +\tfrac{
        \Gamma(2-\frac{r}2)\Gamma(\frac{d+r}{2})
        }{
        \Gamma(1+\frac{d}{2})}\\
    &=
    \tfrac{
        \Gamma(2-\frac{r}{2})\Gamma(\frac{r+d}{2})
        }{
        \Gamma(1+\frac{d}{2})}
    \bigl(-\tfrac{r+d+2-r-d}{2}+1\bigr)
    =0.
\end{align}
}%
Thus it suffices to show that $g$ is increasing. The derivative of $g$ is given by
{\small
\begin{align}
    g'(t)
    &=t^{-3}\tfrac{r(r+d-2)}{4-r}{_2F_1}(1-\tfrac{r}{2},2-\tfrac{r+d}{2};3-\tfrac{r}{2};\tfrac{1}{t^2})\\
    &\quad+(2t^{-3}-4t^{-5})\tfrac{r+d-2}{4-r}\tfrac{(2-r)(4-r-d)}{2(6-r)}{_2F_1}(2-\tfrac{r}{2},3-\tfrac{d+r}{2},4-\tfrac{r}{2};1)\\
    &\quad-2t^{-3}\tfrac{r+d-2}{4-r}
    {_2F_1}(1-\tfrac{r}{2},2-\tfrac{r+d}{2};3-\tfrac{r}{2};1)
    +\tfrac{
        (2-r)\Gamma(2-\frac{r}2)\Gamma(\frac{d+r}{2})t^{1-r}
        }{
        \Gamma(1+\frac{d}{2})}\\
    &\geq -(t^{-3}-t^{-5})r\tfrac{r+d-2}{4-r}\tfrac{(2-r)(4-r-d)}{2(6-r)}{_2F_1}(2-\tfrac{r}{2},3-\tfrac{d+r}{2},4-\tfrac{r}{2};1)\\
    &\quad+t^{-3}\tfrac{r(r+d-2)}{4-r}{_2F_1}(1-\tfrac{r}{2},2-\tfrac{r+d}{2};3-\tfrac{r}{2};1)\\
    &\quad+(2t^{-3}-4t^{-5})\tfrac{r+d-2}{4-r}\tfrac{(2-r)(4-r-d)}{2(6-r)}{_2F_1}(2-\tfrac{r}{2},3-\tfrac{d+r}{2},4-\tfrac{r}{2};1)\\
    &\quad-2t^{-3}\tfrac{r+d-2}{4-r}{_2F_1}(1-\tfrac{r}{2},2-\tfrac{r+d}{2};3-\tfrac{r}{2};1)
    +\tfrac{
        (2-r)\Gamma(2-\frac{r}2)\Gamma(\frac{d+r}{2})t^{1-r}
        }{
        \Gamma(1+\frac{d}{2})},
\end{align}
}%
where the inequality is again the application of Lemma~\ref{lem:hyper_convex}.
Reformulation yields
{\small
\begin{align}
    &g'(t) \geq -(2-r)t^{-3}\tfrac{r+d-2}{4-r}{_2F_1}(1-\tfrac{r}{2},2-\tfrac{r+d}{2};3-\tfrac{r}{2};1)\\
    &+(2t^{-3}-rt^{-3}-4t^{-5}+rt^{-5})\tfrac{r+d-2}{4-r}\tfrac{(2-r)(4-r-d)}{2(6-r)}{_2F_1}(2-\tfrac{r}{2},3-\tfrac{d+r}{2},4-\tfrac{r}{2};1)\\
    &+\tfrac{
        (2-r)\Gamma(2-\frac{r}2)\Gamma(\frac{d+r}{2})t^{1-r}
        }{
        \Gamma(1+\frac{d}{2})}.
\end{align}
}%
Applying Gauss's summation formula for hypergeometric functions and $z\Gamma(z)=\Gamma(z+1)$, the above formula becomes
{\small
\begin{align*}
    g'(t)
    &\ge 
    -(2-r)t^{-3}
    \tfrac{r+d-2}{4-r}
    \tfrac{
        \Gamma(3-\frac{r}{2})\Gamma(\frac{d+r}{2})
        }{
        \Gamma(1+\frac{d}{2})}
    +\tfrac{
        (2-r)
        \Gamma(2-\frac{r}2)\Gamma(\frac{d+r}{2})t^{1-r}
        }{
        \Gamma(1+\frac{d}{2})}\\
    &\quad
    +(2t^{-3}-rt^{-3}-4t^{-5}+rt^{-5})
    \tfrac{r+d-2}{4-r}
    \tfrac{(2-r)(4-r-d)}{2(6-r)}
    \tfrac{
        \Gamma(4-\frac{r}{2})
        \Gamma(\tfrac{d+r}{2}-1)
        }{
        \Gamma(1+\frac{d}{2})}\\
    &=
    -(2-r)t^{-3}
    \tfrac{
        (r+d-2)\Gamma(2-\frac{r}{2})\Gamma(\frac{d+r}{2})
        }{
        2\Gamma(1+\frac{d}{2})}
    +\tfrac{
        (2-r)
        \Gamma(2-\frac{r}2)\Gamma(\frac{d+r}{2})t^{1-r}
        }{
        \Gamma(1+\frac{d}{2})}\\
    &\quad
    +(2t^{-3}-rt^{-3}-4t^{-5}+rt^{-5})
    \tfrac{(2-r)(4-r-d)}{4}
    \tfrac{
        \Gamma(2-\frac{r}{2})\Gamma(\frac{d+r}{2})
        }{
        \Gamma(1+\frac{d}{2})}\\
    &=(2-r)
    \tfrac{
        \Gamma(2-\frac{r}{2})\Gamma(\frac{d+r}{2})
        }{
        \Gamma(1+\frac{d}{2})}
        \bigl(t^{1-r}
        +\bigl(\tfrac{(2-r)(4-r-d)}{4}-\tfrac{r+d-2}{2}\bigr)
        t^{-3}-\tfrac{(4-r)(4-r-d)}{4}t^{-5}\bigr)\\
    &\geq(2-r)
    \tfrac{
        \Gamma(2-\frac{r}{2})\Gamma(\frac{d+r}{2})
        }{
        \Gamma(1+\frac{d}{2})}
    \bigl(t^{1-r}
        +\bigl(\tfrac{(2-r)(4-r-d)}{4}-\tfrac{r+d-2}{2}\bigr)
        t^{-3}-\tfrac{(4-r)(4-r-d)}{4}t^{-3}\bigr)\\
    &=(2-r)
    \tfrac{
        \Gamma(2-\frac{r}{2})\Gamma(\frac{d+r}{2})
        }{
        \Gamma(1+\frac{d}{2})}
        \bigl(t^{1-r}-t^{-3}\bigr)
    \geq 0.
\end{align*}
}%
In summary, we have that $g(1)=0$ and that $g$ is increasing on $[1,\infty)$. 
Therefore, $g$ is non-negative on $[1,\infty)$ which yields that $f'$ is non-negative on $[1,\infty)$.
In particular $f$ is increasing on $[1,\infty)$ and we are done.
\hfill $\Box$

\medskip

\noindent
    {\bf Proof of Theorem~\ref{thm:HD-spec}(ii):} 
		We have to check that $\eta_\tau^*$
    fulfills the conditions in \eqref{eq:opt_cond}.
Since the constraint for $\eta^*$ in \eqref{eq:constraint_problem} is fulfilled
for $R=1$, it remains to show \eqref{eq:opt_cond} just for $R=1$
and the appropriate $\tau$, and then use Proposition \ref{prop:penalizer_vs_constraint}(i) to get $\eta_\tau^*$.
To this end, we consider the functions
{\small
\begin{align}
 h(x) &= -2F_1 \left(-\tfrac{r}{2}, \tfrac{2-r-d}{2}; \tfrac{d}{2};x^2 \right)
 + \tfrac{1}{2\tau} x^2,\\
 \tilde h(x) &= -2F_1 \left(-t\frac{r}{2}, \tfrac{2-r-d}{2}; \tfrac{d}{2};\tfrac{1}{x^2} \right) x^r
 + \tfrac{1}{2\tau} x^2.
\end{align}
}%
Clearly, the equality condition in \eqref{eq:opt_cond} is fulfilled
with the constant $C_\tau = h(1) = \tilde h(1)$.
We have to show that $h(x) \ge h(1)$ for $x \in [0,1]$ 
and
$\tilde h(x) \ge \tilde h(1)$ for $x \in [1,\infty)$.
With
{\small
\[
\tfrac{1}{\tau} = \tfrac{r(d+r-2)}{d} \,_2 F_1(\tfrac{2-r}2, -\tfrac{d+r-4}{2}; \tfrac{d+2}{2}; 1)
\]
}%
this is shown in  Proposition~\ref{lem:min_2F1_a}. 
This implies $\eta^* = \mathcal U_{\S^{d-1}}$ and it follows directly from Proposition~\ref{prop:int_dist_Sd} that
$\mathcal E_K(\eta^*)=-\tfrac12{_2F_1}\big(-\tfrac{r}{2},\tfrac{2-r-d}{2};\tfrac{d}{2};1\big)$.
\hfill $\Box$


\subsection{Proof of Theorem~\ref{thm:directional_derivatives_abs_cont}}
\label{proof:directional_derivatives_abs_cont}
We prove the more general statement that for any symmetric and locally Lipschitz continuous kernel $K\colon\R^d\times\R^d\to\R$ 
with Lipschitz constant 
\begin{equation}
  \Lip(F, A) \coloneqq \sup\Big\{ \tfrac{|F(x) - F(y)|}{\|x-y\|_2}: x, y \in A, \; x \not = y \Big\}.
\end{equation} 
fulfilling
\begin{equation}
\Lip(K, B_{r}(x) \times B_{s}(y)) \le L (1+ \|x\|_2 + \|y\|_2 + r + s), \qquad x, y \in \R^d, \quad r, s \ge 0,
\end{equation}
and any $\mu\in\P_2(\R^d)$ with $\mu(X)=0$, where $X\subseteq\R^d\times\R^d$ is the set, where $K$ is not differentiable,
it holds 
\begin{equation}\label{eq:to_show_directional_derivatives}
\HD_{\zb v}\mathcal E_K(\mu)=\D_{\zb v}\mathcal E_K(\mu) = \langle (\Id, \nabla G)_{\#} \mu, \zb v \rangle_{\mu}, \qquad
\zb v \in \zb \T_{\mu}\P_2(\R^d).
\end{equation}
Since the Riesz kernel with $r\in[1,2)$ fulfills these properties, this implies by the equality condition of the Cauchy--Schwarz inequality from Lemma~\ref{lem:1} (iii) that
  \begin{equation}
    \argmin_{\zb v\in\zb\T_\mu\P_2(\R^d),\|\zb v\|_\mu=1}\HD_{\zb v}\mathcal E_K(\mu)=\Big\{-\|(\mathrm{Id},\nabla G)_\#\mu\|_\mu^{-1}\cdot(\mathrm{Id},\nabla G)_\#\mu\Big\}
\end{equation}
such that
\begin{equation}
\HD_-\mathcal E_K(\mu)=\{ (-1) \cdot (\mathrm{Id},\nabla G)_\#\mu\} = \{  (\mathrm{Id},-\nabla G)_\#\mu\}.
\end{equation}
Observe that the Riesz kernel is everywhere differentiable for $r\in(1,2)$ and that it is not differentiable exactly at $\{(x,x):x\in\R^d\}$ for $r=1$. 
Moreover, by Fubini's Theorem the assertion $\mu(\{x\})=0$ for all $x\in\R^d$ leads to
\begin{align}
\mu\otimes\mu(\{(x,x):x\in\R^d\})&=\int_{\R^d}\int_{\R^d}1_{\{(x,x):x\in\R^d\}}(y,z)\d\mu(y)\d\mu(z)\\&=\int_{\R^d}\int_{\R^d}1_{\{z\}}(y)\d\mu(y)\d\mu(z)=\int_{\R^d}\mu(\{z\})\d\mu(z)=0,
\end{align}
which proves Theorem~\ref{thm:directional_derivatives_abs_cont}.
Thus, it remains to show \eqref{eq:to_show_directional_derivatives}.

Since $K$ is locally Lipschitz, it holds for a.e. $(x,y) \in \R^{2d}$ that $K$ is differentiable, hence
\begin{equation}
\lim_{t \to 0+}  \frac{K(x+tv, y+tw) - K(x,y)}{t} = \nabla K(x,y)^{\tT}(v, w), \qquad (v,w) \in \R^{2d}.
\end{equation}
We can apply the dominated convergence theorem, since the right hand side in
\begin{equation}\label{eq:abs_integrability}
\left|  \frac{K(x+tv, y+tw) - K(x,y)}{t}  \right| \le \Lip(K, B_{\|v\|_2}(x) \times B_{\|w\|_2}(y))  \|(v,w)\|_2
\end{equation}
is absolutely integrable.
Hence, for any measure $\mu$ with $\mu\otimes \mu(X)=0$ and direction $\zb v \in \zb \T_{\mu}\P_2(\R^d)$, we obtain by symmetry of $K$ and Fubini's theorem that
\begin{equation}
\begin{aligned}
& \lim_{t \to 0+} \frac{\mathcal E_{K}(\gamma_{\zb v}(t))- \mathcal E_K(\mu)}{t}
= \lim_{t \to 0+} \frac{\mathcal E_{K}(\gamma_{\zb t \cdot v}(1))- \mathcal E_K(\mu)}{t}
\\
&=  \lim_{t\to 0+} \frac12 \int_{\R^{2d}}\int_{\R^{2d}}\frac{K(x+tv, y+tw) - K(x,y)}{t} \, \d \zb v(x,v) \d \zb v(y,w)\\
&=  \frac12 \int_{\R^{2d}}\int_{\R^{2d}} \nabla K(x,y)^{\tT} (v,w) \, \d \zb v(x,v) \d \zb v(y,w)\\
&=  \frac12 \int_{\R^{2d}}\int_{\R^{2d}} \nabla_1 K(x,y)^\tT v + \nabla_2 K(x,y)^{\tT} w \, \d \zb v(x,v) \d \zb v(y,w)\\
&=  \int_{\R^{2d}} v^{\tT} \left( \int_{\R^d} \nabla_1 K(x,y) \d \mu(y) \right) \, \d \zb v(x,v).
\end{aligned}
\end{equation}
Using again the dominating convergence theorem with \eqref{eq:abs_integrability} we note that
\begin{equation}
\int_{\R^d} \nabla_1 K(x,y) \d \mu(y) = \nabla \int_{\R^d} K(x,y) \d \mu(y), \qquad \mu-\text{a.e.}\quad x \in \R^d,
\end{equation}
so that 
{\small
\begin{align}
\lim_{t \to 0+} \frac{\mathcal E_{K}(\gamma_{\zb v}(t))- \mathcal E_K(\mu)}{t}
&=
\int_{\R^{2d}} v^{\tT} \nabla G(x) \, \d \zb v(x,v)
=
\int_{\R^{3d} } x_2^\tT x_3 \, \d \alpha(x_1,x_2,x_3)
\end{align}
}%
with $\zb \alpha = (\pi_1,\pi_2, \nabla G \circ \pi_1)_\# \zb v$.
Since $\zb \alpha$ is the unique plan with 
$(\pi_{1,2})_\# \alpha = \zb v$ and $(\pi_{1,3})_\# \alpha = (\Id,\nabla G)_\# \mu$
and we arrive at \eqref{eq:to_show_directional_derivatives}
by definition \eqref{inner_mu}. \hfill$\Box$

\subsection{Proof of Theorem~\ref{prop:EK_steepest_descent_flow}} \label{app:desc_flow}
The curve of interest \eqref{eq:EK_steepest_descent_flow}
is of the form $\gamma_{c_{2-r}}$, where 
$c_\tau \coloneqq \left(-\tau r \mathcal E_K(\eta^*) \right)^{\frac{1}{2-r}}$
and 
\begin{equation*}
  \gamma_c(t) \coloneqq \gamma_{\delta_0 \otimes \eta^*}(c t^\frac{1}{2-r}) = (c t^\frac{1}{2-r} \mathrm{Id})_{\#} \eta^*, \quad c >0.
\end{equation*}
In order to apply Theorem~\ref{thm:directional_derivatives_abs_cont}, we compute the gradient of the function 
  \begin{equation}
    G_{\gamma_{c_\tau}(t)}(x) \coloneqq \int_{\R^d} K(x,y) \d \gamma_{c_{\tau}}(t)(y) .
  \end{equation}
Here, we distinguish two cases.
First, in case $d+r<4$, 
Theorem~\ref{thm:HD-spec_0} yields that $\eta^*$ is absolute continuous.
Moreover, we know
from Proposition~\ref{prop:penalized_problem} and Proposition~\ref{prop:penalizer_vs_constraint} that $\eta_{\tau}^*=(c_{\tau} \mathrm{Id})_{\#} \eta^*$
satisfies the optimality conditions \eqref{eq:opt_cond}, in particular,
\begin{equation*}
  \int_{\R^d} K(x,y) \, \d(c_{\tau} \mathrm{Id})_\# \eta^*(y) 
  =
  \int_{\R^d} K(x,y) \, \d \eta_\tau^* (y) 
  = 
  C_{K,\tau} - \frac{1}{2\tau} \|x\|_2^2 
\end{equation*}
for all $\quad x \in \mathrm{supp} \, \eta_\tau^*$.
Then we can compute 
\begin{equation*}
  \begin{aligned}
    G_{\gamma_{c_\tau}(t)}(x) 
    & = 
    - \int_{\R^d} \|x - y\|_2^r \, \d (c_{\tau} t^{\frac{1}{2-r}} \mathrm{Id})_{\#} \eta^*(y) 
    \\
    &= t^{\frac{r}{2-r}} \int_{\R^d} K(x t^{-\frac{1}{2-r}}, y) \, \d (c_{\tau} \mathrm{Id})_{\#} \eta^*(y) \\
    &= 
    t^{\frac{r}{2-r}} \biggl(C_{K, \tau} -\frac{t^{\frac{-2}{2-r}}}{2\tau} \|x\|_2^2 \biggr) 
    \\
    &= 
    t^{\frac{r}{2-r}} C_{K, \tau} - \frac{1}{2\tau t} \|x\|_2^2, 
    \quad x \in \mathrm{supp}(\gamma_{c_{\tau}}(t)).
  \end{aligned}
\end{equation*}
    In the interior of $\mathrm{supp}(\gamma_{c_\tau}(t))$, we thus have 
    \begin{equation*}
    \nabla G_{\gamma_{c_\tau}(t)}(x) = - \frac{x}{\tau t}, \quad t>0,
  \end{equation*}
    which holds also true $\gamma_{c_\tau}(t)$-a.e.\  since $\eta^*$ is absolutely continuous.
        
    Second, if $d+r\geq 4$, we have $\eta^*=\mathcal U_{\mathbb{S}^{d-1}}$ by Theorem~\ref{thm:HD-spec_0}.
    Proposition \ref{prop:int_dist_Sd} for $R=c_\tau t^{\tfrac{1}{2-r}}$ and $x\in\R^d$ with $\|x\|_2\leq R$ implies
    \begin{align}\label{eq:rep_G}
    G_{\gamma_{c_\tau}(t)}(x)=-\int_{R\mathbb{S}^{d-1}}\|x-y\|_2^r\, \d \mathcal U_{R\mathbb{S}^{d-1}}(y)
    =-R^r {_2F_1}\big(-\tfrac{r}{2},\tfrac{2-r-d}{2};\tfrac{d}{2};\tfrac{\|x\|_2^2}{R^2}\big).
    \end{align}
    Using the derivative rule for hypergeometric functions and the chain rule, we have 
    \begin{equation}
    \nabla G_{\gamma_{c_\tau}(t)}(x)=-R^{r-2} \tfrac{r(d+r-2)}{d} {_2F_1}\big(\tfrac{2-r}{2},\tfrac{4-r-d}{2};\tfrac{d+2}{2};\tfrac{\|x\|_2^2}{R^2}\big) x, \quad \|x\| < R.
  \end{equation}
Since $K(x,y)$ is continuously differentiable whenever $x\neq y$, we conclude that $G_{\gamma_{c_\tau}(t)}$ is continuously differentiable on $R\mathbb{S}^{d-1}$ such that the above formula is also true for $\|x\|_2=R$.
Inserting $R$, we obtain 
  \begin{align}
    \nabla G_{\gamma_{c_\tau}(t)}(x)
    &=
      -\frac{c_\tau^{r-2}}{t} \,
      \frac{r(d+r-2)}{d} \,
      {_2F_1}\big(\tfrac{2-r}{2},\tfrac{4-r-d}{2};\tfrac{d+2}{2};1\big) \,
      x
    \\
    &=
      \frac{(d+r-2)}{d} \,
      \frac{{_2F_1}\big(\tfrac{2-r}{2},\tfrac{4-r-d}{2};\tfrac{d+2}{2};1\big)}{\mathcal E_K(\eta^*)} \,
      \frac{x}{\tau t}
  \end{align}
for $x\in\mathrm{supp}(\gamma_{c_\tau}(t))=R\mathbb{S}^{d-1}$.
On the basis of Proposition~\ref{prop:int_dist_Sd}, 
we have $\mathcal E_K(\eta^*)=-\tfrac12{_2F_1}\big(-\tfrac{r}{2},\tfrac{2-r-d}{2};\tfrac{d}{2};1\big)$ so that
  \begin{equation}
    \frac{(d+r-2)}{d} \,
    \frac{{_2F_1}\big(\tfrac{2-r}{2},\tfrac{4-r-d}{2};\tfrac{d+2}{2};1\big)}{\mathcal E_K(\eta^*)}=-1
  \end{equation}
by Lemma~\ref{lem:hyper_decreasing}(i).
Inserting this in the previous equation, we obtain also in this case that $\nabla G_{\gamma_{c_\tau}(t)}(x)=-\tfrac{x}{\tau t}$.
Setting $s \coloneqq  (2-r)^ {-1}$, and using Lemma~\ref{lem:chain_rule} and \ref{lem:tan-geo},
we deduce 
  \begin{align}
    \label{eq:gamma_c}
    \dot \gamma_c(t)
    & = c s t^{s-1} \cdot \dot \gamma_{\delta_0 \otimes \eta^*}(c t^{s})
    \\
    &=  c s t^{s-1} \cdot (\pi_1 + c t^{s} \pi_2, \pi_2)_{\#} (\delta_0 \otimes \eta^*) \\
    & =  (\pi_1, c s t^{s-1} \pi_2)_{\#} (\pi_1 + c t^s \pi_2, \pi_2)_{\#} 
      (\delta_0 \otimes \eta^*)
    \\
    &=  (\pi_1 + c t^{s} \pi_2, c s t^{s-1} \pi_2)_{\#} (\delta_0 \otimes \eta^*) \\
    & =  (c t^s \Id , c s t^{s-1} \Id )_{\#} \eta^*
      =  (\Id , s t^{-1}\Id )_{\#} (c t^s \Id )_{\#} \eta^*  \\
    & =  (\Id ,  s^{-1}t^{-1}\Id )_{\#} \gamma_c(t). \label{fin}
  \end{align}
Inserting the computed gradient for $t>0$, we get
  \begin{equation*}
    \dot \gamma_{c_{2-r}}(t) = (\Id , (2-r)^{-1}t^{-1} \Id )_{\#} \gamma_{c_{2-r}}(t) = 
    (\Id , - \nabla G_{\gamma_{c_{2-r}}(t)})_{\#} \gamma_{c_{2-r}}(t),
  \end{equation*}
which is the unique element of $\HD_- \mathcal E(\gamma_{c_{2-r}}(t))$ by Theorem~\ref{thm:directional_derivatives_abs_cont}.

Finally, we consider the case $t=0$.
For $t \to 0$ in \eqref{eq:gamma_c}, we obtain
\begin{equation*}
  \dot \gamma_{c_{2-r}}(0) 
  = 
  \begin{cases}
    (-\mathcal E_K(\eta^*)) \cdot \delta_0 \otimes \eta^*, & r = 1,\\
    0 \cdot \delta_0 \otimes \eta^*, &  r \in (1,2),
  \end{cases}
\end{equation*}
which is by Theorem~\ref{the:EKdirDF} exactly the direction of steepest descent at $\delta_0$. This concludes the proof.
\hfill $\Box$

\section{Proof of Theorem~\ref{thm:DKdir}}
\label{proof:discr_2}
{\bf Part (i)}
We note that the discrepancy may be written as
$\mathcal D_K ^2(\mu, \nu) = \mathcal E_{\tilde K}(\mu)$
with
\begin{equation*}
  \tilde K(x_1,x_2) = K(x_1,x_2) + V_{K,\nu}(x_1) + V_{K,\nu}(x_2) + \mathcal E_K(\nu), \qquad x_1,x_2 \in \R^d.
\end{equation*}
For $r\in(1,2)$ we have that the kernel $\tilde K$ is by definition differentiable.
In the case $r=1$, the first term is not differentiable for $(x_1,x_2)\not\in X_1\coloneqq\{(x,x):x\in\R^d\}$, the second term is not differentiable for $(x_1,x_2) \not \in X_2\coloneqq \{(q,x)\in \R^d\times\R^d: \nu(\{q\}) \ne 0\}$ and the third term is not differentiable when $(x_1,x_2) \not \in X_3\coloneqq \{(x,q)\in \R^d\times\R^d: \nu(\{q\}) \ne 0\}$.
In the following, we prove that $X_1$, $X_2$ and $X_3$ are zero-sets under $\mu\otimes\mu$.
Then, the statement follows analogously to the proof of Theorem~\ref{thm:directional_derivatives_abs_cont} in \ref{proof:directional_derivatives_abs_cont}.

As in the proof of Theorem~\ref{thm:directional_derivatives_abs_cont}, we have $\mu\otimes\mu(X_1)=0$ since $\mu(\{x\})=0$ for all $x\in\R^d$.
Moreover, we have  $X_2=\{q\in\R^d:\nu(\{q\}) \ne 0\}\times \R^d$ such that 
  \begin{equation}
\mu\otimes\mu(X_2)=\mu(\{q\in\R^d:\nu(\{q\}) \ne 0\})=\sum_{q\in\R^d\text{ with }\nu(\{q\}) \ne 0} \mu(\{q\})=0,
\end{equation}
where we used that $\{q\in\R^d:\nu(\{q\}) \ne 0\}$ is countable as any probability measure has only countable many points with positive mass.
Finally, $\mu\otimes\mu(X_3)=0$ follows analogously.

{\bf Part (ii)} The discrepancy functional is locally Lipschitz. Therefore, to compute the Hadamard derivative, 
we can exploit Proposition~\ref{prop:dini-hard}
and the decomposition \eqref{eq:dis-decomp}.
As in the proof of Theorem~\ref{thm:directional_derivatives_abs_cont},
the function $V_{K,\nu}$ is differentiable in $p$ if $\nu(\{p\})=0$.
In view of Lebesgue's dominated convergence theorem,
the Dini derivative of the interaction energy 
in direction $\zb v \coloneqq \delta_p \otimes \eta$, $\eta \in \P_2(\R^d)$,
is thus given by
\begin{align}
    \D_{\zb v} \mathcal V_{K,\nu}(\delta_p)
    &=
    \lim_{t \to 0+}
    \frac{\mathcal V_{K,\nu}(\gamma_{\zb v}(t)) - \mathcal V_{K,\nu}(\delta_p)}{t}
    \\
    &=
    \lim_{t \to 0+}
    \frac{1}{t} \,
    \int_{\R^d \times \R^d}
    V_{K,\nu}(x_1 + t x_2) - V_{K,\nu}(x_1) \, \d \zb v(x_1, x_2)
    \\
    &=
    \int_{\R^d}
    \langle \nabla V_{K,\nu}(p), x_2 \rangle \, \d \eta(x_2)
    = \langle \nabla V_{K,\nu}(p), v_\eta \rangle,
\end{align}
where $v_\eta \coloneqq \int_{\R^d} x \, \d \eta(x)$.
The steepest descent directions $\HD_{-} \F_{\nu}(\mu)$ are now given 
by the tangents $(\HD_{\delta_p \otimes  \hat \eta}^{-} \F_\nu (\delta_p))^{-} \cdot (\delta_p \otimes \hat \eta)$,
where $\hat \eta$ solves
\begin{equation}    \label{eq:disc_HD_problem}
    \min_{\eta \in \P_2(\R^d)}
    \HD_{\delta_p \otimes \eta}^{-} \mathcal E_K(\delta_p)
    + \nabla V_{K,\nu}(p)^{\tT} v_\eta 
    \quad \text{s.t.} \quad 
    \int_{\R^d} \|x\|_2^2 \, \d \eta(x) = 1.
\end{equation}
We want to bring the problem into an equivalent form, where the minimizer can be easier computed.
First we have for $\eta \in 
S_1 \coloneqq \{\eta \in \P_2(\R^d): \int_{\R^d} \|x\|_2^2 \, \d \eta(x) = 1\}
$
and $v_\eta$ as above
that
\begin{align}
  \int_{\R^d} \|x\|_2^2 \, \d (\Id - v_\eta)_\# \eta(x)
  &=
   \int_{\R^d} \|x\|_2^2 \, \d \eta(x)
  - 2 \, v_\eta^\tT \int_{\R^d} x \, \d \eta(x)
  + \|v_\eta\|_2^2 
  \\
  &=
 \int_{\R^d} \|x\|_2^2 \d \eta(x) - \|v_\eta \|_2^2  = 1 - \|v_\eta\|^2. \label{haha}
\end{align}
In particular, 
we have $\|v_\eta\|_2^2 \le 1$ with equality if and only if $\eta = \delta_{v_\eta}$.
Now, we show that the set
$S_1$ coincides with the set
\begin{align*}
S_2 &\coloneqq \{(\Id + v)_\# \big((1-\|v\|^2)^\frac12 \Id \big)_\# \tilde \eta:
(\tilde \eta,v) \in \P_2(\R^d)\times \R^d, \\
&\quad \int_{\R^d} x \, \d \tilde\eta(x) = 0, 
\,
 \int_{\R^d} \|x\|_2^2 \, \d \tilde\eta (x) = 1,
\, 
 \|v\| \le 1\}.
\end{align*}
Straightforward computation shows that
$S_2 \subseteq S_1$. 
For the other direction, let $\eta \in S_1$ and $v_\eta$ as above.
In the case that $\|v_\eta\|_2^2<1$ we consider $\tilde \eta$ defined by
\begin{equation}\label{eq:eta_tilde_eta}
    \eta =     (\Id + v_\eta)_\# 
    ((1-\|v_\eta\|_2^2)^{\frac{1}{2}} \Id)_\#.
    \tilde\eta
\end{equation}
Then we obtain by \eqref{haha} that
$  \int_{\R^d} \|x\|_2^2 \, \d \tilde \eta(x)
  = 1$
and
\begin{equation*}
  \int_{\R^d}x\,\d \tilde \eta(x)=(1-\|v_\eta\|_2^2)^{-\frac12}\int_{\R^d}x-v_\eta \, \d \eta(x)=0.
\end{equation*}
Thus, $\tilde\eta$ fulfills the constraints from $S_2$.
For $\|v_\eta\|_2^2=1$, we have by definition that $\tilde\eta=\mathcal U_{\mathbb{S}^{d-1}}$ fulfills the constraints from $S_2$ and formula \eqref{eq:eta_tilde_eta} such that we obtain $S_1=S_2$.

For $r=1$, we have by Theorem~\ref{the:EKdirDF}
that 
$\HD_{\delta_p \otimes \eta}^{-} \mathcal E_K(\delta_p) = \mathcal E_K(\eta)$ so that
by the translational invariance of $\mathcal E_K$ and since $S_1 = S_2$
problem~\eqref{eq:disc_HD_problem} is equivalent to
\begin{equation}
    \label{eq:sub-prob}
    \begin{aligned}
          & 
          \min_{\tilde \eta \in \P_2(\R^d)}\min_{v \in \R^d}
          (1 - \|v\|_2^2)^{\frac{1}{2}} \,
          \mathcal E_K(\tilde \eta)
          + \nabla V_{K,\nu}(p)^{\tT} v,  \\
          &\;\text{s.t.} \quad
          \int_{\R^d}\|x\|_2^2 \, \d \tilde \eta(x) = 1, 
          \quad \int_{\R^d} x \, \d \tilde\eta(x) = 0, 
          \quad \|v\|_2^2 \le 1.
    \end{aligned}
\end{equation}
Applying Cauchy--Schwarz's inequality,
we estimate the objective function by
\begin{align}
    (1 -\|v\|_2^2)^{\frac{1}{2}} \, \mathcal E_K(\tilde \eta) 
    +  \nabla V_{K,\nu}(p)^{\tT} v 
    &\ge  (1 -\|v\|_2^2)^{\frac{1}{2}} \, \mathcal E_K(\tilde \eta) 
    - \|\nabla V_{K,\nu}(p)\|_2 \|v\|_2
     \\
     &= \Big\langle 
     \begin{pmatrix}
     (1 -\|v\|_2^2)^{\frac{1}{2}}\\\|v\|_2
     \end{pmatrix},
     \begin{pmatrix}
     \mathcal E_K(\tilde \eta)\\
     -\|\nabla V_{K,\nu}(p)\|_2
     \end{pmatrix}
     \Big\rangle
\end{align}
with equality if and only if $v = a \, \nabla V_{K,\nu}(p)$ for some $a < 0$.
Applying Cauchy--Schwarz's inequality once more, 
we obtain
\begin{equation*}
    (1 -\|v\|_2^2)^{\frac{1}{2}} \, \mathcal E_K(\tilde \eta) 
    +  \nabla V_{K,\nu}(p)^{\tT} v
    \ge - \sqrt{\mathcal E_K(\tilde\eta)^2 + \|\nabla V_{K,\nu}(p) \|_2^2}
\end{equation*}
with equality if and only if
\begin{equation*}
  \bigl( (1-\|v\|_2^2)^{\frac{1}{2}},  \|v\|_2 \bigr)
  = b \, ( -\mathcal E_K(\tilde\eta), \|\nabla V_{K,\nu}(p)\|_2 )
\end{equation*}
for some $b > 0$.
Since the norm of the left-hand side is one, 
and due to the equality within the second component,
equality can only hold if
\begin{equation*}
 b = \left(\mathcal E_K(\tilde \eta)^2 + \|\nabla V_{K,\nu}(p)\|_2^2\right)^{-\frac12} = -a.
\end{equation*}
Hence, for any fixed measure $\tilde\eta$, the vector
\begin{equation*}
 \hat v = -(\mathcal E_K(\tilde\eta)^2 + \|\nabla V_{K,\nu}(p)\|_2^2)^{-\frac12} \nabla V_{K,\nu}(p)
\end{equation*}
minimizes the objective in \eqref{eq:sub-prob} ,
which then simplifies to
\begin{align}
  &\min_{\tilde\eta \in \P_2(\R^d)} - \left(\mathcal E_K(\tilde\eta)^2 + \|\nabla V_{K,\nu}(p)\|_2^2\right)^{\frac12}
  \\
  &\quad\text{s.t}\quad
  \int_{\R^d}\|x\|_2^2 \, \d \tilde\eta(x) = 1,
  \;
  \int_{\R^d} x \, \d \tilde\eta(x) = 0.
\label{eq:disc_HD_problem_b}
\end{align}
Due to the non-positiveness $\mathcal E_K$,
problem \eqref{eq:disc_HD_problem_b} is equivalent to \eqref{eq:constraint_problem}
up to the additional condition $\int_{\R^d} x \, \d \tilde \eta(x) = 0$.
However, since by Proposition~\ref{prop:penalized_problem} every solution of \eqref{eq:constraint_problem} is orthogonally invariant,
the solutions $\eta^*$ of both problems coincide.
Hence,
any solution of \eqref{eq:disc_HD_problem} can be represented as
\begin{equation*}
    \hat \eta
    = 
    \bigl(\Id - b^* \, \nabla V_{K,\nu}(p) \bigr)_{\#} 
    \bigl( - b^* \mathcal E_K(\eta^*) \Id \bigr)_{\#}
    \eta^*
    \bigl( b^* \bigl( - \mathcal E_K( \eta^*) \Id - \nabla V_{K,\nu}(p) \bigr) \bigr)_{\#} \eta^*
\end{equation*}
and
\begin{equation*}
 b^{*} =  (\mathcal E_K(\eta^*)^2 + \|\nabla V_{K,\nu}(p)\|_2^2)^{-\frac12}.
\end{equation*}
Since the minimum of \eqref{eq:disc_HD_problem} is
$-(b^*)^{-1} = \HD_{\delta_p \otimes \eta^*}^- \F_\nu (\delta_p)$,
we get the assertion.

For $r \in (1,2)$,
we again apply Theorem~\ref{the:EKdirDF} to conclude 
that \eqref{eq:disc_HD_problem} is equivalent to
\begin{equation*}
    \min_{v \in \R^d} \nabla V_{K,\nu}(p)^\tT v
    \quad\text{s.t.}\quad 
    \|v\|_2^2 \le 1.
\end{equation*}
Here the minimizer is given by $\hat v = -\nabla V_{K,\nu}(p) / \|\nabla V_{K,\nu}(p)\|_2$
such that it holds $\hat \eta = \delta_{-\nabla V_{K,\nu}(p) / \|\nabla V_{K,\nu}(p)\|_2}$,
and the minimum is given by $\HD_{\delta_p \otimes \hat \eta}^- \F_\nu (\delta_p) = -\|\nabla V_{K,\nu}(p)\|_2$,
which yields the assertion.
\hfill $\Box$
\medskip
	
	\noindent	
 {\bf Acknowledgements.} 
 \small{Funding by German Research Foundation (DFG) within the project STE 571/16-1, 
 by the DFG excellence cluster MATH+ and by the 
 BMBF project “VI-Screen” (13N15754) 
 are gratefully acknowledged.}

{\small
\bibliographystyle{abbrv} 
\bibliography{references}}
\end{document}